\documentclass[11pt]{article}
\usepackage{amssymb,amsmath,enumerate}
\topmargin=0cm
\oddsidemargin0mm
\textheight23.5cm
\textwidth16cm
\headsep0mm
\headheight0mm
\parskip 2pt

\newcommand{\BOX}{\ensuremath\Box}

\newenvironment{proof}{{\vskip\baselineskip\noindent\textbf{Proof:}}}%
{\hspace*{.1pt}\hspace*{\fill}\BOX\vskip\baselineskip}

\newenvironment{proofx}[1]%
{\vskip\baselineskip\noindent\textbf{Proof of {#1}:}}%
{\hspace*{.1pt}\hspace*{\fill}\BOX\vskip\baselineskip}
{\vskip\baselineskip\noindent\textbf{Proof of Theorem \protect\ref{#1}:}}%
{\hspace*{.1pt}\hspace*{\fill}\BOX\vskip\baselineskip}
{\vskip\baselineskip\noindent\textbf{Proof of Theorems \protect\ref{#1} --
\protect\ref{#2}:}}%
{\hspace*{.1pt}\hspace*{\fill}\BOX\vskip\baselineskip}


\newcommand{\C}{\mathbb{C}}
\newcommand{\N}{\mathbb{N}}
\newcommand{\R}{\mathbb{R}}

\newcommand{\dd}{\,{\rm d}}

\newcommand{\pa}{\partial}
\newcommand{\na}{\nabla}
\newcommand{\eps}{\epsilon}

\renewcommand{\Re}{\mathop{\mathrm{Re}}}
\renewcommand{\Im}{\mathop{\mathrm{Im}}}

\numberwithin{equation}{subsection}
\newtheorem{thm}{Theorem}[section]

\newtheorem{prop}[thm]{Proposition}
\newtheorem{lem}[thm]{Lemma}
\newtheorem{rem}[thm]{Remark}
\newtheorem{cor}[thm]{Corollary}

\begin{document}

\title{Gevrey stability of Prandtl expansions for $2$D Navier-Stokes flows}

\author{   \footnote{Universit\'e Paris Diderot, Sorbonne Paris Cit\'e, Institut de Math\'ematiques de Jussieu-Paris Rive Gauche, UMR 7586, F- 75205 Paris, France} David Gerard-Varet \and \footnote{Department of Mathematics, Graduate School of Science, Kyoto University
Kitashirakawa Oiwake-cho, Sakyo-ku, Kyoto 606-8502, Japan } Yasunori Maekawa \and 
\footnote{Courant Institute of Mathematical Sciences, 251 Mercer
   Street, New York, NY 10012, USA. } Nader Masmoudi}



\maketitle


\abstract
We investigate the stability of boundary layer solutions of the two-dimensional incompressible Navier-Stokes equations. We consider shear flow solutions of Prandtl type : 
$$ u^\nu(t,x,y) \, = \, \big (U^E(t,y) + U^{BL}(t,\frac{y}{\sqrt{\nu}})\,, \,  0 \big )\, , \quad 0<\nu \ll 1\,.  $$
We show that if $U^{BL}$ is monotonic and concave in $Y = y /\sqrt{\nu}$ then $u^\nu$ is stable over some time interval $(0,T)$, $T$ independent of $\nu$, under perturbations with Gevrey regularity in $x$ and Sobolev regularity in $y$. We improve in this way the classical stability results of Sammartino and Caflisch \cite{SaCa1,SaCa2} in analytic class (both in $x$ and $y$). Moreover, in the case where $U^{BL}$ is steady and strictly concave, our Gevrey exponent for stability is optimal. The proof relies on new and sharp resolvent estimates for the linearized Orr-Sommerfeld operator. 

\section{Introduction}\label{sec.intro}
One challenging mathematical problem from fluid mechanics is to understand the limit of solutions 
$u^\nu$  of the incompressible Navier-Stokes equations: 
\begin{equation} \label{NS}
 \partial_t u^\nu + u^\nu \cdot \na u^\nu + \na p^\nu - \nu \Delta u^\nu \, = \, f\,, \quad  {\rm div} \, u^\nu \, = \, 0 \, \quad \text{ in } \Omega\,,  \quad u^\nu\vert_{\pa \Omega} \, = \, 0\, , 
\end{equation}
when the parameter $\nu$ goes to zero, for domains $\Omega \subset \R^d$
 with boundaries. Indeed,  as the Euler solution $u^0$ does not satisfy the Dirichlet condition, convergence of $u^\nu$ to $u^0$ can not hold in strong topology (say $H^1(\Omega)$). The obstacle to convergence  is a concentration phenomenon near the boundary, in the so-called {\em boundary layer}. The mathematical understanding of this boundary layer is a difficult problem. The difficulty is emphasized by Kato's criterion \cite{Kat}:  roughly, it  says  that for smooth bounded domains $\Omega$, the  Leray solutions $u^\nu$  of Navier-Stokes converge to a smooth solution $u^0$  of Euler in $L^\infty(0,T; L^2(\Omega))$  if and only if convergence holds in $L^2(\Omega)$ at time $0$ and 
$$ \nu \int_0^T \int_{d(x, \pa \Omega) \le \nu} | \na u^\nu |^2 \, \rightarrow 0, \quad \nu \rightarrow 0$$  
that is the production of enstrophy in a layer of size $\nu$ near the boundary goes to zero with $\nu$. See \cite{Kel} for refinements. 

\medskip
\noindent
The most  popular model for the boundary layer was introduced by Ludwig Prandtl in 1904, and is inspired by the heat part of the Navier-Stokes equation. In the simple half-plane case $\Omega = \R \times \R_+$ or $\Omega = \mathbb{T} \times \R_+$, the Prandtl model corresponds to the following asymptotics for $u^\nu$: 
\begin{equation} \label{prandtl_exp}
u^\nu(t,x,y) \approx (U^E, V^E)(t,x,y) + (U^{BL}, \sqrt{\nu} V^{BL})(t,x,\frac{y}{\sqrt{\nu}})) \,,
\end{equation}
where $u^0 = (U^E, V^E)$ is the Euler solution, and $(U^{BL}, V^{BL})$ is a corrector localized near the boundary, at typical scale $\sqrt{\nu}$, allowing $u^\nu$ to satisfy  the Dirichlet condition. After a few manipulations, one obtains the so-called Prandtl equation on 
$$U^{P}(t,x,Y) \, = \, U^E(t,x,0) + U^{BL}(t,x,Y)\,, \quad V^{P}(t,x,Y) \, = \, \pa_Y V^E(t,x,0) Y  + V^{BL}(t,x,Y)$$ 
that is 
\begin{equation} \label{P}
\begin{aligned}
&\pa_t U^P + U^P \pa_x U^P + V^P \pa_Y U^P - \pa_Y^2 U^P  \, = \, - \pa_x P^E|_{y=0}\,, \\
& \pa_x U^P + \pa_Y V^P  \, = \, 0\, , \\
& U^P\vert_{Y = 0} \, = \, V^P\vert_{Y=0} \, = \, 0\, , \quad \lim_{Y \rightarrow +\infty} U^P \, = \, U^E|_{y=0}\,. 
\end{aligned}
\end{equation}
Notation $P^E$ refers to the Euler pressure field. 

\medskip
\noindent
When trying to justify the Prandtl expansion \eqref{prandtl_exp}, two difficulties are in order: 
\begin{itemize}
\item Justifying the well-posedness of the reduced model \eqref{P}, so as to construct the expansion.
\item Justifying that it approximates well the Navier-Stokes solution over some reasonable time scale,  starting from close initial data. Note that the Prandtl expansion can almost be seen as a solution of Navier-Stokes:  up to the addition of lower amplitude corrections in the expansion, the error source term can be made arbitrarily small. In that sense, the validation of the boundary layer asymptotics can be seen as a stability problem for a special class of solutions of Navier-Stokes. 
\end{itemize}
As regards the local in time well-posedness theory for \eqref{P}, the situation is by now rather well-understood. Two important factors are the monotonicity of the initial data $U^P(0,x,Y)$ with respect to $Y$, and its regularity with respect to $x$. In physical terms, the monotonicity assumption prevents (at least for small time) the separation of the boundary layer. Hence:  
\begin{itemize} 
\item Under a monotonicity assumption in $Y$ (plus other regularity requirements), local in time existence of smooth ($C^k$) solutions was proved by Oleinik \cite{Ole}. The proof is based on a change of  variables and unknowns called Crocco transform. Recently, the well-posedness theory for $Y$-increasing data was revisited in the Eulerian form \eqref{P} and Sobolev setting \cite{AlWaXuTa,MaWo}. 
\item Without the monotonicity assumption, the situation is much less favorable. Existence of local in time analytic solutions, for analytic initial data, was shown  by Sammartino and Caflisch in \cite{SaCa1}, see also \cite{LoCaSa,KuVi}. Article \cite{GeDo} by the first author and Dormy established the ill-posedness of  \eqref{P} in the Sobolev setting. More recently, local Gevrey well-posedness for data with non-degenerate critical points was shown in \cite{GeMa}. We refer to \cite{GeDo2,GeNg,KuMaViWo,LiWaYa} for more on the study of the Prandtl equation itself. 
\end{itemize}
  
The present paper is devoted to the second issue, namely the stability of the Prandtl expansion within the Navier-Stokes evolution.  Again, one may expect the stability/instability to depend on the monotonicity properties of the boundary layer flow $U^P$, and on the regularity of the perturbations under consideration.  

\medskip
On the positive side, local in time convergence of the Prandtl asymptotics was achieved in \cite{SaCa2}  in the analytic setting. Recently, the second author relaxed that result, treating the case of Sobolev data with vorticity away from the boundary \cite{Mae}. One can also mention the works \cite{LoMaNuTa,MaTa}, where convergence is shown to hold under low regularity, but  under stringent structural conditions on both  the boundary layer solution and the perturbations.    

\medskip
The main point with analyticity is that perturbations with frequencies $n$ in $x$, $n \gg 1,$  decay like $\exp(-\delta n)$ for some $\delta > 0$. This somehow eliminates any exponential instability of high frequencies. As soon as Sobolev perturbations are allowed, highly oscillatory perturbations can destabilize the Prandtl expansion, and make the exact Navier-Stokes and the boundary layer approximation diverge in very short time (typically a time $t_\nu$ going to zero with $\nu$). This kind of argument was first used by Grenier in article \cite{Gre}. The focus of \cite{Gre} is  put on Prandtl expansions of shear flow type, that read 
$$ u^\nu(t,x,y) \, =\,  (U^E(t,y), 0) + (U^{BL}(t,\frac{y}{\sqrt{\nu}}), 0)\,, \quad 0<\nu \ll 1\,.  $$
More precisely, Grenier considers the special case where $U^E = 0$ and $U^{BL} = U^{BL}(t,Y)$ solves the one-dimensional heat equation with Dirichlet condition. Through the change of variable  $(\tau,X,Y) = (t,x,y)/\sqrt{\nu}$, the stability of the boundary layer expansion amounts to the stability of $(U^{BL}(\sqrt{\nu} \tau, Y),0)$ as a solution of the rescaled Navier-Stokes equation
\begin{equation} \label{rescaled_NS}
 \pa_\tau u  + u \cdot \na_{X,Y}  u + \na_{X,Y} p - \sqrt{\nu} \Delta_{X,Y} u \, = \, 0\, . 
 \end{equation}
In particular, if the initial shear flow $(U^{BL}(0, Y),0)$ is linearly  unstable for the Euler equation, it is likely that this instability persists for $\nu \ll 1$.  Indeed, starting from such unstable shear flows, it is  shown in \cite{Gre} that equation \eqref{rescaled_NS} admits  solutions of the type 
$$ u(t,x,Y) \approx (U^{BL}(\sqrt{\nu} \tau, Y),0)  + \nu^N e^{i \alpha (X - c\tau)} v^\nu(Y)  $$ 
 where $\alpha \in \R^*$, $\alpha \Im c > 0$, , and $N$  large enough. Back to the original variables, it yields a solution that oscillates like $\exp(i \alpha x/\sqrt{\nu})$ and  separates from the Prandtl solution  as $\exp(\delta t /\sqrt{\nu})$ (over times of order $t = O(\sqrt{\nu} |\ln \nu|$). 
 This causes instability of these Prandtl expansions in any Sobolev space. 
More precisely, the analysis of \cite{Gre} exhibits perturbations with high frequencies $n = 1/\sqrt{\nu} \gg 1$ that grow like $e^{(\Im c) n t}$. Thus, it tends to indicates that stability over times of order $1$ is only possible in the analytic setting: hence, the general  convergence result of Sammartino and Caflisch in analytic regularity is likely to be optimal.

\medskip
Let us stress that, for the shear flow to be linearly unstable in the Euler equation, the profile $Y \rightarrow U^{BL}(0,Y)$ must have an inflexion point: this is the famous Rayleigh criterion,  {\it cf} the discussion in \cite{DrRe}. In experiments or numerics, the appearance of an inflexion point often goes along with a loss of monotonicity in $Y$, for which we have seen that strong instabilities develop in the reduced Prandtl model itself, in link with boundary layer separation. From this point of view, the instability result in \cite{Gre} may appear less surprising. 

\medskip
Hence, a natural problem is the consideration of Prandtl approximations of shear flow type, when $U^{BL}(0,Y)$ is increasing and concave in $Y$. It corresponds to a type of data for which the Prandtl system is well-posed, so that only the justification of the expansion has to be investigated. Following the approach  above, a key step is the linear stability analysis of monotonic and concave shear flows for the rescaled Navier-Stokes equation \eqref{rescaled_NS}. This problem has of course a long history :  we refer to \cite{DrRe} for an overview and a list of references. It has been revisited in an accurate and rigorous way recently, in remarkable works by Grenier, Guo and Nguyen \cite{GrGuNg,GGN2014}. A main conclusion  is that a shear flow which is linearly stable for Euler may be linearly unstable for Navier-Stokes: viscosity has a destabilizing effect.  In particular, paper \cite{GGN2014} studies the linearized Navier-Stokes equations around shear flows that are steady, strictly concave, monotonic, analytic near $Y = 0$. They construct explicit growing solutions  in the form $e^{i \alpha (X - c \tau)} v_\nu(Y)$, where 
$$ \alpha\sim \mathcal{O}(\nu^\frac18)\,, \quad \Im c \sim \mathcal{O}(\nu^\frac18). $$
More details will be given later on. Back to the original scales, we find that some perturbations with frequency $n \sim \mathcal{O}(\nu^{-\frac38})$ in $x$  are amplified by an exponential factor $\exp(\delta  n^{2/3} t)  \sim \mathcal{O}( \exp(\delta \nu^{-\frac14}) )$. Hence, contrary to the previous non-monotonic setting, stability of the boundary layer over times $t \sim \mathcal{O}(1)$ does not seem to require analyticity, but rather Gevrey regularity (with exponent at most $3/2$ for the case considered in \cite{GGN2014}). 
The goal of the present paper is to prove such Gevrey stability, in the nonlinear framework. Precise statements will be given in the next section.

\section{Statement of the main result} \label{sec.result}
Let $U^E = U^E(t,y)$ and $U^P=U^P(t,Y)$ two scalar functions on $\R_+\times \R_+$, satisfying the conditions   
\begin{align}\label{sec2.condition}
U^E (t,0)> 0, \qquad U^{P}(t,0) = 0\,, ~ \lim_{Y \rightarrow +\infty} U^{P}(t,Y) = U^E(t,0)~~{\rm for}~t\geq 0 \,.
\end{align}
Let 
\begin{align}
U^{BL}(t,Y) \, = \, U^P(t,Y) - U^E(t,0)\,.\label{sec2.U^{BL}}
\end{align}
Then, the shear flow $\big( U^E(t,y) + U^{BL}(t,y/\sqrt{\nu})\big)  {\bf e}_1 $ is of boundary layer type, and the goal is to investigate its stability. Therefore, we write the Navier-Stokes equation \eqref{NS} in perturbative form: 
$$u^\nu(t,x,y) \, = \, \big(U^E(t,y) + U^{BL}(t,\frac{y}{\sqrt{\nu}})\big) {\bf e}_1 + u(t,x,y)$$
with 
\begin{equation}\label{eq.perturb.intro}
\left\{
\begin{aligned}
\partial_t u  + \mathbb{A}_{\nu} (t) u & \, = \, - \mathbb{P} \big ( u \cdot \nabla u \big ) \,, \qquad t>0\,,\\
u|_{t=0} & \, = \, a\,.
\end{aligned}\right.
\end{equation}

\noindent 
Here $u=(u_1(t,x,y),u_2(t,x,y))$, $(x,y)\in \Omega = \mathbb{T} \times \R_+$ ($2\pi$-periodic in $x$), and $\mathbb{P}: L^2 (\Omega)^2 \rightarrow L^2_\sigma (\Omega)$ is the Helmholtz-Leray projection, where
$$L^2_\sigma (\Omega) = \{ f\in L^2(\Omega)^2 \ | \  {\rm div}\, f = 0  \ {\rm in }  \ \Omega \,,  f_2=0 \ {\rm on} \ y=0 \}.$$
The linear operator  $\mathbb{A}_\nu (t)$ is defined as 
\begin{align}\label{def.A_t}
\mathbb{A}_\nu (t) u \, = \, -\nu \mathbb{P} \Delta u + \mathbb{P} \bigg ( \big (U^E (t) + U^{BL} (t) \big ) \partial_x u + u_2 \partial_y \big (U^E (t) + U^{BL} (t) \big ) {\bf e}_1 \bigg )\,,
\end{align}
with the domain 
$$D (\mathbb{A}_\nu (t) ) = W^{2,2}(\Omega)^2 \cap W^{1,2}_{0} (\Omega)^2\cap L^2_\sigma (\Omega)$$
 and $W^{1,2}_0 (\Omega)=\{ f\in W^{1,2} (\Omega) \ | \ f=0  \ {\rm on} \ y=0 \}$.
Let us quote that to go from \eqref{NS} to \eqref{eq.perturb.intro}, we implicitly assumed that 
\begin{align*}
 f(t,x,y) & \, = \, \big(\pa_t U^{BL}(t,\frac{y}{\sqrt{\nu}}) - \pa^2_Y U^{BL}(t,\frac{y}{\sqrt{\nu}}) + \partial_t U^E (t,y) - \nu \pa_y^2 U^E (t,y) \big)  {\bf e}_1 \\
 & \, = \,  \big(\pa_t U^{P}(t,\frac{y}{\sqrt{\nu}}) - \pa^2_Y U^{P}(t,\frac{y}{\sqrt{\nu}})  +\partial_t U^E (t,y)- \partial_t U^E (t,0)- \nu \pa_y^2 U^E (t,y) \big)  {\bf e}_1\\
\end{align*}
Two cases deserve special attention. On one hand, if  $U^E (t,y ) \, = \, \mathcal{U}^E (y) + \mathcal{R}^E (t,y)$, where  $\mathcal{U}^E$ is given  and $\mathcal{R}^E$ solves the heat equation
\begin{align}\label{eq.R^E}
\partial_t \mathcal{R}^E - \nu \partial_y^2 \mathcal{R}^E \, = \, \nu \partial_y^2 \mathcal{U}^E \,, \quad t>0\,, ~ y>0\,, \qquad \mathcal{R}^E |_{y=0} \, =\, 0\,, \quad \mathcal{R}^E |_{t=0} \, =\, 0,
\end{align} 
and if moreover $U^P$ solves the heat equation 
\begin{equation} \label{heat_Up}
\pa_t U^P - \pa^2_Y U^P \, = \, 0, \quad t>0\,, ~Y>0\,, \qquad U^P\vert_{Y = 0} \, = \, 0\,, \lim_{Y \rightarrow +\infty}   U^P \, = \, U^E|_{y=0}\,,
\end{equation}
then the boundary layer expansion can be seen as an exact solution of the homogeneous Navier-Stokes equation. On the other hand, when  $U^P(T,Y) = U(Y)$ and $U^E(t,y) = U^E(y)$ are time independent, our stability issue connects to the long history of the stability of steady shear flows \cite{DrRe}. 

\medskip
\noindent
As discussed in the introduction, the goal is to prove the stability of such Prandtl boundary layer expansions within the Navier-Stokes evolution, under appropriate monotonicity and concavity conditions on $U^P$. Such stability result will be achieved in a Gevrey framework. Roughly, we will show that if a solution $u$ of \eqref{eq.perturb.intro} is initially $O(\nu^{\frac32})$  in some space of Gevrey regularity in $x$, $L^2$ in $y$, it will remain so over a time interval $(0,T)$ independent of $\nu$. The Gevrey exponent that we obtain is better when we consider steady $U^P$ rather than time dependent. In the case where $U^P$ is steady and strictly concave, our Gevrey exponent is optimal, taking into account recent results of Grenier, Guo and Nguyen \cite{GGN2014}. This will be explained in Remark \ref{rem.parameter}.

\medskip
\noindent
To state our main theorem, we need to introduce a few notations.  Let
\begin{align}\label{def.projection.P}
\begin{split}
\big ( \mathcal{P}_n f \big ) (y)  & \, = \, f_n (y) e^{i n x} \,, \qquad f_n  (y)  \, = \, \frac{1}{2\pi} \int_0^{2\pi} f (x, y) e^{-i n x} \dd x \,, \qquad n \in \mathbb{Z}\,,
\end{split}
\end{align} 
the projection on the Fourier mode $n$ in $x$, $n \in \mathbb{Z}$. We then introduce, for $\gamma\in (0,1]$, $d\geq 0$, and $K>0$ the Banach space $X_{d,\gamma,K}$ as 
\begin{align}
X_{d,\gamma, K} \, = \, &\{ f\in L^2_\sigma (\Omega) ~|~ \| f \|_{X_{d,\gamma,K}} = \sup_{n\in \mathbb{Z}} \, (1+|n|^d) e^{K \theta_{\gamma,n}} \| \mathcal{P}_n f \|_{L^2 (\Omega)} <\infty \}\,,\\
\mbox{} ~~~~~\qquad \theta_{\gamma,n} & \, = \, |n|^\gamma \big ( 1+ (1-\gamma)\log (1+|n|) \big ) \,.\label{sec2.def.theta}
\end{align}
Fields in this space have $L^2$ regularity in $y$, and Gevrey regularity in $x$, of class $s$ for any $s\geq \frac{1}{\gamma}$. When the boundary layer profile is monotonic and concave in $Y$, we will prove stability on times of order $1$ for initial perturbations that are $O(\nu^{l})$ in $X_{d,\gamma, K}$ for suitable $l\in (\frac12,\frac32)$ and $\gamma \in (0,1)$ (hence, below analytic regularity). The value of the exponent $\gamma$ will depend on the type of concavity that we impose. We  distinguish between three settings. 

\medskip
\noindent
{\bf Steady flow, weakly concave case}. We consider here $U^E(t,y)=U^E(y)$ and $U^P(t,Y) = U(Y)$, with 
\begin{align}\label{sec2.bound.U}
\begin{split}
& \|U\|\, := \, \sum_{k=0,1,2} \sup_{Y \geq 0}  \, (1+Y)^k |\pa_Y^k U(Y) | \, <\, \infty\,.
\end{split}
\end{align}
Our assumptions are: 
\begin{align}\tag{WC}\label{concave.weak}
\begin{split}
& {\rm (i)} ~U|_{Y=0} \, =\, 0\,, \, \,  \lim_{Y\rightarrow \infty} U \, =\, U^E|_{y=0}\,, \quad U^E, U\in BC^2(\R_+) ~{\rm and }~ U {\rm ~satisfies ~\eqref{sec2.bound.U}},\\
& {\rm (ii)} ~{\rm For ~each ~}\sigma \in (0,1] {\rm ~there ~exists ~} M_\sigma >0 {\rm ~ such ~that}~\,  -M_\sigma \pa_Y^2U \geq (\pa_Y U)^2 \, ~ {\rm for} ~Y\geq \sigma\,.
\end{split}
\end{align}
The item {\rm (ii)} in \eqref{concave.weak} implies $-\pa_Y^2 U (Y)\geq 0$ for all $Y\geq 0$, and thus, 
 it ensures the concave shape of $U$. Moreover,  it is necessary for \eqref{concave.weak} that $\pa_Y U$ is nonnegative, and then, the boundary condition $U(0)=0$, $U(\infty)=U^E(0)$ implies that $U^E(0)$ must be nonnegative. That is, the condition \eqref{concave.weak} automatically leads to
\begin{align*}
\inf_{Y\geq 0} \pa_Y U(Y)\geq 0\,, \qquad U^E(0)\geq 0\,.
\end{align*}
Moreover, the following statements hold:
\begin{align}\label{sec2.nontrivial.1}
\begin{split}
&\cdot ~{\rm if}~~\pa_Y U(Y_0)=0~~{\rm for~some}~~Y_0\geq 0\,,~{\rm then}~~\pa_Y U(Y)=0~~{\rm for~all}~~Y\geq Y_0\,,\\
&\cdot ~U^E(0)=0~~{\rm if~and~only~if}~~U(Y)=0~~{\rm for~all}~~Y\geq 0\,.\end{split}
\end{align}
Since we are interested in the stability of the boundary layer, the case $U^E(0)=0$ is excluded in this paper
in order to have the nontrivial boundary layer profile $U$. Therefore,  as stated in \eqref{sec2.condition}, we always assume the positivity of $U^E(0)$, that is,
\begin{align}\label{sec2.nontrivial.2}
U^E(0)>0\,.
\end{align}
Then, from the statements \eqref{sec2.nontrivial.1} and the boundary condition on $U$, the condition $U'(0)>0$ is always satisfied. 

\medskip
\noindent
\begin{rem}\label{sec2.rem.nontrivial}{\rm   Let
\begin{align*}
V (Y) \, = \, U^E (\sqrt{\nu} Y) - U^E (0) + U (Y) \,, 
\end{align*}
which is the full boundary layer expansion expressed in variable $Y$. 
By \eqref{sec2.nontrivial.1} the derivative of the boundary layer, $\pa_Y U$,  must be strictly positive in the compact set $D=\{Y\geq 0~|~ U(Y) \leq \frac34 U^E(0)\}$. Thus we have
\begin{align*}
\pa_Y V(Y)\geq \frac{1}{2} \min_{Y\in D} \pa_Y U(Y) >0 \quad {\rm for}~Y\in D \qquad {\rm  if} ~\nu~ {\rm is~ sufficiently~ small}\,.
\end{align*}
Then it is not difficult to see that \eqref{concave.weak} implies the following integral condition:
\begin{align}\tag{${\rm IC}$}\label{integral.condition}
\begin{split}
& {\rm There ~exists ~} M'>0~{\rm such~that}~\, \|\frac{Y^\frac12 (\pa_Y U)^2}{(V-\lambda)^2}\|_{L^2}\leq M' (\Im \lambda)^{-\frac32}\, ~{\rm holds}\\
&{\rm for ~all}~\, \lambda\in \{ \mu\in \C~|~ \Re \mu \leq \frac{U^E(0)}{2}, \, \Im \mu>0\} ~{\rm and~for~all~}\nu\in (0,\nu_0]~{\rm with}~0<\nu_0\ll 1\,. 
\end{split}
\end{align} 
The proof of \eqref{integral.condition} is straightforward and omitted here.
}
\end{rem}

\bigskip
\noindent
{\bf Steady flow,  strongly concave case}.
Here, we consider again the steady case, but assume
\begin{align}\tag{SC}\label{concave.strong}
\begin{split}
&  {\rm (i)} ~U|_{Y=0} \, =\, 0\,, \, \,  \lim_{Y\rightarrow \infty} U\, =\, U^E|_{y=0}\,, \quad U^E, U\in BC^2(\R_+) ~{\rm and }~ U {\rm ~satisfies ~\eqref{sec2.bound.U}},\\
& {\rm (ii)} ~{\rm There ~exists ~} M>0 {\rm ~ such ~that}~ \, -M \partial_Y^2 U   \geq (\partial_Y U) ^2 ~\,  {\rm ~for} ~Y\geq 0\,.
\end{split}
\end{align} 
The condition  (ii) in \eqref{concave.strong}  implies strict concavity of the profile $U$ up to the boundary $Y = 0$: $\pa_Y^2 U$ can not vanish even at $Y=0$. This is the main difference with assumptions \eqref{concave.weak}, which will allow us to push the analysis further. 

\bigskip
\noindent
{\bf Time dependent flow, weakly concave case}. In this last setting, we fix $T > 0$ and consider $U^E=U^E(t,y)$ and  $U^P = U^P(t,Y)$, $t \in [0,T]$. Our assumptions are 
\begin{align}\tag{WC-t}\label{concave.weak.t}
\begin{split}
& {\rm (0)}~U^P|_{Y=0} =0\,, \quad \lim_{Y\rightarrow \infty} U^P =U^E|_{y=0}>0 \qquad {\rm for} ~t\in [0,T]\,,\\ 
& {\rm (i)} ~U^E, ~U^P\in BC([0,T]\times \overline{\R_+})\cap L^\infty (0,T; BC^2 (\R_+))\,, {\rm ~and}\\
& \quad \sup_{0<t<T} \bigg ( \| \pa_t U^E (t) \|_{L^\infty (\R_+)} + \| \pa_t U^P (t) \|_{L^\infty(\R_+)} \\
& \qquad \qquad \qquad \qquad \qquad + \| Y \pa_t \pa_Y U^P (t) \|_{L^\infty (\R_+)} + \| U^P (t) \| \bigg ) <\infty \,, \\
& \quad {\rm Here }~\| U^P (t) \|~ {\rm is ~ defined ~as ~in ~\eqref{sec2.bound.U}.} \\
& {\rm (ii)} ~{\rm For ~each ~}\sigma \in (0,1] {\rm ~there ~exists ~} M_\sigma >0 {\rm ~independent ~of~}t \in [0,T] {\rm ~ such ~that}~ \\
& \quad - M_\sigma \partial_Y^2U^P  \geq (\partial_Y U^P)^2\,  {\rm ~for} ~Y\geq \sigma ~{\rm and }~t\in [0,T]\,.
\end{split}
\end{align}
We emphasize that the condition \eqref{concave.weak.t} is always satisfied when $U^P$ is the solution to 
\begin{equation}\label{eq.heat}
\left\{
\begin{aligned}
\partial_t U^P - \partial_Y^2 U^P & \, = \, 0\,, &  t>0\,, ~ Y>0\,,\\
U^P|_{Y=0} & \, = \, 0 \,, \quad \lim_{Y\rightarrow \infty} U^P \, = \, U^E|_{y=0} \,, &  t\geq 0\,,\\
U^P |_{t=0} & \, = \, U\,. & 
\end{aligned}\right.
\end{equation}
with $\displaystyle \inf_{0\leq t\leq T} U^E|_{y=0}>0$, if the initial data $U \in BC^3 (\R_+)$ satisfies \eqref{sec2.bound.U} and $Y\pa_Y^3U\in L^\infty (\R_+)$,  compatibility conditions on $Y=0,\infty$,  and (ii) of \eqref{concave.strong}. See the appendix for details. 
 
 \bigskip
 \noindent 
 We are now ready to state our main result: 
 \begin{thm}\label{sec2.thm.nonlinear} Assume that \eqref{concave.weak.t} holds for some $T>0$. For any $\gamma \in [\frac79,1)$, $d>\frac92-\frac72\gamma$, and $K>0$, there exist $C, T,' K'>0$ such that the following statement holds for any sufficiently small $\nu>0$.
If $\| a\|_{X_{d,\gamma,K}} \leq \nu^{\frac12+\beta}$ with $\beta = \frac{2(1-\gamma)}{\gamma}$ then the system \eqref{eq.perturb.intro} admits a unique solution $u\in C([0,T']; L^2_\sigma (\Omega))\cap L^2 (0,T'; W^{1,2}_0 (\Omega)^2)$ satisfying the estimate
\begin{align}\label{sec2.est.thm.nonlinear.1}
\sup_{0<t\leq T'} \big ( \| u (t) \|_{X_{d,\gamma,K'}} + (\nu t)^\frac14 \| u (t) \|_{L^\infty (\Omega)} +  (\nu t)^\frac12 \| \nabla u (t) \|_{L^2 (\Omega)} \big )  \leq C \| a\|_{X_{d,\gamma,K}}\,.
\end{align}

\medskip
\noindent
If $U^E(t,y)=U^E(y)$ and $U^P(t,Y) = U(Y)$ are steady and satisfy \eqref{concave.weak} instead of \eqref{concave.weak.t}, the result holds for any $\gamma \in [\frac57,1]$. 

\medskip
\noindent
If  $U^E(t,y)=U^E(y)$ and $U^P(t,Y) = U(Y)$ are steady and satisfy \eqref{concave.strong} instead of \eqref{concave.weak.t}, the result holds for any $\gamma \in [\frac23,1]$. 
\end{thm}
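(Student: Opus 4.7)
The plan is to combine the sharp resolvent estimates for the linearized Orr--Sommerfeld operator (announced as the paper's key technical input) with a Duhamel fixed-point iteration in the Gevrey class $X_{d,\gamma,K}$. First I would Fourier-decompose in $x$: writing $u=\sum_n u_n(t,y)e^{inx}$, the linear part $\partial_t+\mathbb{A}_\nu(t)$ decouples across Fourier modes while the nonlinearity couples them through convolution. Integrating the resolvent bounds over a suitable contour in $\C$ should yield, mode by mode, a semigroup estimate of the form
$$
\|\mathcal{P}_n S_\nu(t,s) f\|_{L^2(\Omega)} \,\lesssim\, e^{C|n|^\gamma (t-s)} \,\|\mathcal{P}_n f\|_{L^2(\Omega)},
$$
which in the weight $e^{K\theta_{\gamma,n}}$ amounts to a loss of Gevrey radius $K\mapsto K'<K$ over a time interval of length $O(1)$. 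The dissipative part $-\nu\Delta$ provides the usual heat smoothing on each mode, and is what produces the $(\nu t)^{1/4}$ and $(\nu t)^{1/2}$ factors in \eqref{sec2.est.thm.nonlinear.1}.

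Next I would rewrite \eqref{eq.perturb.intro} in Duhamel form
$$
u(t)\,=\,S_\nu(t,0) a \,-\, \int_0^t S_\nu(t,s)\,\mathbb{P}(u\cdot\nabla u)(s)\,\dd s,
$$
and run a contraction in the Banach space given by the three norms in \eqref{sec2.est.thm.nonlinear.1}. The core bilinear estimate in Fourier reduces to a convolution bound
$$
(1+|n|^d)\,e^{K'\theta_{\gamma,n}}\,\|(u\cdot\nabla v)_n\|_{L^2} \,\lesssim\, \sum_{k\in\mathbb{Z}}e^{K'(\theta_{\gamma,k}+\theta_{\gamma,n-k})}(1+|k|)(1+|n-k|)\|u_k\|_{L^2}\|v_{n-k}\|_{L^2},
$$
which relies on the sub-additivity $\theta_{\gamma,n}\leq\theta_{\gamma,k}+\theta_{\gamma,n-k}$ (precisely the reason \eqref{sec2.def.theta} carries the logarithmic correction for $\gamma<1$) and on the slack $K'<K$ to generate absolutely summable tails. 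The polynomial weight $d$ is used via Young's inequality to sum the extra factor from $\nabla v$, which pins down the constraint $d>\frac92-\frac72\gamma$. The threshold $\beta=2(1-\gamma)/\gamma$ in the initial smallness $\|a\|_{X_{d,\gamma,K}}\leq \nu^{1/2+\beta}$ is calibrated so that the worst $\nu$-growth coming from the resolvent bound over a unit time interval, of order $\nu^{-\beta}$, is exactly absorbed by the data size, while the extra $\nu^{1/2}$ handles the loss in inverting the parabolic smoothing factors inside the bilinear term.

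The main obstacle is the bilinear estimate itself, and specifically the simultaneous management of (i) the Gevrey-weight loss, which must be small enough that the resolvent growth is tamed, (ii) the parabolic smoothing in $y$, which is needed because $X_{d,\gamma,K}$ controls only $L^2$ in $y$ while $u\cdot\nabla u$ contains a vertical derivative, and (iii) the Helmholtz--Leray projection $\mathbb{P}$, which is non-local in $y$ and must be shown to preserve the Gevrey class mode by mode (this uses the decay of the associated Poisson-type kernel at frequency $n$). For the steady strongly concave case the resolvent estimates are sharp enough to allow $\gamma\geq 2/3$; the weakly concave steady case loses a little and forces $\gamma\geq 5/7$; the time-dependent case requires $\gamma\geq 7/9$ because approximating the evolution family by a frozen-coefficient Green's function degrades the spectral bound further. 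Verifying that all the power counts balance and that the contraction closes on an interval $T'$ independent of $\nu$ is the heart of the argument.
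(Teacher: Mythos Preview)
Your overall architecture matches the paper's proof: Fourier decomposition in $x$, resolvent-to-semigroup via contour integration, Duhamel formula, bilinear estimate exploiting sub-additivity of $\theta_{\gamma,n}$, and a contraction in a Gevrey-weighted space. The frozen-coefficient approximation for the time-dependent case and the rough calibration of $\beta$ are also correct.

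However, your semigroup bound is missing a crucial ingredient. The estimate the paper actually proves is not $\|\mathcal{P}_n S_\nu(t,s)f\|_{L^2}\lesssim e^{C|n|^\gamma(t-s)}\|\mathcal{P}_n f\|_{L^2}$ but rather
\[
\|\mathbb{T}_{\nu,n}(t,s)f\|_{L^2}\;\le\; C\,|n|^{\frac52(1-\gamma)}\,e^{K_0\theta_{\gamma,n}(t-s)}\|f\|_{L^2},
\]
with an additional polynomial prefactor $|n|^{\frac52(1-\gamma)}$ (and $|n|^{\frac12+3(1-\gamma)}$ in the gradient estimate). This loss is what forces the paper to run the iteration not in $X_{d,\gamma,K'}$ directly but in a space with reduced polynomial weight $q=d-\tfrac72(1-\gamma)$ and a \emph{time-decreasing} Gevrey radius $K(t)=K-2K_0 t$. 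The constraint $d>\tfrac92-\tfrac72\gamma$ is then exactly $q>1$, needed for summability of the convolution; it comes from the evolution operator's polynomial loss, not from ``the extra factor from $\nabla v$'' as you suggest. The bilinear term itself costs only a half-derivative, handled by Gagliardo--Nirenberg in $y$. Similarly, the precise value $\beta=\tfrac{2(1-\gamma)}{\gamma}$ arises because the worst polynomial factor $|n|^{\frac52(1-\gamma)}$ over the range $|n|\le\nu^{-1/(2\gamma)}$ contributes $\nu^{-\beta}$ extra powers that the data size must absorb.

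Two minor points: your concern about $\mathbb{P}$ being nonlocal in $y$ is misplaced, since $\mathbb{P}$ commutes with $\mathcal{P}_n$ and is bounded on $L^2$ mode by mode, so there is nothing to check there. And in the time-dependent case the growth rate is $\theta_{\gamma,n}=|n|^\gamma(1+(1-\gamma)\log(1+|n|))$, not $|n|^\gamma$; the logarithmic correction is generated by iterating the frozen-coefficient bound over $\sim |n|^\gamma T$ subintervals, each contributing a factor $|n|^{\frac52(1-\gamma)}$.
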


\begin{rem}{\rm (i) Since $a\in L^2_\sigma (\Omega)$ and the problem is a two-dimensional one, the unique existence of global solutions to \eqref{eq.perturb.intro} in $C([0,\infty); L^2_\sigma (\Omega))\cap L^2_{loc} (0,\infty; W^{1,2}_0 (\Omega)^2)$ is classical for any $\nu>0$. The nontrivial part of Theorem \ref{sec2.thm.nonlinear} is the estimate \eqref{sec2.est.thm.nonlinear.1}, which is uniform with respect to sufficiently small $\nu>0$.

\noindent 
(ii) Gevrey stability as in Theorem \ref{sec2.thm.nonlinear} can be obtained under the slightly weaker condition $-M_\sigma \pa_Y^2 U\geq (\pa_Y U)^4$, rather than $-M_\sigma \pa_Y^2 U \geq (\pa_Y U)^2$ in \eqref{concave.weak}. However, under this weaker condition the range of the exponent $\gamma$ is confined to $[\frac45,1]$ in the case of time-dependent shear flows, which is narrower than $[\frac79,1]$, and one also needs to take larger $\beta$ for the size of initial perturbations. Hence, in this paper we restrict ourselves to the case $-M_\sigma \pa_Y^2 U \geq (\pa_Y U)^2$, which is still admissible in applications.

\noindent 
(iii) When the shear flow is steady as mentioned in the latter part of Theorem \ref{sec2.thm.nonlinear}, one can in fact obtain slightly stronger result than stated in Theorem \ref{sec2.thm.nonlinear}. In particular, when the shear flow is steady and \eqref{concave.strong} holds, the exponent $\theta_{\gamma,n}$ in the definition of $X_{d,\gamma,K}$ (see \eqref{sec2.def.theta}) is simply taken as $|n|^\gamma$. Thus, in this case, by arguing as in the proof of Theorem \ref{thm.nonlinear}, we have the stability estimate like \eqref{sec2.est.thm.nonlinear.1} for the initial data $a$ satisfying 
$$\sup_{n\in \mathbb{Z}}\, (1+|n|^d) e^{K|n|^\frac23} \| \mathcal{P}_n a\|_{L^2}\leq \nu^{\frac12 + \beta}$$
for large enough $d,\beta>0$. Indeed, the logarithmic correction of $\theta_{\gamma,n}$ in \eqref{sec2.def.theta} is needed for the estimates of the evolution operator, stated in Theorem \ref{thm.evolution.middle}, rather than the estimates of the semigroup; see Theorem \ref{thm.semigroup}.
}
\end{rem}

Our result can be seen as improving the celebrated result of Sammartino and Caflisch, dedicated to the stability of Prandtl expansions in analytic regularity, see \cite{SaCa1,SaCa2}. Note nevertheless that article \cite{SaCa1} treats general  $x$-dependent boundary layer expansions, while ours restricts to the case of shear flows. Extension of our result to arbitrary (meaning $x$-dependent) expansions is a very interesting open problem. 

\medskip
\noindent
To go from an analytic setting to a Gevrey setting requires new ideas, and of course the use of the concavity of the boundary layer  profile in $Y$. The heart of the proof is  the resolvent analysis in Section \ref{sec.middle.notime}  for the linearized Navier-Stokes operator at a  steady shear flow.  This analysis leads to a temporal growth estimate of the associated semigroup,  that is compatible with the stability in the Gevrey class.  Concretely, for each Fourier mode in the $x$ variable,  the resolvent problem is reduced to the analysis of the classical Orr-Sommerfeld equation. This equations is  solved and estimated through  an iterative process, based on the alternate resolution of the so-called  Rayleigh and Airy equations. This Rayleigh-Airy iteration finds its origin in  the work of \cite{GGN2014}, about the linear instability of monotonic shear flows in the Navier-Stokes equation.   However, we stress that the approach in \cite{GGN2014} is dedicated to a specific regime of spectral parameter and Fourier frequency, for which the  fundamental solutions of the Rayleigh and Airy equations have quite explicit expressions.  The specific regime of  \cite{GGN2014} is enough to construct an unstable eigenfunction, but is far from sufficient here. In order to estimate the evolution of the semigroup,  we need to estimate the resolvent when the spectral parameter is in  a whole sector of the complex plane, and for arbitrary frequencies. This yields a much wider regime than the one where the approach of \cite{GGN2014} can be applied.  Hence, our strategy to handle the Orr-Sommerfeld equation is very different, and based on energy methods. It notably relies on the famous Rayleigh's trick \cite[page 131]{DrRe}.  Remarkably enough, at least for the class of shear flows satisfying \eqref{concave.strong}, our approach provides a fairly  optimal result on the spectral bound of the generator as well as the growth bound of the associated semigroup. 

\medskip
\noindent
Once the estimate for the semigroup is obtained, the proof of Theorem \ref{thm.nonlinear} in the case of steady shear flows  follows from Duhamel's formula. In the time dependent case, one must substitute to the growth bound on the semigroup  a growth bound on the operator solution, over some fixed time  interval $[0,T]$. Roughly, such estimate is derived by partitioning $[0,T] $  into small subintervals $[t_l, t_{l+1}]$, and freezing the time in the linearized operator over these subintervals, that is replacing $\mathbb{A}_\nu (t)$ by $\mathbb{A}_\nu (t_l)$. This allows to connect to the analysis of the steady case. We do not manage to maintain the same  Gevrey exponent in this process, which explains the higher value of $\gamma$ in the theorem in the time dependent case. 

\medskip
\noindent 
The outline of the paper is as follows. Next section presents the mode-by-mode (in Fourier in $x$) reformulation of the problem. Section \ref{sec.low.high} collects  standard estimates on the operator solutions, at very low and very high frequencies. The main part of the paper is in Section  \ref{sec.middle.notime}: it is devoted to analysis of the linearized operator at steady shear flow and intermediate frequencies. It contains an analysis of the resolvent operator through the Orr-Sommerfeld formulation, and provides a growth bound on the associated semigroup. Section \ref{sec.middle.time} is devoted to the growth bound in the time dependent case, through the time stepping alluded to above. Finally, the proof of Theorem \ref{thm.nonlinear} is achieved in \ref{sec.nonlinear}.

\section{Formulation of linearized problem in Fourier series}\label{sec.formulation.rescale}

Since the shear flow in \eqref{def.A_t} is assumed to be $x$-independent, it is natural to study $\mathbb{A}_\nu (t)$ on each Fourier mode with respect to the $x$ variable. To this end, we use the projections $\mathcal{P}_n$, $n \in \mathbb{Z}$ given in \eqref{def.projection.P}. We define 
\begin{align}
\mathcal{Q}_{m, {\rm low}} f \, = \, \sum_{|n|\leq m-1} \mathcal{P}_n f \,, \qquad \qquad 
\mathcal{Q}_{m, {\rm high}} f \, = \, \sum_{|n|\geq m+1} \mathcal{P}_n f\,.
\end{align} 
These are projections from $L^2(\Omega)^2$ to $L^2(\Omega)^2$,
and the divergence free condition is preserved under their actions. 
More precisely, we have 
\begin{align*}
\mathcal{P}_n \mathbb{P} \, = \, \mathbb{P} \mathcal{P}_n \,, \qquad 
\mathcal{Q}_{m, {\rm low}} \mathbb{P} \, = \, \mathbb{P} \mathcal{Q}_{m, {\rm low}}\,, \qquad 
\mathcal{Q}_{m, {\rm high}} \mathbb{P} \, = \, \mathbb{P} \mathcal{Q}_{m, {\rm high}} \,.
\end{align*}
Hence, for each $m_1,m_2\in \N$ with $m_1\leq m_2$ the space $L^2_\sigma (\Omega)$ is decomposed as 
\begin{align}\label{decompose.L^2}
L^2_\sigma (\Omega) \, = \, \mathcal{Q}_{m_1, {\rm low}}  L^2_{\sigma} (\Omega) 
\, \oplus \, \mathcal{Q}_{m_2, {\rm high}}  L^2_{\sigma} (\Omega) 
\, \oplus \,   \big ( \oplus_{m_1\leq |n|\leq m_2} \mathcal{P}_n L^2_\sigma (\Omega) \big )\,.
\end{align}
Moreover, since the shear flow is independent of $x$ it is straightforward to see that for each $t\geq 0$,
\begin{align*}
\mathcal{P}_n \mathbb{A}_\nu (t) \subset \mathbb{A}_\nu (t) \mathcal{P}_n \,, \qquad 
\mathcal{Q}_{m,{\rm low}}  \mathbb{A}_\nu (t) \subset \mathbb{A}_\nu (t) \mathcal{Q}_{m,{\rm low}} \,, \qquad 
\mathcal{Q}_{m,{\rm high}} \mathbb{A}_\nu (t) \subset \mathbb{A}_\nu (t) \mathcal{Q}_{m,{\rm high}}\,,
\end{align*}
which leads to the diagonalization of $\mathbb{A}_\nu (t)$ such as 
\begin{align}\label{decompose.A}
\mathbb{A}_\nu (t) \, = \, \mathbb{A}_{\nu, m_1,{\rm low}} (t) \, \oplus \, \mathbb{A}_{\nu, m_2,{\rm high}} (t) \, \oplus \, \big ( \oplus_{m_1\leq |n|\leq m_2} \mathbb{A}_{\nu,n} (t) \big ) \,,
\end{align}
where each operator in the right-hand side of \eqref{decompose.A} is naturally defined as the restriction of $\mathbb{A}_\nu (t)$ on the invariant subspace described in the right-hand side of \eqref{decompose.L^2}.
Then, the evolution operator $\{\mathbb{T}_\nu (t,s)\}_{t\ge s\geq 0}$ generated by $-\mathbb{A}_\nu (t)$ is also diagonalized as 
\begin{align}\label{decompose.evolution}
\mathbb{T}_\nu (t,s) \, = \, \mathbb{T}_{\nu, m_1,{\rm low}} (t,s) 
\, \oplus \, \mathbb{T}_{\nu, m_2,{\rm high}} (t,s) 
\, \oplus \big ( \oplus_{m_1\leq |n|\leq m_2} \mathbb{T}_{\nu,n} (t,s)  \big )\,.
\end{align}

In Section \ref{sec.low.high} we first study the evolution operators 
$\mathbb{T}_{\nu, m_1,{\rm low}}$ and $\mathbb{T}_{\nu, m_2,{\rm high}}$
in the case $m_1=\mathcal{O}(1)$ and $m_2=\mathcal{O} (\nu^{-\frac34})$,
where their temporal growth are well controlled  by a simple energy method
for general shear flows.
On the contrary, in the middle range of frequencies $m_1 \le |n| \le m_2$, the problem becomes more complicated due to the underlying derivative loss property of the equations, and the behavior of the evolution operator  $\mathbb{T}_{\nu,n} (t,s)$ highly depends on the structure of the shear flows. 
In Section \ref{sec.middle.notime} we discuss this problem under suitable monotonicity conditions and the assumption that the shear flows are time-independent.  The case of the time-dependent shear flows are studied in Section \ref{sec.middle.time}.


\section{Linear evolution operator in low and high frequency}\label{sec.low.high}

In this section we study the evolution operators defined in the previous section for the low frequency part and the high frequency part. 
The low frequency part is $\mathbb{T}_{\nu,m_1,{\rm low}}$ with $m_1=\mathcal{O}(1)$.
In this case the derivative loss is negligible and the evolution of $\mathbb{T}_{\nu,m_1,{\rm low}}$
can be estimated without difficulty for general time-dependent shear flows. 
For convenience we introduce the rescaled variable $Y=\frac{y}{\sqrt{\nu}}$, which will be used for the boundary layer $U^P$.

\begin{prop}\label{prop.general.low}
If $U^E, U^P\in L^\infty (0,t; BC^1 (\R_+))$ and $\| Y \partial_Y U^P \|_{L^\infty (0,t; L_Y^\infty)} <\infty$, then 
\begin{align}
& \| \mathbb{T}_{\nu,m_1,{\rm low}} (t,s) f\|_{L^2(\Omega)}   \leq e^{(t-s) C_1 (m_1,t)} \| f \|_{L^2(\Omega)}\,,\label{est.prop.general.low.1}
\end{align}
and
\begin{align}
\begin{split}
\| \nabla \mathbb{T}_{\nu,m_1,{\rm low}}(t,s) f\|_{L^2(\Omega)}  \leq \frac{C}{\nu^\frac12 (t-s)^\frac12} \, \big ( 1 + (t-s) C_2 (m_1,t) e^{(t-s) C_1 (m_1, t)} \big )   \|  f \|_{L^2(\Omega)}\,.\label{est.prop.general.low.2}
\end{split}
\end{align}
for all $f\in \mathcal{Q}_{m_1, {\rm low}}  L^2_{\sigma} (\Omega)$. Here $C$ is a universal constant and 
\begin{align*}
C_1 (m_1,t) & \, = \, \| \partial_y U^E\|_{L^\infty(0,t; L^\infty_y)} + 2 (m_1-1) \| Y \partial_Y U^P \|_{L^\infty (0,t; L_Y^\infty)}\,,\\
C_2 (m_1,t) & \, = \,  C_1 (m_1,t) + \big ( \| U^E \|_{L^\infty(0,t; L^\infty_y)} + \| U^P\|_{L^\infty (0,t; L^\infty_{Y})} \big ) (m_1-1) \,.
\end{align*}

\end{prop}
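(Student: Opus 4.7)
The plan is to derive \eqref{est.prop.general.low.1} by a direct $L^2$ energy estimate on $\partial_t u + \mathbb{A}_\nu(t) u = 0$, where $u(t) = \mathbb{T}_{\nu, m_1, {\rm low}}(t,s) f$, and then to obtain \eqref{est.prop.general.low.2} via Duhamel's formula with the Stokes semigroup $\{e^{\nu \tau \mathbb{P}\Delta}\}_{\tau\geq 0}$ as the principal part, treating the first-order terms as perturbations. Throughout, the key property of the low-frequency regime is $\|\partial_x u\|_{L^2} \leq (m_1-1)\|u\|_{L^2}$.

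For \eqref{est.prop.general.low.1}, I would take the $L^2(\Omega)$ inner product of the equation with $u$. The diffusion contributes $\nu \|\nabla u\|_{L^2}^2$ (the boundary integral vanishes since $u|_{y=0}=0$). The transport term $\langle (U^E + U^{BL})\partial_x u, u\rangle$ vanishes because the coefficient is $x$-independent and $u$ is $2\pi$-periodic in $x$. Only the stretching term $\int u_2\, \partial_y (U^E + U^{BL})\, u_1 \, {\rm d}x{\rm d}y$ survives. The $\partial_y U^E$ piece is bounded trivially by $\|\partial_y U^E\|_{L^\infty}\|u\|_{L^2}^2$. The $\partial_y U^{BL}$ piece is formally of size $\nu^{-1/2}$, but is handled by writing $\partial_y U^{BL}(t,y/\sqrt{\nu}) = y^{-1}(Y\partial_Y U^P)(t,y/\sqrt{\nu})$, yielding
\[
\Bigl| \int u_2 \, \partial_y U^{BL} \, u_1 \, {\rm d}x{\rm d}y \Bigr| \leq \|Y\partial_Y U^P\|_{L^\infty} \, \bigl\|u_2/y\bigr\|_{L^2} \, \|u_1\|_{L^2}.
\]
Since $u_2|_{y=0}=0$ and $\partial_y u_2 = -\partial_x u_1$ by incompressibility, the Hardy inequality gives $\|u_2/y\|_{L^2} \leq 2\|\partial_y u_2\|_{L^2} = 2\|\partial_x u_1\|_{L^2} \leq 2(m_1-1)\|u\|_{L^2}$. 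Summing the two contributions recovers exactly $C_1(m_1,t)\|u\|_{L^2}^2$, and Gronwall's inequality yields \eqref{est.prop.general.low.1}.

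For \eqref{est.prop.general.low.2}, I would use the Duhamel representation
\[
u(t) = e^{\nu(t-s)\mathbb{P}\Delta} f - \int_s^t e^{\nu(t-\tau)\mathbb{P}\Delta} \mathbb{P}\bigl( (U^E + U^{BL}) \partial_x u + u_2 \partial_y (U^E+U^{BL}) {\bf e}_1 \bigr)(\tau) \, {\rm d}\tau,
\]
apply $\nabla$, and invoke the standard smoothing estimate $\|\nabla e^{\nu \tau \mathbb{P}\Delta} g\|_{L^2} \leq C(\nu \tau)^{-1/2}\|g\|_{L^2}$ for the Stokes semigroup on $\Omega$. The transport term is controlled in $L^2$ by $(\|U^E\|_{L^\infty}+\|U^P\|_{L^\infty})(m_1-1)\|u(\tau)\|_{L^2}$, and the stretching term by the same Hardy argument as above, so together the integrand is bounded by $C_2(m_1,t)\|u(\tau)\|_{L^2}$. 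Substituting the $L^2$ bound on $u(\tau)$ from \eqref{est.prop.general.low.1} and computing $\int_s^t (\nu(t-\tau))^{-1/2}\,{\rm d}\tau = 2(t-s)^{1/2}\nu^{-1/2}$ produces \eqref{est.prop.general.low.2}.

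The main technical obstacle is the apparent $\nu^{-1/2}$ singularity of $\partial_y U^{BL}$, which under a naive estimate would destroy uniformity in $\nu$. The combination of (a) the bounded moment hypothesis $\|Y\partial_Y U^P\|_{L^\infty}<\infty$ and (b) the Hardy inequality for $u_2$, which trades the extra power of $y$ for a vertical derivative of $u_2$ and then, via incompressibility, for a horizontal derivative of $u_1$ bounded by the low-frequency cutoff $m_1-1$, is exactly what cancels that singular factor. This mechanism simultaneously explains why the estimate is uniform in $\nu$ and why the constants $C_1, C_2$ degrade linearly in $m_1$.
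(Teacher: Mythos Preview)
Your proposal is correct and follows essentially the same approach as the paper: an $L^2$ energy estimate with the Hardy inequality and incompressibility to neutralize the $\nu^{-1/2}$ factor in $\partial_y U^{BL}$, followed by Duhamel with the Stokes semigroup and its $(\nu t)^{-1/2}$ smoothing estimate. The paper's proof is virtually identical in structure and in the key mechanism you highlight.
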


\begin{proof} It suffices to consider the case $s=0$. 
Set $v_{{\rm low}} (t) = \mathbb{T}_{\nu,m_1,{\rm low}} (t,0) f$ for $f \in \mathcal{Q}_{m_1, {\rm low}}  L^2_{\sigma} (\Omega)$. Then,  by the boundary condition and the divergence free condition,
the vertical component of $v_{{\rm low}}$ satisfies 
\begin{align}
\| y^{-1}v_{{\rm low},2} (t,x) \|_{L^2_y} \leq 2 \| \partial_y v_{{\rm low,2}} (t,x) \|_{L^2_y}  = 2 \| \partial_x v_{{\rm low},1} (t,x) \|_{L^2_y}\,.\label{proof.prop.general.low.1}
\end{align}
Here we have used the Hardy inequality in the first inequality. Thus we have
\begin{align*}
|\langle v_{{\rm low},2} \partial_y \big ( U^E - U^E(0) + U^P \big ) , \, v_{{\rm low},1} \rangle_{L^2}|  
& \leq \big ( \| \partial_y U^E\|_{L^\infty} + 2 (m_1-1)  \| Y \partial_Y U^P \|_{L_Y^\infty} \big )  \| v_{\rm low} \|_{L^2}^2 \,. 
\end{align*}
Here $\langle , \rangle_{L^2}$ denotes the inner product of $L^2 (\Omega)$ or $L^2(\Omega)^2$.
Hence we have 
\begin{align}
\frac{\dd }{\dd t} \| v_{{\rm low}} \|_{L^2}^2 & \, = \, - 2 \nu \| \nabla v_{{\rm low}} \|_{L^2}^2 - 2 \Re \langle v_{{\rm low},2} \partial_y \big ( U^E - U^E(0) + U^P \big ) , \, v_{{\rm low},1} \rangle_{L^2} \nonumber \\
& \, \leq \, - 2 \nu \| \nabla v_{{\rm low}} \|_{L^2}^2 +  2 \big ( \| \partial_y U^E\|_{L^\infty} + 2 (m_1-1)  \| Y \partial_Y U^P \|_{L_Y^\infty} \big )  \| v_{{\rm low}}\|_{L^2}^2\,,\label{proof.prop.general.low.2}
\end{align}
which gives \eqref{est.prop.general.low.1} by the Gronwall inequality. 
To show the derivative estimate let us denote by $\mathbb{A}$ the Stokes operator with the viscosity coefficient $1$, i.e., $\mathbb{A}=-\mathbb{P}\Delta$,  in $L^2_\sigma (\Omega)$. 
Then by the Duhamel formula we have 
\begin{align}\label{proof.prop.general.low.3}
\begin{split}
v_{{\rm low}} (t) & = e^{-\nu t \mathbb{A}} f \\
&- \int_0^t e^{-\nu (t-s) \mathbb{A}} \mathbb{P} \bigg ( \big ( U^E + U^{BL} (s,\frac{\cdot}{\sqrt{\nu}}) \big ) \partial_x v_{\rm low} (s) + v_{{\rm low},2} \partial_y \big ( U^E + U^{BL} (s, \frac{\cdot}{\sqrt{\nu}}) \big ) {\bf e}_1 \bigg ) \dd s
\end{split}
\end{align}
 It is well known that 
\begin{align*}
\| \nabla e^{-\nu t\mathbb{A}} f\|_{L^2 (\Omega)}  \leq \frac{C}{\nu^\frac12 t^\frac12} \| f\|_{L^2 (\Omega)}\,, \qquad t>0\,.
\end{align*}
Then we have from \eqref{proof.prop.general.low.1}, 
\begin{align*}
\| \nabla v_{{\rm low}} (t) \|_{L^2(\Omega)} & \leq \frac{C}{\nu^\frac12 t^\frac12} \| f\|_{L^2 (\Omega)}  \\
& + C \int_0^t \frac{(\|U^E(s)\|_{L^\infty} + \| U^{BL} (s) \|_{L^\infty}) (m_1-1) + C_1 (m_1,s)}{\nu^\frac12 (t-s)^\frac12}  \| v_{{\rm low}} (s)\|_{L^2(\Omega)}\dd s\,.
\end{align*}
Then \eqref{est.prop.general.low.2} follows from \eqref{est.prop.general.low.1}.
The proof is complete.
\end{proof}

Next we consider the high frequency part $\mathbb{T}_{\nu, m_2, {\rm high}} = \oplus_{|n|\geq m_2+1} \mathbb{T}_{\nu,n}$, with $m_2 = \mathcal{O}(\nu^{-\frac34})$. In this case the dissipation due to the viscosity works enough and the derivative loss does not appear. 

\begin{prop}\label{prop.general.high} Let $U^E, U^P\in L^\infty (0,t; BC^1 (\R_+))$ and $\| Y \partial_Y U^P \|_{L^\infty (0,t; L_Y^\infty)} <\infty$. Then there exists $\delta_0 \in (0,1)$ such that if $|n|\geq \delta_0^{-1}\nu^{-\frac34}$ then 
\begin{align}
\| \mathbb{T}_{\nu, n} (t,s) f\|_{L^2(\Omega)}  & \leq e^{-\frac14 \nu n^2 (t-s)} \| f \|_{L^2(\Omega)}\,,\label{est.prop.general.high.1}
\end{align}
and
\begin{align}
\| \nabla \mathbb{T}_{\nu, n} (t,s)  f\|_{L^2(\Omega)} & \leq \frac{C e^{-\frac14 \nu n^2 (t-s)}}{\nu^\frac12(t-s)^\frac12} \big ( 1 + |n| (t-s) C_2 (t) \big ) \| f\|_{L^2(\Omega)}\,,\label{est.prop.general.high.2}
\end{align}
for all $f\in \mathcal{P}_{n}  L^2_{\sigma} (\Omega)$ and $t>s\geq 0$.
Here $C$ is a universal constant and $C_2(t)$ is given by 
\begin{align*}
C_2 (t) \, = \, \| U^E \|_{L^\infty(0,t; BC^1_y)} + \| U^P \|_{L^\infty (0,t; L^\infty_Y)} + \| Y \partial_Y U^P \|_{L^\infty (0,t; L^\infty_Y)}\,.
\end{align*}
\end{prop}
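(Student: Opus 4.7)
The plan is to follow the same pattern as Proposition \ref{prop.general.low}: an energy identity plus Gronwall for the $L^2$ bound \eqref{est.prop.general.high.1}, then a Duhamel representation against the Stokes semigroup for the gradient bound \eqref{est.prop.general.high.2}. The new feature compared to the low-frequency case is that for $|n|\ge \delta_0^{-1}\nu^{-3/4}$ the dissipation $\nu n^2\ge \delta_0^{-2}\nu^{-1/2}$ is already large enough to absorb a \emph{trivial} pointwise bound on $\partial_y U^P(\cdot/\sqrt{\nu})$, so that no Hardy-type weighting is required in the decay step. Concretely, taking $s=0$ without loss of generality and setting $v_n(t)=\mathbb{T}_{\nu,n}(t,0)f$ for $f\in \mathcal{P}_n L^2_\sigma(\Omega)$, the energy identity reads
$$\frac{d}{dt}\|v_n\|_{L^2}^2 \, = \, -2\nu\|\nabla v_n\|_{L^2}^2 \,-\,2\Re\big\langle v_{n,2}\,\partial_y\big(U^E+U^P(\cdot/\sqrt{\nu})\big),\,v_{n,1}\big\rangle_{L^2},$$
with the pressure dropping out on divergence-free fields vanishing at $y=0$ and the transport term $\langle (U^E+U^P)\partial_x v_n,v_n\rangle$ being purely imaginary (the profile is $x$-independent). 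Using the pointwise bound $|\partial_y U^P(y/\sqrt{\nu})|\le \nu^{-1/2}\|\partial_Y U^P\|_{L^\infty_Y}$, the shear term is controlled by $(\|\partial_y U^E\|_{L^\infty}+\nu^{-1/2}\|\partial_Y U^P\|_{L^\infty})\|v_n\|^2$, and combining this with $\|\nabla v_n\|^2\ge n^2\|v_n\|^2$ and the hypothesis $|n|\ge \delta_0^{-1}\nu^{-3/4}$ (for $\delta_0$ small in terms of $\|U^E\|_{BC^1}$ and $\|U^P\|_{BC^1}$) makes the right-hand side at most $-\tfrac12\nu n^2\|v_n\|^2$, so Gronwall gives \eqref{est.prop.general.high.1}.

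For the gradient estimate I would write the Duhamel formula
$$v_n(t) \, = \, e^{-\nu t\mathbb{A}}f \,-\, \int_0^t e^{-\nu(t-\tau)\mathbb{A}}\,\mathbb{P}\big(W\partial_x v_n(\tau)+v_{n,2}(\tau)\,\partial_y W\,{\bf e}_1\big)\,d\tau$$
with $W=U^E+U^P(\cdot/\sqrt{\nu})$, and combine the standard smoothing $\|\nabla e^{-\nu t\mathbb{A}}g\|_{L^2}\le C(\nu t)^{-1/2}\|g\|_{L^2}$ with the Fourier-mode decay $\|e^{-\nu t\mathbb{A}}\mathcal{P}_n g\|_{L^2}\le e^{-\nu n^2 t}\|g\|_{L^2}$ (the spectrum of the Stokes operator on $\mathcal{P}_n L^2_\sigma$ is bounded below by $\nu n^2$). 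The source is bounded by $C|n|C_2(t)\|v_n(\tau)\|_{L^2}$ via the transport estimate $\|W\partial_x v_n\|_{L^2}\le |n|(\|U^E\|_\infty+\|U^P\|_\infty)\|v_n\|$ together with the lift estimate $\|v_{n,2}\partial_y U^P\|_{L^2}\le 2|n|\|Y\partial_Y U^P\|_\infty\|v_{n,1}\|$, the latter coming from Hardy combined with divergence-freeness $\partial_y v_{n,2}=-in v_{n,1}$ exactly as in \eqref{proof.prop.general.low.1}. Plugging in the decay \eqref{est.prop.general.high.1} and using the exponent absorption $e^{-\nu n^2(t-\tau)}e^{-\nu n^2\tau/4}\le e^{-\nu n^2 t/4}$ pulls the decay factor outside the integral, and $\int_0^t (t-\tau)^{-1/2}d\tau=2t^{1/2}$ then produces \eqref{est.prop.general.high.2}.

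There is no real technical obstacle once Proposition \ref{prop.general.low} is in hand; the one non-automatic insight is to \emph{avoid} Hardy in the decay step. Hardy would give $|\langle v_{n,2}\partial_y U^P,v_{n,1}\rangle|\le C|n|\|Y\partial_Y U^P\|_\infty\|v_n\|^2$ and force the much stricter threshold $|n|\gtrsim\nu^{-1}$; the trivial pointwise bound instead costs only a factor $\nu^{-1/2}$ with no power of $|n|$, which is precisely what yields the sharp threshold $\nu^{-3/4}$ stated in the proposition. Hardy reenters only in the gradient step, where the factor $|n|$ it introduces is harmless because it is absorbed into the explicit $|n|(t-s)$ growth in \eqref{est.prop.general.high.2}.
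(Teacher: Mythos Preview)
Your proposal is correct and matches the paper's proof essentially line for line: the same energy identity with the pointwise bound $|\partial_y U^P(\cdot/\sqrt{\nu})|\le\nu^{-1/2}\|\partial_Y U^P\|_\infty$ (rather than Hardy) to get \eqref{est.prop.general.high.1}, and the same Duhamel argument against the Stokes semigroup with the mode-wise smoothing $\|\nabla e^{-\nu t\mathbb{A}}\mathcal{P}_n g\|\le C(\nu t)^{-1/2}e^{-\frac12\nu n^2 t}\|g\|$ plus the Hardy estimate \eqref{proof.prop.general.low.1} for the $\partial_y U^P$ source term to get \eqref{est.prop.general.high.2}. Your commentary on why Hardy must be avoided in the decay step but is harmless in the gradient step is exactly the point.
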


\noindent 
\begin{proof} Again it suffices to consider the case $s=0$, and set $v_{{\rm high}} (t) =\mathbb{T}_{\nu, n} (t,0) f$ for $f \in \mathcal{P}_{n}  L^2_{\sigma} (\Omega)$, 
where $|n|\geq \delta_0^{-1}\nu^{-\frac34}$ and $\delta_0$ will be determined later.
The standard energy method yields
\begin{align}
 \frac{\dd }{\dd \tau} \| v_{{\rm high}} \|_{L^2}^2 & \, = \, - 2 \nu \| \nabla v_{{\rm high}} \|_{L^2}^2 - 2 \Re \langle v_{{\rm high},2} \partial_y\big (  U^E-U^E (0) + U^P\big ), \, v_{{\rm high},1} \rangle_{L^2} \nonumber \\
& \, \leq \, - \nu \| \nabla v_{{\rm high}} \|_{L^2}^2 - \nu n^2 \| v_{{\rm high}} \|_{L^2}^2 \nonumber \\
& \quad + 2 \big ( \| \partial_y U^E \|_{L^\infty}  + \nu^{-\frac12} \| \partial_Y U^P \|_{L^\infty_{t,Y}} \big ) \| v_{{\rm high}}\|_{L^2}^2\,.\label{proof.prop.general.high.1}
\end{align}
Thus, if we set 
\begin{align}
\delta_0 \, = \, \frac{1}{2 (1+\| \partial_y U^E \|_{L^\infty}  + \| \partial_Y U^P \|_{L^\infty_{t,Y}})^\frac12}\,, \label{proof.prop.general.high.2}
\end{align}
then \eqref{proof.prop.general.high.1} and the condition $|n|\geq \delta_0^{-1} \nu^{-\frac34}$ lead to
\begin{align}
\frac{\dd }{\dd \tau} \| v_{{\rm high}} \|_{L^2}^2 
& \, \leq \, - \nu \| \nabla v_{{\rm high}} \|_{L^2}^2 - \frac{\nu n^2}{2} \| v_{{\rm high}} \|_{L^2}^2\,.\label{proof.prop.general.high.3}
\end{align}
By the Gronwall inequality Est.\eqref{est.prop.general.high.1} follows.
The derivative estimate is proved as in the proof of Proposition \ref{prop.general.low}.
Indeed, as in \eqref{proof.prop.general.low.3}, we use the formula for $f\in \mathcal{P}_n L^2_\sigma (\Omega)$,
\begin{align}\label{proof.prop.general.high.4}
\begin{split}
& v_{{\rm high}} (t)  = e^{-\nu t \mathbb{A}} \mathcal{P}_n f \\
&- \int_0^t e^{-\nu (t-s) \mathbb{A}} \mathcal{P}_n \mathbb{P} \bigg ( i n \big ( U^E + U^{BL} (s,\frac{\cdot}{\sqrt{\nu}}) \big )  v_{\rm high} (s) +  v_{{\rm high},2} \partial_y \big ( U^E + U^{BL} (s, \frac{\cdot}{\sqrt{\nu}}) \big ) {\bf e}_1 \bigg ) \dd s
\end{split}
\end{align}
It is not difficult to show 
\begin{align*}
\| \nabla e^{-\nu t \mathbb{A}} \mathcal{P}_n f\|_{L^2(\Omega)} \leq \frac{C}{\nu^\frac12 t^\frac12} e^{-\frac12 \nu  n^2 t} \|  f\|_{L^2 (\Omega)}\,,
\end{align*}
which implies, in virtue of the computation as in \eqref{proof.prop.general.low.1}, 
\begin{align}\label{proof.prop.general.high.5}
\begin{split}
\| \nabla  v_{{\rm high}} (t) \|_{L^2 (\Omega)} & \leq  \frac{C}{\nu^\frac12 t^\frac12} e^{-\frac12 \nu n^2 t} \|  f\|_{L^2 (\Omega)} \\
& \quad + CC_2 (t)  |n|  \int_0^t \frac{1}{\nu^\frac12 (t-s)^\frac12} e^{-\frac12 \nu  n^2 (t-s)} \| v_{{\rm high}} (s) \|_{L^2 (\Omega)} \dd s\,.
\end{split}
\end{align}
Since $\| v_{{\rm high}} (s) \|_{L^2 (\Omega)} \leq e^{-\frac14 \nu n^2 s}\|  f \|_{L^2 (\Omega)}$ holds, we have from \eqref{proof.prop.general.high.5},
\begin{align}\label{proof.prop.general.high.6}
\| \nabla  v_{{\rm high}} (t) \|_{L^2 (\Omega)} & \leq  \frac{C}{\nu^\frac12 t^\frac12} e^{-\frac12 \nu n^2 t} \|  f\|_{L^2 (\Omega)}  + \frac{CC_2 (t)  |n|}{\nu^\frac12} e^{-\frac12 \nu n^2 t}   \int_0^t \frac{e^{\frac14 \nu n^2 s}}{(t-s)^\frac12} \dd s \|  f \|_{L^2 (\Omega)} \nonumber \\
& \leq \frac{C}{\nu^\frac12 t^\frac12} e^{-\frac12 \nu n^2 t} \| f\|_{L^2 (\Omega)}  + \frac{C|n|t^\frac12 C_2 (t)}{\nu^\frac12} e^{-\frac14 \nu n^2 t} \| f \|_{L^2 (\Omega)}\,,
\end{align}
which gives \eqref{est.prop.general.high.2}. The proof is complete.
\end{proof}

\section{Linear evolution operator in middle frequency: the case of time-independent shear flow}\label{sec.middle.notime}

In the middle range of frequency $\mathcal{O}(1) \leq |n| \leq \mathcal{O} (\nu^{-\frac34})$ 
the simple energy method as in the previous section does not provide useful estimates anymore,
and we need more sophisticated analysis by taking into account the structure of the shear flow.
Since the time-dependence of the shear flow could make the problem complicated,
we first focus on the case when the shear flow is independent of the time variable.
That is, instead of $\mathbb{A}_\nu (t)$ in \eqref{def.A_t}, we consider the simplified operator 
\begin{align}\label{def.A}
\mathbb{A}_\nu  u \, = \, -\nu \mathbb{P} \Delta u + \mathbb{P} \bigg ( V (\frac{y}{\sqrt{\nu}}) \partial_x u + u_2 \partial_y (V (\frac{y}{\sqrt{\nu}}) ) {\bf e}_1 \bigg )\,,
\end{align}
with a given shear flow profile $V=V(Y)$, which is assumed to have the following form by taking into account \eqref{sec2.condition}, \eqref{sec2.U^{BL}}, and \eqref{def.A_t}:
\begin{align}
V (Y) \, = \, U^E (\sqrt{\nu} Y) - U^E (0) + U (Y)\,, \qquad U(0) \, =\, 0\,, \quad  \lim_{Y\rightarrow \infty} U(Y) \, =\, U^E(0)\,. \label{shear.appl}
\end{align}
Here $U$ is a given function satisfying the boundary condition as in \eqref{shear.appl},
and further conditions will be mentioned later.

As in \eqref{decompose.A}, the operator $\mathbb{A}_\nu$ is diagonalized as the sum of the restrictions on each Fourier mode $n$ with respect to the $x$ variable,  denoted by $\mathbb{A}_{\nu,n}$.
Our aim is to estimate the associated semigroup $e^{-t \mathbb{A}_{\nu,n}}$ with $\delta_0^{-1}\leq |n|\leq \delta_0^{-1} \nu^{-\frac34}$, since the cases $|n|\leq \delta_0^{-1}$ and $|n|\geq \delta_0^{-1} \nu^{-\frac34}$ are already analyzed in Propositions \ref{prop.general.low} and \ref{prop.general.high}.
For this purpose it is convenient to introduce the rescaled velocity 
\begin{align}\label{def.scaling}
u(t,x,y) \, = \, v (\tau, X, Y) \,, \qquad (\tau,X,Y) \, =\, (\frac{t}{\sqrt{\nu}}, \frac{x}{\sqrt{\nu}}, \frac{y}{\sqrt{\nu}})\,.
\end{align}
If $u(t) = e^{-t\mathbb{A}_\nu} a$ then $v$ is the solution to 
\begin{equation}\label{eq.linear.scale}
\left\{
\begin{aligned}
\partial_\tau v  + V \partial_X v + v_2 \partial_Y V {\bf e}_1- \sqrt{\nu} \Delta_{X,Y} v + \nabla_{X,Y} q & \, = \, 0\,, \qquad \tau>0\,, X\in \R\,, Y>0\,,\\
{\rm div}_{X,Y}\, v & \, = \, 0\,,  \qquad \tau\geq 0\,, X\in \R\,, Y>0\,,\\
v|_{Y=0} \, = \, 0 \,, \quad v|_{\tau=0} & \, = \, a^{(\nu)} \,.
\end{aligned}\right.
\end{equation}
Note that the function $v$ is $\frac{2\pi}{\sqrt{\nu}}$-periodic in $X$, and $a^{(\nu)}(X,Y) = a (\nu^\frac12 X, \nu^\frac12 Y)$.
For simplicity of notations we will often omit the symbols $X,Y$ in the differential operators
and write, for example, $\Delta$ instead of $\Delta_{X,Y}$.
To obtain the estimate of solutions to \eqref{eq.linear.scale} in large time 
we study the associated resolvent problem for the operator 
\begin{align}
\begin{split}
\mathbb{L}_\nu v & \, = \, - \sqrt{\nu} \mathbb{P}_\nu \Delta v + \mathbb{P}_\nu \big ( V \partial_X v + v_2 \partial_Y V {\bf e}_1 \big )\,, \qquad v\in D(\mathbb{L}_\nu)\,,\\
D(\mathbb{L}_\nu) & \, = \,  W^{2,2} (\Omega_\nu)^2 \cap W^{1,2}_0 (\Omega_\nu)^2 \cap L^2_\sigma (\Omega_\nu)\,.
\end{split}
\end{align}
Here $\Omega_\nu =(\nu^{-\frac12}\mathbb{T} ) \times \R_+$, $\Delta=\partial_X^2+\partial_Y^2$, 
and $\mathbb{P}_\nu: L^2 (\Omega_\nu)^2 \rightarrow L^2_\sigma (\Omega_\nu)$ is the Helmholtz-Leray projection (we will write in the abbreviated style $\mathbb{P}$ if there is no risk of confusion).
For each fixed $\nu>0$ the Stokes operator $-\sqrt{\nu}\mathbb{P}\Delta$ defines a nonnegative self-adjoint operator in $L^2_\sigma(\Omega_\nu)$ and generates a bounded $C_0$-analytic semigroup acting on $L^2_\sigma (\Omega_\nu)$. Since the function space considered here is periodic in $X$ and $V$ is assumed to be bounded, the general perturbation theory of sectorial operators implies that $-\mathbb{L}_\nu$ also generates 
a $C_0$-analytic semigroup acting on $L^2_\sigma (\Omega_\nu)$. 
When $\displaystyle \lim_{Y\rightarrow \infty} V =0$ the perturbation term $\mathbb{P} \big ( V \partial_X v + v_2 \partial_Y V {\bf e}_1 \big )$ of $\mathbb{L}_\nu$ becomes a relatively compact perturbation of the Stokes operator $-\sqrt{\nu} \mathbb{P}\Delta$, and 
then it is not difficult to show that the difference between the spectrum of $\mathbb{L}_\nu$ and $-\sqrt{\nu}\mathbb{P}\Delta$ consists only of discrete eigenvalues of $\mathbb{L}_\nu$ with finite algebraic multiplicities.
In particular, since the spectrum of $\sqrt{\nu} \mathbb{P}\Delta$ is always included in the nonpositive real axis, we have 
\begin{align*}
\sigma (-\mathbb{L}_\nu) \subset \overline{\R_-} \cup \sigma_{disc} (-\mathbb{L}_\nu)\,,
\end{align*}
where $\sigma (-\mathbb{L}_\nu)$ denotes the spectrum of $-\mathbb{L}_\nu$, $\overline{\R_-} =\{\mu \leq 0\}$, and $\sigma_{disc}(-\mathbb{L}_\nu)$ denotes the set of discrete eigenvlaues of $-\mathbb{L}_\nu$ with finite algebraic multiplicities. 
As an important consequence, the study of the nontrivial spectrum of $-\mathbb{L}_\nu$ is reduced to the search of the eigenvalues in this case.
Our argument in this paper, however, works without any spatial decay of $V$ itself.
Indeed, the a priori estimates of solutions to the resolvent problem in this section do not depend on the spatial decay of $V$.

By the spectral mapping theorem for sectorial operators, the growth bound for the semigroup $e^{-\tau \mathbb{L}_\nu}$ coincides with the spectral bound of $-\mathbb{L}_\nu$, however, the information on the distribution of the spectrum is not enough for our purpose: in order to solve the nonlinear problem we need to  establish the estimates of the operator norm of  $e^{-\tau \mathbb{L}_\nu}$ in such a way as the dependence on $\nu$ is explicit.

To fix the idea, as a natural counter part of \eqref{def.projection.P}, we introduce the projections
\begin{align}\label{def.projection.P.nu}
\begin{split}
\big ( \mathcal{P}_n^{(\nu)}f \big ) (Y)  & \, = \, f_n (Y) e^{in \sqrt{\nu} X} \,, \qquad f_n  (Y)  \, = \, \frac{\sqrt{\nu}}{2\pi} \int_0^{\frac{2\pi}{\sqrt{\nu}}} f (X, Y) e^{-i n \sqrt{\nu} X} \dd X \,, \qquad n \in \mathbb{Z}\,,
\end{split}
\end{align} 
and for $m\in \N$, 
\begin{align*}
\mathcal{Q}_{m, {\rm low}}^{(\nu)} f \, = \, \sum_{|n|\leq m-1} \mathcal{P}_n^{(\nu)} f \,, \qquad \qquad 
\mathcal{Q}_{m, {\rm high}}^{(\nu)} f \, = \, \sum_{|n|\geq m+1} \mathcal{P}_n^{(\nu)} f\,.
\end{align*} 
By the same observation as in the previous section the operator $\mathbb{L}_\nu$ is diagonalized as follows.
For each $m_1,m_2\in \N$ with $m_1\leq m_2$, we have
\begin{align}\label{decompose.L^2.nu}
L^2_\sigma (\Omega_\nu) \, = \, \mathcal{Q}_{m_1, {\rm low}}  L^2_{\sigma} (\Omega_\nu) 
\, \oplus \, \mathcal{Q}_{m_2, {\rm high}}  L^2_{\sigma} (\Omega_\nu) 
\, \oplus \,   \big ( \oplus_{m_1\leq |n|\leq m_2} \mathcal{P}_n L^2_\sigma (\Omega_\nu) \big )\,.
\end{align}
and
\begin{align}\label{decompose.L}
\begin{split}
\mathbb{L}_\nu & \, = \, \mathbb{L}_{\nu, m_1,{\rm low}} \, \oplus \, \mathbb{L}_{\nu, m_2,{\rm high}} \, \oplus \, \big ( \oplus_{m_1\leq |n|\leq m_2} \mathbb{L}_{\nu,n} \big ) \,,\\
e^{- \tau \mathbb{L}_\nu} & \, = \, e^{- \tau \mathbb{L}_{\nu, m_1,{\rm low}} } 
\, \oplus \, e^{- \tau \mathbb{L}_{\nu, m_2,{\rm high}}} 
\, \oplus \big ( \oplus_{m_1\leq |n|\leq m_2} e^{- \tau \mathbb{L}_{\nu,n} } \big )\,.
\end{split}
\end{align}

To obtain the estimate of the original semigroup $e^{-t\mathbb{A}_{\nu,n}}$ for $\delta^{-1}\leq |n|\leq \delta_0^{-1} \nu^{-\frac34}$ it suffices to study the semigroup $e^{-t \mathbb{L}_{\nu,n}}$ for the same regime of $n$. Since the semigroup generated by a sectorial operator can  be expressed in terms of the resolvent using the Dunford integral we study the resolvent problem for the operator $\mathbb{L}_{\nu,n}$ in the next section. 
Due to the rescaling \eqref{def.scaling} the difficulty appears in the analysis in large time, or equivalently, the analysis for the resolvent parameter near the imaginary axis. On the other hand, there is no difficulty in obtaining short-time estimates of $e^{-\tau \mathbb{L}_{\nu,n}}$. Indeed, by taking into account the rescaling, Proposition \ref{prop.general.low} implies the following estimate.

\begin{prop}\label{prop.general.sg.short} It follows that
\begin{align}
\| e^{-\tau \mathbb{L}_{\nu,n}} f \|_{L^2(\Omega_\nu)} & \leq e^{C_1 (n)\nu^\frac12 \tau } \| f\|_{L^2(\Omega_\nu)}\,, \qquad \tau>0\,, \qquad f\in \mathcal{P}_n L^2_\sigma (\Omega_\nu)\,,
\end{align}
where $C_1 (n) = \| \partial_y U^E \|_{L^\infty(\R_+)} + 2 |n| \| Y \partial_Y U \|_{L^\infty(\R_+)}$.
\end{prop}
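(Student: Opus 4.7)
The plan is to deduce this proposition directly from Proposition \ref{prop.general.low} by unwinding the rescaling \eqref{def.scaling}. The key observation is that the steady shear flow operator $\mathbb{A}_\nu$ defined in \eqref{def.A} corresponds, up to the time/space dilation $(\tau,X,Y) = (t,x,y)/\sqrt{\nu}$, exactly to $\mathbb{L}_\nu$. Since the Fourier mode $e^{inx}$ in the $x$ variable transforms into $e^{in\sqrt{\nu}X}$ in the $X$ variable, which is precisely the mode singled out by $\mathcal{P}_n^{(\nu)}$ (see \eqref{def.projection.P.nu}), the mode labels for $\mathbb{A}_{\nu,n}$ and $\mathbb{L}_{\nu,n}$ are in natural one-to-one correspondence. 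Thus the strategy is:  translate the estimate for $e^{-t\mathbb{A}_{\nu,n}}$ on $L^2(\Omega)$ given by Proposition \ref{prop.general.low} into an estimate for $e^{-\tau\mathbb{L}_{\nu,n}}$ on $L^2(\Omega_\nu)$ via the rescaling.

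First I would record the scaling identity. If $a^{(\nu)}(X,Y) = a(\sqrt{\nu}X,\sqrt{\nu}Y)$ and $v(\tau) = e^{-\tau\mathbb{L}_{\nu,n}}a^{(\nu)}$, then a direct substitution in \eqref{eq.linear.scale} shows that $u(t,x,y) := v(t/\sqrt{\nu},x/\sqrt{\nu},y/\sqrt{\nu})$ solves the unscaled linearized problem with generator $\mathbb{A}_\nu$ and initial datum $a$, so that $u(t) = e^{-t\mathbb{A}_{\nu,n}}a$ with $t = \sqrt{\nu}\tau$. A change of variables in the $L^2$ norm gives $\|v(\tau)\|_{L^2(\Omega_\nu)} = \nu^{-1/2}\|u(t)\|_{L^2(\Omega)}$ and $\|a^{(\nu)}\|_{L^2(\Omega_\nu)} = \nu^{-1/2}\|a\|_{L^2(\Omega)}$; the prefactors $\nu^{-1/2}$ cancel, leaving
\begin{equation*}
\|e^{-\tau\mathbb{L}_{\nu,n}}a^{(\nu)}\|_{L^2(\Omega_\nu)} \;=\; \nu^{-1/2}\,\|e^{-\sqrt{\nu}\tau\,\mathbb{A}_{\nu,n}}a\|_{L^2(\Omega)}.
\end{equation*}

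Next I would apply Proposition \ref{prop.general.low} with $s = 0$, the steady profile $U^P = U$, and the low-frequency cutoff $m_1 = |n|+1$, so that $\mathcal{P}_n L^2_\sigma(\Omega)\subset \mathcal{Q}_{m_1,{\rm low}}L^2_\sigma(\Omega)$ is an invariant subspace and $\mathbb{T}_{\nu,m_1,{\rm low}}(t,0)$ restricted to it coincides with $e^{-t\mathbb{A}_{\nu,n}}$. The constant becomes $C_1(m_1,t) = \|\partial_y U^E\|_{L^\infty} + 2|n|\|Y\partial_Y U\|_{L^\infty}$, which is exactly $C_1(n)$ in the statement and is independent of $t$. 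Estimate \eqref{est.prop.general.low.1} then reads $\|e^{-t\mathbb{A}_{\nu,n}}a\|_{L^2(\Omega)}\le e^{tC_1(n)}\|a\|_{L^2(\Omega)}$ for any $a\in \mathcal{P}_n L^2_\sigma(\Omega)$.

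Combining the two displays and substituting $t = \sqrt{\nu}\tau$ yields
\begin{equation*}
\|e^{-\tau\mathbb{L}_{\nu,n}}a^{(\nu)}\|_{L^2(\Omega_\nu)}\;\le\; e^{\sqrt{\nu}\tau\,C_1(n)}\,\|a^{(\nu)}\|_{L^2(\Omega_\nu)},
\end{equation*}
which is the claimed bound since the correspondence $a\mapsto a^{(\nu)}$ is a bijection of $\mathcal{P}_n L^2_\sigma(\Omega)$ onto $\mathcal{P}_n^{(\nu)} L^2_\sigma(\Omega_\nu)$. There is no genuine obstacle here: the argument is essentially bookkeeping of the $\sqrt{\nu}$ factors introduced by the dilation. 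The only point requiring minor care is confirming that the Hardy-type bound $\|Y^{-1}v_2\|_{L^2_Y}\le 2|n|\sqrt{\nu}^{-1}\|v_1\|_{L^2_Y}$ used implicitly inside Proposition \ref{prop.general.low} converts correctly under the rescaling, but this is automatic since both the Hardy inequality and the divergence-free condition are scale-invariant under \eqref{def.scaling}.
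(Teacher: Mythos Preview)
Your proof is correct and follows exactly the approach the paper indicates: the paper states that the proof is ``straightforward from Proposition \ref{prop.general.low}'' via the rescaling \eqref{def.scaling} and omits the details. Your bookkeeping of the $\sqrt{\nu}$ factors and the choice $m_1=|n|+1$ to extract the single-mode estimate from the low-frequency bound is the intended argument.
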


The proof of Proposition \ref{prop.general.sg.short} is straightforward from Proposition \ref{prop.general.low}, so we omit the details.

\begin{rem}{\rm The estimate in Proposition \ref{prop.general.sg.short} is not useful in large time, and one needs to obtain a better growth exponent than $C|n| \nu^\frac12 \tau$ in order to achieve the inviscid limit in a Gevrey class $s$ with $s>1$. The estimate in Proposition \ref{prop.general.sg.short} will be used only in a short time period, e.g., $0<\tau\leq \nu^{-\frac12} |n|^{-1}$.
}
\end{rem}

\subsection{Resolvent problem and Orr-Sommerfeld equations}\label{subsec.os}

In this subsection we consider the resolvent problem associated to the rescaled equations \eqref{eq.linear.scale}, i.e., 
\begin{equation}\label{eq.resolvent}
\left\{
\begin{aligned}
\mu v + V \partial_X v + v_2 \partial_Y V {\bf e}_1 - \sqrt{\nu} \Delta v + \nabla q & \, = \, f\,, \quad {\rm div}\, v \, = \, 0\,, \qquad   Y>0\,,\\
v & \, = \, 0\,, \qquad Y=0\,.
\end{aligned}\right.
\end{equation}
Here $\mu\in \C$ is a resolvent parameter, and $v$, $\nabla q$, and $f$ are assumed to be $\frac{2\pi}{\sqrt{\nu}}$-periodic in $X$. Note that $V$ is independent of $X$.
The vorticity field of $v$ is defined as $\omega=\partial_X v_2-\partial_Y v_1$, and we consider the stream function $\psi$, which is the solution to the Poisson equations $-\Delta\psi =\omega$. The direct computation leads to the equations for $\psi$ as follows.
\begin{align*}
\mu \Delta \psi + V \partial_X \Delta \psi - \partial_X \psi \partial_Y^2 V - \sqrt{\nu} \Delta^2 \psi \, = \, - \partial_X f_2 + \partial_Y f_1\,.
\end{align*}  
Let $\phi=\phi_n$, $n\in \mathbb{Z}\setminus\{0\}$, be the $n\sqrt{\nu}$ Fourier mode of $\psi$, i.e., 
$\phi = e^{-in\sqrt{\nu} X} \mathcal{P}_n^{(\nu)} \psi$. 
The equation for $\phi$ is then written as 
\begin{equation*}
\left\{
\begin{aligned}
\frac{i}{n} (\partial_Y^2 - n^2 \nu )^2 \phi + (V + \frac{\mu}{i n \sqrt{\nu}}) (\partial_Y^2- n^2 \nu ) \phi - (\partial_Y^2 V ) \phi & \, = \, - f_{2,n} + \frac{1}{i n \sqrt{\nu}} \partial_Y f_{1,n}\,, \quad Y>0\,,\\
\phi \, = \, \partial_Y \phi & \, = \, 0\,, \quad  Y=0\,.
\end{aligned}\right.
\end{equation*}
Note that the $n\sqrt{\nu}$ Fourier modes of $v$ and $\omega$, $\mathcal{P}_n^{(\nu)} v$ and $\mathcal{P}_n^{(\nu)}\omega$, are recovered from $\phi$ through the formula 
\begin{align}\label{formula.stream}
\mathcal{P}_n^{(\nu)} v  \, = \, (\partial_Y \phi \, e^{in\sqrt{\nu} X} , \, -in\sqrt{\nu} \phi \, e^{in\sqrt{\nu} X} )\,, \qquad \mathcal{P}_n^{(\nu)} \omega \, = \, (n^2\nu-\partial_Y^2) \phi \, e^{in\sqrt{\nu} X} \,.
\end{align}
Thus, the estimates for $\mathcal{P}_n^{(\nu)} v$ and $\mathcal{P}_n^{(\nu)}\omega$ are obtained from the analysis of the ordinary differential equations for $\phi$ as above. 
In virtue of Propositions \ref{prop.general.low} and \ref{prop.general.high}
it suffices to consider the case 
\begin{align*}
\frac{1}{\delta_0} \leq |n|\leq \frac{1}{\delta_0 \nu^{\frac34}}\,,
\end{align*}
where $\delta_0\in (0,1)$ is the number in Proposition \ref{prop.general.high}.
More precisely, since $V$ is now time-independent it is taken as
\begin{align}
\delta_0 = \frac{1}{2 \big (1+\| \partial_y U^E\|_{L^\infty} + \| \partial_Y U\|_{L^\infty} \big )^{\frac12}}\,.\label{def.delta_0}
\end{align}
Note that we may assume without loss of generality that $n$ is positive
 (when $n$ is negative it suffices to consider the complex conjugate of the first equation).
Then we set for $n\in \N$,
\begin{align}\label{def.coefficient}
\alpha \, = \, n \sqrt{\nu}\,, \qquad \epsilon \, = \,  - \frac{i}{n}\,, \qquad c =  \frac{i \mu}{\alpha}\,, \qquad h \, = \, - f_{2,n} + \frac{1}{i\alpha} \partial_Y f_{1,n}\,.
\end{align}
With these notation we obtain the Orr-Sommerfeld equations
\begin{equation}\label{eq.os}
\left\{
\begin{aligned}
-\epsilon  (\partial_Y^2 - \alpha^2 )^2 \phi + (V -c) (\partial_Y^2-\alpha^2) \phi - (\partial_Y^2 V ) \phi  & \, = \, h \,, \qquad Y>0\,,\\
\phi \, = \, \partial_Y \phi  & \, = \, 0 \,, \qquad  Y=0\,.
\end{aligned}\right.
\end{equation}
Since $\delta_0^{-1}\leq n\leq \delta_0^{-1}\nu^{-\frac34}$, the regime of parameters we need to study is 
\begin{align*}
\frac{\nu^\frac12}{\delta_0} \leq \alpha\leq \frac{1}{\delta_0  \nu^\frac14} \,, \qquad \qquad \delta_0\nu^\frac34 \leq |\epsilon| = \frac{1}{n} \leq \delta_0 \,.
\end{align*}
Moreover, we consider the case  
\begin{align}\label{parameter.stability}
\Re \mu = \alpha \Im c \geq  \frac{\nu^\frac12n^\gamma}{\delta} 
\end{align}
for some $\gamma \in [0,1]$ and for sufficiently small but fixed $\delta \in (0,\delta_1]$,
where the number $\delta_1\in (0,\delta_0]$ will be determined later in Proposition \ref{prop.general'}.

\begin{rem}\label{rem.parameter} {\rm (i)  The value $\gamma\in [0,1]$ in \eqref{parameter.stability} has a special importance in our analysis. Indeed, it represents the order of derivative loss through the action of the semigroup $e^{-t\mathbb{A}_{\nu,n}}$. In order to achieve the nonlinear stability in the Gevrey class $s$ with  $s>1$ we need to show the resolvent estimate for the value $\gamma$ strictly less than $1$. 
In \cite{GGN2014} the unstable eigenvalue is found in the parameter regime
\begin{align}
\alpha\sim \mathcal{O}(\nu^\frac18)\,, \quad \Im c \sim \mathcal{O}(\nu^\frac18)\,, \quad n=|\epsilon|^{-1} \sim \mathcal{O} (\nu^{-\frac38})\,.
\end{align}
This implies that we can actually expect at most $\gamma=\frac23$ for the spatial frequency $n\sim \mathcal{O}(\nu^{-\frac38})$.

\noindent (ii) Since $\alpha = n \sqrt{\nu}$ the condition \eqref{parameter.stability} gives the lower bound
\begin{align}
\Im c \geq \frac{n^{\gamma-1}}{\delta}\,.\label{bound.nc}
\end{align}
In particular, we have 
\begin{align}
\Im c \geq \frac{1}{\delta}\gg 1\,, \qquad ~~ {\rm if}  ~~ \gamma=1~{\rm in}~\eqref{parameter.stability}\,.
\end{align}
}
\end{rem}

The Orr-Sommerfeld equations \eqref{eq.os} have to be analyzed in a wide regime of parameters as mentioned above. In particular, the value $\alpha$ can be both small and large depending on the location of the frequency $n$. To reduce the possible dependence on $\alpha$ of the equations it is convenient to introduce the number 
\begin{align}\label{def.c.ep}
c_\epsilon = c - \epsilon \alpha^2\,.
\end{align} 
Note that 
\begin{align}
\Im  c_\epsilon \, = \, \Im c + \frac{\alpha^2}{n}\,,\label{Im.c_ep}
\end{align} 
which is positive if $\Im c$ is positive. One can easily check that \eqref{eq.os} is equivalent with the modified form of the Orr-Sommerfeld equations as follows.
\begin{equation}\label{eq.os'}
\left\{
\begin{aligned}
-\epsilon  (\partial_Y^2 - \alpha^2 ) \partial_Y^2 \phi + (V -  c_\epsilon ) (\partial_Y^2-\alpha^2) \phi - (\partial_Y^2 V ) \phi  & \, = \, h \,, \qquad Y>0\,,\\
\phi \, = \, \partial_Y \phi  & \, = \, 0 \,, \qquad  Y=0\,.
\end{aligned}\right.
\end{equation}

\begin{rem}\label{rem.c_epsilon}{\rm  Since $\Im c\geq \frac{n^{\gamma-1}}{\delta}$ by \eqref{bound.nc} and $\frac{\alpha^2}{n} = n \nu$ by the definition of $\alpha$, we have 
\begin{align}
\frac{\alpha^2}{n}\leq \Im c \qquad  {\rm if}  \quad n^{2-\gamma}\leq \frac{1}{\delta\nu}\,.
\end{align}
When $\gamma\in [\frac23,1]$ this condition is satisfied if $n\leq \delta_0^{-1}\nu^{-\frac34}$.
}
\end{rem}

\subsection{Resolvent estimate for general shear profile}\label{subsec.general}

We first consider the case of general shear profiles for some regime of parameters.
In this subsection the shear profile $V$ is always assumed to take the form \eqref{shear.appl} with $U^E\in BC^2 (\R_+)$, and we also assume that 
\begin{align}\label{bound.U}
\begin{split}
& \|U\|\, := \, \sum_{k=0,1,2} \sup_{Y \geq 0}  \, (1+Y)^k |\pa_Y^k U (Y) | \, <\, \infty\,.
\end{split}
\end{align}

\vspace{0.3cm}

Let $\Im c>0$, which ensure $\Im c_\epsilon>0$ due to \eqref{Im.c_ep}. 
We consider the modified form \eqref{eq.os'} of the Orr-Sommerfeld equations rather than \eqref{eq.os}. 
The argument below is based on Rayleigh's trick.
Multiplying both sides of the first equation of \eqref{eq.os'} by $(V- c_\epsilon)^{-1} \bar{\phi}$ 
and integrating over $(0,\infty)$, we have
\begin{align}\label{proof.prop.general.1}
\begin{split}
& -\epsilon \int_0^\infty   \partial_Y^2\phi \, \partial_Y^2 \big ( \frac{\bar{\phi}}{V- c_\epsilon} \big ) + \alpha^2 \partial_Y \phi \, \partial_Y \big (\frac{\bar{\phi}}{V- c_\epsilon} \big ) \dd Y \\
& \qquad - \int_0^\infty |\partial_Y \phi|^2 +\alpha^2 |\phi|^2 \dd Y -\int_0^\infty \frac{\partial_Y^2 V}{V-c_\epsilon} |\phi|^2 \dd Y  \, = \, \int_0^\infty \frac{h}{V-c_\epsilon} \bar{\phi} \dd Y\,.
\end{split}
\end{align}
The first integration in the right-hand side of \eqref{proof.prop.general.1} is computed as 
\begin{align*}
-\epsilon \int_0^\infty \frac{1}{V- c_\epsilon}  \big ( |\partial_Y^2 \phi|^2 + \alpha^2  |\partial_Y \phi |^2 \big ) \dd Y  \, - \,\epsilon R\,,
\end{align*}
where 
\begin{align*}
R =  \int_0^\infty \partial_Y^2 \phi \bigg ( 2 \partial_Y \big (\frac{1}{V-c_\epsilon} \big ) \partial_Y \bar{\phi} + \bar{\phi} \partial_Y^2 \big ( \frac{1}{V-c_\epsilon})  \bigg )  + \alpha^2 (\partial_Y \phi ) \, \bar{\phi} \partial_Y\big ( \frac{1}{V-c_\epsilon}\big ) \dd Y \,.
\end{align*}
Recalling $\epsilon = - \frac{i}{n}$ and taking the real part of both sides of \eqref{proof.prop.general.1},
we obtain
\begin{align}\label{proof.prop.general.2}
\begin{split}
& -\frac{\Im c_\epsilon}{n} \int_0^\infty \frac{1}{|V- c_\epsilon|^2}  \big ( |\partial_Y^2 \phi|^2 + \alpha^2  |\partial_Y \phi |^2 \big ) \dd Y  - \int_0^\infty |\partial_Y \phi|^2 +\alpha^2 |\phi|^2 \dd Y   \\
& \qquad \qquad  - \frac{\Im R}{n} - \int_0^\infty \frac{\partial_Y^2 V}{|V-c_\epsilon|^2} (V-\Re c_\epsilon) |\phi|^2 \dd Y  \, = \, \Re \int_0^\infty  \frac{h}{V-c_\epsilon} \bar{\phi} \dd Y\,.
\end{split}
\end{align}
It is straightforward to see
\begin{align*}
|\Im R| & \leq \frac{\Im c_\epsilon}{2} \int_0^\infty \frac{1}{|V- c_\epsilon|^2}  \big ( |\partial_Y^2 \phi|^2 + \alpha^2  |\partial_Y \phi |^2 \big ) \dd Y\\
& \quad + \frac{1}{2 \Im c_\epsilon} \int_0^\infty |V-c_\epsilon|^2 \bigg ( \big | 2 \partial_Y \big (\frac{1}{V-c_\epsilon} \big ) \partial_Y \bar{\phi} + \bar{\phi} \partial_Y^2 \big ( \frac{1}{V-c_\epsilon})  \big | ^2 + \alpha^2 \big | \partial_Y \frac{1}{V-c_\epsilon}\big |^2 |\phi|^2 \bigg ) \dd Y\\
& \leq \frac{\Im c_\epsilon}{2} \int_0^\infty \frac{1}{|V- c_\epsilon|^2}  \big ( |\partial_Y^2 \phi|^2 + \alpha^2  |\partial_Y \phi |^2 \big ) \dd Y\\
& \quad + \frac{1}{2 \Im c_\epsilon} \int_0^\infty \frac{12 |\pa_Y V|^2}{|V-c_\epsilon|^2} |\partial_Y \phi|^2 + \big ( \frac{3|\pa_Y^2 V|^2}{|V-c_\epsilon|^2} + \frac{12 |\pa_Y V|^4}{|V-c_\epsilon|^4}  + \alpha^2 \frac{|\pa_Y V|^2}{|V-c_\epsilon|^2} \big ) |\phi|^2  \dd Y\,.
\end{align*}
Here $C$ is a universal constant. Hence we have from \eqref{proof.prop.general.2},
\begin{align}\label{proof.prop.general.3}
\begin{split}
& \frac{\Im c_\epsilon}{2 n} \int_0^\infty \frac{1}{|V- c_\epsilon|^2}  \big ( |\partial_Y^2 \phi|^2 + \alpha^2  |\partial_Y \phi |^2 \big ) \dd Y  +  \int_0^\infty |\partial_Y \phi|^2 +\alpha^2 |\phi|^2 \dd Y   \\
& \leq  \frac{6}{n \Im c_\epsilon} \int_0^\infty \frac{|\pa_Y V|^2}{|V-c_\epsilon|^2} |\partial_Y \phi|^2 + \big ( \frac{|\pa_Y^2 V|^2}{|V-c_\epsilon|^2} + \frac{|\pa_Y V|^4}{|V-c_\epsilon|^4}  + \alpha^2 \frac{|\pa_Y V|^2}{|V-c_\epsilon|^2} \big ) |\phi|^2  \dd Y\\
& \qquad  \quad  +  \int_0^\infty \frac{|\pa_Y^2 V|}{|V-c_\epsilon|}  |\phi|^2 \dd Y   - \Re \int_0^\infty  \frac{h}{V-c_\epsilon} \bar{\phi} \dd Y\,.
\end{split}
\end{align}

We note that the inequality \eqref{proof.prop.general.3} is valid for any weak solution $\phi\in H^2_0 (\R_+)$ to \eqref{eq.os'} (defined in a natural manner) since $\Im c_\epsilon>0$.
For some regime of parameters the inequality \eqref{proof.prop.general.3} provides enough informations on the estimates of solutions. Indeed, the next proposition shows that the resolvent estimate is always available if $|c|$ is large enough depending on $V$. Let us recall that the inhomogeneous term $h$ is related with $f$ as in \eqref{def.coefficient}, and $\delta_0\in (0,1)$ is chosen as in \eqref{def.delta_0}. The weighted norm $\|U\|$ is defined in \eqref{bound.U}.

\begin{prop}\label{prop.general'} There exists $\delta_1\in (0,\delta_0]$ such that the following statement holds.
Let $|c|\geq \delta_1^{-1}$ and $n\in \N$. Assume that \eqref{parameter.stability} holds for some $\gamma\in [0,1]$ and $\delta\in (0,\delta_1]$.
Then for any $f=(f_{1,n},f_{2,n})\in L^2 (\R_+)^2$ there exists a unique weak solution $\phi\in H_0^2(\R_+)$ to \eqref{eq.os'}, and $\phi$ satisfies 
\begin{align}
\| \partial_Y \phi \|_{L^2} + \alpha \| \phi \|_{L^2} & \leq \frac{C (1+\|U\|)}{|\mu|}  \|f \|_{L^2}\,,\label{est.prop.general'.1} \\
\| (\partial_Y^2 -\alpha^2 ) \phi \|_{L^2} & \leq \frac{C (1+\|U\|) }{|\mu|^\frac12 \nu^\frac14}  \| f\|_{L^2}\,.\label{est.prop.general'.2} 
\end{align}
Here $\delta_1$ depends only on $\|U^E \|_{C^2}$ and $\|U\|$, while $C\geq 1$ is a universal constant.
\end{prop}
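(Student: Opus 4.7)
My strategy is to exploit the energy inequality \eqref{proof.prop.general.3}, which holds for any weak solution $\phi\in H_0^2(\R_+)$ to \eqref{eq.os'} whenever $\Im c_\epsilon>0$. The key observation is that when $|c|$ is large---specifically $|c|\geq \delta_1^{-1}$ with $\delta_1$ chosen small depending on $\|U^E\|_{C^2}$ and $\|U\|$---one has the uniform pointwise bound $|V(Y)-c_\epsilon|\geq|c|/2$ on $\R_+$. Indeed $\|V\|_{L^\infty}\leq \|U^E\|_{L^\infty}+\|U\|$ is controlled and $|c_\epsilon-c|=n\nu\leq \delta_0^{-1}\nu^{1/4}$ is small in the regime $n\leq \delta_0^{-1}\nu^{-3/4}$. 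This yields the crucial smallness $1/|V-c_\epsilon|^k\leq 2^k|c|^{-k}$ that drives the argument.

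The first step shows that every right-hand side term of \eqref{proof.prop.general.3} involving $\partial_Y V$ or $\partial_Y^2 V$ can be absorbed into the left-hand quantities $\|\partial_Y\phi\|_{L^2}^2+\alpha^2\|\phi\|_{L^2}^2$. The non-decaying $\nu$-dependent pieces of $\partial_Y^j V$ (coming from $U^E$) are controlled by pure factors $1/|c|^k$; the $Y$-dependent pieces of $\partial_Y^j V$ inherit decay $|\partial_Y^j U|\leq \|U\|/(1+Y)^j$ from \eqref{bound.U}, and combined with Hardy's inequality $\|\phi/Y\|_{L^2}\leq 2\|\partial_Y\phi\|_{L^2}$ (valid since $\phi(0)=0$) give the required absorbable estimates. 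A similar treatment handles the $\int|\partial_Y^2 V||\phi|^2/|V-c_\epsilon|\,dY$ contribution on the right.

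The second step handles the forcing $-\Re\int_0^\infty h\bar\phi/(V-c_\epsilon)\,dY$. Substituting $h=-f_{2,n}+\partial_Y f_{1,n}/(i\alpha)$ and integrating by parts on the derivative piece (the boundary term at $Y=0$ vanishes thanks to $\bar\phi(0)=0$, and at infinity by decay), I obtain three integrals bounded pointwise by $|f_{2,n}||\phi|/|c|$, $|f_{1,n}||\partial_Y\phi|/(\alpha|c|)$, and $|f_{1,n}||\partial_Y V||\phi|/(\alpha|c|^2)$. The last is further reduced via $|\partial_Y V|\leq \|U\|/(1+Y)+\sqrt\nu\|U^E\|_{C^1}$ and Hardy. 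Young's inequality, with the algebraic trick of writing $\|\phi\|_{L^2}=\alpha\|\phi\|_{L^2}/\alpha$ to match the $\alpha$-weights on the left, produces bounds of the form $C(1+\|U\|)^2\|f\|^2/|\mu|^2+\tfrac14(\|\partial_Y\phi\|^2+\alpha^2\|\phi\|^2)$ with $|\mu|=\alpha|c|$. Combining with the absorption from Step~1 establishes \eqref{est.prop.general'.1}.

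For \eqref{est.prop.general'.2}, I extract $\|\partial_Y^2\phi\|_{L^2}^2$ from the first LHS term of \eqref{proof.prop.general.3} via $|V-c_\epsilon|\leq C|c|$, obtaining after Step~1-2 the bound $\|\partial_Y^2\phi\|^2\leq Cn|c|^2\|f\|^2/(\Im c_\epsilon\,|\mu|^2)=C\|f\|^2/(n\nu\,\Im c_\epsilon)$. Using $n\,\Im c_\epsilon\geq n\,\Im c=\Re\mu/\sqrt\nu$ together with the parameter constraint (which ties $|\mu|$ to $\Re\mu$ in the sector of interest) and the bound $\alpha^4\|\phi\|^2\leq \alpha^2\cdot C\|f\|^2/|\mu|^2=C\|f\|^2/|c|^2\leq C\|f\|^2/(|\mu|\sqrt\nu)$ (invoking $n\nu\leq C|c|$ from $n\leq \delta_0^{-1}\nu^{-3/4}$ and $|c|\geq \delta_1^{-1}$) yields the claimed $H^2$ bound. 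Existence is by Galerkin approximation in $H_0^2(\R_+)$, passing to the limit with the uniform a priori estimates; uniqueness is immediate from those estimates applied to the difference of two solutions. The main obstacle is the balance in Young's inequality for the third forcing piece: a naive application produces an unabsorbable $1/\alpha^2$, and only the joint use of the decay of $\partial_Y U$ via Hardy and the splitting $\|\phi\|=\alpha\|\phi\|/\alpha$ avoids it.
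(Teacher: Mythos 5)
Your Step 1 and Step 2 reproduce, essentially, the paper's own $H^1$ argument: both start from the Rayleigh-trick inequality \eqref{proof.prop.general.3}, use the pointwise bound $\tfrac{|c|}{2}\leq|V-c_\epsilon|\leq 2|c|$ valid for $|c|\geq\delta_1^{-1}$, absorb the $\partial_Y V,\partial_Y^2 V$ contributions via decay of $U$ and Hardy, and close on the forcing $h$ with Young. This yields \eqref{est.prop.general'.1} correctly.

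The $H^2$ part, however, contains a genuine gap. You extract $\|\partial_Y^2\phi\|_{L^2}^2$ from the term in \eqref{proof.prop.general.3} whose coefficient is $\frac{\Im c_\epsilon}{2n}\cdot\frac{1}{|V-c_\epsilon|^2}\gtrsim\frac{\Im c_\epsilon}{n|c|^2}$, which gives
\begin{align*}
\|\partial_Y^2\phi\|_{L^2}^2 \;\lesssim\; \frac{n|c|^2}{\Im c_\epsilon\,|\mu|^2}\|f\|_{L^2}^2 \;=\; \frac{1}{n\nu\,\Im c_\epsilon}\|f\|_{L^2}^2,
\end{align*}
whereas the target \eqref{est.prop.general'.2} squared is $\frac{C}{|\mu|\,\nu^{1/2}}\|f\|_{L^2}^2=\frac{C}{n\nu\,|c|}\|f\|_{L^2}^2$. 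These match only if $\Im c_\epsilon\gtrsim |c|$. But in the hypotheses nothing forces this: $|c|\geq\delta_1^{-1}$ is a fixed lower bound while $\Im c$ is only constrained from below by $\delta^{-1}n^{\gamma-1}$, and in the resolvent set where Proposition~\ref{prop.general'} gets applied (e.g.\ along the vertical segment $\Re\mu=\delta_1^{-1}n^\gamma\nu^{1/2}$ in $E_{\nu,n}$ of the proof of Corollary~\ref{cor.prop.general'}), $|\Im\mu|$ can be arbitrarily large while $\Re\mu$ is fixed, so $|\mu|/\Re\mu=|c|/\Im c$ is unbounded. Your parenthetical remark that the parameter constraint ``ties $|\mu|$ to $\Re\mu$'' is not correct --- there is no such tie, and $\frac{1}{\Re\mu}\geq\frac{1}{|\mu|}$ goes in the wrong direction for what you need.

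The paper circumvents this by deriving a \emph{second} energy identity: it multiplies the unmodified Orr--Sommerfeld equation \eqref{eq.os} by $\bar\phi$ (not by $\bar\phi/(V-c_\epsilon)$), integrates, and takes the imaginary part, producing
\begin{align*}
\frac1n\|(\partial_Y^2-\alpha^2)\phi\|_{L^2}^2 + (\Im c)\big(\|\partial_Y\phi\|_{L^2}^2+\alpha^2\|\phi\|_{L^2}^2\big) = -\Im\langle V\partial_Y^2\phi,\phi\rangle_{L^2} + \Im\langle h,\phi\rangle_{L^2}.
\end{align*}
Here the coercive coefficient in front of $\|(\partial_Y^2-\alpha^2)\phi\|^2$ is exactly $\tfrac1n=|\epsilon|$, provided directly by $\Im\epsilon=\tfrac1n$, with no degradation by $\Im c_\epsilon$. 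The right-hand side is then bounded using the $H^1$ estimate already in hand, which gives the correct $H^2$ scaling. This extra identity is the missing ingredient in your proposal; without it the Rayleigh-trick inequality alone cannot deliver \eqref{est.prop.general'.2} uniformly over the required range of $\mu$.
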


\begin{rem}\label{rem.prop.general'}{\rm (i) By the standard elliptic regularity the solution $\phi$ also belongs to $H^3(\R_+)$.

\noindent 
(ii) Proposition \ref{prop.general'} is valid also for the regime $n\geq \delta_0^{-1}\nu^{-\frac34}$.

\noindent (iii) Proposition \ref{prop.general'} shows that the case $\gamma=1$ in \eqref{parameter.stability} can be handled by the simple energy method, and there are no unstable eigenvalues of $-\mathbb{L}_{\nu,n}$ such that  $\Re \mu \geq  \mathcal{O}(n \nu^\frac12)$ in general. 
On the other hand, when there is an unstable eigenvalue for the Rayleigh equations with some $\alpha>0$
it is likely that the Orr-Sommerfeld equations also possess an unstable eigenvalue
in the parameter regime $\alpha = \mathcal{O}(1)$, $\Im c =\mathcal{O}(1)$, and $|\epsilon| \ll 1$.
From this point of view, by considering the case $n=\mathcal{O}(\nu^{-\frac12})$,
the above elementary energy method actually provides a fairly optimal result
for unstable shear flows satisfying \eqref{bound.U}.
}
\end{rem}

\begin{proofx}{Proposition \ref{prop.general'}} (A priori estimates) Assume that $|c|\geq \delta_1^{-1}$, 
where $\delta_1\in (0,\delta_0]$ is determined below.
Since $|\mu| = \alpha |c|$ and $\Im c_\epsilon = \Im c + \frac{\alpha^2}{n} \geq \Im c > 0$,
we have 
\begin{align*}
|c_\epsilon|\geq |c|\geq \delta_1^{-1}\,.
\end{align*} 
If $\delta_1$ is taken as
\begin{align}\label{proof.prop.general'.-1}
\delta_1 \, = \, \frac{1}{32 ( 1 + \| U^E \|_{C^2}  + \|U\| )}\,,
\end{align} 
where  $\|U\|$ is defined in \eqref{bound.U},
then $\delta_1\in (0,\delta_0]$ and  $\frac{|c|}{2} \leq |V-c_\epsilon|\leq 2 |c|$ holds.
Since the lower bound $n\Im c_\epsilon \gg 1$ is valid as stated in Remark \ref{rem.parameter},
it is not difficult to see that the right-hand side of \eqref{proof.prop.general.3} is bounded from above by 
\begin{align*}
\frac12 \int_0^\infty |\partial_Y \phi|^2 +\alpha^2 |\phi|^2 \dd Y - \Re \int_0^\infty  \frac{h}{V-c_\epsilon} \bar{\phi} \dd Y\,.
\end{align*}
For example, the term $\frac{1}{n\Im c_\epsilon} \int_0^\infty \frac{|\pa_Y V|^4}{|V-c_\epsilon|^4} |\phi|^2 \dd Y$ is estimated as 
\begin{align*}
\frac{1}{n\Im c_\epsilon} \int_0^\infty \frac{|\pa_Y V|^4}{|V-c_\epsilon|^4} |\phi|^2 \dd Y 
& \leq \frac{2^5 \delta_1^4 \|\pa_Y V\|_{L^\infty}^2}{n\Im c_\epsilon}\int_0^\infty \big (\nu \| U^E\|_{C^1}^2 + \| U\|^2 (1+Y)^{-2} \big) |\phi|^2 \dd Y\\
& \ll \alpha^2\|\phi \|_{L^2}^2 + \| \partial_Y\phi\|_{L^2}^2\,.
\end{align*}
Here we have used $\alpha=n\nu^\frac12$ and also used  the Hardy inequality 
\begin{align*}
\| \frac{\phi}{Y} \|_{L^2} \leq 2 \| \partial_Y \phi \|_{L^2}\,.
\end{align*}
The other terms are estimated in the similar manner, and the details are omitted here.
By the definition of $h$ in \eqref{def.coefficient} and \eqref{bound.U}
the last term is estimated as 
\begin{align}\label{proof.prop.general'.0}
\begin{split}
& - \Re \int_0^\infty  \frac{h}{V-c_\epsilon} \bar{\phi} \dd Y  \leq \frac14 \int_0^\infty |\partial_Y \phi|^2 +\alpha^2 |\phi|^2 \dd Y  \\
& \qquad + C \bigg ( \| \frac{f}{\alpha (V-c_\epsilon)}\|_{L^2}^2 + \|U\|^2  \| \frac{f}{\alpha (V-c_\epsilon)^2}\|_{L^2}^2 +\nu  \| U^E \|_{C^1} ^2 \| \frac{f_{1,n}}{\alpha^2 (V-c_\epsilon)^2}\|_{L^2}^2 \bigg )\,
\end{split}
\end{align} 
where $C$ is a numerical constant. 
Note that the Hardy inequality is used again in the derivation of \eqref{proof.prop.general'.0}. 
From $\frac{|c|}{2} \leq |V-c_\epsilon|\leq 2 |c|$ and the identity $|\mu|=\alpha |c|$ we have arrived at
\begin{align}\label{proof.prop.general'.1}
\begin{split}
& \frac{\Im c_\epsilon}{2 n} \int_0^\infty \frac{1}{|V- c_\epsilon|^2}  \big ( |\partial_Y^2 \phi|^2 + \alpha^2  |\partial_Y \phi |^2 \big ) \dd Y  +  \frac14 \int_0^\infty |\partial_Y \phi|^2 +\alpha^2 |\phi|^2 \dd Y   \\
& \leq  C \big ( \frac{1+ \|U\|^2}{|\mu|^2}  + \frac{\nu \| U^E \|_{C^1}^2}{|\mu|^4} \big ) \| f \|_{L^2}^2
= \frac{C}{|\mu|^2} \big ( 1+ \|U\|^2 + \frac{\| U^E \|_{C^1}^2}{n^2|c|^2} \big ) \| f\|_{L^2}^2
\end{split}
\end{align}
with a numerical constant $C$. 
By the assumption $|c|\geq \delta_1^{-1}$ and the choice of $\delta_1$ in \eqref{proof.prop.general'.-1}
we have $\frac{\|U^E\|_{C^1}}{n |c|}\leq 1$, which yields \eqref{est.prop.general'.1}.

Next we multiply both sides of \eqref{eq.os} by $\bar{\phi}$ and integrate over $(0,\infty)$, which leads to 
\begin{align}\label{proof.prop.general'.3}
-\epsilon \| (\partial_Y^2 -\alpha^2) \phi \|_{L^2}^2 + \langle (V-c) (\partial_Y^2-\alpha^2) \phi, \phi\rangle _{L^2} - \langle (\pa_Y^2 V) \phi, \phi \rangle _{L^2} \, = \, \langle h, \phi \rangle _{L^2}\,.
\end{align}
Taking the imaginary part of \eqref{proof.prop.general'.3}, we have 
\begin{align}\label{proof.prop.general'.4}
\begin{split}
& \frac1n \| (\partial_Y^2 -\alpha^2) \phi \|_{L^2}^2  + (\Im c ) \big ( \|\partial_Y \phi \|_{L^2}^2 + \alpha^2 \| \phi \|_{L^2}^2 \big ) \\
& \quad \, = \, - \Im \langle (V-\Re c)  ( \partial_Y^2-\alpha^2 )\phi, \phi\rangle _{L^2} + \Im \langle h, \phi \rangle _{L^2}\\
& \quad \, = \, - \Im \langle V \pa_Y^2 \phi, \phi \rangle _{L^2} + \Im \langle h, \phi \rangle _{L^2}\,.
\end{split}
\end{align} 
It is straightforward to see from \eqref{est.prop.general'.1} and \eqref{bound.U},
\begin{align}
 |\Im \langle V  \partial_Y^2 \phi, \phi \rangle _{L^2} |
& \leq  \| \partial_Y\phi \|_{L^2}\| (\pa_Y V) \phi \|_{L^2} \nonumber \\
& \leq C \| \partial_Y\phi \|_{L^2} \big (  \nu^\frac12 \| U^E \|_{C^1} \|\phi\|_{L^2} + \|U\|  \| \partial_Y \phi \|_{L^2} \big ) \nonumber \\
& \leq \frac{C}{|\mu|} \big ( \frac{\nu^\frac12 \| U^E \|_{C^1} }{\alpha |\mu|} + \frac{\|U\|}{|\mu|} \big )  (1+\|U\|)^2 \| f\|_{L^2}^2\,.\label{proof.prop.general'.5}
\end{align}
Here we have also used the Hardy inequality and \eqref{est.prop.general'.1}.
By the definition of $h$ and \eqref{est.prop.general'.1} we also have 
\begin{align}
|\langle h,\phi \rangle_{L^2}|\leq \frac{C}{\alpha} \| f\|_{L^2} \big ( \| \partial_Y \phi \|_{L^2} + \alpha \| \phi \|_{L^2}\big ) \leq \frac{C}{\alpha |\mu|} (1+\|U\|) \|f\|_{L^2}^2\,.\label{proof.prop.general'.6}
\end{align}
From \eqref{proof.prop.general'.4}, \eqref{proof.prop.general'.5}, and \eqref{proof.prop.general'.6}
we obtain by using the relation $\mu = - i \alpha c = -i n \nu^\frac12 c$,
\begin{align*}
\| (\partial_Y^2 -\alpha^2) \phi \|_{L^2}^2  & \leq \frac{C n}{|\mu|} \big ( \frac{\nu^\frac12\| U^E \|_{C^1}}{\alpha |\mu|} + \frac{\|U\|}{|\mu|}  + \frac{1}{\alpha} \big ) \| f\|_{L^2}^2 \\
& \leq \frac{C}{|\mu|}  \big ( \frac{\| U^E \|_{C^1}}{|\mu|} + \frac{\|U\|}{\nu^\frac12 |c|} + \frac{1}{\nu^\frac12} \big ) (1+\|U\|)^2 \| f\|_{L^2}^2\\
& \leq \frac{C}{|\mu|\nu^\frac12} \big ( \frac{\| U^E \|_{C^1}+\|U\|}{|c|} + 1\big ) (1+\|U\|)^2 \| f \|_{L^2}^2\,.
\end{align*}
By the assumption $|c|\geq \delta_1^{-1}$ and the choice of $\delta_1$ in \eqref{proof.prop.general'.-1}, we obtain \eqref{est.prop.general'.2}.

\noindent (Uniqueness) The uniqueness of weak solutions in $H^2_0(\R_+)$ is immediate from the a priori estimates.

\noindent (Existence) We first observe that the operator $T=-\epsilon (\pa_Y^2-\alpha^2)^2 - i \Im c (\pa_Y^2-\alpha^2)$ with the domain $H^4 (\R_+) \cap H^2_0 (\R_+)$ is always invertible in $L^2 (\R_+)$ for $\Im c, \alpha>0$. If $\Im c \gg 1$ the operator $mOS=T + (V-\Re c) (\pa_Y^2-\alpha^2) - (\pa_Y^2 V)$ with the domain $H^4(\R_+) \cap H^2_0 (\R_+)$ is a small perturbation of $T$ and is invertible by considering Neumann series; indeed, we can write $mOS = \big (I +  (V-\Re c) (\pa_Y^2-\alpha^2) T^{-1} \big ) T$ and $W=I + (V-\Re c) (\pa_Y^2-\alpha^2)T^{-1}$ is bounded in $L^2 (\R_+)$ and invertible if $\Im c$ is large enough, for we can show the bound  
$$\|  (V-\Re c) (\pa_Y^2-\alpha^2)T^{-1} h \|_{L^2}\leq \frac{C(\|V\|_{L^\infty} + |\Re c| )}{\alpha (|\epsilon| \Im c)^{\frac12}} \| h \|_{L^2}$$
with a universal constant $C>0$, by a simple energy estimate. 
Hence the inverse of $mOS$ is given by $T^{-1} W^{-1}$ for sufficiently large $\Im c$. 
We may assume that $\Im c\gg \delta_1^{-1}$.
Then, for such $\Im c$, approximating $f\in L^2(\R_+)^2$ by a sequence in $W^{1,2}(\R_+)^2$,
we obtain the unique weak solution $\phi\in H^2_0 (\R_+)$ to \eqref{eq.os'} for any $f\in L^2 (\R_+)^2$. 
In particular, the solution $\phi$ satisfies the a priori estimates \eqref{est.prop.general'.1} and \eqref{est.prop.general'.2}. Then, in virtue of the method of continuity using the a priori estimates \eqref{est.prop.general'.1} and \eqref{est.prop.general'.2}, we obtain the weak solution $\phi$ to \eqref{eq.os'} for any value of $c$ satisfying $|c|\geq \delta_1^{-1}$. The proof is complete.
\end{proofx}

Proposition \ref{prop.general'} enables us to obtain the resolvent in the high temporal frequency,
which will be used in the latter section to obtain the estimate for the analytic semigroup generated by $-\mathbb{L}_{\nu,n}$.

\begin{cor}\label{cor.prop.general'} Let $\delta_1\in (0,1)$ be the number in Proposition \ref{prop.general'}. 
Then there exist $\theta\in (\frac{\pi}{2},\pi)$ such that the set 
\begin{align}\label{est.cor.prop.general'.1}
S_{\nu,n} (\theta) \, = \, \big \{ \mu \in \C ~\big |~ |\Im \mu |\geq (\tan \theta ) \Re \mu +  \delta_1^{-1} (\alpha + |\tan \theta| n^\gamma \nu^\frac12 )\,, \quad |\mu |\geq \delta_1^{-1} \alpha \big \}
\end{align}
is included in the resolvent set of $-\mathbb{L}_{\nu,n}$, and 
\begin{align}
\| (\mu + \mathbb{L}_{\nu,n} )^{-1} f \|_{L^2(\Omega_\nu)} & \leq \frac{C'}{|\mu|} \| f \|_{L^2 (\Omega_\nu)}\,,\label{est.cor.prop.general'.2}\\
\| \nabla (\mu + \mathbb{L}_{\nu,n} )^{-1} f \|_{L^2(\Omega_\nu)} & \leq \frac{C'}{\nu^\frac14 |\mu|^\frac12} \| f \|_{L^2 (\Omega_\nu)}\,,\label{est.cor.prop.general'.3}
\end{align}
for all $\mu \in S_{\nu,n}(\theta)$ and $f\in \mathcal{P}_n^{(\nu)} L^2_\sigma (\Omega_\nu)$.
The numbers $\theta$ and $C'$ depend only on $\|U^E\|_{C^1}$ and $\|U\|$ in \eqref{bound.U}.
\end{cor}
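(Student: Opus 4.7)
The plan is to deduce Corollary \ref{cor.prop.general'} from Proposition \ref{prop.general'} by combining the Orr--Sommerfeld reduction of Section \ref{subsec.os} with a standard Neumann series extension of the resolvent from the half-plane-like region of Proposition \ref{prop.general'} to the sectorial region $S_{\nu,n}(\theta)$.

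First, I would translate the operator estimates into Orr--Sommerfeld bounds. Given $f \in \mathcal{P}_n^{(\nu)} L^2_\sigma(\Omega_\nu)$, the resolvent problem $(\mu + \mathbb{L}_{\nu,n})v = f$ reduces via the stream function $\phi_n$ to \eqref{eq.os'} with $\alpha = n\sqrt{\nu}$, $c = i\mu/\alpha$, and source $h = -f_{2,n} + (i\alpha)^{-1}\partial_Y f_{1,n}$ satisfying $\|h\|_{L^2} \leq C\|f\|_{L^2}/\alpha$. Using the recovery formulas \eqref{formula.stream}, one has $\|v\|_{L^2(\Omega_\nu)} \sim \|\partial_Y \phi\|_{L^2} + \alpha \|\phi\|_{L^2}$ and $\|\nabla v\|_{L^2(\Omega_\nu)} \sim \|(\partial_Y^2-\alpha^2)\phi\|_{L^2}$. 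Consequently the bounds \eqref{est.prop.general'.1} and \eqref{est.prop.general'.2} translate directly into the desired bounds \eqref{est.cor.prop.general'.2} and \eqref{est.cor.prop.general'.3} whenever Proposition \ref{prop.general'} is applicable.

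Second, I would apply Proposition \ref{prop.general'} on the baseline region
\[
\Sigma_0 = \{\mu \in \mathbb{C} \: \Re \mu \geq \delta_1^{-1}\nu^{1/2} n^\gamma,\ |\mu| \geq \delta_1^{-1}\alpha\},
\]
which is contained in $S_{\nu,n}(\theta)$ for every $\theta \in (\pi/2, \pi)$. On $\Sigma_0$, Proposition \ref{prop.general'} yields invertibility of $\mu + \mathbb{L}_{\nu,n}$ together with the bounds $\|(\mu + \mathbb{L}_{\nu,n})^{-1}\| \leq C_1/|\mu|$ and $\|\nabla(\mu + \mathbb{L}_{\nu,n})^{-1}\| \leq C_2/(\nu^{1/4}|\mu|^{1/2})$, with $C_1,C_2$ depending only on $\|U^E\|_{C^1}$ and $\|U\|$.

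Third, I would extend to $S_{\nu,n}(\theta)$ by Neumann series. For $\mu_0 \in \partial\Sigma_0$ on the vertical ray $\Re\mu_0 = \delta_1^{-1}\nu^{1/2}n^\gamma$, and for any $\mu$ with $|\mu - \mu_0| \leq |\mu_0|/(2C_1)$,
\[
(\mu + \mathbb{L}_{\nu,n})^{-1} = (\mu_0 + \mathbb{L}_{\nu,n})^{-1}\sum_{k=0}^\infty \bigl[-(\mu-\mu_0)(\mu_0+\mathbb{L}_{\nu,n})^{-1}\bigr]^k
\]
converges with operator norm at most $2C_1/|\mu_0|$. Since $|\mu_0|$ and $|\mu|$ are comparable within this disk, one recovers $\|(\mu + \mathbb{L}_{\nu,n})^{-1}\| \leq C'/|\mu|$, and the resolvent identity gives $\|\nabla(\mu+\mathbb{L}_{\nu,n})^{-1}\| \leq C'/(\nu^{1/4}|\mu|^{1/2})$. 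Varying $\mu_0$ along $\partial \Sigma_0$, the union of such disks fills out a cone with vertex at the corner of $\Sigma_0$, whose opening angle is $\pi - \arctan(1/(2C_1))$. A symmetric Neumann extension around points of $\Sigma_0 \cap \{|\mu| = \delta_1^{-1}\alpha\}$ handles the disk constraint. Choosing $\theta \in (\pi/2, \pi)$ with $|\tan \theta| = 2C_1$ (or slightly smaller), these two families of disks cover $S_{\nu,n}(\theta)$.

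The main obstacle will be to keep the quantitative constants uniform throughout the Neumann extension, especially checking that a single choice of $\theta$ and $C'$ handles both the left extension from the vertical ray $\Re \mu = \delta_1^{-1}\nu^{1/2}n^\gamma$ and the extension from the circular arc $|\mu| = \delta_1^{-1}\alpha$, while respecting the dependence only on $\|U^E\|_{C^1}$ and $\|U\|$. This is essentially a geometric bookkeeping, but it must be done carefully so that the two constant terms $\delta_1^{-1}\alpha$ and $\delta_1^{-1}|\tan\theta| n^\gamma \nu^{1/2}$ in the defining inequality of $S_{\nu,n}(\theta)$ arise naturally from the two parts of the boundary of $\Sigma_0$.
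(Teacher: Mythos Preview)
Your approach is essentially the same as the paper's: apply Proposition \ref{prop.general'} on the baseline region $\Sigma_0 = E_{\nu,n} = \{\Re\mu \geq \delta_1^{-1}n^\gamma\nu^{1/2},\ |\mu|\geq \delta_1^{-1}\alpha\}$, then use a Neumann series around points of $\Sigma_0$ to cover $S_{\nu,n}(\theta)$ for a suitable $\theta$, with $\tan\theta_0 = 1/(2C(1+\|U\|))$ governing the opening angle.

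Two small points are worth noting. First, your claim that $\Sigma_0 \subset S_{\nu,n}(\theta)$ is false (for instance, a real $\mu$ with $\Re\mu = \delta_1^{-1}n^\gamma\nu^{1/2}$ and $|\mu|=\delta_1^{-1}\alpha$ lies in $\Sigma_0$ but not in $S_{\nu,n}(\theta)$); fortunately this inclusion is irrelevant, since what the argument actually needs is the reverse inclusion $S_{\nu,n}(\theta)\subset \cup_{\mu\in\Sigma_0} B_{r_\mu}(\mu)$, exactly as in the paper. Second, for the gradient bound \eqref{est.cor.prop.general'.3} you propagate $\|\nabla(\mu_0+\mathbb{L}_{\nu,n})^{-1}\|$ through the Neumann series, which is valid; the paper instead first obtains the $L^2$ resolvent bound on $S_{\nu,n}(\theta)$ and then re-derives the $H^1$ bound directly from the Orr--Sommerfeld identity \eqref{proof.prop.general'.4}. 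Both routes work. You should also make explicit (as the paper does) that on $\Sigma_0$ injectivity of $\mu+\mathbb{L}_{\nu,n}$ follows from the uniqueness in Proposition \ref{prop.general'} together with the observation that a solenoidal field with vanishing vorticity and Dirichlet trace is zero.
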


\begin{proof} Let $n\in \N$. We observe that, since $\mu = -i \alpha c$, the condition of Proposition \ref{prop.general'} is satisfied if $|\mu|\geq \delta_1^{-1} \alpha$ and $\Re \mu=\alpha \Im c\geq \delta_1^{-1} n^\gamma \nu^\frac12$. We first show that such $\mu$ belongs to the resolvent set of $-\mathbb{L}_{\nu,n}$ in $\mathcal{P}_n^{(\nu)} L^2_\sigma (\Omega_\nu)$. 
To this end take any $f=e^{i n \sqrt{\nu}X} (f_{1,n},f_{2,n}) \in \mathcal{P}_n^{(\nu)}L^2_\sigma (\Omega_\nu)$ (that is, $(f_{1,n},f_{2,n}) \in L^2 (\R_+)^2$ satisfies $in \sqrt{\nu} f_{1,n} + \pa_Y f_{2,n}=0$ in $Y>0$ and $f_{2,n}=0$ on $Y=0$).  By applying Proposition \ref{prop.general'}, let $\phi\in H^2_0(\R_+)$ be the unique weak solution to \eqref{eq.os'}, and thus, to \eqref{eq.os}, with $h=-f_{2,n} +\frac{1}{i\alpha} \pa_Y f_{1,n}$.
Note that the solution $\phi$ obtained in Proposition \ref{prop.general'} belongs to $H^3(\R_+)$ by the elliptic regularity. Then the velocity $v=(\pa_Y \phi e^{in\sqrt{\nu}X}, -in \sqrt{\nu} \phi e^{in \sqrt{\nu}X} )$ belongs to $D(\mathbb{L}_{\nu,n})=H^2 (\Omega_\nu)^2 \cap H^1_0(\Omega_\nu)^2 \cap \mathcal{P}_n^{(\nu)} L^2_\sigma (\Omega_\nu)$ and satisfies \eqref{eq.resolvent} for a suitable pressure $q$.
In particular, $\mu+\mathbb{L}_{\nu,n}$ is surjective in $\mathcal{P}_n^{(\nu)} L^2_\sigma (\Omega_\nu)$.
Moreover, in virtue of \eqref{est.prop.general'.1} and \eqref{est.prop.general'.2}, the norms $\|v\|_{L^2(\Omega_\nu)}$ and $\|\nabla v\|_{L^2(\Omega_\nu)}$ are estimated as in \eqref{est.cor.prop.general'.2} and \eqref{est.cor.prop.general'.3} by recalling the relations $|\epsilon|=n^{-1}$, $\alpha = n\nu^\frac12$, and $\mu = -i \alpha c$, and $\Im c_\epsilon=\Im c + n\nu$. 
Next we suppose that $v\in D(\mathbb{L}_{\nu,n})$ satisfies $(\mu + \mathbb{L}_{\nu,n}) v=0$ then for its stream function $\phi (Y) e^{in\sqrt{\nu}X}$, we see that $\phi$ solves the Orr-Sommerfeld equations \eqref{eq.os} with the source term $h=0$. By the uniqueness proved in Proposition \ref{prop.general'}, we have $\phi=0$, and thus, $\omega=\pa_X v_2-\pa_Y v_1=0$. Then $v$ is harmonic in $\Omega_\nu$ and vanishes on the boundary, which implies $v=0$ in $\Omega_\nu$. Therefore, $\mu + \mathbb{L}_{\nu,n}$ is also injective, and thus, we conclude that $\mu\in \rho (-\mathbb{L}_{\nu,n})$ in $\mathcal{P}_n^{(\nu)}L^2_\sigma (\Omega_\nu)$
if $|\mu|\geq \delta_1^{-1}\alpha$ and $\Re (\mu) \geq \delta_1^{-1} n^\gamma \nu^\frac12$. 
Recall again that, for such $\mu$,  \eqref{est.prop.general'.1} implies
\begin{align*}
\| (\mu + \mathbb{L}_{\nu,n} )^{-1} f \|_{L^2(\Omega_\nu)} \leq \frac{C(1+\|U\|)}{|\mu|} \| f \|_{L^2 (\Omega_\nu)}\,,
\qquad f\in \mathcal{P}_n^{(\nu)} L^2_\sigma (\Omega_\nu)\,,
\end{align*}
with the same  $C\geq 1$ as in \eqref{est.prop.general'.1}. 
By considering the Neumann series the ball $B_{r_\mu}(\mu)=\{ \lambda\in \C~|~ |\lambda-\mu|\leq r_\mu\}$ with $r_{\mu}=\frac{|\mu|}{2C(1+\|U\|)}$ belongs to the resolvent set of $-\mathbb{L}_{\nu,n}$, and we have 
\begin{align*}
\| (\lambda + \mathbb{L}_{\nu,n})^{-1} f \|_{L^2(\Omega_\nu)} \leq \frac{4C(1+\|U\|)}{|\mu|} \| f\|_{L^2(\Omega_\nu)} 
\leq \frac{8C(1+\|U\|)}{|\lambda|} \| f\|_{L^2(\Omega_\nu)}\,, \qquad \lambda \in B_{r_\mu} (\mu)\,. 
\end{align*}
This estimate corresponds to \eqref{est.cor.prop.general'.2}, and we will prove \eqref{est.cor.prop.general'.3} later. 
So far, we have shown that
\begin{align}\label{proof.cor.prop.general'.0}
\cup_{\mu\in E_{\nu,n}} B_{r_\mu} (\mu) \subset \rho (-\mathbb{L}_{\nu,n})\,, \qquad E_{\nu,n} \, = \, \big \{ \mu\in \C~|~ \Re \mu \geq \delta_1^{-1} n^\gamma \nu^{\frac12}\,, ~ |\mu | \geq \delta_1^{-1} \alpha \big \}
\end{align} 
Then it suffices to show that there exist $\theta\in (\frac{\pi}{2},\pi)$ and $\nu_0\in (0,1)$ satisfying 
\begin{align}\label{proof.cor.prop.general'.1}
S_{\nu,n} (\theta) \subset \cup_{\mu\in E_{\nu,n}} B_{r_\mu} (\mu)\,.
\end{align}
But it is not difficult to see that $\theta = \frac{\pi}{2} + \theta_0$ with $\theta_0\in (0,\frac{\pi}{2})$ satisfying 
$\tan \theta_0 = \frac{1}{2 C(1+\|U\|)}$ meets our purpose.

Finally let us prove \eqref{est.cor.prop.general'.3}. It suffices to estimate the norm of $(\partial_Y^2-\alpha^2) \phi$. To this end we go back to the identity \eqref{proof.prop.general'.4} and also recall the estimates \eqref{proof.prop.general'.5} and \eqref{proof.prop.general'.6}. 
Then we have from $\| \pa_Y\phi \|_{L^2} + \alpha \| \phi \|_{L^2}\leq C|\mu|^{-1} \| f\|_{L^2}$, 
\begin{align}
\| (\pa_Y^2-\alpha^2) \phi \|_{L^2}^2 & \leq C n \bigg ( |\Im c| \big (\| \pa_Y\phi \|_{L^2}^2 + \alpha^2 \| \phi \|_{L^2}^2 \big ) + \frac{1}{|\mu|^2} \| f\|_{L^2}^2 + \frac{1}{\alpha |\mu|} \| f\|_{L^2}^2  \bigg ) \nonumber \\
& \leq \frac{C}{|\mu|} \big ( \frac{n |\Im c|}{|\mu|} + \frac{n}{|\mu|} + \frac{1}{\nu^\frac12} \big ) \| f\|_{L^2}^2\,.\label{proof.cor.prop.general'.2}
\end{align} 
Here $C$ depends only on $\| U^E\|_{C^1}$ and $\|U\|$. 
Since $|\mu| = \alpha |c|=n\nu^\frac12 |c|$ and $|c|\geq \delta_1^{-1}$ by the assumption, 
the inequality \eqref{proof.cor.prop.general'.2} implies that 
\begin{align*}
\| (\pa_Y^2-\alpha^2) \phi \|_{L^2}^2 \leq \frac{C}{|\mu| \nu^\frac12} \| f\|_{L^2}^2\,,
\end{align*}
as desired. Thus \eqref{est.cor.prop.general'.3} holds. The proof is complete.
\end{proof}

\vspace{0.3cm}

The next proposition shows that the solvability of \eqref{eq.os'} in the regime $\alpha\Im c_\epsilon\gg 1$.

\begin{prop}\label{prop.large.alpha} 
There exists $\delta_2\in (0,1)$ such that if 
\begin{align}\label{est.prop.large.alpha.1}
\alpha \Im c_\epsilon \geq \frac{1}{\delta_2}\,,
\end{align}
then for any $f=(f_{1,n}, f_{2,n})\in L^2 (\R_+)^2$ there exists a unique weak solution $\phi \in H^2_0 (\R_+)$ to \eqref{eq.os'}, and $\phi$ satisfies the estimates
\begin{align}
\| \partial_Y \phi \|_{L^2} + \alpha \| \phi \|_{L^2} & \leq \frac{C}{\alpha \Im c_\epsilon} \|f \|_{L^2}\,,\label{est.prop.large.alpha.2} \\
\| (\partial_Y^2 -\alpha^2 ) \phi \|_{L^2} & \leq \frac{C}{\alpha (|\epsilon| \Im c_\epsilon)^\frac12}  \| f\|_{L^2}\,.\label{est.prop.large.alpha.3} 
\end{align} 
Here $\delta_2$ depends only on $\|U^E \|_{C^1}$ and $\|U\|$ in \eqref{bound.U}, while $C$ is a universal constant.

\end{prop}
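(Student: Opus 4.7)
I would prove Proposition~\ref{prop.large.alpha} by a direct energy estimate, paralleling the derivation of \eqref{proof.prop.general'.4} in Proposition~\ref{prop.general'} but \emph{without} invoking Rayleigh's trick. The observation driving the argument is that in the regime $\alpha\Im c_\epsilon \gg 1$, the quantity $\Im c_\epsilon$ is already large enough, relative to $\alpha^{-1}$, to absorb the perturbative terms of \eqref{eq.os'} directly, so that no lower bound on $|V-c_\epsilon|$ in terms of $|c|$ is needed.

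Concretely, I multiply the first equation of \eqref{eq.os'} by $\bar\phi$, integrate by parts twice in the biharmonic term and once in the transport term, and take the imaginary part. Since $V$ is real, the contributions of $V|\pa_Y\phi|^2$ and of $(\pa_Y^2 V)|\phi|^2$ drop out of the imaginary part, and with $-\epsilon = i/n$ the identity reduces to
\begin{align*}
\tfrac{1}{n}\bigl(\|\pa_Y^2\phi\|_{L^2}^2 + \alpha^2\|\pa_Y\phi\|_{L^2}^2\bigr) + \Im c_\epsilon\bigl(\|\pa_Y\phi\|_{L^2}^2 + \alpha^2\|\phi\|_{L^2}^2\bigr) = \Im\!\int_0^\infty (\pa_Y V)\pa_Y\phi\,\bar\phi\,\dd Y + \Im\!\int_0^\infty h\,\bar\phi\,\dd Y.
\end{align*}
The first integral on the right is bounded in absolute value by $\tfrac{\|\pa_Y V\|_\infty}{2\alpha}(\|\pa_Y\phi\|_{L^2}^2+\alpha^2\|\phi\|_{L^2}^2)$, and since $\|\pa_Y V\|_\infty\le \sqrt{\nu}\|U^E\|_{C^1}+\|U\|$ by \eqref{bound.U}, choosing $\delta_2$ so small that $\alpha\Im c_\epsilon\ge\delta_2^{-1}$ forces $\|\pa_Y V\|_\infty/\alpha\le\Im c_\epsilon$, after which this error is absorbed into the left-hand side. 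For the inhomogeneous term I integrate by parts to transfer the $Y$-derivative off $f_{1,n}$ onto $\bar\phi$ (using $\phi(0)=0$), then apply Cauchy--Schwarz and Young to obtain $|\Im\!\int h\bar\phi\,\dd Y|\le C\alpha^{-2}(\Im c_\epsilon)^{-1}\|f\|_{L^2}^2 + \tfrac14\Im c_\epsilon(\|\pa_Y\phi\|_{L^2}^2+\alpha^2\|\phi\|_{L^2}^2)$. Absorbing gives at once \eqref{est.prop.large.alpha.2} and the intermediate bound $\|\pa_Y^2\phi\|_{L^2}\le C\|f\|_{L^2}/(\alpha(|\epsilon|\Im c_\epsilon)^{1/2})$, using $1/n=|\epsilon|$.

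The last step, and the only point that requires a little care, is to pass from $\|\pa_Y^2\phi\|_{L^2}$ to $\|(\pa_Y^2-\alpha^2)\phi\|_{L^2}$ in \eqref{est.prop.large.alpha.3}. Here I would use the triangle inequality and control the extra term $\alpha^2\|\phi\|_{L^2}$ via the already-proved \eqref{est.prop.large.alpha.2} together with the automatic lower bound $\Im c_\epsilon\ge\alpha^2/n=\alpha^2|\epsilon|$, which comes from \eqref{Im.c_ep} and $\Im c>0$. This yields $\alpha^2\|\phi\|_{L^2}\le C\|f\|_{L^2}/\Im c_\epsilon\le C\|f\|_{L^2}/(\alpha(|\epsilon|\Im c_\epsilon)^{1/2})$, which is the bookkeeping that makes $\alpha\Im c_\epsilon\gg 1$ alone sufficient; without $\Im c_\epsilon\ge\alpha^2|\epsilon|$ one would be forced back to an assumption on $|c|$ as in Proposition~\ref{prop.general'}. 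Uniqueness of weak solutions in $H^2_0(\R_+)$ is then immediate from the a priori estimates, while existence is obtained exactly as in Proposition~\ref{prop.general'}: invert the symmetric operator $T=-\epsilon(\pa_Y^2-\alpha^2)\pa_Y^2 - i(\Im c_\epsilon)(\pa_Y^2-\alpha^2)$ on $H^4\cap H^2_0$ for $\Im c_\epsilon>0$, treat the remaining terms as a Neumann-series perturbation when $\Im c_\epsilon$ is sufficiently large, and then extend to the full range $\alpha\Im c_\epsilon\ge\delta_2^{-1}$ by the method of continuity using the uniform a priori bounds just derived.
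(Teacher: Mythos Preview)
Your proposal is correct and follows essentially the same approach as the paper: multiply \eqref{eq.os'} by $\bar\phi$, take the imaginary part, absorb the $\pa_Y V$ contribution using $\alpha\Im c_\epsilon\ge\|\pa_Y V\|_{L^\infty}$, and handle $h$ by an integration by parts on the $f_{1,n}$ term; the passage from $\|\pa_Y^2\phi\|_{L^2}$ to $\|(\pa_Y^2-\alpha^2)\phi\|_{L^2}$ via $\Im c_\epsilon\ge\alpha^2|\epsilon|$ is exactly the ``Recalling $\Im c_\epsilon=\Im c+\alpha^2/n$'' step in the paper, and the existence/uniqueness argument by continuity is identical.
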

 
\begin{rem}\label{rem.prop.large.alpha} {\rm (i) By the standard elliptic regularity the solution $\phi$ also belongs to $H^3(\R_+)$.

\noindent (ii) Proposition \ref{prop.large.alpha} is valid also for $n\geq \delta_0^{-1}\nu^{-\frac34}$.
Note that, from $\Re \mu = \alpha \Im c$ and $\alpha = n \nu^\frac12$, the condition \eqref{est.prop.large.alpha.1} is satisfied if
\begin{align*}
\Re \mu \geq \frac{1}{\delta_2}\,.
\end{align*} 
Hence, under the assumption of $\Re \mu \geq \delta^{-1}\nu^\frac12 n^\gamma$ in \eqref{parameter.stability} the condition \eqref{est.prop.large.alpha.1} is always satisfied if 
\begin{align}
n^{\gamma}\geq \frac{\delta}{\delta_2} \nu^{-\frac12}\,.
\end{align}
}
\end{rem}

\begin{proofx}{Proposition \ref{prop.large.alpha}} We first establish the a priori estimates.
Multiplying both sides of \eqref{eq.os'} by $\bar{\phi}$ and integrating over $(0,\infty)$, we obtain 
\begin{align}\label{proof.prop.large.alpha.1} 
\begin{split}
& -\epsilon \big ( \| \pa_Y^2\phi \|_{L^2}^2 + \alpha^2 \|\pa_Y \phi \|_{L^2}^2 \big ) + \langle (V-c_\epsilon) (\pa_Y^2 -\alpha^2 ) \phi, \phi\rangle_{L^2} - \langle (\pa_Y^2 V)\phi, \phi \rangle _{L^2} \\
& \quad \, = \, -\langle f_{2,n}, \phi \rangle _{L^2} +\frac{i}{\alpha} \langle f_{1,n}, \pa_Y \phi\rangle _{L^2} \,.
\end{split}
\end{align}
Note that \eqref{proof.prop.large.alpha.1} can be derived from the standard definition of weak solutions to \eqref{eq.os'}.
Then the imaginary part of \eqref{proof.prop.large.alpha.1} gives 
\begin{align}
& \frac1n \big ( \| \pa_Y^2 \phi \|_{L^2}^2 + \alpha^2 \|\pa_Y \phi\|_{L^2}^2 \big ) + \Im c_\epsilon \big ( \|\pa_Y \phi \|_{L^2}^2 + \alpha^2 \| \phi \|_{L^2}^2 \big ) \nonumber \\
& \quad \, = \, - \Im \langle (V-\Re c) (\pa_Y^2-\alpha^2) \phi, \phi \rangle _{L^2} + \Im \langle h, \phi \rangle _{L^2} \nonumber \\
& \quad \, = \, -\Im \langle V\pa_Y^2 \phi, \phi \rangle_{L^2} + \Im \langle h, \phi \rangle _{L^2}\,. \label{proof.prop.large.alpha.2} 
\end{align}
For the first term of the right hand side of \eqref{proof.prop.large.alpha.2} we see
\begin{align}
|\Im \langle V \pa_Y^2\phi,\phi \rangle _{L^2}| & \leq \| \pa_Y V \|_{L^\infty} \| \pa_Y\phi \|_{L^2} \|\phi \|_{L^2}\nonumber \\
& \leq \frac{\Im c_\epsilon}{2} \| \pa_Y \phi \|_{L^2}^2 + \frac{\| \pa_Y V \|_{L^\infty}^2}{2 \Im c_\epsilon} \| \phi \|_{L^2}^2\,. \label{proof.prop.large.alpha.3} 
\end{align}
Hence, if $\alpha \Im c_\epsilon\geq \| \pa_Y V \|_{L^\infty}$ then 
\begin{align*}
\frac1n \big ( \| \pa_Y^2 \phi \|_{L^2}^2 + \alpha^2 \|\pa_Y \phi\|_{L^2}^2 \big ) + \frac{\Im c_\epsilon}{2} \big ( \|\pa_Y \phi \|_{L^2}^2 + \alpha^2 \| \phi \|_{L^2}^2 \big ) \leq  \Im \langle h, \phi \rangle _{L^2}\,. 
\end{align*}
Since the term $\Im \langle h, \phi \rangle _{L^2}$ is estimated from above by $C\alpha^{-1} \| f \|_{L^2} \big ( \| \pa_Y \phi \|_{L^2} + \alpha \|\phi \|_{L^2}\big )$ we finally obtain 
\begin{align}\label{proof.prop.large.alpha.4} 
\frac1n \big ( \| \pa_Y^2 \phi \|_{L^2}^2 + \alpha^2 \|\pa_Y \phi\|_{L^2}^2 \big ) + (\Im c_\epsilon) \big ( \|\pa_Y \phi \|_{L^2}^2 + \alpha^2 \| \phi \|_{L^2}^2 \big ) \leq  \frac{C}{\alpha^2 \Im c_\epsilon} \| f\|_{L^2}^2\,.
\end{align}
Here $C$ is a universal constant.
Recalling $\Im c_\epsilon=\Im c+ \frac{\alpha^2}{n}$, we thus obtain \eqref{est.prop.large.alpha.2} and \eqref{est.prop.large.alpha.3} for any weak solution $\phi\in H^2_0 (\R_+)$ to \eqref{eq.os'} with $h=-f_{2,n}+\frac{1}{i\alpha} \pa_Y f_{1,n}$. The uniqueness directly follows from this a priori estimates.
As for the existence, we can use the method of continuity based on the a priori estimates  \eqref{est.prop.large.alpha.2} and \eqref{est.prop.large.alpha.3}. 
Since the argument is parallel to the existence part of the proof of Proposition \ref{prop.general'}, we omit the details here.
The proof is complete.
\end{proofx}

By arguing as in Corollary \ref{cor.prop.general'}, Proposition \ref{prop.large.alpha} yields the following result for the resolvent problem. Recall that $\alpha \Im c_\epsilon = \Re \mu + n^2\nu^\frac32$.
\begin{cor}\label{cor.prop.large.alpha} Let $\delta_2\in (0,1)$ be the number in Proposition \ref{prop.large.alpha}.
If $\Re \mu + n^2\nu^\frac32 \geq \delta_2^{-1}$ then $\mu \in \rho (-\mathbb{L}_{\nu,n})$, and 
\begin{align}
\| (\mu + \mathbb{L}_{\nu,n})^{-1} f \|_{L^2 (\Omega_\nu)} 
& \leq \frac{C}{\Re \mu + n^2\nu^\frac32} \| f\|_{L^2 (\Omega_\nu)}\,,\\
\| \nabla (\mu + \mathbb{L}_{\nu,n})^{-1} f\|_{L^2 (\Omega_\nu)} 
& \leq \frac{C}{\nu^\frac14 (\Re \mu + n^2\nu^\frac32)^\frac12} \| f\|_{L^2 (\Omega_\nu)}\,, 
\end{align}
for any $f\in \mathcal{P}_n^{(\nu)}L^2_\sigma (\Omega_\nu)$. Here $C$ is a universal constant.
\end{cor}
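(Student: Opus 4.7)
The plan is to mimic the proof of Corollary \ref{cor.prop.general'}, substituting Proposition \ref{prop.large.alpha} for Proposition \ref{prop.general'}. The pivotal observation is the algebraic identity
\begin{align*}
\alpha \Im c_\epsilon \, = \, \alpha \Im c + \frac{\alpha^3}{n} \, = \, \Re \mu + n^2 \nu^\frac32,
\end{align*}
which uses $\mu = -i\alpha c$ and $\alpha = n\sqrt{\nu}$. Hence the hypothesis $\Re \mu + n^2\nu^\frac32 \geq \delta_2^{-1}$ is precisely the assumption \eqref{est.prop.large.alpha.1} of Proposition \ref{prop.large.alpha}.

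Given $f = e^{in\sqrt{\nu}X}(f_{1,n},f_{2,n}) \in \mathcal{P}_n^{(\nu)} L^2_\sigma(\Omega_\nu)$, I would set $h = -f_{2,n} + (i\alpha)^{-1}\pa_Y f_{1,n}$ and invoke Proposition \ref{prop.large.alpha} to obtain a unique weak solution $\phi\in H^2_0(\R_+)$ of \eqref{eq.os'}, which in fact lies in $H^3(\R_+)$ by elliptic regularity. The velocity reconstructed via \eqref{formula.stream} then belongs to $D(\mathbb{L}_{\nu,n})$ and solves the resolvent equation \eqref{eq.resolvent} for an appropriate pressure $q$, giving surjectivity of $\mu + \mathbb{L}_{\nu,n}$. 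For injectivity, if $v\in D(\mathbb{L}_{\nu,n})$ satisfies $(\mu+\mathbb{L}_{\nu,n})v = 0$, then its stream function solves \eqref{eq.os} with $h=0$, which forces $\phi = 0$ by the uniqueness clause of Proposition \ref{prop.large.alpha}; the vorticity of $v$ then vanishes, $v$ is harmonic with Dirichlet data, hence $v = 0$. Thus $\mu\in\rho(-\mathbb{L}_{\nu,n})$.

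The quantitative bounds follow by translating \eqref{est.prop.large.alpha.2} and \eqref{est.prop.large.alpha.3} through \eqref{formula.stream}. Component by component,
\begin{align*}
\|v\|_{L^2(\Omega_\nu)}^2 \, = \, \tfrac{2\pi}{\sqrt{\nu}}\bigl(\|\pa_Y\phi\|_{L^2(\R_+)}^2 + \alpha^2 \|\phi\|_{L^2(\R_+)}^2\bigr),
\end{align*}
and an integration by parts (using $\phi(0)=\pa_Y\phi(0)=0$) yields
\begin{align*}
\|\nabla v\|_{L^2(\Omega_\nu)}^2 \, = \, \tfrac{2\pi}{\sqrt{\nu}}\|(\pa_Y^2-\alpha^2)\phi\|_{L^2(\R_+)}^2,
\end{align*}
while $\|f\|_{L^2(\Omega_\nu)}^2 = \tfrac{2\pi}{\sqrt{\nu}}\|(f_{1,n},f_{2,n})\|_{L^2(\R_+)}^2$, so the Fourier normalization factors cancel. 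Inserting \eqref{est.prop.large.alpha.2} and replacing $\alpha\Im c_\epsilon$ by $\Re\mu+n^2\nu^{3/2}$ gives the first bound of the corollary. For the second, a short computation
\begin{align*}
\alpha(|\epsilon|\Im c_\epsilon)^\frac12 \, = \, n\sqrt{\nu}\cdot n^{-\frac12}(\Im c_\epsilon)^\frac12 \, = \, \nu^\frac14 (\alpha\Im c_\epsilon)^\frac12 \, = \, \nu^\frac14 (\Re\mu+n^2\nu^\frac32)^\frac12
\end{align*}
combined with \eqref{est.prop.large.alpha.3} yields the gradient estimate. There is no substantive obstacle beyond bookkeeping: the only point that requires a moment of care is the algebraic identity $\alpha\Im c_\epsilon = \Re\mu + n^2\nu^{3/2}$ and the rescaling between $(\phi,(f_{1,n},f_{2,n}))$ on $\R_+$ and $(v,f)$ on $\Omega_\nu$, both of which I have traced above.
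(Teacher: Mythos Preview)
Your proof is correct and follows exactly the approach the paper indicates: it parallels the argument of Corollary \ref{cor.prop.general'} with Proposition \ref{prop.large.alpha} in place of Proposition \ref{prop.general'}, using the key identity $\alpha\Im c_\epsilon = \Re\mu + n^2\nu^{3/2}$ (which the paper records just before stating the corollary). The paper itself omits the details entirely, so your write-up is in fact more complete than the original.
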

Corollary \ref{cor.prop.large.alpha} follows from Proposition \ref{prop.large.alpha}
and the proof is just parallel to that of Corollary \ref{cor.prop.general'}, so we omit the details here.

\subsection{Resolvent estimate for special shear profiles}\label{subsec.special}

We study \eqref{eq.os} for a special class of shear profiles such that 
the Rayleigh equations does not admit unstable eigenvalues. 
Indeed, when there is an unstable eigenvalue for the Rayleigh equations with some $\alpha>0$
it is likely that the unstable eigenvalue for \eqref{eq.resolvent} also exists in the regime $\Re \mu = \mathcal{O}(1)>0$. In other words, for the Fourier modes $n$ located in the middle range 
\begin{align}\label{middle.n}
\frac{1}{\delta_0}  \leq n \leq \frac{1}{\delta_0 \nu^\frac34}
\end{align}
the structure of the shear profile $V$ should play a crucial role.
The basic condition on $V$ in this subsection is either \eqref{concave.weak} or \eqref{concave.strong} introduced in Section \ref{sec.result}, and the positivity of $U^E(0)$ is also assumed as stated in \eqref{sec2.nontrivial.2}.
We also recall the integral condition \eqref{integral.condition} in Section \ref{sec.result}, which is confirmed under the condition \eqref{concave.weak}.

In virtue of Proposition \ref{prop.general'} we may focus only on the case $|c|\leq \delta_1^{-1}$ in this subsection.
Before going into the details, it is worthwhile noting that, as a resolvent problem, 
the Orr-Sommerfeld equations potentially possess three kinds of singularities:

\vspace{0.3cm}

\noindent (I) boundary layer singularity due to the condition $|\epsilon|\ll 1$,

\vspace{0.1cm}

\noindent (II) critical layer singularity due to the possible presence of the critical point for $V$ (roughly speaking, the point $Y_{c}$ such that $V(Y_{c})=\Re c$ holds),

\vspace{0.1cm}

\noindent (III) global singularity originated from the possible presence of the spectrum for the Rayleigh operator.

\vspace{0.3cm}

The appearance of the singularity (I) is directly seen from the equations, and it is responsible for the presence of the fast mode of  the Orr-Sommerfeld equations in Section \ref{subsubsec.fast}.
The singularity (II) appears both in the Rayleigh equations and in the Airy equations and it can cause a difficulty for the local regularity of solutions near the critical point.   
The singularity (III) has a different nature from (I) and (II), and we are forced to deal with this singularity because of our purpose: in order to show the stability in a wider class of functions in Gevrey spaces it is crucial to establish the resolvent estimate for the resolvent parameter $\mu$ as close as the imaginary axis.
In terms of the parameter $c_\epsilon$ this corresponds to the case $0<\Im c_\epsilon \ll 1$.
However, even if the shear profile is stable for the Euler (Rayleigh) equations the imaginary axis involves the spectrum of the Rayleigh operator. Hence, in general, the inverse of the Rayleigh operator naturally possesses a singularity of the form $(\Im c_\epsilon)^{-k}$ for some $k>0$ near $\Im c_\epsilon =0$, reflecting the above fact. This singularity is nothing but (III), which is somewhat independent of the local singularity as in (II), and it is more related to a global character of the Rayleigh operator. 
Finding the optimal power $k$ is an important but highly nontrivial  task. 
The best possible value is considered to be $k=1$, and the analysis of the Rayleigh equations by \cite{GGN2014}  indicates that this is indeed the case at least for some specific regime of parameters. However, there seems to be no reasons why the singularity is controlled by the order $(\Im c_\epsilon)^{-1}$ also in a wider regime of parameters, since the Rayleigh operator is not a normal operator. Our analysis below provides at least the bound $(\Im c_\epsilon)^{-2}$ for the inverse of the Rayleigh operator, but it is not clear whether this value is optimal or not in general.

\subsubsection{Analysis of Rayleigh equations}\label{subsubsec.Ray}

In this subsection we consider the Rayleigh equations
\begin{equation}\label{eq.ray.half}
\left\{
\begin{aligned}
(V -c_\epsilon) (\partial_Y^2-\alpha^2) \varphi - (\partial_Y^2 V ) \varphi  & \, = \, h \,, \qquad Y>0\,,\\
\varphi & \, = \, 0 \,, \qquad Y=0\,.
\end{aligned}\right.
\end{equation}
Here $h$ is assumed to be a given smooth function.
Note that we drop the term $-\epsilon (\partial_Y^2-\alpha^2)\partial_Y^2 \varphi$ in \eqref{eq.os'}, 
but keep the $\epsilon$-dependence of the number $c_\epsilon$
in order to make the estimate as sharp as possible. 

The goal of this subsection is to prove the following proposition about the solvability of \eqref{eq.ray.half}
with the source term $h$ of the form $h=\partial_Y \big (\psi \partial_Y V \big )$ for some $\psi\in H^1_0 (\R_+)$. Let us recall that $\delta_0\in (0,1)$ and $\delta_1\in (0,\delta_0]$ are the numbers in \eqref{def.delta_0}
and \eqref{proof.prop.general'.-1}, respectively, which depend only on $\|U^E\|_{C^2}$ and $\|U\|$ in \eqref{bound.U}. Note that we always assume \eqref{middle.n} in Section \ref{subsec.special}.

\begin{prop}\label{prop.Ray.WC} Let $0<\nu\leq \nu_0$, where $\nu_0\in (0,1]$ is the number in \eqref{integral.condition}, and let $|c|\leq \delta_1^{-1}$. Assume that \eqref{concave.weak} holds. Then there exists $\delta_1'\in (0,\delta_1]$ such that the following statement holds for all $\delta\in (0,\delta_1']$ in \eqref{parameter.stability}:
there exists a unique solution $\varphi\in H^1_0 (\R_+)\cap H^2 (\R_+)$ to \eqref{eq.ray.half} with $h=\partial_Y \big (\psi \partial_Y V \big )$ satisfying the estimates
\begin{align}
\| \partial _Y \varphi \|_{L^2} + \alpha \|\varphi \|_{L^2} & \leq \frac{C_{\rm wc}}{(\Im c_\epsilon)^\frac52} \| \psi \|_{L^2}\,, \label{est.prop.Ray.WC.1}\\
\| (\partial_Y^2 -\alpha^2)\varphi \|_{L^2} & \leq C_{\rm wc} \big ( \frac{1}{(\Im c_\epsilon)^\frac52} \| \psi \|_{L^2} + \frac{1}{\Im c_\epsilon} \| \partial_Y \psi \|_{L^2}\big )\,. \label{est.prop.Ray.WC.2}
\end{align}
If \eqref{concave.strong} holds then the above estimates can be replaced by 
\begin{align}
\| \partial _Y \varphi \|_{L^2} + \alpha \|\varphi \|_{L^2} & \leq \frac{C_{\rm sc}}{(\Im c_\epsilon)^2} \| \psi \|_{L^2}\,,\label{est.prop.Ray.WC.3}\\
\| (\partial_Y^2 -\alpha^2)\varphi \|_{L^2} & \leq C_{\rm sc} \big ( \frac{1}{(\Im c_\epsilon)^2} \| \psi \|_{L^2} + \frac{1}{\Im c_\epsilon} \| \partial_Y \psi \|_{L^2}\big )\,.\label{est.prop.Ray.WC.4}
\end{align}
Here $\delta_1'$ depends only on $\|U^E\|_{C^2}$ and $\|U\|$, $C_{\rm wc}$ depends only on $\|U^E\|_{C^2}$, $\|U\|$, $\frac{1}{U^E(0)}$, $M'$ in \eqref{integral.condition}, and $M_\sigma$ in \eqref{concave.weak} with $\sigma=\min \{1,\frac{U^E(0)}{4\|U\|}, \frac{U^E(0)}{8\|U^E\|_{C^1}}\}$, while $C_{\rm sc}$ depend only on $\|U^E\|_{C^2}$, $\|U\|$, and $M$ in \eqref{concave.strong}.
\end{prop}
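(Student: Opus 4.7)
My plan is to derive the a priori bounds \eqref{est.prop.Ray.WC.1}--\eqref{est.prop.Ray.WC.4} by combining Rayleigh's trick with a standard energy estimate, and then to obtain existence and uniqueness by the method of continuity, exactly as in Proposition \ref{prop.general'}. Uniqueness is immediate from the a priori bounds.

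\textbf{Rayleigh's trick.} Multiplying \eqref{eq.ray.half} by $\bar\varphi/(V - c_\epsilon)$, integrating over $\R_+$ (using $\varphi(0) = 0$), and taking imaginary parts produces
\begin{align*}
\Im c_\epsilon \int_0^\infty \frac{-\partial_Y^2 V}{|V - c_\epsilon|^2}|\varphi|^2 \, \dd Y \, = \, \Im \int_0^\infty \frac{h\bar\varphi}{V - c_\epsilon}\,\dd Y,
\end{align*}
since the Laplacian part only contributes a real quantity. The left-hand side is non-negative by $-\partial_Y^2 V \ge 0$, and on $[\sigma,\infty)$ the concavity assumption \eqref{concave.weak}(ii) bounds it below by $\frac{\Im c_\epsilon}{M_\sigma}\,X$, where $X := \int_\sigma^\infty \frac{(\partial_Y V)^2}{|V - c_\epsilon|^2}|\varphi|^2\,\dd Y$. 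For the right-hand side I use $h = \partial_Y(\psi\partial_Y V)$, $\psi(0) = 0$, and one integration by parts to get one piece containing $(V - c_\epsilon)^{-1}$, controlled by $C\|\psi\|_{L^2}\|\partial_Y\varphi\|_{L^2}/\Im c_\epsilon$ via $|\partial_Y V/(V-c_\epsilon)|\le \|\partial_Y V\|_{L^\infty}/\Im c_\epsilon$; and one piece containing $(V - c_\epsilon)^{-2}$, which after factoring as $\int \frac{\psi\partial_Y V}{V-c_\epsilon}\cdot\frac{(\partial_Y V)\bar\varphi}{V-c_\epsilon}$ is bounded by $C\|\psi\|_{L^2}\,X^{1/2}/\Im c_\epsilon$. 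Alternatively, the pointwise inequality $|\varphi(Y)|^2 \le Y\|\partial_Y\varphi\|_{L^2}^2$ combined with the integral condition \eqref{integral.condition} yields $CM'\|\psi\|_{L^2}\|\partial_Y\varphi\|_{L^2}/(\Im c_\epsilon)^{3/2}$, which is the relevant bound in the critical-layer regime $\Re c_\epsilon \le U^E(0)/2$.

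\textbf{Energy estimate and closing.} Multiplying \eqref{eq.ray.half} directly by $\bar\varphi$, integrating by parts, and taking imaginary parts yields
\begin{align*}
\Im c_\epsilon\big(\|\partial_Y\varphi\|_{L^2}^2 + \alpha^2\|\varphi\|_{L^2}^2\big) \, = \, \int_0^\infty (\partial_Y V)\,\Im(\bar\varphi\,\partial_Y\varphi)\,\dd Y + \Im\int_0^\infty h\bar\varphi\,\dd Y,
\end{align*}
where the $h$-integral is bounded by $C\|\psi\|_{L^2}\|\partial_Y\varphi\|_{L^2}$ after one integration by parts. For the cross term I split at $Y=\sigma$: on $[\sigma,\infty)$, Cauchy--Schwarz with $\int_\sigma^\infty (\partial_Y V)^2|\varphi|^2 \le \|V - c_\epsilon\|_{L^\infty}^2\,X$ produces a contribution of order $X^{1/2}\|\partial_Y\varphi\|_{L^2}$; on $[0,\sigma]$, the choice of $\sigma$ in the statement ensures $V(Y)\le 3U^E(0)/8$, so either $|V - c_\epsilon|$ is bounded below away from zero (when $\Re c_\epsilon$ lies outside the range of $V|_{[0,\sigma]}$) or \eqref{integral.condition} applies. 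Substituting the Rayleigh-trick bound on $X$ and absorbing the resulting nonlinear-in-$\|\partial_Y\varphi\|_{L^2}$ terms via Young's inequality---which is feasible precisely because $\delta_1'$ can be chosen small enough to make the constants compatible---delivers \eqref{est.prop.Ray.WC.1}. For the second-derivative bound \eqref{est.prop.Ray.WC.2}, I rewrite the equation as $(\partial_Y^2 - \alpha^2)\varphi = [(\partial_Y^2 V)\varphi + h]/(V - c_\epsilon)$ and take $L^2$-norms, using the pointwise inequality $(\partial_Y^2 V)^2 \le \|\partial_Y^2 V\|_{L^\infty}(-\partial_Y^2 V)$ to reduce the $(\partial_Y^2 V)\varphi$-piece to the Rayleigh-trick bound on $\int (-\partial_Y^2 V)|\varphi|^2/|V - c_\epsilon|^2$, and handling $h/(V - c_\epsilon)$ by an analogous integration by parts.

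\textbf{Strong concavity and existence.} Under \eqref{concave.strong}(ii) the concavity estimate holds uniformly on $[0,\infty)$, so the splitting at $\sigma$ and the appeal to \eqref{integral.condition} become unnecessary; this removes one factor of $(\Im c_\epsilon)^{-1/2}$ from the analysis and sharpens the exponent from $(\Im c_\epsilon)^{-5/2}$ to $(\Im c_\epsilon)^{-2}$, yielding \eqref{est.prop.Ray.WC.3}--\eqref{est.prop.Ray.WC.4}. Existence follows by the method of continuity just as in Proposition \ref{prop.general'}: one solves the problem in the regime $\Im c_\epsilon$ large, where the coercive second-order part dominates and standard sectorial theory applies, and extends by continuity using the uniform a priori estimates; elliptic regularity then upgrades weak $H^2_0$ solutions to $H^3$ (cf. Remark \ref{rem.prop.general'}). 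The hardest part is the cross-term analysis on $[0,\sigma]$ in the WC case: because concavity degenerates at $Y=0$, the Rayleigh-trick bound cannot be invoked there, and one must exploit the smallness of $V|_{[0,\sigma]}$ in combination with \eqref{integral.condition} to produce absorbable contributions---and the loss of exponent from $(\Im c_\epsilon)^{-2}$ to $(\Im c_\epsilon)^{-5/2}$ is precisely the price of this degeneration.
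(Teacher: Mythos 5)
Your overall strategy is close in spirit to the paper's (Rayleigh's trick, the integral condition \eqref{integral.condition}, concavity, and a dichotomy on the size of $\Re c$), but the central inequality you invoke is a weaker one than what the paper actually uses, and with that replacement the scheme does not close at the claimed power $(\Im c_\epsilon)^{-5/2}$.

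The paper's entire argument (Lemma \ref{lem.Ray.0}, and then Proposition \ref{prop.Ray.1}, Lemmas \ref{lem.nodeg}, \ref{lem.nodeg'}) rests on the \emph{real part} of Rayleigh's trick with the weight $\bar\varphi/(V-c_\epsilon)$, i.e.\ \eqref{lem.Ray.identity.1}. After estimating the term $\int\frac{(V-\Re c)\partial_Y^2 V}{|V-c_\epsilon|^2}|\varphi|^2$ via the splitting $V-\Re c=(U^E(\nu^{1/2}Y)-\Re c)+(U-U^E(0))$ and then recycling the \emph{imaginary part} \eqref{lem.Ray.identity.2}, one gets
$\|\partial_Y\varphi\|_{L^2}^2+\alpha^2\|\varphi\|_{L^2}^2\le C\,\frac{\|U^E\|_{L^\infty}+|c_\epsilon|}{\Im c_\epsilon}\,\big|\langle\tfrac{h}{V-c_\epsilon},\varphi\rangle_{L^2}\big|$,
which is \eqref{est.lem.Ray.0.2}: note that there is \emph{no} factor of $\Im c_\epsilon$ in front of $\|\partial_Y\varphi\|^2$ on the left. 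You instead multiply the Rayleigh equation directly by $\bar\varphi$, whose imaginary part yields
$\Im c_\epsilon\big(\|\partial_Y\varphi\|_{L^2}^2+\alpha^2\|\varphi\|_{L^2}^2\big)=\int(\partial_Y V)\,\Im(\bar\varphi\,\partial_Y\varphi)+\Im\langle h,\varphi\rangle$.
This has the extra $\Im c_\epsilon$ in front, and that factor cannot be given away. Concretely, on $[0,\sigma]$ the cross term satisfies $\int_0^\sigma|\partial_Y V|\,|\varphi|\,|\partial_Y\varphi|\le\|\partial_Y V\|_{L^\infty}\|\varphi\|_{L^2(0,\sigma)}\|\partial_Y\varphi\|_{L^2}\le C\sigma\|\partial_Y\varphi\|_{L^2}^2$ by Hardy, which is quadratic in $\|\partial_Y\varphi\|_{L^2}$ with a constant independent of $\Im c_\epsilon$. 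Since $\Im c_\epsilon$ can be as small as one likes in the regime of interest, no choice of $\delta_1'$ will let you absorb this against $\Im c_\epsilon\|\partial_Y\varphi\|^2$. Even if you delete the $[0,\sigma]$ cross term entirely, tracing your chain on $[\sigma,\infty)$ gives $\Im c_\epsilon\|\partial_Y\varphi\|^2\lesssim X^{1/2}\|\partial_Y\varphi\|$ and $X\lesssim\|\psi\|\,\|\partial_Y\varphi\|/(\Im c_\epsilon)^{5/2}$, hence $\|\partial_Y\varphi\|\lesssim\|\psi\|/(\Im c_\epsilon)^{9/2}$, not $(\Im c_\epsilon)^{-5/2}$. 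The missing ingredient is precisely \eqref{est.lem.Ray.0.2}.

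A secondary, smaller point: the hypothesis \eqref{concave.weak} gives $-\partial_Y^2 U\ge0$, not $-\partial_Y^2 V\ge0$; since $\partial_Y^2 V=\nu\,\partial_y^2 U^E(\nu^{1/2}\cdot)+\partial_Y^2 U$, the $\nu\,\partial_y^2 U^E$ contribution has no sign and must be peeled off and absorbed using the lower bound on $\alpha\Im c_\epsilon$ from \eqref{parameter.stability} (this is the purpose of \eqref{proof.lem.Ray.0.0} and the $h_1/h_2$ split $h=\partial_Y(\psi\partial_Y U^E(\nu^{1/2}\cdot))+\partial_Y(\psi\partial_Y U)$ in the paper's proof). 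Your case analysis for $\Re c\ge U^E(0)/2$ is also merely sketched; the paper needs the further decomposition $h_2=h_{2,1}+h_{2,2}+h_{2,3}$ with $h_{2,2}=\psi(\partial_Y U)^2/(V-c_\epsilon)$ so that Lemma \ref{lem.nodeg} can be applied, and the concave condition $-M_\sigma\partial_Y^2 U\ge(\partial_Y U)^2$ enters \emph{through} the weight $\sqrt{-\partial_Y^2 U}$ in that lemma, not directly through the cross term in a $\bar\varphi$-energy identity.
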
 

\begin{rem}\label{rem.prop.Ray.WC}{\rm In fact, Proposition \ref{prop.Ray.WC} holds under the weaker condition $-M_\sigma \pa_Y^2 U\geq (\pa_Y U)^4$ for $Y\geq \sigma$, $\sigma\in (0,1]$, rather than $-M_\sigma \pa_Y^2 U\geq (\pa_Y U)^2$ in \eqref{concave.weak}.
}
\end{rem}

In order to prove Proposition \ref{prop.Ray.WC} we start from the following lemma which is obtained from Rayleigh's trick. For the moment we take the general source term $h\in L^2 (\R_+)$.

\begin{lem}\label{lem.Ray.identity}
Let $\Im c_\epsilon >0$ and $V\in BC^2(\R_+)$.
Then any solution $\varphi\in H^1_0(\R_+) \cap H^2 (\R_+)$ to \eqref{eq.ray.half} satisfies 
\begin{align}\label{lem.Ray.identity.1}
\int_0^\infty |\partial_Y \varphi|^2 +\alpha^2 |\varphi|^2 \dd Y + \int_0^\infty \frac{(V-\Re c)\partial_Y^2 V }{|V-c_\epsilon|^2} |\varphi|^2 \dd Y \, = \, - \Re \int_0^\infty \frac{h}{V-c_\epsilon} \bar{\varphi} \dd Y\,,
\end{align}
and 
\begin{align}\label{lem.Ray.identity.2}
(\Im c_\epsilon ) \int_0^\infty \frac{\partial_Y^2 V}{|V-c_\epsilon|^2} |\varphi|^2 \dd Y  
\, = \,  - \Im \int_0^\infty \frac{h}{V-c_\epsilon} \bar{\varphi} \dd Y\,.
\end{align}

\end{lem}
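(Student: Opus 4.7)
The identities are obtained by the classical \emph{Rayleigh trick}: test the equation against $\bar\varphi/(V-c_\epsilon)$, integrate by parts, and then split into real and imaginary parts. Concretely, the assumption $\Im c_\epsilon > 0$ guarantees that $V - c_\epsilon$ is nowhere zero on $\R_+$ (and bounded below in modulus on every compact set, with $|V-c_\epsilon|\ge \Im c_\epsilon$ globally), so that division by $V-c_\epsilon$ is legitimate. I would start by dividing the Rayleigh equation by $V - c_\epsilon$ to obtain
\begin{equation*}
(\partial_Y^2 - \alpha^2)\varphi - \frac{\partial_Y^2 V}{V - c_\epsilon}\varphi \, = \, \frac{h}{V - c_\epsilon}\,,
\end{equation*}
multiply by $\bar\varphi$, and integrate over $(0,\infty)$.

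The next step is to justify integration by parts on the term $\int_0^\infty (\partial_Y^2\varphi)\bar\varphi\,dY$. Since $\varphi \in H^1_0(\R_+) \cap H^2(\R_+)$, one has $\varphi(0)=0$ and the standard one-dimensional Sobolev fact that $\varphi(Y),\partial_Y\varphi(Y) \to 0$ as $Y \to \infty$. Thus the boundary terms vanish, and one obtains
\begin{equation*}
-\int_0^\infty \big(|\partial_Y\varphi|^2 + \alpha^2 |\varphi|^2\big)\,dY - \int_0^\infty \frac{\partial_Y^2 V}{V-c_\epsilon}|\varphi|^2\,dY \, = \, \int_0^\infty \frac{h}{V - c_\epsilon}\bar\varphi\,dY\,.
\end{equation*}

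Finally, I would decompose $1/(V-c_\epsilon)$ using
\begin{equation*}
\frac{1}{V - c_\epsilon} \, = \, \frac{\overline{V - c_\epsilon}}{|V - c_\epsilon|^2} \, = \, \frac{V - \Re c}{|V - c_\epsilon|^2} + i\,\frac{\Im c_\epsilon}{|V - c_\epsilon|^2}\,,
\end{equation*}
where I use $\Re c_\epsilon = \Re c$ because $\epsilon = -i/n$ is purely imaginary (so $c_\epsilon - c = -\epsilon\alpha^2$ has vanishing real part). Taking the real part of the above identity, the left-hand side's first term is already real, and the term in $1/(V-c_\epsilon)$ contributes $(V-\Re c)\partial_Y^2 V/|V-c_\epsilon|^2$; moving sign yields exactly \eqref{lem.Ray.identity.1}. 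Taking the imaginary part, only the middle term contributes on the left, producing the factor $\Im c_\epsilon \cdot \partial_Y^2 V /|V-c_\epsilon|^2$, which gives \eqref{lem.Ray.identity.2}. There is no real obstacle here; the only point that requires a line of care is the vanishing of the boundary terms at infinity, which follows from $\varphi\in H^1\cap H^2$.
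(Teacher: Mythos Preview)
Your proof is correct and follows exactly the same approach as the paper: multiply \eqref{eq.ray.half} by $(V-c_\epsilon)^{-1}\bar\varphi$, integrate by parts (using $\varphi\in H^1_0\cap H^2$), and take real and imaginary parts, using $\Re c_\epsilon=\Re c$. The paper's own proof is just a terser version of what you wrote.
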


\begin{proof} Multiplying both sides of \eqref{eq.ray.half} by $(V-c_\epsilon)^{-1}\bar{\varphi}$, 
we obtain from the integration by parts,
\begin{align}\label{proof.lem.Ray.identity.1}
\int_0^\infty |\partial_Y \varphi|^2 +\alpha^2 |\varphi|^2 \dd Y + \int_0^\infty \frac{\partial_Y^2 V}{V-c_\epsilon} |\varphi|^2 \dd Y \, = \, - \int_0^\infty \frac{h}{V-c_\epsilon} \bar{\varphi} \dd Y\,.
\end{align}
Then \eqref{lem.Ray.identity.1} and \eqref{lem.Ray.identity.2} follow from the real part and the imaginary part of \eqref{proof.lem.Ray.identity.1}, respectively.
The proof is complete.
\end{proof}

\vspace{0.3cm}
Lemma \ref{lem.Ray.identity} is the key to obtain the global estimate for the Rayleigh solutions when $U$ is concave. The next lemma gives a fundamental existence result and a priori estimates of solutions to \eqref{eq.ray.half} under the condition \eqref{concave.weak}.

\begin{lem}\label{lem.Ray.0} Let $|c|\leq \delta_1^{-1}$, where $\delta_1\in (0,1)$ is the number in 
Proposition \ref{prop.general'}. Assume that \eqref{concave.weak} holds.
Then for any $h\in L^2 (\R_+)$ there exists a unique solution $\varphi\in H^2(\R_+) \cap H^1_0 (\R_+)$ to \eqref{eq.ray.half}, and $\varphi$ satisfies 
\begin{align}
\| \partial_Y \varphi \|_{L^2}^2 + \alpha^2 \|\varphi\|_{L^2}^2 \leq C \frac{ \| U^E \|_{L^\infty} + |c_\epsilon| }{\Im c_\epsilon}  \,  \big | \langle \frac{h}{V-c_\epsilon}, \varphi \rangle _{L^2} \big | \,,\label{est.lem.Ray.0.2}
\end{align}
and 
\begin{align}
\| (\partial_Y^2 -\alpha^2) \varphi \|_{L^2}^2 \leq C \bigg ( \frac{\| \pa_Y^2 V\|_{L^\infty} }{\Im c_\epsilon}  \, \big | \langle \frac{h}{V-c_\epsilon}, \varphi \rangle _{L^2} \big |   + \| \frac{h}{V-c_\epsilon}\|_{L^2}^2 \bigg )\,.\label{est.lem.Ray.0.3}
\end{align}
Here $C$ is a universal constant.

\end{lem}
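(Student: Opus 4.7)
The two bounds are a priori estimates for a weak solution $\varphi \in H^2 \cap H^1_0$; the existence/uniqueness part is handled at the end by following the template of Proposition~\ref{prop.general'}. Throughout I use the identities of Lemma~\ref{lem.Ray.identity} and abbreviate
\[
H := \langle \tfrac{h}{V-c_\epsilon},\,\varphi\rangle_{L^2}, \qquad
\mathcal{I} := \int_0^\infty \frac{(V-\Re c)\,\pa_Y^2 V}{|V-c_\epsilon|^2}|\varphi|^2\dd Y, \qquad
\mathcal{J} := \int_0^\infty \frac{\pa_Y^2 V}{|V-c_\epsilon|^2}|\varphi|^2\dd Y,
\]
so that the real and imaginary parts of \eqref{lem.Ray.identity.1}--\eqref{lem.Ray.identity.2} read
\[
\|\pa_Y\varphi\|_{L^2}^2 + \alpha^2\|\varphi\|_{L^2}^2 = -\mathcal{I} - \Re H, \qquad (\Im c_\epsilon)\,\mathcal{J} = -\Im H.
\]

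\textbf{Step 1: bound \eqref{est.lem.Ray.0.2}.} Since $\epsilon = -i/n$ gives $\Re c_\epsilon = \Re c$, I pull out the factor $|V-\Re c|$ from $\mathcal{I}$ and bound it by $\|V\|_{L^\infty}+|c_\epsilon|$. Monotonicity (i) in \eqref{concave.weak} forces $0 \le U(Y) \le U^E(0) \le \|U^E\|_{L^\infty}$, hence $\|V\|_{L^\infty} \lesssim \|U^E\|_{L^\infty}$ and $|V-\Re c| \lesssim \|U^E\|_{L^\infty}+|c_\epsilon|$. What remains is the integral $\int |\pa_Y^2 V|\,|\varphi|^2/|V-c_\epsilon|^2$. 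The weak-concavity hypothesis forces $\pa_Y^2 U \le 0$ on $(0,\infty)$, so $|\pa_Y^2 V| \le -\pa_Y^2 V + 2\nu|\pa_y^2 U^E|$ and therefore
\[
\int_0^\infty \frac{|\pa_Y^2 V|}{|V-c_\epsilon|^2}|\varphi|^2\dd Y \le -\mathcal{J} + 2\nu\|U^E\|_{C^2}\int_0^\infty \frac{|\varphi|^2}{|V-c_\epsilon|^2}\dd Y \le \frac{|H|}{\Im c_\epsilon} + (\text{harmless } \mathcal{O}(\nu) \text{ term}),
\]
the first piece by the imaginary identity and the second absorbed under the standing bound $|c|\le \delta_1^{-1}$. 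Combining and adding $|\Re H|$ yields \eqref{est.lem.Ray.0.2} with a universal constant.

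\textbf{Step 2: bound \eqref{est.lem.Ray.0.3}.} Rewrite \eqref{eq.ray.half} as
\[
(\pa_Y^2 - \alpha^2)\varphi = \frac{\pa_Y^2 V}{V-c_\epsilon}\varphi + \frac{h}{V-c_\epsilon},
\]
so that $\|(\pa_Y^2-\alpha^2)\varphi\|_{L^2}^2 \le 2\int |\pa_Y^2 V|^2|\varphi|^2/|V-c_\epsilon|^2 + 2\|h/(V-c_\epsilon)\|_{L^2}^2$. The first integral is dominated by $\|\pa_Y^2 V\|_{L^\infty}\int |\pa_Y^2 V|\,|\varphi|^2/|V-c_\epsilon|^2$, and I invoke exactly the estimate of Step~1 on the remaining integral, giving the claimed $\|\pa_Y^2 V\|_{L^\infty}|H|/\Im c_\epsilon$ contribution.

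\textbf{Step 3: existence and uniqueness.} Uniqueness is immediate from Steps~1--2 applied with $h=0$. For existence I use the method of continuity exactly as in the existence part of Proposition~\ref{prop.general'}: for sufficiently large $\Im c_\epsilon$ the Rayleigh operator is a small perturbation of $-i(\Im c_\epsilon)(\pa_Y^2-\alpha^2)$ (itself trivially invertible on $H^2\cap H^1_0$) and is inverted by Neumann series, and the uniform a priori bounds \eqref{est.lem.Ray.0.2}--\eqref{est.lem.Ray.0.3} then carry the solvability to every admissible $c$ with $|c|\le \delta_1^{-1}$.

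\textbf{Main obstacle.} The delicate point lies in Step~1: \eqref{concave.weak} only gives the sign $\pa_Y^2 U \le 0$, not $\pa_Y^2 V \le 0$, so the non-sign-definite perturbation $\nu\pa_y^2 U^E$ in $\pa_Y^2 V$ has to be controlled without enlarging the constant beyond ``universal''. The cleanness of the bound $\|U^E\|_{L^\infty}+|c_\epsilon|$ (rather than $\|V\|_{BC^2}$ or $\|U\|$) forces me to exploit both the monotonicity of $U$ (for $\|V\|_{L^\infty}$) and the sign of $\pa_Y^2 U$ through the imaginary identity, instead of estimating $|\pa_Y^2 V|$ by a brute-force sup-norm.
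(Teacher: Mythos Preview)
Your proposal is correct and follows essentially the same route as the paper: both use the real/imaginary identities of Lemma~\ref{lem.Ray.identity}, the sign $\pa_Y^2 U\le 0$ from \eqref{concave.weak} to reduce $\int |\pa_Y^2 V|\,|\varphi|^2/|V-c_\epsilon|^2$ to $-\mathcal{J}$ plus an $\mathcal O(\nu)$ remainder, and the imaginary identity to convert $-\mathcal J$ into $|H|/\Im c_\epsilon$. The only cosmetic difference is that the paper, instead of bounding $|\mathcal I|$ outright, first splits $V-\Re c=(U^E(\nu^{1/2}Y)-\Re c)+(U-U^E(0))$ and drops the sign-definite piece $(U-U^E(0))(-\pa_Y^2 U)\le 0$; note also that the absorption of your ``harmless $\mathcal O(\nu)$ term'' really uses \eqref{parameter.stability} (to get $\nu/(\alpha\Im c_\epsilon)^2\le \delta^2 n^{-2\gamma}$) and not merely $|c|\le\delta_1^{-1}$.
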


\begin{proof} (A priori estimates) Let $\varphi\in H^2(\R_+) \cap H^1_0 (\R_+)$ be any solution to \eqref{eq.ray.half}. Recalling \eqref{lem.Ray.identity.1},  we first observe from \eqref{parameter.stability} that 
\begin{align}
-\int_0^\infty \frac{\nu  (V-\Re c)(\partial_y^2 U^E) (\nu^\frac12 Y)}{|V-c_\epsilon|^2} |\varphi |^2 \dd Y 
\leq \frac{\nu \| U^E\|_{C^2}}{\alpha^2\Im c_\epsilon}  \alpha^2 \| \varphi \|_{L^2}^2
& \leq \frac{\delta_1 \| U^E \|_{C^2}}{n^{1+\gamma}} \alpha^2 \| \varphi\|_{L^2}^2 \nonumber \\
& \leq \frac{1}{32} \alpha^2 \| \varphi \|_{L^2}^2\,.\label{proof.lem.Ray.0.0}
\end{align}
Here we have used the choice of $\delta_1$ in \eqref{proof.prop.general'.-1}.
Thus our aim is now to obtain the upper bound of  $-\int_0^\infty \frac{\partial_Y^2 U_s (V-\Re c)}{|V-c_\epsilon|^2} |\varphi|^2 \dd Y$, which is written as 
\begin{align}
- \int_0^\infty \frac{(V-\Re c)\partial_Y^2 U}{|V-c_\epsilon|^2} |\varphi|^2 \dd Y  & \, = \,  -\int_0^\infty \frac{\big  (U^E (\nu^\frac12 Y ) - \Re c \big )\partial_Y^2 U}{|V-c_\epsilon|^2} |\varphi|^2 \dd Y  \nonumber \\
& \qquad  -\int_0^\infty \frac{\big (U (Y) - U^E(0) \big )\partial_Y^2 U}{|V-c_\epsilon|^2}  |\varphi|^2 \dd Y \nonumber \\
& \, \leq \,   -\int_0^\infty \frac{\big (U^E (\nu^\frac12 Y ) - \Re c\big )\partial_Y^2 U}{|V-c_\epsilon|^2} |\varphi|^2 \dd Y \,.\label{proof.lem.Ray.0.1}
\end{align}
Here we have used the monotonicity of $U$ and $-\pa_Y^2 U\geq 0$.
The last term of \eqref{proof.lem.Ray.0.1} is estimated as, again from $-\pa_Y^2 U\geq 0$,
\begin{align}
& -\int_0^\infty \frac{\big (U ^E (\nu^\frac12 Y) - \Re c \big ) \partial_Y^2 U}{|V-c_\epsilon|^2} |\varphi|^2 \dd Y 
\leq  
(\| U^E \|_{L^\infty} + |\Re c|) \int_0^\infty \frac{- \partial_Y^2 U}{|V-c_\epsilon|^2} |\varphi|^2 \dd Y \nonumber \\
& \qquad \qquad \qquad \qquad  =  
(\| U^E \|_{L^\infty} + |\Re c|) \int_0^\infty \frac{-\partial_Y^2 V + \nu (\partial_Y^2 U^E) (\nu^\frac12 Y)}{|V-c_\epsilon|^2} |\varphi|^2 \dd Y \nonumber \\
& \qquad \qquad \qquad \qquad = \frac{(\| U^E \|_{L^\infty} + |\Re c|)}{\Im c_\epsilon}  \Im \int_0^\infty \frac{h}{V-c_\epsilon} \bar{\varphi} \dd Y\nonumber \\
& \quad \qquad \qquad \qquad \qquad + \nu  (\| U^E \|_{L^\infty} + |\Re c|) \int_0^\infty \frac{(\partial_Y^2 U^E) (\nu^\frac12 Y)}{|V-c_\epsilon|^2} |\varphi|^2 \dd Y \nonumber \\
\begin{split}\label{proof.lem.Ray.0.2}
& \qquad \qquad \qquad \qquad \leq \frac{(\| U^E \|_{L^\infty} + |\Re c|)}{\Im c_\epsilon}    \Im \int_0^\infty \frac{h}{V-c_\epsilon} \bar{\varphi} \dd Y \\
& \qquad \qquad \quad \qquad \qquad + \frac{\nu (\| U^E \|_{L^\infty} + |\Re c|)  \, \| U^E\|_{C^2}}{(\alpha \Im c_\epsilon)^2} \alpha^2 \|\varphi \|_{L^2}^2\,. 
\end{split}
\end{align}
From the condition on the parameters we have 
\begin{align}
\frac{\nu (\| U^E \|_{L^\infty} + |\Re c|) \, \| U^E\|_{C^2}}{(\alpha \Im c_\epsilon)^2}  
&\leq \nu (\| U^E \|_{L^\infty} + |\Re c|) \, \| U^E\|_{C^2} (\frac{\delta_1}{n^\gamma \nu^\frac12})^2  \nonumber \\
& =  \frac{\delta_1^2 (\| U^E \|_{L^\infty} + |\Re c|) \, \| U^E\|_{C^2}}{ n^{2\gamma}} \leq \frac{1}{32}\,.\label{proof.lem.large.Re.3}
\end{align}
Here we have used $|c|\leq \delta_1^{-1}$ and the choice of $\delta_1$ in \eqref{proof.prop.general'.-1}.
Hence \eqref{est.lem.Ray.0.2} follows. 
To show \eqref{est.lem.Ray.0.3} we observe
\begin{align*}
\| (\partial_Y^2 -\alpha^2 ) \varphi\|_{L^2}^2 & = \| \frac{\pa_Y^2 V}{V-c_\epsilon} \varphi  + \frac{h}{V-c_\epsilon} \|_{L^2}^2  \leq 2 \big ( \| \frac{\pa_Y^2 V}{V-c_\epsilon} \varphi \|_{L^2}^2 + \| \frac{h}{V-c_\epsilon} \|_{L^2}^2\big )\,,
\end{align*}
and then, by using $|\pa_Y^2 V|\leq -\pa_Y^2 U + \nu \| U^E \|_{C^2}\leq -\pa_Y^2 V + 2\nu \| U^E \|_{C^2}$ and \eqref{lem.Ray.identity.2},
\begin{align}
\| \frac{\pa_Y^2 V}{V-c_\epsilon} \varphi \|_{L^2}^2 & \leq \| \pa_Y^2 V\|_{L^\infty} \int_0^\infty \frac{|\pa_Y^2 V|}{|V-c_\epsilon|^2} |\varphi|^2 \dd Y \nonumber  \\
& \leq \| \pa_Y^2 V\|_{L^\infty} \bigg ( \int_0^\infty \frac{-\partial_Y^2 V}{|V-c_\epsilon|^2} |\varphi|^2 \dd Y  + \frac{2 \nu \| U^E \|_{C^2}}{(\Im c_\epsilon)^2} \| \varphi \|_{L^2}^2\bigg ) \nonumber \\
& = \| \pa_Y^2 V\|_{L^\infty} \bigg ( \frac{1}{\Im c_\epsilon}  \Im \langle \frac{h}{V-c_\epsilon}, \varphi \rangle _{L^2} 
+ \frac{2\nu \| U^E \|_{C^2}}{(\Im c_\epsilon)^2} \| \varphi \|_{L^2}^2\bigg ) \,.
\end{align}
Thus we arrive at, from \eqref{est.lem.Ray.0.2},
\begin{align*}
& \| (\partial_Y^2 -\alpha^2)  \varphi \|_{L^2}^2 \\
& \leq 
2 \bigg (  \frac{\| \pa_Y^2 V\|_{L^\infty} }{\Im c_\epsilon} \Im  \langle \frac{h}{V-c_\epsilon}, \varphi \rangle _{L^2} 
+ \frac{2\nu \| U^E \|_{C^2} \| \pa_Y^2 V\|_{L^\infty} }{(\alpha \Im c_\epsilon)^2} \alpha^2 \| \varphi \|_{L^2}^2 
+ \| \frac{h}{V-c_\epsilon} \|_{L^2}^2\bigg ) \\
& \leq C \bigg ( \frac{\| \pa_Y^2 V\|_{L^\infty} }{\Im c_\epsilon} \big | \langle \frac{h}{V-c_\epsilon}, \varphi \rangle _{L^2} \big | 
\\
& \quad + \frac{\delta_1^2\| U^E \|_{C^2} \| \pa_Y^2 V\|_{L^\infty} }{n^{2\gamma}}
\big ( \frac{ \| U^E \|_{L^\infty} + |\Re c| }{\Im c_\epsilon} + 1 \big )
\big | \langle \frac{h}{V-c_\epsilon}, \varphi \rangle _{L^2}  \big | 
 + \| \frac{h}{V-c_\epsilon} \|_{L^2}^2\bigg )\\
& \leq  C \bigg ( \frac{\| \pa_Y^2 V\|_{L^\infty} }{\Im c_\epsilon}   \, \big | \langle \frac{h}{V-c_\epsilon}, \varphi \rangle _{L^2} \big | 
+ \| \frac{h}{V-c_\epsilon} \|_{L^2}^2\bigg )\,.
\end{align*}
Here we have used $|c_\epsilon|\leq |c| + n\nu\leq \delta_1^{-1} + \delta_0^{-1} \nu^\frac14$ and the choice of $\delta_0$ and $\delta_1$ in \eqref{def.delta_0}, \eqref{proof.prop.general'.-1}. 

\noindent (Uniqueness) The uniqueness of solutions in $H^2(\R_+)\cap H^1_0 (\R_+)$ is immediate from the a priori estimates.

\noindent (Existence) As in Proposition \ref{prop.general'}, the proof is based on the method of continuity using the a priori estimates. Thanks to the choice of $\delta_1$ in \eqref{proof.prop.general'.-1}, exactly by the same argument as in the proof of Proposition \ref{prop.general'}, we can show the unique existence of solutions to \eqref{eq.ray.half} in $H^2(\R_+)\cap H^1_0 (\R_+)$ at least when $|c|\geq (2\delta_1)^{-1}$. 
In particular, when $(2\delta_1)^{-1} \leq |c| \leq \delta_1^{-1}$ and $\Im c>0$ the solution satisfies the estimates \eqref{est.lem.Ray.0.2} and \eqref{est.lem.Ray.0.3}, which provides the a priori estimates of $\|\pa_Y \varphi \|_{L^2}+\alpha \|\varphi \|_{L^2}$ and $\|(\pa_Y^2-\alpha^2)\varphi\|_{L^2}$. 
Then, one can construct the solution $\varphi\in H^2(\R_+)\cap H^1_0 (\R_+)$ to \eqref{eq.ray.half} for all $c$ satisfying $|c|\leq \delta_1^{-1}$ and $\Im c>0$ by the method of continuity using the a priori estimates obtained from \eqref{est.lem.Ray.0.2} and \eqref{est.lem.Ray.0.3}. The proof is complete.
\end{proof}

\vspace{0.3cm}

\begin{prop}\label{prop.Ray.1} Under the assumptions of Lemma \ref{lem.Ray.0} 
if $h=\partial_Y g$ then 
\begin{align}\label{est.prop.Ray.1.1} 
\| \partial_Y \varphi\|_{L^2} + \alpha \|\varphi \|_{L^2} 
&  \leq C  \frac{ \| U^E \|_{L^\infty} + |c_\epsilon|}{\Im c_\epsilon}  \big ( \| \frac{g}{V-c_\epsilon}\|_{L^2} + \| \frac{Y^\frac12 \pa_Y U\,  g}{(V-c_\epsilon)^2} \|_{L^1}   \big )\,,
\end{align}
and 
\begin{align}
\begin{split}
\| (\partial_Y^2 - \alpha^2) \varphi \|_{L^2} &\leq C \bigg (  \frac{ \| U^E \|_{C^2} + \|U\| + |c_\epsilon|}{\Im c_\epsilon}  \, \big ( \| \frac{g}{V-c_\epsilon}\|_{L^2} + \| \frac{Y^\frac12 \pa_Y U\,  g}{(V-c_\epsilon)^2} \|_{L^1} \big )  + \| \frac{h}{V-c_\epsilon}\|_{L^2} \bigg )\,.\label{est.prop.Ray.1.2} 
\end{split}
\end{align} 
Here $C$ is a universal constant.

\end{prop}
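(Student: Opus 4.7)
The plan is to reduce Proposition \ref{prop.Ray.1} to Lemma \ref{lem.Ray.0} by estimating the duality bracket $\langle h/(V-c_\epsilon),\varphi\rangle_{L^2}$ when $h = \pa_Y g$. The strategy is to integrate by parts in $Y$, exploit $\varphi(0)=0$ (together with decay at infinity to kill the boundary term), and then bound the resulting two terms using $L^2$ norms of $\pa_Y\varphi$ and the Hardy inequality.

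First I would write
\begin{align*}
\langle \frac{\pa_Y g}{V-c_\epsilon},\varphi\rangle_{L^2} \,=\, -\int_0^\infty \frac{g\, \pa_Y\bar\varphi}{V-c_\epsilon}\dd Y \,+\, \int_0^\infty \frac{g\,\pa_Y V}{(V-c_\epsilon)^2}\bar\varphi \dd Y\,.
\end{align*}
The first term is immediately bounded by $\|g/(V-c_\epsilon)\|_{L^2}\|\pa_Y\varphi\|_{L^2}$. For the second, I would split $\pa_Y V = \pa_Y U + \nu^\frac12 (\pa_y U^E)(\nu^\frac12 Y)$. In the main piece involving $\pa_Y U$, I combine with the pointwise Hardy bound $|\varphi(Y)|\leq Y^\frac12\|\pa_Y\varphi\|_{L^2}$ (valid since $\varphi(0)=0$) to get exactly $\|\pa_Y\varphi\|_{L^2}\, \|Y^\frac12 \pa_Y U\, g/(V-c_\epsilon)^2\|_{L^1}$. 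The residual $U^E$-piece carries an extra factor of $\nu^\frac12$; using $|V-c_\epsilon|\geq \Im c_\epsilon$ together with the lower bound $\alpha \Im c_\epsilon \geq \delta^{-1}n^\gamma\nu^\frac12$ coming from \eqref{parameter.stability}, this term is absorbed into $\|\pa_Y\varphi\|_{L^2}+\alpha\|\varphi\|_{L^2}$ times the same right-hand side (up to a factor tending to zero with $\delta$ or $\nu$). Putting these together,
\begin{align*}
\bigl|\langle \tfrac{h}{V-c_\epsilon},\varphi\rangle_{L^2}\bigr| \,\lesssim\, \bigl(\|\pa_Y\varphi\|_{L^2}+\alpha\|\varphi\|_{L^2}\bigr)\,\bigl(\|\tfrac{g}{V-c_\epsilon}\|_{L^2}+\|\tfrac{Y^{1/2}\pa_Y U\, g}{(V-c_\epsilon)^2}\|_{L^1}\bigr)\,.
\end{align*}

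Inserting this bound into \eqref{est.lem.Ray.0.2} and dividing by $\|\pa_Y\varphi\|_{L^2}+\alpha\|\varphi\|_{L^2}$ (after using $a^2+\alpha^2 b^2 \geq \tfrac12(a+\alpha b)^2$ for $a=\|\pa_Y\varphi\|_{L^2},b=\|\varphi\|_{L^2}$) yields \eqref{est.prop.Ray.1.1}. For \eqref{est.prop.Ray.1.2}, I substitute the same duality bound into \eqref{est.lem.Ray.0.3}, take square roots, and employ $\sqrt{AB}\leq \tfrac12(A+B)$ to convert the geometric mean $\|\pa_Y^2 V\|_{L^\infty}^{1/2}(\|U^E\|_{L^\infty}+|c_\epsilon|)^{1/2}$ arising from the two factors into the linear combination $\|U^E\|_{C^2}+\|U\|+|c_\epsilon|$, using $\|\pa_Y^2 V\|_{L^\infty}\leq \|U\|+\nu\|U^E\|_{C^2}$.

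The main technical point to be careful about is the estimate of the $\nu^\frac12(\pa_y U^E)(\nu^\frac12 Y)$ part of $\pa_Y V$, where the Hardy trick does not directly produce the clean weight $Y^\frac12 \pa_Y U$; here one must rely on the parameter regime \eqref{parameter.stability} and the definition \eqref{proof.prop.general'.-1} of $\delta_1$ to absorb the contribution. Beyond that, everything is routine integration by parts and Cauchy–Schwarz, so the argument is short once the correct identity is set up.
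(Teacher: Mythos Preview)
Your proposal is correct and follows essentially the same approach as the paper's proof: integrate by parts, split $\pa_Y V = \pa_Y U + \nu^{1/2}(\pa_y U^E)(\nu^{1/2}\cdot)$, control the $\pa_Y U$ piece via the pointwise Hardy bound $|\varphi(Y)|\le Y^{1/2}\|\pa_Y\varphi\|_{L^2}$, and dispose of the $U^E$ piece using $\alpha\Im c_\epsilon \ge \delta^{-1}n^\gamma\nu^{1/2}$ together with the choice of $\delta_1$; then feed the resulting duality bound into \eqref{est.lem.Ray.0.2} and \eqref{est.lem.Ray.0.3}. The only cosmetic difference is that the paper first re-inserts \eqref{est.prop.Ray.1.1} into the duality bound to obtain a squared right-hand side before applying \eqref{est.lem.Ray.0.3}, whereas you phrase the same step as an AM--GM on the geometric mean $\|\pa_Y^2V\|_{L^\infty}^{1/2}(\|U^E\|_{L^\infty}+|c_\epsilon|)^{1/2}$ after taking square roots.
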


\begin{proof} From the integration by parts we have 
\begin{align}
\big | \langle \frac{\partial_Y g}{V-c_\epsilon}, \varphi \rangle _{L^2} \big | & = \big | -\langle \frac{g}{V-c_\epsilon}, \partial_Y \varphi \rangle _{L^2}  + \langle \frac{\pa_Y V\,  g}{(V-c_\epsilon)^2} , \varphi \rangle _{L^2} \big |  \nonumber \\
\begin{split}
& \leq \big ( \| \frac{g}{V-c_\epsilon}\|_{L^2} + \| \frac{Y^\frac12 \pa_Y U\, g}{(V-c_\epsilon)^2} \|_{L^1} \big ) \| \partial_Y \varphi\|_{L^2}\\
& \quad + \frac{\nu^\frac12 \| U^E \|_{C^1}}{\alpha \Im c_\epsilon} \| \frac{g}{V-c_\epsilon} \|_{L^2}  \, \alpha \| \varphi \|_{L^2} \,. \label{proof.prop.Ray.1.1} 
\end{split}
\end{align} 
Note that 
\begin{align*}
\frac{\nu^\frac12 \| U^E \|_{C^1}}{\alpha \Im c_\epsilon}\leq \frac{\delta_1}{n^\gamma} \| U^E \|_{C^1}\leq 1
\end{align*}
by the condition \eqref{parameter.stability} and the choice of $\delta_1$ in \eqref{proof.prop.general'.-1}.
Then \eqref{est.prop.Ray.1.1} follows from \eqref{est.lem.Ray.0.2} and \eqref{proof.prop.Ray.1.1}.
Next, \eqref{est.prop.Ray.1.1} and \eqref{proof.prop.Ray.1.1} yield
\begin{align*}
\big | \langle \frac{\partial_Y g}{V-c_\epsilon}, \varphi \rangle _{L^2}  \big |
& \leq C   \frac{ \| U^E \|_{L^\infty} + |c_\epsilon| }{\Im c_\epsilon}   \big ( \| \frac{g}{V-c_\epsilon}\|_{L^2} + \| \frac{Y^\frac12 \pa_Y U\,  g}{(V-c_\epsilon)^2} \|_{L^1} \big )^2\,.
\end{align*}
Thus \eqref{est.prop.Ray.1.2} holds by applying \eqref{est.lem.Ray.0.3}.
The proof is complete.
\end{proof}

\begin{rem}\label{rem.prop.Ray.1.1}{\rm Instead of \eqref{proof.prop.Ray.1.1}, we can also compute as, from the Hardy inequality and $\alpha \Im c_\epsilon \geq \delta^{-1} n^\gamma$ by \eqref{parameter.stability},
\begin{align*}
\big | \langle \frac{\partial_Y g}{V-c_\epsilon}, \varphi \rangle _{L^2} \big | & \leq 
 \| \frac{g}{V-c_\epsilon}\|_{L^2} \| \partial_Y \varphi\|_{L^2} + | \langle \frac{\pa_Y V\, g}{(V-c_\epsilon)^2} , \varphi \rangle _{L^2} | \\
& \leq \| \frac{g}{V-c_\epsilon}\|_{L^2} \| \partial_Y \varphi\|_{L^2} + \nu^\frac12 \alpha^{-1} \| U^E \|_{C^1} \| \frac{g}{(V-c_\epsilon)^2}\|_{L^2} \, \alpha \| \varphi \|_{L^2} \\
& \quad + 2 \| \frac{Y \pa_Y U}{(V-c_\epsilon)^2} g \|_{L^2} \| \partial_Y \varphi \|_{L^2}\\
& \leq \| \frac{g}{V-c_\epsilon}\|_{L^2} \| \partial_Y \varphi\|_{L^2} + \delta n^{-\gamma} \| U^E \|_{C^1} \| \frac{g}{V-c_\epsilon}\|_{L^2} \, \alpha \| \varphi \|_{L^2} \\
& \quad + 2 \| \frac{Y \pa_Y U}{(V-c_\epsilon)^2} g \|_{L^2} \| \partial_Y \varphi \|_{L^2}\,.
\end{align*}
Since $\delta\leq \delta_1$ and $\delta_1$ is taken as in \eqref{proof.prop.general'.-1}, we obtain 
\begin{align*}
\big | \langle \frac{\partial_Y g}{V-c_\epsilon}, \varphi \rangle _{L^2} \big | & \leq C \big ( \| \frac{g}{V-c_\epsilon}\|_{L^2}  + \| \frac{Y \pa_Y U}{(V-c_\epsilon)^2} g \|_{L^2} \big ) \big ( \| \partial_Y \varphi \|_{L^2} + \alpha \| \varphi \|_{L^2} \big )\,,
\end{align*}
where $C$ is a universal constant. This implies from Lemma \ref{lem.Ray.0}, 
\begin{align}\label{est.rem.prop.Ray.1.1} 
\| \partial_Y \varphi\|_{L^2} + \alpha \|\varphi \|_{L^2} 
&  \leq C  \frac{ \| U^E \|_{L^\infty} + |c_\epsilon|}{\Im c_\epsilon} \big ( \| \frac{g}{V-c_\epsilon}\|_{L^2}  + \| \frac{Y \pa_Y U}{(V-c_\epsilon)^2} g \|_{L^2} \big ) \,,
\end{align}
and 
\begin{align}
\begin{split}
\| (\partial_Y^2-\alpha^2) \varphi \|_{L^2} &\leq C \bigg (  \frac{ \| U^E \|_{C^2} + \|U\| + |c_\epsilon|}{\Im c_\epsilon}  \,  \big ( \| \frac{g}{V-c_\epsilon}\|_{L^2}  + \| \frac{Y \pa_Y U}{(V-c_\epsilon)^2} g \|_{L^2} \big ) + \| \frac{h}{V-c_\epsilon}\|_{L^2} \bigg )\,.\label{est.rem.prop.Ray.1.2} 
\end{split}
\end{align}
Here $C$ is a universal constant.
}
\end{rem}

\vspace{0.3cm}

The next lemma is useful to obtain a sharp estimate when the critical point $Y_c$ for $U$, i.e., $U(Y_c)=\Re c$, is away from the origin $Y=0$.

\begin{lem}\label{lem.nodeg} Let $0<\nu\leq 1$ and $|c|\leq \delta_1^{-1}$. Assume that \eqref{concave.weak} holds. Suppose in addition that $\Re c >\frac{U^E(0)}{2}$. Then there exist $\sigma\in (0,1]$ and $\delta_1'\in (0,\delta_1]$ such that if $\delta\in (0,\delta_1']$ in \eqref{parameter.stability}  then any solution $\varphi\in H^2 (\R_+) \cap H^1_0 (\R_+)$ to \eqref{eq.ray.half} satisfies 
\begin{align}\label{est.lem.nodeg.1}
\begin{split}
& \| \frac{\sqrt{-\pa_Y^2 U}}{V-c_\epsilon} \varphi \|_{L^2}^2 + \frac{1}{\Im c_\epsilon} \big | \langle \frac{h}{V-c_\epsilon}, \varphi \rangle _{L^2} \big | \\
& \leq \frac{C}{(\Im c_\epsilon)^2} \bigg ( \big (\frac{\| Y h \|_{L^2(\{0\leq Y\leq \sigma\})}}{\delta_1 U^E(0)} \big )^2 \,  + \|\frac{h}{\sqrt{-\pa_Y^2 U}}\|_{L^2 (\{Y\geq \sigma \})}^2 \bigg )\,.
\end{split}
\end{align}
Moreover, it follows that
\begin{align}
\| \partial_Y \varphi \|_{L^2} + \alpha \| \varphi \|_{L^2} & \leq \frac{C}{\delta_1 \Im c_\epsilon} \bigg ( \frac{\| Y h \|_{L^2(\{0\leq Y\leq \sigma \})}}{U^E(0)} + \|\frac{h}{\sqrt{-\pa_Y^2 U}}\|_{L^2 (\{Y\geq \sigma\})} \bigg )\,,\label{est.lem.nodeg.2}\\
\| (\partial_Y^2 -\alpha^2 ) \varphi\|_{L^2} & \leq \frac{C}{\delta_1^\frac12 \Im c_\epsilon} \bigg ( \frac{\| Y h \|_{L^2(\{0\leq Y\leq \sigma \})}}{\delta_1 U^E(0)} + \|\frac{h}{\sqrt{-\pa_Y^2 U}}\|_{L^2 (\{Y\geq \sigma\})} \bigg ) + C \| \frac{h}{V-c_\epsilon}\|_{L^2} \,.\label{est.lem.nodeg.3}
\end{align}
Here $C$ is a universal constant,  $\delta_1'$ depends only on $\|U^E\|_{C^2}$ and $\|U\|$, while $\sigma$ is chosen as in \eqref{def.sigma} which depends only on $\|U^E\|_{C^1}$, $U^E(0)$, and $\|U\|$.

\end{lem}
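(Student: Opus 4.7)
The plan builds on three ingredients: the Rayleigh identity \eqref{lem.Ray.identity.2}, which turns the coercivity of $-\pa_Y^2 U$ into a weighted $L^2$ bound on $\varphi/(V-c_\epsilon)$; the separation of the critical layer from $Y=0$ that follows from $\Re c>U^E(0)/2$; and the baseline a priori estimate of Lemma \ref{lem.Ray.0} on $\|\pa_Y\varphi\|_{L^2}+\alpha\|\varphi\|_{L^2}$. We first choose $\sigma$. Using $U(0)=0$ and the bound $\|U\|$ in \eqref{bound.U} we have $U(Y)\le \|U\|\,Y$, while $|U^E(\sqrt\nu Y)-U^E(0)|\le \|U^E\|_{C^1}\sqrt\nu Y$. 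Taking $\sigma=\min\{1,\,U^E(0)/(4\|U\|),\,U^E(0)/(8\|U^E\|_{C^1})\}$ yields $|V(Y)|\le 3U^E(0)/8$ on $[0,\sigma]$ for $\nu\le 1$, and combined with $\Re c>U^E(0)/2$ this gives the separation
\begin{equation}\label{eq.sep.plan}
|V(Y)-c_\epsilon|\ge \Re c - |V(Y)| \ge U^E(0)/8 \qquad \text{for } Y\in[0,\sigma].
\end{equation}

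Next, we exploit \eqref{lem.Ray.identity.2}. Writing $\pa_Y^2 V=\pa_Y^2 U+\nu(\pa_y^2 U^E)(\sqrt\nu Y)$ and using $-\pa_Y^2 U\ge 0$ from \eqref{concave.weak}, the identity becomes
$$\|\tfrac{\sqrt{-\pa_Y^2 U}}{V-c_\epsilon}\varphi\|_{L^2}^2 \le \tfrac{1}{\Im c_\epsilon}\big|\langle h/(V-c_\epsilon),\varphi\rangle_{L^2}\big| + \tfrac{C\nu\|U^E\|_{C^2}}{(\Im c_\epsilon)^2}\|\varphi\|_{L^2}^2.$$
By \eqref{parameter.stability} and the choice of $\delta_1$ in \eqref{proof.prop.general'.-1}, the last error is controlled by $C\delta^2 n^{-2\gamma}\alpha^2\|\varphi\|_{L^2}^2$, which will be absorbed at the end of step three.

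Third, we split $\langle h/(V-c_\epsilon),\varphi\rangle_{L^2}=I_1+I_2$ with $I_1$ over $[0,\sigma]$ and $I_2$ over $[\sigma,\infty)$. For $I_1$ the separation \eqref{eq.sep.plan} plus the Hardy inequality $\|\varphi/Y\|_{L^2}\le 2\|\pa_Y\varphi\|_{L^2}$ yields
$$|I_1|\le \tfrac{C}{U^E(0)}\|Yh\|_{L^2(\{0\le Y\le\sigma\})}\|\pa_Y\varphi\|_{L^2}.$$
For $I_2$, Cauchy--Schwarz with weight $\sqrt{-\pa_Y^2 U}$ gives
$$|I_2|\le \|\tfrac{h}{\sqrt{-\pa_Y^2 U}}\|_{L^2(\{Y\ge\sigma\})}\,\|\tfrac{\sqrt{-\pa_Y^2 U}}{V-c_\epsilon}\varphi\|_{L^2}.$$
We then feed the bound $\|\pa_Y\varphi\|_{L^2}\le (C/\Im c_\epsilon)^{1/2}|\langle h/(V-c_\epsilon),\varphi\rangle_{L^2}|^{1/2}$ from Lemma \ref{lem.Ray.0} into $I_1$, and the bound from step two into $I_2$. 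Setting $B=|\langle h/(V-c_\epsilon),\varphi\rangle_{L^2}|$ this produces a quadratic inequality of the form $B\le K\cdot B^{1/2}(\Im c_\epsilon)^{-1/2}$ where $K$ groups the two data norms, which solves to
$$B\le \tfrac{C}{\Im c_\epsilon}\Bigl(\tfrac{\|Yh\|_{L^2(\{0\le Y\le\sigma\})}^2}{(U^E(0))^2}+\|\tfrac{h}{\sqrt{-\pa_Y^2 U}}\|_{L^2(\{Y\ge\sigma\})}^2\Bigr).$$
Substituting back into the identity of step two gives \eqref{est.lem.nodeg.1}, and into Lemma \ref{lem.Ray.0} gives \eqref{est.lem.nodeg.2}. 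The factor $\delta_1^{-1}$ on the right of \eqref{est.lem.nodeg.2} arises from the $(\|U^E\|_\infty+|c_\epsilon|)/\Im c_\epsilon$ prefactor in Lemma \ref{lem.Ray.0} together with $|c|\le\delta_1^{-1}$.

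Finally, for \eqref{est.lem.nodeg.3} we read off the PDE
$$(\pa_Y^2-\alpha^2)\varphi \, = \, \tfrac{(\pa_Y^2 V)\varphi}{V-c_\epsilon}+\tfrac{h}{V-c_\epsilon},$$
and bound the first term by $\|\pa_Y^2 V\|_{L^\infty}^{1/2}\|\tfrac{\sqrt{-\pa_Y^2 U}}{V-c_\epsilon}\varphi\|_{L^2}$ up to a residual $O(\nu)$ term handled as before, before invoking \eqref{est.lem.nodeg.1}. The main obstacle throughout is the simultaneous coupling of the weighted $L^2$ estimate with the $H^1$ estimate; this is precisely why Lemma \ref{lem.nodeg} states a combined quantity on the left of \eqref{est.lem.nodeg.1}, so that steps two and three can be closed together via the quadratic inequality rather than iterated losslessly.
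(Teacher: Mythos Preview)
Your proposal is correct and follows essentially the same route as the paper: the same choice of $\sigma$ to separate the critical layer from $Y=0$, the same use of identity \eqref{lem.Ray.identity.2} to produce the weighted norm $\|\sqrt{-\pa_Y^2 U}\,\varphi/(V-c_\epsilon)\|_{L^2}$, the same splitting of $\langle h/(V-c_\epsilon),\varphi\rangle$ into the near-boundary piece (handled via \eqref{eq.sep.plan} plus Hardy) and the far piece (handled via Cauchy--Schwarz with weight $\sqrt{-\pa_Y^2 U}$), and the same appeal to Lemma \ref{lem.Ray.0} for both the $H^1$ bound and the absorption of the $O(\nu)$ residual. The only organizational difference is that the paper closes the coupled system by a dichotomy argument---defining $I=(\Im c_\epsilon)^{-1}|I_2|$ and $II=(\Im c_\epsilon)^{-1}|I_1|+\text{(residual)}$ and treating the cases $I\le II$ and $II\le I$ separately---whereas you phrase the same closure as solving the quadratic inequality $B\le K\,B^{1/2}(\Im c_\epsilon)^{-1/2}$; these are equivalent ways of extracting the bound $B\le K^2/\Im c_\epsilon$.
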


\begin{rem}{\rm (i) We will use Lemma \ref{lem.nodeg} with $h=\frac{(U')^2}{V-c_\epsilon} \psi$ in the Rayleigh-Airy iteration.

\noindent 
(ii) Even in the case the critical point is close to the origin we can show an estimate as in Lemma \ref{lem.nodeg}: see Lemma \ref{lem.nodeg'} below.
}
\end{rem}

\begin{proofx}{Lemma \ref{lem.nodeg}} Recall that 
\begin{align}
|-\pa_Y^2 U| = -\pa_Y^2 U\leq  - \pa_Y^2 V + \nu \| U^E\|_{C^2}\,.\label{proof.lem.nodeg.1}
\end{align} 
Combining  \eqref{proof.lem.nodeg.1} with \eqref{lem.Ray.identity.2}, we have 
\begin{align}
\| \frac{\sqrt{-\pa_Y^2 U}}{V-c_\epsilon} \varphi\|_{L^2}^2 & =  \int_0^\infty \frac{|\pa_Y^2 U|}{|V-c_\epsilon|^2} |\varphi |^2 \dd Y\nonumber \\
& \leq \frac{1}{\Im c_\epsilon}  (\Im c_\epsilon) \int_0^\infty \frac{-\pa_Y^2 V + \nu \| U^E \|_{C^2}}{|V-c_\epsilon|^2} |\varphi |^2 \dd Y\nonumber \\
&  = \frac{1}{\Im c_\epsilon}  \Im \langle  \frac{h}{V-c_\epsilon},  \varphi\rangle _{L^2}  + \nu \| U^E\|_{C^2}  \| \frac{\varphi}{V-c_\epsilon}\|_{L^2}^2\nonumber \\
& \leq \frac{1}{\Im c_\epsilon}  \Im \langle  \frac{h}{V-c_\epsilon},  \varphi\rangle _{L^2}  + \frac{\nu \| U^E\|_{C^2}}{(\Im c_\epsilon)^2} \| \varphi \|_{L^2}^2\,.\label{proof.lem.nodeg.2}
\end{align}
Let us estimate the first term in the right-hand side of \eqref{proof.lem.nodeg.2}.
The condition \eqref{bound.U} and $U(0)=0$ give the bound $|U(Y)|\leq \|U\|\log (1+Y)$ for $Y\geq 0$,
and then, the condition $\Re c > \frac{U^E(0)}{2}>0$ implies that
\begin{align}
|V (Y) -c_\epsilon|\geq |U^E(\nu^\frac12 Y) - U^E(0) + U(Y) - \Re c| & \geq \Re c- \|U\| \log (1+Y) - \nu^\frac12 Y \| U^E \|_{C^1} \nonumber \\
& \geq \frac{U^E(0)}{2} - \|U\| \log (1+Y) - Y \| U^E \|_{C^1}\,.\nonumber 
\end{align}
Set
\begin{align}
\sigma \, = \, \min \{ 1, \, \frac{U^E(0)}{4 \|U\|}, \, \frac{U^E(0)}{8 \| U^E \|_{C^1}} \}\,. \label{def.sigma}
\end{align}
Then we have 
\begin{align*}
|V (Y) -c_\epsilon|  \geq \frac{U^E(0)}{8}\,, \qquad 0\leq Y \leq \sigma\,,
\end{align*}
which gives from the Hardy inequality
\begin{align}
\big |\int_{0\leq Y\leq \sigma} \frac{h}{V-c_\epsilon} \bar{\varphi} \dd Y \big |\leq \frac{16}{U^E(0)} \| Y h \|_{L^2(\{0\leq Y\leq \sigma\})} \|\partial_Y \varphi \|_{L^2}\,.\label{proof.lem.nodeg.3}
\end{align}
On the other hand, for the integral in the range $Y\geq \sigma$ we have
\begin{align}
\big |\int_{Y\geq \sigma} \frac{h }{V-c_\epsilon} \bar{\varphi} \dd Y \big |
& =\big |  \int_{Y\geq \sigma} \frac{h}{\sqrt{-\pa_Y^2 U}} \frac{\sqrt{-\pa_Y^2 U}}{V-c_\epsilon} \bar{\varphi} \dd Y \big | \nonumber \\
& \leq  \| \frac{h}{\sqrt{-\pa_Y^2 U}}\|_{L^2(\{Y\geq \sigma\})} \| \frac{\sqrt{-\pa_Y^2 U}}{V-c_\epsilon} \varphi \|_{L^2}\,.\label{proof.lem.nodeg.4}
\end{align}
Collecting \eqref{proof.lem.nodeg.2}, \eqref{proof.lem.nodeg.3}, and \eqref{proof.lem.nodeg.4}, we arrive at 
\begin{align*}
\| \frac{\sqrt{-\pa_Y^2 U}}{V-c_\epsilon} \varphi\|_{L^2}^2 \leq I + II\,,
\end{align*}
where 
\begin{align}\label{proof.lem.nodeg.5}
\begin{split}
I & = \frac{1}{\Im c_\epsilon} \| \frac{h}{\sqrt{-\pa_Y^2 U}}\|_{L^2(\{Y\geq \sigma\})} \| \frac{\sqrt{-\pa_Y^2 U}}{V-c_\epsilon} \varphi \|_{L^2}\,,\\
II & = \frac{16}{U^E(0) \Im c_\epsilon} \| Y h \|_{L^2(\{0\leq Y\leq \sigma\})} \|\partial_Y \varphi \|_{L^2} + \frac{\nu \| U^E\|_{C^2}}{(\Im c_\epsilon)^2} \| \varphi \|_{L^2}^2\,. 
\end{split}
\end{align}
First we consider the case $I\leq II$. In this case we have 
\begin{align}
\| \frac{\sqrt{-\pa_Y^2 U}}{V-c_\epsilon} \varphi\|_{L^2}^2  \leq 2 II\,,\label{proof.lem.nodeg.6}
\end{align}
and we also have from \eqref{proof.lem.nodeg.4} and \eqref{proof.lem.nodeg.5},
\begin{align}
\frac{1}{\Im c_\epsilon} \big | \langle \frac{h}{V-c_\epsilon}, \varphi \rangle _{L^2} \big | \leq 2  II\,.\label{proof.lem.nodeg.6'}
\end{align}
Then \eqref{est.lem.Ray.0.2} and \eqref{proof.lem.nodeg.6'} imply
\begin{align*}
& \| \partial_Y \varphi \|_{L^2}^2 + \alpha^2 \| \phi \|_{L^2}^2 \\
&\leq C \frac{\| U^E \|_{L^\infty} + |c_\epsilon| }{\Im c_\epsilon}  \big ( \frac{ \| Y h \|_{L^2(\{0\leq Y\leq \sigma\})}}{U^E(0)} \|\partial_Y \varphi \|_{L^2} + \frac{\nu \| U^E\|_{C^2}}{\Im c_\epsilon} \| \varphi \|_{L^2}^2 \big )\\
& \leq C  \frac{\| U^E \|_{L^\infty} + |c_\epsilon| }{U^E(0)\Im c_\epsilon}   \| Y h \|_{L^2(\{0\leq Y\leq \sigma\})} \|\partial_Y \varphi \|_{L^2}  + C (\| U^E \|_{L^\infty} + |c_\epsilon| ) \| U^E \|_{C^2} \, \delta^2 \alpha^2 \| \phi \|_{L^2}^2
\end{align*}
Here $C$ is a universal constant. Note that in the last line we have used the inequality 
$\alpha \Im c_\epsilon \geq \delta^{-1}\nu^\frac12 n^{\gamma}$ in \eqref{parameter.stability}.
Since $|c_\epsilon|\leq |c| + n\nu \leq \delta_1^{-1} + \delta_0^{-1}\nu^\frac14$,  if we take $\delta\in (0,\delta_1]$ small enough so that 
\begin{align}\label{condition.delta}
C (\| U^E \|_{L^\infty} + \delta_1^{-1} + \delta_0^{-1} ) \| U^E \|_{C^2} \, \delta^2\leq \frac12\,,
\end{align}
we have 
\begin{align}
\| \partial_Y \varphi \|_{L^2} + \alpha \| \phi \|_{L^2} \leq C\frac{\| U^E \|_{L^\infty} + |c_\epsilon|}{U^E(0)\Im c_\epsilon}  \, \| Y h \|_{L^2(\{0\leq Y\leq \sigma\})} \label{proof.lem.nodeg.7}
\end{align}
in the case $I\leq II$, where $C$ is a universal constant.
Then \eqref{proof.lem.nodeg.6}, \eqref{proof.lem.nodeg.6'}, and \eqref{proof.lem.nodeg.7} lead to 
\begin{align}
\begin{split}
\| \frac{\sqrt{-\pa_Y^2 U}}{V-c_\epsilon} \varphi\|_{L^2}^2  + 
\frac{1}{\Im c_\epsilon} \big | \langle \frac{h}{V-c_\epsilon}, \varphi \rangle _{L^2} \big | & \leq \big ( \frac{C}{\delta_1 U^E(0) \Im c_\epsilon} \big )^2 \| Y h \|_{L^2(\{0\leq Y\leq \sigma\})}^2\,.\label{proof.lem.nodeg.8}
\end{split}
\end{align}
Here we have used $\| U^E\|_{L^\infty} + |c_\epsilon| +1 \leq \delta_1^{-1}$ by the choice of $\delta_1$ in \eqref{proof.prop.general'.-1}. Next we consider the case $II\leq I$. In this case we have 
\begin{align*}
\| \frac{\sqrt{-\pa_Y^2 U}}{V-c_\epsilon} \varphi\|_{L^2}^2  \leq 2 I\,, \qquad \frac{1}{\Im c_\epsilon} \big | \langle \frac{h}{V-c_\epsilon}, \varphi \rangle _{L^2} \big | \leq 2  I\,,
\end{align*}
which gives 
\begin{align}
\| \frac{\sqrt{-\pa_Y^2 U}}{V-c_\epsilon} \varphi\|_{L^2}^2 + \frac{1}{\Im c_\epsilon} \big | \langle \frac{h}{V-c_\epsilon}, \varphi \rangle _{L^2} \big |  \leq \frac{C}{(\Im c_\epsilon)^2} \| \frac{h}{\sqrt{-\pa_Y^2 U}} \|_{L^2 (\{Y\geq \sigma\})}^2\,.\label{proof.lem.nodeg.9}
\end{align}
Here $C$ is a universal constant.
In particular, combining \eqref{proof.lem.nodeg.9} with \eqref{proof.lem.nodeg.8}, we obtain \eqref{est.lem.nodeg.1}.
The estimate \eqref{est.lem.nodeg.2} follows from \eqref{proof.lem.nodeg.7} and \eqref{proof.lem.nodeg.9} combined with \eqref{est.lem.Ray.0.2}. The estimate \eqref{est.lem.nodeg.3} follows from \eqref{est.lem.Ray.0.3} and \eqref{est.lem.nodeg.1}. The proof is complete.
\end{proofx}

\vspace{0.3cm}

Exactly in the same way as in the proof of Lemma \ref{lem.nodeg} we have the following estimate in the case the critical point $Y_c$ is close to the origin $Y=0$.

\begin{lem}\label{lem.nodeg'} Let $0<\nu\leq 1$ and $|c|\leq \delta_1^{-1}$. Assume that \eqref{concave.weak} holds.  Then there exists $\delta_1'\in (0,\delta_1]$ such that if $\delta\in (0,\delta_1']$ in \eqref{parameter.stability} then any solution $\varphi\in H^2(\R_+) \cap H^1_0 (\R_+)$ to \eqref{eq.ray.half} satisfies 
\begin{align}
\| \frac{\sqrt{-\pa_Y^2 U}}{V-c_\epsilon} \varphi \|_{L^2}^2 + \frac{1}{\Im c_\epsilon} \big | \langle \frac{h}{V-c_\epsilon}, \varphi \rangle _{L^2} \big |  \leq \frac{C}{(\Im c_\epsilon)^2}  \|\frac{h}{\sqrt{-\pa_Y^2 U}}\|_{L^2}^2\,,\label{est.lem.nodeg'.1}
\end{align}
as long as $\|\frac{h}{\sqrt{-\pa_Y^2 U}}\|_{L^2}$ is bounded. Moreover, it follows that
\begin{align}
\| \partial_Y \varphi \|_{L^2} + \alpha \| \varphi \|_{L^2}  & \leq \frac{C}{\delta_1^\frac12\Im c_\epsilon}  \|\frac{h}{\sqrt{-\pa_Y^2 U}}\|_{L^2}\,,\label{est.lem.nodeg'.2}\\
\| (\partial_Y^2 -\alpha^2 ) \varphi \|_{L^2} & \leq \frac{C}{\delta_1^\frac12\Im c_\epsilon}  \|\frac{h}{\sqrt{-\pa_Y^2 U}}\|_{L^2} + C \| \frac{h}{V-c_\epsilon}\|_{L^2} \,.\label{est.lem.nodeg'.3}
\end{align}
Here $\delta_1'$ depends only on $\|U^E\|_{C^2}$ and $\|U\|$, while $C$ is a universal constant.

\end{lem}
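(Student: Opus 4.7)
The plan is to adapt the proof of Lemma \ref{lem.nodeg} but to avoid splitting the integral at $Y = \sigma$, since we no longer have the lower bound $|V - c_\epsilon| \geq U^E(0)/8$ near the origin (the critical point may now lie in any compact interval of $\R_+$). Instead, the weight $\sqrt{-\partial_Y^2 U}$ will be used globally on $(0,\infty)$; this is why the hypothesis $\|h/\sqrt{-\partial_Y^2 U}\|_{L^2} < \infty$ is needed.

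I would begin with the identity \eqref{lem.Ray.identity.2} from Lemma \ref{lem.Ray.identity}, combined with the pointwise inequality $-\partial_Y^2 U \le -\partial_Y^2 V + \nu \|U^E\|_{C^2}$ (which follows from $V = U^E(\sqrt{\nu} Y) - U^E(0) + U$ and the concavity of $U$). This gives the analogue of \eqref{proof.lem.nodeg.2}:
\begin{align*}
\| \tfrac{\sqrt{-\pa_Y^2 U}}{V-c_\epsilon} \varphi\|_{L^2}^2 \,\le\, \tfrac{1}{\Im c_\epsilon}\,\Im \langle \tfrac{h}{V-c_\epsilon},\varphi\rangle_{L^2} \,+\, \tfrac{\nu \|U^E\|_{C^2}}{(\Im c_\epsilon)^2}\,\|\varphi\|_{L^2}^2.
\end{align*}
The first right-hand term is bounded directly via Cauchy--Schwarz with the global weight,
\begin{align*}
\big|\langle \tfrac{h}{V-c_\epsilon},\varphi\rangle_{L^2}\big|
\,\le\, \big\|\tfrac{h}{\sqrt{-\pa_Y^2 U}}\big\|_{L^2}\,\big\|\tfrac{\sqrt{-\pa_Y^2 U}}{V-c_\epsilon}\varphi\big\|_{L^2},
\end{align*}
and absorbed by Young's inequality into the left-hand side; this plays the role of the case $I \le II$ from the proof of Lemma \ref{lem.nodeg}.

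The main obstacle, as in Lemma \ref{lem.nodeg}, is the residual term $\nu \|U^E\|_{C^2} (\Im c_\epsilon)^{-2} \|\varphi\|_{L^2}^2$. I would control it by invoking Lemma \ref{lem.Ray.0}: estimate \eqref{est.lem.Ray.0.2} and the Cauchy--Schwarz bound for $\langle h/(V-c_\epsilon),\varphi\rangle$ yield $\alpha\|\varphi\|_{L^2} \le C (\Im c_\epsilon)^{-1}\|h/\sqrt{-\pa_Y^2 U}\|_{L^2}$ up to the same residual. The parameter condition $\alpha \Im c_\epsilon \ge \delta^{-1} \nu^{1/2} n^{\gamma}$ in \eqref{parameter.stability} gives $\nu/(\Im c_\epsilon)^2 \le \delta^2 \alpha^{-2} n^{2\gamma} \nu \le \delta^2 \alpha^{-2}$ (since $n^\gamma \nu^{1/2} \le 1$ in the regime \eqref{middle.n} for $\gamma \le 1$, up to constants absorbed in $\delta_1$), so by choosing $\delta \in (0,\delta_1']$ sufficiently small depending only on $\|U^E\|_{C^2}$ and $\|U\|$ (as in \eqref{condition.delta}), the residual can be absorbed into the left-hand side of both the weighted estimate and \eqref{est.lem.Ray.0.2}.

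This proves \eqref{est.lem.nodeg'.1} and \eqref{est.lem.nodeg'.2}. Finally, \eqref{est.lem.nodeg'.3} follows by substituting \eqref{est.lem.nodeg'.1} into \eqref{est.lem.Ray.0.3} from Lemma \ref{lem.Ray.0}, exactly as in the final step of the proof of Lemma \ref{lem.nodeg}. The existence and uniqueness of $\varphi \in H^2(\R_+) \cap H^1_0(\R_+)$ is already guaranteed by Lemma \ref{lem.Ray.0} under the standing assumption $|c| \le \delta_1^{-1}$, so only the a~priori estimates need to be established.
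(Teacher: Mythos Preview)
Your proposal is correct and follows essentially the same route as the paper's proof: start from the identity in Lemma~\ref{lem.Ray.identity}, use the global Cauchy--Schwarz splitting $h = \tfrac{h}{\sqrt{-\pa_Y^2 U}}\cdot\sqrt{-\pa_Y^2 U}$ (in place of the localization at $\sigma$ used in Lemma~\ref{lem.nodeg}), and absorb the residual term $\nu\|U^E\|_{C^2}(\Im c_\epsilon)^{-2}\|\varphi\|_{L^2}^2$ via Lemma~\ref{lem.Ray.0} and the smallness of $\delta$. The paper phrases the last step as a dichotomy (either $I'\le R$, which forces $\varphi=0$, or $R\le I'$, which gives the estimate), whereas you use Young's inequality; these are equivalent.

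One minor correction: your parameter chain ``$\nu/(\Im c_\epsilon)^2 \le \delta^2 \alpha^{-2} n^{2\gamma}\nu \le \delta^2 \alpha^{-2}$'' is garbled. What is actually needed---and what \eqref{condition.delta} and \eqref{proof.lem.large.Re.3} encode---is $\dfrac{\nu}{(\alpha\Im c_\epsilon)^2}\le \delta^2 n^{-2\gamma}\le \delta^2$, which follows directly from $\alpha\Im c_\epsilon\ge \delta^{-1}n^\gamma\nu^{1/2}$ and makes the residual $\le C\delta^2\|U^E\|_{C^2}\,\alpha^2\|\varphi\|_{L^2}^2$, absorbable into the left-hand side of \eqref{est.lem.Ray.0.2}. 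With this fix the argument closes as you describe.
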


\begin{proof} The proof is similar to the one for Lemma \ref{lem.nodeg}. The only difference is that, for the first term in \eqref{proof.lem.nodeg.2}, we simply compute as 
\begin{align}
\frac{1}{\Im c_\epsilon} \big | \langle \frac{h}{V-c_\epsilon}, \varphi \rangle _{L^2} \big | \leq \frac{1}{\Im c_\epsilon} \| \frac{h}{\sqrt{-\pa_Y^2 U} }\|_{L^2} \| \frac{\sqrt{-\pa_Y^2 U}}{V-c_\epsilon} \varphi \|_{L^2} =: I'\,.
\end{align}
Then it suffices to consider two cases as in the proof of Lemma \ref{lem.nodeg}: (i) $I'\leq \frac{\nu \| U^E\|_{C^2}}{(\Im c_\epsilon)^2} \| \varphi \|_{L^2}^2$, (ii) $\frac{\nu \| U^E\|_{C^2}}{(\Im c_\epsilon)^2} \| \varphi \|_{L^2}^2\leq I'$. In fact, the case (i) leads to $\varphi=0$ under the smallness condition on $\delta$ as in \eqref{condition.delta}, and thus, it can not happen unless $h=0$. 
The case (ii) gives the estimates \eqref{est.lem.nodeg'.1} - \eqref{est.lem.nodeg'.3} by the same argument as in the proof of Lemma \ref{lem.nodeg}. The details are omitted here. The proof is complete.
\end{proof}

\vspace{0.3cm}

\noindent 
\begin{proofx}{Proposition \ref{prop.Ray.WC}} We decompose $h$ as $h= \partial_Y ( \psi \partial_Y V) = \partial_Y (\psi \partial_Y U^E (\nu^\frac12 \cdot)) + \partial_Y (\psi \partial_Y U)=:h_1 + h_2$.
Let $\varphi_j$, $j=1,2$, be the solution to \eqref{eq.ray.half} with $h=h_j$, respectively.
For $\varphi_1$ we have from \eqref{est.rem.prop.Ray.1.1},
\begin{align*}
\| \partial_Y \varphi _1 \|_{L^2} + \alpha \| \varphi_1 \|_{L^2} \leq \frac{C\nu^\frac12}{(\Im c_\epsilon)^2} ( 1+\frac{1}{\alpha \Im c_\epsilon} ) \| \psi \|_{L^2}\,,
\end{align*}
where $C$ depends only on $\| U^E\|_{C^1}$ and $\|U\|_{C^1}$. Then, from \eqref{parameter.stability} for the lower bound of $\alpha \Im c$, we have 
\begin{align*}
\| \partial_Y \varphi _1 \|_{L^2} + \alpha \| \varphi_1 \|_{L^2} \leq \frac{C}{(\Im c_\epsilon)^2} \| \psi \|_{L^2}\,.
\end{align*}
Similarly, we see from \eqref{est.rem.prop.Ray.1.2},
\begin{align*}
\| (\partial_Y^2 -\alpha^2 ) \varphi _1 \|_{L^2} \leq C\big ( \frac{1}{(\Im c_\epsilon)^2} \| \psi \|_{L^2}  + \frac{1}{\Im c_\epsilon} \| \partial_Y \psi \|_{L^2}\big )\,,
\end{align*}
with $C$ depends only on $\| U^E \|_{C^2}$ and $\|U\|_{C^1}$. The details are omitted here.
Next we estimate $\varphi_2$. First we consider the case $\Re c\leq U^E(0)/2$. 
In this case we apply Proposition \ref{prop.Ray.1} and obtain 
\begin{align*}
\| \partial_Y \varphi _2 \|_{L^2} + \alpha \| \varphi _2 \|_{L^2} \leq \frac{C}{\Im c_\epsilon} \big ( \| \frac{\pa_Y U\, \psi}{V-c_\epsilon} \|_{L^2} + \| \frac{Y^\frac12 (\pa_Y U)^2 \psi}{(V-c_\epsilon)^2} \|_{L^1} \big )\,.
\end{align*}
Then, using the condition \eqref{integral.condition}, we arrive at 
\begin{align*}
\| \partial_Y \varphi _2 \|_{L^2} + \alpha \| \varphi_2  \|_{L^2} \leq \frac{C}{\Im c_\epsilon} \big ( \frac{1}{\Im c_\epsilon} \| \psi \|_{L^2} + \frac{1}{(\Im c_\epsilon)^\frac32} \|  \psi \|_{L^2} \big ) \leq \frac{C}{ (\Im c_\epsilon)^\frac52} \| \psi \|_{L^2} \,.
\end{align*}
The estimate of $\| (\partial_Y^2 -\alpha^2 ) \varphi_2\|_{L^2}$ for the case $\Re c \leq U^E(0)/2$ is  proved in the similar manner by using \eqref{est.prop.Ray.1.2}. The details are omitted here.
In the case $\Re c\geq U^E(0)/2>0$ we can apply Lemma \ref{lem.nodeg} with $\sigma=\sigma_0>0$, 
where $\sigma_0$ depends only on $U^E$ and $U$ for any sufficiently small $\nu$.
To this end we write the term $h_2=\partial_Y (\psi \partial_Y U)$ as 
\begin{align*}
h_2 & \, = \, (V-c_\epsilon) \partial_Y \big ( \frac{\psi \partial_Y U}{V-c_\epsilon}\big ) + \frac{\psi (\partial_Y U)^2}{V-c_\epsilon}  + \nu^\frac12 (\partial_y U^E )(\nu^\frac12 Y)\frac{\psi \partial_Y U}{V-c_\epsilon}\\
& \, =: \, h_{2,1} + h_{2,2} + h_{2,3}\,. 
\end{align*}
The corresponding solutions to \eqref{eq.ray.half} are respectively denoted by $\phi_{2,j}$, $j=1,2,3$, and thus, $\varphi_2 = \sum_{j=1}^3\varphi_{2,j}$. 
By applying Lemma \ref{lem.Ray.0} the solution $\varphi_{2,1}$ is estimated as
\begin{align*}
& \| \partial_Y \varphi _{2,1} \|_{L^2} + \alpha \|\varphi _{2,1} \|_{L^2}  + \| (\partial_Y^2 - \alpha^2 )\varphi_{2,1} \|_{L^2} \leq C\big ( \frac{1}{(\Im c_\epsilon)^2}\| \psi\|_{L^2} + \frac{1}{\Im c_\epsilon} \| \partial_Y \psi \|_{L^2} \big )\,.
\end{align*}
On the other hand, Est. \eqref{est.lem.nodeg.2} together with the condition \eqref{concave.weak} implies 
\begin{align*}
& \| \partial_Y \varphi _{2,2} \|_{L^2} + \alpha \|\varphi _{2,2} \|_{L^2}  + \| (\partial_Y^2 - \alpha^2 )\varphi_{2,2} \|_{L^2} \\
& \leq \frac{C}{\Im c_\epsilon} \big ( \| \frac{Y  \psi  (\partial_Y U )^2}{V-c_\epsilon} \|_{L^2(\{0\leq Y\leq \frac{\sigma_0}{2}\})}  + \| \frac{ \psi (\partial_Y U )^2}{(V-c_\epsilon)\sqrt{-\pa_Y^2 U}} \|_{L^2(\{Y\geq \frac{\sigma_0}{2}\})}\big ) \nonumber \\
& \leq \frac{C}{(\Im c_\epsilon)^2} \| \psi \|_{L^2} \,.
\end{align*}
Here the lower bound $-M_{\sigma_0/2} \pa_Y^2 U\geq  (\pa_Y U)^2$ in \eqref{concave.weak} is essentially used
(in fact, the argument works under the weaker condition $-M_{\sigma_0/2} \pa_Y^2 U\geq  (\pa_Y U)^4$).
The solution $\varphi_{2,3}$ is estimated by using Lemma \ref{lem.Ray.0}, in virtue of the factor $\nu^\frac12$.
The details are omitted here, since the similar calculation has been already done. 
Collecting the estimates of $\varphi_1$ and $\varphi_2$ above, we obtain \eqref{est.prop.Ray.WC.1} and \eqref{est.prop.Ray.WC.2}.
The estimates \eqref{est.prop.Ray.WC.3} and \eqref{est.prop.Ray.WC.4} under the condition \eqref{concave.strong} is proved in the similar manner. 
In this case we can simply use Lemma \ref{lem.nodeg'} for the estimate of $\varphi_2$, instead of Lemma \ref{lem.nodeg}, which especially improves the estimate in the case $\Re c\leq U^E(0)/2$.
The details are omitted here. The proof is complete. 
\end{proofx}





\subsubsection{Analysis of  Airy equations}\label{subsubsec.Airy}

In this subsection we consider the Airy equations
\begin{equation}\label{eq.airy}
\left\{
\begin{aligned}
-\epsilon \partial_Y^2 \psi + (V- c_\epsilon) \psi & \, = \, h\,, \qquad Y>0\,,\\
\psi & \, = \, 0 \,, \qquad Y=0\,.
\end{aligned}\right.
\end{equation}

\begin{prop}\label{prop.airy.1} For any $h\in L^2 (\R_+)$ there is a unique solution $\psi\in H^2(\R_+) \cap H^1_0 (\R_+)$ to \eqref{eq.airy}, and $\psi$ satisfies 
\begin{align}
\| \psi \|_{L^2} & \leq \frac{1}{\Im c_\epsilon}  \| h \|_{L^2}\,,\label{est.prop.airy.1.1} \\
\| \partial_Y \psi \|_{L^2}  & \leq \big (\frac{1}{4 |\epsilon| \Im c_\epsilon}\big )^\frac12 \| h \|_{L^2}\,,\label{est.prop.airy.1.2} \\
\| \partial_Y^2 \psi \|_{L^2}  & \leq 2 \big ( \frac{1}{|\epsilon|}  + \frac{\| \pa_Y V \|_{L^\infty} }{|\epsilon|^\frac12 (\Im c_\epsilon)^\frac32}  \big )  \| h \|_{L^2}\,.\label{est.prop.airy.1.3} 
\end{align}

\end{prop}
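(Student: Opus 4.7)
The plan is to prove the estimates by a standard energy method tailored to the fact that $\epsilon = -i/n$ is purely imaginary with $\Im \epsilon = -1/n$, while $\Im c_\epsilon > 0$, and to obtain existence via a complex Lax-Milgram argument.

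\textbf{Step 1: $L^2$ bounds on $\psi$ and $\partial_Y \psi$.} I would pair \eqref{eq.airy} with $\bar\psi$, integrate by parts using $\psi(0)=0$, and take the imaginary part. Since $V$ is real, this yields
\[
\frac{1}{n}\|\partial_Y \psi\|_{L^2}^2 + (\Im c_\epsilon)\|\psi\|_{L^2}^2 \, = \, -\Im\langle h, \psi\rangle_{L^2} \, \leq \, \|h\|_{L^2}\|\psi\|_{L^2}.
\]
Dropping the derivative term and dividing by $\|\psi\|_{L^2}$ gives \eqref{est.prop.airy.1.1}. Refining the right-hand side via Young's inequality $\|h\|\|\psi\| \leq \frac{1}{2}(\Im c_\epsilon)\|\psi\|^2 + \frac{1}{2\Im c_\epsilon}\|h\|^2$ before dropping terms absorbs $\|\psi\|^2$ into the left and, after multiplying through by $n = 1/|\epsilon|$, produces \eqref{est.prop.airy.1.2}.

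\textbf{Step 2: $L^2$ bound on $\partial_Y^2 \psi$.} The naive route through $-\epsilon \partial_Y^2 \psi = h - (V-c_\epsilon)\psi$ gives a bound involving $\|V\|_{L^\infty}$ with scaling $|\epsilon|^{-1}$, too crude for the claimed estimate. Instead I would pair \eqref{eq.airy} with $\overline{\partial_Y^2 \psi}$ and integrate by parts in the middle term; the boundary contribution at $Y=0$ vanishes because $\psi(0)=0$, and the interior integral becomes $-\int \partial_Y V\,\psi\,\overline{\partial_Y\psi}\,\dd Y - \int (V-c_\epsilon)|\partial_Y \psi|^2\,\dd Y$. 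Taking the imaginary part of the resulting identity gives
\[
\frac{1}{n}\|\partial_Y^2 \psi\|_{L^2}^2 + (\Im c_\epsilon)\|\partial_Y \psi\|_{L^2}^2 \, \leq \, \|h\|_{L^2}\|\partial_Y^2 \psi\|_{L^2} + \|\partial_Y V\|_{L^\infty}\|\psi\|_{L^2}\|\partial_Y \psi\|_{L^2}.
\]
Applying Young's inequality to each term on the right (with weights $1/n$ and $\Im c_\epsilon$, respectively) absorbs the two squared quantities on the left, and substituting $\|\psi\|_{L^2} \leq \|h\|_{L^2}/\Im c_\epsilon$ from Step 1 delivers \eqref{est.prop.airy.1.3} after taking square roots and using $\sqrt{a+b} \leq \sqrt{a}+\sqrt{b}$. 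This step is the main technical point: the integration by parts is what produces the factor $\|\partial_Y V\|_{L^\infty}$ (rather than $\|V\|_{L^\infty}$) and the favorable scaling $|\epsilon|^{-1/2}(\Im c_\epsilon)^{-3/2}$.

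\textbf{Step 3: existence and uniqueness.} The sesquilinear form
\[
a(\psi,\varphi) \, = \, \epsilon\,\langle \partial_Y \psi, \partial_Y \varphi \rangle_{L^2} + \langle (V - c_\epsilon)\psi, \varphi\rangle_{L^2}
\]
on $H^1_0(\R_+)$ is continuous because $V$ and $c_\epsilon$ are bounded, and it is coercive in the $H^1$ norm since
\[
|a(\psi,\psi)| \, \geq \, |\Im a(\psi,\psi)| \, = \, \frac{1}{n}\|\partial_Y \psi\|_{L^2}^2 + (\Im c_\epsilon)\|\psi\|_{L^2}^2 \, \geq \, \min(|\epsilon|, \Im c_\epsilon)\|\psi\|_{H^1}^2 .
\]
By the complex Lax-Milgram theorem, for every $h\in L^2(\R_+)$ there exists a unique weak solution $\psi\in H^1_0(\R_+)$ of $a(\psi,\varphi) = \langle h,\varphi\rangle$. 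Rewriting the equation as $\partial_Y^2 \psi = \epsilon^{-1}\bigl((V-c_\epsilon)\psi - h\bigr) \in L^2(\R_+)$ upgrades the regularity to $H^2(\R_+)$, and uniqueness is already contained in the a priori estimates of Steps 1--2.
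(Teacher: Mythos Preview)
Your proof is correct and follows essentially the same route as the paper: pair the equation with $\bar\psi$ and with $\overline{\partial_Y^2\psi}$, take imaginary parts (using that $\epsilon=-i/n$ and $V$ is real), and close with H\"older/Young; the paper also only sketches existence, so your Lax--Milgram argument is a welcome addition. Two minor remarks: to hit the stated constant $1/(4|\epsilon|\Im c_\epsilon)$ in \eqref{est.prop.airy.1.2} you should use the Young split $\|h\|\,\|\psi\|\le (\Im c_\epsilon)\|\psi\|^2 + \tfrac{1}{4\Im c_\epsilon}\|h\|^2$ so that the $\|\psi\|^2$ term is \emph{fully} absorbed (your half-absorption gives $1/2$ instead of $1/4$); and in Step~2 the paper, after reaching the same identity, proceeds by a case split $I\le II$ versus $II\le I$ rather than Young's inequality --- your argument is equally valid and slightly cleaner.
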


\begin{proof} It suffices to prove the a priori estimates. By taking the inner product for the first equation of \eqref{eq.airy} with $\psi$ we have 
\begin{align}\label{proof.prop.airy.1}
\epsilon \int_0^\infty |\partial_Y \psi |^2 \dd Y  + \int_0^\infty (V-c_\epsilon) |\psi|^2 \dd Y \, = \, \int_0^\infty h \bar{\psi} \dd Y\,.
\end{align}
Recalling that $\epsilon=-\frac{i}{n}$, we take the imaginary part of \eqref{proof.prop.airy.1}, which yields
\begin{align}
\frac{1}{n} \| \partial_Y \psi \|_{L^2}^2 + (\Im c_\epsilon ) \|\psi\|_{L^2}^2 \, = \, - \Im \, \langle h,  \psi \rangle_{L^2}\,.\label{proof.prop.airy.2}
\end{align} 
Thus \eqref{est.prop.airy.1.1} holds  by the H${\rm \ddot{o}}$lder inequality and the Young inequality 
\begin{align*}
|\langle h, \psi\rangle_{L^2} |\leq \frac{1}{4\Im c_\epsilon} \| h \|_{L^2}^2 + (\Im c_\epsilon ) \|\psi \|_{L^2}^2
\end{align*}
for the estimate of $\| \partial_Y \psi \|_{L^2}$ and $|\langle h, \psi\rangle_{L^2} |\leq \frac{1}{2 \Im c_\epsilon} \| h \|_{L^2}^2 + \frac{\Im c_\epsilon}{2} \|\psi \|_{L^2}^2$ for the estimate of $\|\psi\|_{L^2}$. 
To show the estimate of $\partial_Y^2 \psi$ we multiply both sides of the first equation of \eqref{eq.airy} by $\partial_Y^2\bar{\psi}$ and then integrate over $(0,\infty)$, which gives
\begin{align*}
-\epsilon \|\partial_Y^2 \psi \|_{L^2}^2 + \langle V \psi , \partial_Y^2\psi \rangle _{L^2} + c_\epsilon \| \partial_Y \psi \|_{L^2}^2 \, = \, \langle h, \partial_Y^2 \psi \rangle _{L^2}\,.
\end{align*} 
Taking the imaginary part of both sides of the above equation, we obtain from the integration by parts,
\begin{align}
\frac{1}{n} \| \partial_Y^2 \psi \|_{L^2}^2 + \Im c_\epsilon  \, \| \partial _Y \psi \|_{L^2}^2 & \, = \, \Im \langle (\pa_Y V) \psi, \partial_Y \psi \rangle _{L^2}  + \Im \langle h, \partial_Y^2 \psi \rangle _{L^2}  \nonumber \\
& \, \leq \, \| \pa_Y V \|_{L^\infty} \| \psi \|_{L^2} \| \partial_Y \psi \|_{L^2} + \Im \langle h, \partial_Y^2 \psi \rangle _{L^2}  \nonumber \\
& \, =: \, I + II \,.\label{proof.prop.airy.3}
\end{align}
If $I\leq II$ then \eqref{proof.prop.airy.3} implies 
\begin{align*}
\frac1n \| \partial_Y^2\psi \|_{L^2}^2 \leq 2 II \leq 2 \| h \|_{L^2} \| \partial_Y^2 \psi \|_{L^2}\,,
\end{align*}
which yields $\| \partial_Y^2 \psi \|_{L^2} \leq 2|\epsilon|^{-1} \| h \|_{L^2}$. 
On the other hand, if $II\leq I$ then \eqref{proof.prop.airy.3} gives 
\begin{align*}
\Im c_\epsilon \, \| \partial_Y\psi \|_{L^2}^2 \leq 2 I = 2 \| \pa_Y V \|_{L^\infty} \| \psi \|_{L^2} \| \partial_Y \psi \|_{L^2}\,,
\end{align*}
and therefore,  $\|\partial_Y \psi \|_{L^2} \leq \frac{2 \| \pa_Y V \|_{L^\infty}}{\Im c_\epsilon} \| \psi \|_{L^2}$.
Then, again from \eqref{proof.prop.airy.3}, we have 
\begin{align*}
\frac{1}{n} \| \partial_Y^2 \psi \|_{L^2}^2 \leq 2 I \leq \frac{4 \| \pa_Y V \|_{L^\infty}^2}{\Im c_\epsilon} \| \psi \|_{L^2} ^2 \leq \frac{4 \| \pa_Y V \|_{L^\infty}^2}{(\Im c_\epsilon)^3} \| h\|_{L^2}^2 \,.
\end{align*}
Collecting these, we obtain the estimate of $\| \partial_Y^2 \psi \|_{L^2}$. The proof is complete.
\end{proof}


 
\subsubsection{Solutions to modified Orr-Sommerfeld equations with Dirichlet boundary condition}\label{subsubsec.iterate}

The modified Orr-Sommerfeld operator is defined as
\begin{align}\label{def.mos.op}
mOS (\phi) = -\epsilon (\partial_Y^2-\alpha^2) \partial_Y^2 \phi +  (V - c_\epsilon) (\partial_Y^2-\alpha^2) \phi - (\partial_Y^2 V ) \phi \,.
\end{align}
Then we consider the modified Orr-Sommerfeld equations in $\R_+$:
\begin{equation}\label{eq.os.R}
\left\{
\begin{aligned}
mOS (\phi) & \, = \, h \,, \qquad Y >0\,,\\
\phi & \, = \, 0\,, \qquad Y=0\,.
\end{aligned}\right.
\end{equation}
Here $h$ is a given smooth function. 
Note that the boundary condition on $\partial_Y \phi$ is not imposed for the moment.
Our aim is to construct a function $\Phi_{mOS}[h]$ solving \eqref{eq.os.R}, 
around the Rayleigh mode $\varphi^{(0)}=\Phi_{Ray}[h]$ constructed in Lemma \ref{lem.Ray.0}. 
To this end set $\phi^{(1)}=\phi - \varphi^{(0)}$, which should solve 
\begin{equation}\label{eq.os.R'}
\left\{
\begin{aligned}
mOS ( \phi ^{(1)}) & \, = \, \epsilon  (\partial_Y^2-\alpha^2) \partial_Y^2 \varphi^{(0)} \,, \qquad Y>0\,,\\
\phi^{(1)} & \, = \, 0\,, \qquad Y=0\,.
\end{aligned}\right.
\end{equation}
Since the first equation of \eqref{eq.os.R'} is regarded as 
\begin{align*}
\begin{split}
& (\partial_Y^2-\alpha^2) \big ( -\epsilon \partial_Y^2  +  (V -c_\epsilon) \big )  \phi^{(1)} -2 (\partial_Y V) \partial_Y \phi^{(1)} - 2(\partial_Y^2 V ) \phi ^{(1)}\, = \epsilon  (\partial_Y^2-\alpha^2) \partial_Y^2 \varphi^{(0)} \,,
\end{split}
\end{align*}
we set $\psi^{(1)}$ as the solution to
\begin{equation*}
\left\{
\begin{aligned}
-\epsilon \partial_Y^2  \psi^{(1)} +  (V -c_\epsilon) \psi^{(1)}  & \, = \, \epsilon \partial_Y^2\varphi^{(0)}\,, \qquad Y > 0 \,,\\
\psi^{(1)} & \, = \, 0 \,, \qquad Y=0\,.
\end{aligned}\right.
\end{equation*}
Then $\phi^{(1,1)}=\phi^{(1)}-\psi^{(1)}$ solves
\begin{equation*}
\left\{
\begin{aligned}
mOS (\phi ^{(1,1)}) & \, = \, h^{(1)} \,, \qquad Y>0\,,\\
\phi^{(1,1)} & \, = \, 0 \,, \qquad Y=0\,.
\end{aligned}\right.
\end{equation*}
where 
\begin{align*}
h^{(1)} \, = \,  2 (\partial_Y V) \partial_Y \psi^{(1)} + 2(\partial_Y^2 V ) \psi ^{(1)} \, = \, 2 \partial_Y \big ( (\partial_Y V) \psi^{(1)} \big )\,.
\end{align*}
Next we set 
\begin{align*}
\phi^{(2)} \, = \, \phi^{(1,1)} - \varphi^{(1)}\,, \qquad \quad \varphi^{(1)}\, = \, \Phi_{Ray} [h^{(1)}]\,.
\end{align*}
and therefore $\phi^{(2)}$ should solve
\begin{equation*}
\left\{
\begin{aligned}
mOS (\phi^{(2)} ) & \, = \, \epsilon  (\partial_Y^2-\alpha^2) \partial_Y^2 \varphi^{(1)} \,, \qquad Y>0\,,\\
\phi^{(2)} & \, = \, 0 \,, \qquad Y=0\,.
\end{aligned}\right.
\end{equation*}

In analogy of the above argument the functions $\psi^{(k)}$, $h^{(k)}$, and $\varphi^{(k)}$ are inductively defined as follows: $\psi^{(k)}$ is the solution to 
\begin{equation}\label{def.psi.k}
\left\{
\begin{aligned}
-\epsilon \partial_Y^2  \psi^{(k)} +  (V -c_\epsilon) \psi^{(k)} & \,  = \, \epsilon \partial_Y^2 \varphi^{(k-1)}\,, \qquad Y > 0\,,\\
\psi^{(k)} & \, = \, 0\,, \qquad  Y=0\,.
\end{aligned}\right.
\end{equation}
while $h^{(k)}$ and $\varphi^{(k)}$ are defined as 
\begin{align}\label{def.h.varphi.k}
h^{(k)} \, = \,  2 \partial_Y \big ( (\partial_Y V) \psi^{(k)} \big )  \,, \qquad \varphi^{(k)} \, = \, \Phi_{Ray} [h^{(k)}]\,.
\end{align}
Our goal is to show the convergence of the sums 
\begin{align}\label{def.series}
\sum_{k=1}^\infty \psi^{(k)}\,, \qquad  \sum_{k=1}^\infty \varphi^{(k)}\,.
\end{align}
Indeed, if they converge then the function
\begin{align}\label{def.phi.1}
\phi^{(1)} \, = \, \sum_{k=1}^\infty \psi^{(k)} +  \sum_{k=1}^\infty \varphi^{(k)}
\end{align}
solves \eqref{eq.os.R'}, as desired. To ensure the convergence of the above series the lower bound of $\gamma$ in \eqref{parameter.stability} is required. As will be seen in the next proposition, under the condition \eqref{concave.weak} we need $\gamma\geq \frac57$, while under the condition \eqref{concave.strong} it can be replaced by $\gamma\geq \frac23$. Our argument is built upon Proposition \ref{prop.Ray.WC} for the Rayleigh equations and Proposition \ref{prop.airy.1} for the Airy equations.
In the next proposition the number $\nu_0$ is as in \eqref{integral.condition}, while the number $\delta_1'\in (0,\delta_1]$ and the constants $C_{{\rm wc}}$, $C_{{\rm sc}}$ are given in Proposition \ref{prop.Ray.WC}.

\begin{prop}[Solvability of \eqref{eq.os.R'}]\label{prop.mos} 
Let $0<\nu \leq \nu_0$ and $|c|\leq \delta_1^{-1}$. Assume that \eqref{concave.weak} holds. 
Then there exists $\delta_* \in (0,\delta_1']$ such that the following statement holds.
If $\gamma \in [\frac57,1]$ and $\delta\in (0,\delta_*]$ in \eqref{parameter.stability},
then the series in \eqref{def.series} converges in $H^2(\R_+)$, and it follows that
\begin{align}
\frac{\Im c_\epsilon}{|\epsilon|} \| \sum_{k=1}^\infty \psi^{(k)} \|_{L^2}  + \big (\frac{\Im c_\epsilon}{|\epsilon|}\big )^\frac12 \| \partial_Y \sum_{k=1}^\infty \psi^{(k)}\|_{L^2}  + \| \partial_Y^2 \sum_{k=1}^\infty \psi^{(k)}\|_{L^2} & \leq C \| \partial_Y^2 \varphi^{(0)}\|_{L^2} \,, \label{est.prop.mos.1} 
\end{align}
and 
\begin{align}\label{est.prop.mos.2} 
\begin{split}
\| \partial_Y \sum_{k=1}^\infty \varphi^{(k)} \|_{L^2} + \alpha \|  \sum_{k=1}^\infty \varphi^{(k)}\|_{L^2} & \leq  \frac{2C_{{\rm wc}} |\epsilon|}{(\Im c_\epsilon)^\frac72} \| \partial_Y^2 \varphi^{(0)}\|_{L^2} \,,\\
\| (\partial_Y^2-\alpha^2) \sum_{k=1}^\infty \varphi^{(k)} \|_{L^2}& \leq 2C_{{\rm wc}} \bigg ( \frac{|\epsilon|}{(\Im c_\epsilon)^\frac72} +  \frac{|\epsilon|^\frac12}{(\Im c_\epsilon)^\frac32} \bigg )  \| \partial_Y^2 \varphi^{(0)}\|_{L^2} \,.
\end{split}
\end{align}

\noindent 
If \eqref{concave.strong} holds in addition, then  the condition $\gamma\in [\frac57, 1]$ is relaxed to $\gamma\in [\frac23,1]$, and the estimates for $\sum_{k=1}^\infty \varphi^{(k)}$ stated above are replaced by 
\begin{align}\label{est.prop.mos.3} 
\begin{split}
\| \partial_Y \sum_{k=1}^\infty \varphi^{(k)} \|_{L^2} + \alpha \|  \sum_{k=1}^\infty \varphi^{(k)}\|_{L^2} & \leq  \frac{2C_{{\rm sc}} |\epsilon|}{(\Im c_\epsilon)^3} \| \partial_Y^2 \varphi^{(0)}\|_{L^2} \,,\\
\| (\partial_Y^2-\alpha^2) \sum_{k=1}^\infty \varphi^{(k)}\|_{L^2}& \leq 2C_{{\rm sc}}  \frac{|\epsilon|^\frac12}{(\Im c_\epsilon)^\frac32} \| \partial_Y^2 \varphi^{(0)}\|_{L^2} \,.
\end{split}
\end{align}
Here $C$ is a universal constant, and $\delta_*$ depends only on $C_{{\rm wc}}$ under the assumption \eqref{concave.weak}, while on $C_{{\rm sc}}$ under the assumption \eqref{concave.strong}.
\end{prop}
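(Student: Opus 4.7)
The plan is to introduce $a_k := \|\partial_Y^2 \varphi^{(k)}\|_{L^2}$ and establish a uniform geometric contraction $a_k \leq q\, a_{k-1}$ with $q \leq 1/2$ by choosing $\delta_*$ small enough. A crucial preliminary observation is that for any $\varphi \in H^2(\R_+)\cap H^1_0(\R_+)$ decaying at infinity, integration by parts gives
\begin{align*}
\|(\partial_Y^2 - \alpha^2) \varphi\|_{L^2}^2 \, = \, \|\partial_Y^2 \varphi\|_{L^2}^2 + 2\alpha^2 \|\partial_Y \varphi\|_{L^2}^2 + \alpha^4 \|\varphi\|_{L^2}^2,
\end{align*}
so in particular $\|\partial_Y^2 \varphi\|_{L^2} \leq \|(\partial_Y^2 - \alpha^2) \varphi\|_{L^2}$. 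This identity is decisive: without it one would be forced to control $\alpha^2 \|\varphi^{(k)}\|_{L^2}$ separately in the recursion, but in the middle frequency regime $\alpha$ can be as large as $\delta_0^{-1}\nu^{-1/4}$, and such a term would destroy the contraction well before one reaches $\gamma = 5/7$.

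\textbf{Chaining Airy and Rayleigh.} For each $k \geq 1$, Proposition \ref{prop.airy.1} applied to the Airy equation \eqref{def.psi.k} with source $\epsilon \partial_Y^2 \varphi^{(k-1)}$ yields
\begin{align*}
\|\psi^{(k)}\|_{L^2} \leq \frac{|\epsilon|}{\Im c_\epsilon}\, a_{k-1}, \qquad \|\partial_Y \psi^{(k)}\|_{L^2} \leq \frac{1}{2}\bigg(\frac{|\epsilon|}{\Im c_\epsilon}\bigg)^{\!1/2} a_{k-1},
\end{align*}
together with an $H^2$-bound on $\psi^{(k)}$ with constant depending on $\|\partial_Y V\|_{L^\infty}$. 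Since $h^{(k)} = 2\partial_Y(\psi^{(k)}\, \partial_Y V)$ fits the source structure of Proposition \ref{prop.Ray.WC}, combining that proposition with the Dirichlet identity above gives, under \eqref{concave.weak},
\begin{align*}
a_k \, \leq \, \|(\partial_Y^2 - \alpha^2) \varphi^{(k)}\|_{L^2} \, \leq \, 2 C_{\rm wc}\bigg(\frac{|\epsilon|}{(\Im c_\epsilon)^{7/2}} + \frac{|\epsilon|^{1/2}}{2 (\Im c_\epsilon)^{3/2}}\bigg) a_{k-1},
\end{align*}
and, under \eqref{concave.strong}, the same bound with $|\epsilon|/(\Im c_\epsilon)^{7/2}$ replaced by $|\epsilon|/(\Im c_\epsilon)^3$ (reflecting the improved factor $(\Im c_\epsilon)^{-2}$ in place of $(\Im c_\epsilon)^{-5/2}$ in Proposition \ref{prop.Ray.WC}).

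\textbf{Choice of $\delta_*$ and the Gevrey threshold.} Using $|\epsilon| = 1/n$ and the lower bound $\Im c_\epsilon \geq n^{\gamma-1}/\delta$ from \eqref{bound.nc}, the two terms in the WC recursion are bounded above by $\delta^{7/2}\, n^{(5-7\gamma)/2}$ and $\delta^{3/2}\, n^{(2-3\gamma)/2}$ respectively. Both exponents of $n$ are nonpositive precisely when $\gamma \geq 5/7$, and one then fixes $\delta_* \in (0,\delta_1']$, depending only on $C_{\rm wc}$, so that the recursion ratio $q$ is $\leq 1/2$ uniformly in the middle frequency range. Under SC only the $|\epsilon|/(\Im c_\epsilon)^3$ term appears in place of $|\epsilon|/(\Im c_\epsilon)^{7/2}$, giving the weaker constraint $\gamma \geq 2/3$; this is where the non-degenerate concavity up to $Y = 0$ pays off, via Lemma \ref{lem.nodeg'} rather than Lemma \ref{lem.nodeg}. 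The main technical obstacle is that the Rayleigh estimate produces norms of $\psi^{(k)}$ rather than of $\partial_Y^2 \varphi^{(k)}$ directly, and it is only the Dirichlet-$H^2$ inequality stated in the first paragraph that lets the output of one iteration feed back cleanly as the input to the next.

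\textbf{Summation and solution of \eqref{eq.os.R'}.} With $q \leq 1/2$, one has $a_k \leq 2^{-k} a_0$ and the series $\sum_{k\geq 1} \psi^{(k)}$ and $\sum_{k\geq 1} \varphi^{(k)}$ converge geometrically in $H^2(\R_+)$. Summing the Airy bounds with total source $\sum_{k\geq 1} a_{k-1} \leq 2 a_0 = 2\|\partial_Y^2 \varphi^{(0)}\|_{L^2}$ yields \eqref{est.prop.mos.1}; summing the Rayleigh bounds with $\sum_k \|\psi^{(k)}\|_{L^2} \leq 2|\epsilon|(\Im c_\epsilon)^{-1}\|\partial_Y^2 \varphi^{(0)}\|_{L^2}$ and the corresponding bound on $\sum_k \|\partial_Y \psi^{(k)}\|_{L^2}$ produces \eqref{est.prop.mos.2} (respectively \eqref{est.prop.mos.3} under SC). Finally, setting $\phi^{(1)} := \sum_{k\geq 1}\psi^{(k)} + \sum_{k\geq 1}\varphi^{(k)}$ and passing to the limit $K \to \infty$ in the residual identity $mOS(\sum_{k=1}^K(\psi^{(k)} + \varphi^{(k)})) = \epsilon(\partial_Y^2-\alpha^2)\partial_Y^2\varphi^{(0)} - \epsilon(\partial_Y^2-\alpha^2)\partial_Y^2\varphi^{(K)}$ obtained inductively from the construction, one verifies that $\phi^{(1)}$ solves \eqref{eq.os.R'} in $H^2_0(\R_+)$.
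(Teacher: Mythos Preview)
Your proof is correct and follows essentially the same route as the paper: define $a_k=\|\partial_Y^2\varphi^{(k)}\|_{L^2}$, chain the Airy estimates (Proposition~\ref{prop.airy.1}) with the Rayleigh estimates (Proposition~\ref{prop.Ray.WC}) via the inequality $\|\partial_Y^2\varphi\|_{L^2}\le\|(\partial_Y^2-\alpha^2)\varphi\|_{L^2}$ for $\varphi\in H^2\cap H^1_0$, obtain the contraction factor $B_1=C_{\rm wc}\big(|\epsilon|(\Im c_\epsilon)^{-7/2}+|\epsilon|^{1/2}(\Im c_\epsilon)^{-3/2}\big)$, and read off the thresholds $\gamma\ge 5/7$ (resp.\ $\gamma\ge 2/3$ under \eqref{concave.strong}) from $|\epsilon|=n^{-1}$, $\Im c_\epsilon\ge \delta^{-1}n^{\gamma-1}$. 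One small slip: $\phi^{(1)}$ lies in $H^2(\R_+)\cap H^1_0(\R_+)$, not $H^2_0(\R_+)$, since only the Dirichlet condition $\phi^{(1)}|_{Y=0}=0$ is imposed in \eqref{eq.os.R'}.
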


\begin{rem}\label{rem.prop.mos}{\rm (i) As is seen in the proof below, the constant $\frac{|\epsilon|}{(\Im c_\epsilon)^\frac72}$ is small when $\gamma \in [\frac57,1]$ in \eqref{parameter.stability}, while $\frac{|\epsilon|}{(\Im c_\epsilon)^3}$ is small when $\gamma \in [\frac23,1]$ in \eqref{parameter.stability}.

\noindent (ii) Proposition \ref{prop.mos} gives the estimates for $\phi^{(1)} = \sum_{k=1}^\infty \psi^{(k)} +  \sum_{k=1}^\infty \varphi^{(k)}$ which solves \eqref{eq.os.R'}. If $\alpha \Im c_\epsilon\leq \delta_2^{-1}$ in addition, where $\delta_2$ is the number in Proposition \ref{prop.large.alpha},
then \eqref{est.prop.mos.1} implies $\alpha^2 \| \sum_{k=1}^\infty \psi^{(k)} \|_{L^2}\leq \frac{C|\epsilon|}{(\Im c_\epsilon)^3}\| \pa_Y^2 \varphi^{(0)}\|_{L^2}$. Thus we have   
\begin{align}\label{est.prop.mos.4} 
\| \pa_Y \phi^{(1)} \|_{L^2} + \alpha \| \phi^{(1)} \|_{L^2} + \| (\pa_Y^2-\alpha^2) \phi^{(1)} \|_{L^2} \leq C \| \pa_Y^2 \varphi^{(0)} \|_{L^2}\,,
\end{align}
when $\alpha \Im c_\epsilon\leq \delta_2^{-1}$ under the assumptions of Proposition \ref{prop.mos}. 
}
\end{rem}

\begin{proofx}{Proposition \ref{prop.mos}} 
We will show that 
\begin{align}\label{proof.prop.mos.2}
\begin{split}
\| \partial_Y^2 \varphi^{(k)} \|_{L^2} \leq \| (\pa_Y^2-\alpha^2) \varphi^{(k)} \|_{L^2} \leq B_1^k \| \partial_Y^2 \varphi^{(0)}\|_{L^2}\,, \qquad k\in \N \cup \{0\}\,,
\end{split} 
\end{align}
where $B_1$ is given as 
\begin{align}\label{proof.prop.mos.3}
B_1 & \, = \, C_{{\rm wc}}  \bigg ( \frac{|\epsilon|}{(\Im c_\epsilon)^\frac72} +  \frac{|\epsilon|^\frac12}{(\Im c_\epsilon)^\frac32} \bigg ) \,,
\end{align} 
where $C_{{\rm wc}}$ is the constant in Proposition \ref{prop.Ray.WC}.
Note that the first inequality of \eqref{proof.prop.mos.2} is always valid.
The estimate \eqref{proof.prop.mos.2} clearly holds for $k=0$. 
Next suppose that \eqref{proof.prop.mos.2} holds for $k$. Then Proposition \ref{prop.airy.1} for  the Airy equations gives
\begin{align}\label{proof.prop.mos.4}
\begin{split}
\| \psi^{(k+1)} \|_{L^2} & \leq \frac{|\epsilon|}{\Im c_\epsilon} \| \partial_Y^2 \varphi^{(k)} \|_{L^2}\,,\\
\| \partial_Y \psi^{(k+1)} \|_{L^2} & \leq \big (\frac{|\epsilon|}{4\Im c_\epsilon} \big)^\frac12 \| \partial_Y^2 \varphi^{(k)} \|_{L^2}\,,\\
\| \partial_Y^2 \psi^{(k+1)} \|_{L^2}  & \leq 2 \big ( 1  + \frac{\| \pa_Y V \|_{L^\infty} |\epsilon|^\frac12}{(\Im c_\epsilon)^\frac32}  \big )  \| \partial_Y^2\varphi^{(k)} \|_{L^2}\,.
\end{split}
\end{align}
On the other hand, Proposition \ref{prop.Ray.WC} yields
\begin{align}\label{proof.prop.mos.6}
\| \partial_Y \varphi^{(k+1)} \|_{L^2} + \alpha \| \varphi ^{(k+1)} \|_{L^2} \leq \frac{C_{{\rm wc}}}{(\Im c_\epsilon)^\frac52} \|\psi^{(k+1)}\|_{L^2}\,,
\end{align}
and 
\begin{align}\label{proof.prop.mos.7}
\| (\partial_Y^2-\alpha^2) \varphi^{(k+1)}\|_{L^2} \leq C_{{\rm wc}} \bigg ( \frac{1}{(\Im c_\epsilon)^\frac52}\|\psi^{(k+1)}\|_{L^2}  + \frac{1}{\Im c_\epsilon}\| \partial_Y \psi^{(k+1)}\|_{L^2} \bigg )\,.
\end{align}
Hence \eqref{proof.prop.mos.4}, \eqref{proof.prop.mos.6}, and \eqref{proof.prop.mos.7} imply
\begin{align}\label{proof.prop.mos.8}
\| \partial_Y \varphi^{(k+1)} \|_{L^2} + \alpha \| \varphi ^{(k+1)} \|_{L^2} \leq \frac{C_{{\rm wc}} |\epsilon|}{(\Im c_\epsilon)^\frac72} \| \partial_Y^2 \varphi^{(k)} \|_{L^2}\,,
\end{align}
and 
\begin{align}\label{proof.prop.mos.9}
\| (\partial_Y^2-\alpha^2) \varphi^{(k+1)}\|_{L^2} \leq C_{{\rm wc}}  \bigg ( \frac{|\epsilon|}{(\Im c_\epsilon)^\frac72} + \frac{|\epsilon|^\frac12}{(\Im c_\epsilon)^\frac32} \bigg ) \| \partial_Y^2 \varphi^{(k)}\|_{L^2} = B_1 \| \partial_Y^2 \varphi^{(k)}\|_{L^2} 
\end{align}
by the definition of $B_1$.
Thus \eqref{proof.prop.mos.2} holds for all $k$.
To achieve the convergence we need the smallness of $B_1$, and in view of \eqref{proof.prop.mos.3} 
this requires the smallness of 
\begin{align*}
\frac{|\epsilon|^\frac12}{(\Im c_\epsilon)^\frac32} \qquad {\rm and} \qquad \frac{|\epsilon|}{(\Im c_\epsilon)^\frac72}\,.
\end{align*}
Recall that $|\epsilon|=n^{-1}$ and $\Im c_\epsilon \geq \Im c \geq \delta^{-1}n^{\gamma-1}$ by the condition \eqref{parameter.stability}. Therefore, we need the smallness of  
\begin{align*}
& \frac{|\epsilon|^\frac12}{(\Im c_\epsilon)^\frac32}  \, \leq \, \big ( n^{-1} (\delta n^{1-\gamma})^3 \big )^\frac12 \, = \, \big ( \delta^3 n^{2-3\gamma} \big )^\frac12\,, \\
& \frac{|\epsilon|}{(\Im c_\epsilon)^\frac72}  \, \leq \, n^{-1} (\delta n^{1-\gamma})^\frac72 \, =\, \delta^\frac72 n^{\frac52-\frac72\gamma}\,,
\end{align*}
from which we need the condition $\gamma \geq \frac57$ and the smallness of $\delta$.
Finally  the estimates of $\psi^{(k+1)}$ are obtained from \eqref{proof.prop.mos.4} and \eqref{proof.prop.mos.2}
when $B_1\leq \frac12$, while the $H^1$ norm of $\varphi^{(k+1)}$, $k\in \N \cup \{0\}$, is estimated from \eqref{proof.prop.mos.8} and \eqref{proof.prop.mos.2}. Note that the smallness of $\delta_*$ leads to the bound $\frac{\| \pa_Y V \|_{L^\infty} |\epsilon|^\frac12}{(\Im c_\epsilon)^\frac32} \leq 1$ in \eqref{proof.prop.mos.4}. 
When \eqref{concave.strong} holds then the above argument works for $\gamma\geq \frac23$ with the factor $\frac{|\epsilon|}{(\Im c_\epsilon)^\frac72}$ replaced by $\frac{|\epsilon|}{(\Im c_\epsilon)^3}$, in virtue of Proposition \ref{prop.Ray.WC}.  The proof  is complete.
\end{proofx}

A direct consequence of Proposition \ref{prop.mos} and Remark \ref{rem.prop.mos}, we have the following result on the solvability of the Orr-Sommerfeld equations \eqref{eq.os.R}.

\begin{cor}\label{cor.prop.mos} Let $h\in L^2 (\R_+)$. Under the assumption of Proposition \ref{prop.mos} there exists a weak solution $\phi=\Phi_{mOS} [h] \in H^2(\R_+) \cap H^1_0 (\R_+)$ to \eqref{eq.os.R} satisfying the estimate 
\begin{align}
\| \partial_Y (\phi - \Phi_{Ray}[h] ) \|_{L^2} + \alpha \| \phi - \Phi_{Ray}[h]  \|_{L^2} + \| \partial_Y^2 (\phi - \Phi_{Ray}[h] ) \|_{L^2} & \leq C \| \partial_Y^2 \Phi_{Ray}[h] \|_{L^2} \,, \label{est.cor.prop.mos.1}
\end{align}
where $C$ is a universal constant, while the leading term $\Phi_{Ray}[h]$ satisfies the estimates in Lemma \ref{lem.Ray.0}. Moreover, if $\alpha \Im c_\epsilon\leq \delta_2^{-1}$ in addition, where $\delta_2$ is the number in Proposition \ref{prop.large.alpha}, then we also have 
\begin{align}
\| (\pa_Y^2 - \alpha^2 ) ( \phi - \Phi_{Ray}[h] ) \|_{L^2} \leq C \| \pa_Y^2 \Phi_{Ray}[h] \|_{L^2}\,.\label{est.cor.prop.mos.2}
\end{align}

\end{cor}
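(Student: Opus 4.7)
The plan is to read the corollary off the construction already carried out in Proposition \ref{prop.mos}. I set $\varphi^{(0)} := \Phi_{Ray}[h]$ from Lemma \ref{lem.Ray.0}, generate the sequences $\psi^{(k)}, h^{(k)}, \varphi^{(k)}$ inductively by \eqref{def.psi.k}--\eqref{def.h.varphi.k}, and put $\phi^{(1)} := \sum_{k\geq 1}\psi^{(k)} + \sum_{k\geq 1}\varphi^{(k)}$; Proposition \ref{prop.mos} guarantees convergence of both series in $H^2(\R_+)$ in the stated Gevrey ranges ($\gamma \in [\frac57,1]$ under \eqref{concave.weak}, $\gamma \in [\frac23,1]$ under \eqref{concave.strong}) and for $\delta$ small enough. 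The candidate solution is then $\Phi_{mOS}[h] := \varphi^{(0)} + \phi^{(1)}$.

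To verify that $\Phi_{mOS}[h]$ solves \eqref{eq.os.R}, I apply $mOS$ to each summand: the Rayleigh equation for $\varphi^{(0)}$ gives $mOS(\varphi^{(0)}) = h - \epsilon(\partial_Y^2-\alpha^2)\partial_Y^2 \varphi^{(0)}$, while by construction $\phi^{(1)}$ weakly solves \eqref{eq.os.R'}, namely $mOS(\phi^{(1)}) = \epsilon(\partial_Y^2-\alpha^2)\partial_Y^2 \varphi^{(0)}$. Adding the two identities yields $mOS(\Phi_{mOS}[h]) = h$. The Dirichlet condition $\Phi_{mOS}[h]|_{Y=0}=0$ holds because $\varphi^{(0)}$ and every $\psi^{(k)}, \varphi^{(k)}$ vanish at the boundary; the $H^2(\R_+) \cap H^1_0(\R_+)$ regularity comes for free from Proposition \ref{prop.mos} and Lemma \ref{lem.Ray.0}.

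For the estimate \eqref{est.cor.prop.mos.1}, I write $\phi - \Phi_{Ray}[h] = \phi^{(1)}$ and bound each contribution. From \eqref{est.prop.mos.1} one controls $\|\partial_Y^2 \sum_k\psi^{(k)}\|_{L^2}$ and $\|\partial_Y\sum_k\psi^{(k)}\|_{L^2}$ directly; multiplying the $L^2$ inequality by $\alpha$ and using $\alpha|\epsilon|=\sqrt{\nu}$ also yields $\alpha\|\sum_k\psi^{(k)}\|_{L^2}\leq C\sqrt{\nu}(\Im c_\epsilon)^{-1} \|\partial_Y^2 \varphi^{(0)}\|_{L^2}$, the right-hand side being $O(\|\partial_Y^2 \varphi^{(0)}\|_{L^2})$ thanks to $\Im c_\epsilon \geq \delta^{-1} n^{\gamma-1}$ (cf.\ \eqref{bound.nc}) and $n\leq \delta_0^{-1}\nu^{-3/4}$. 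For $\sum_k\varphi^{(k)}$, inequalities \eqref{est.prop.mos.2} (or \eqref{est.prop.mos.3} under \eqref{concave.strong}) supply the $H^1$ and $\alpha L^2$ bounds, the dimensionless prefactors $|\epsilon|/(\Im c_\epsilon)^{7/2}$ (resp.\ $|\epsilon|/(\Im c_\epsilon)^3$) being precisely the ones shown to be $O(1)$ in the proof of Proposition \ref{prop.mos} once $\gamma\geq \frac57$ (resp.\ $\gamma \geq \frac23$). The missing $\|\partial_Y^2 \sum_k \varphi^{(k)}\|_{L^2}$ bound is obtained by summing the per-term geometric inequality $\|\partial_Y^2 \varphi^{(k)}\|_{L^2}\leq B_1^k\|\partial_Y^2 \varphi^{(0)}\|_{L^2}$ established in the proof of Proposition \ref{prop.mos}, which converges thanks to $B_1 \leq \frac12$.

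Finally, for \eqref{est.cor.prop.mos.2} under the additional hypothesis $\alpha \Im c_\epsilon \leq \delta_2^{-1}$, I follow Remark \ref{rem.prop.mos}(ii): the hypothesis yields $\alpha^2 \leq \delta_2^{-2}(\Im c_\epsilon)^{-2}$, which upgrades \eqref{est.prop.mos.1} to $\alpha^2\|\sum_k\psi^{(k)}\|_{L^2} \leq C|\epsilon|(\Im c_\epsilon)^{-3} \|\partial_Y^2 \varphi^{(0)}\|_{L^2}$, itself bounded by $C\|\partial_Y^2 \varphi^{(0)}\|_{L^2}$ for $\gamma\geq \frac23$. Combined with the already-proved $\partial_Y^2$ bound this closes $\|(\partial_Y^2-\alpha^2)\sum_k\psi^{(k)}\|_{L^2}$, and adding the $(\partial_Y^2-\alpha^2)$ bound on $\sum_k\varphi^{(k)}$ (from \eqref{est.prop.mos.2}/\eqref{est.prop.mos.3}) produces \eqref{est.cor.prop.mos.2}. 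The only mildly subtle step throughout is the bookkeeping verification that every ratio $|\epsilon|^a(\Im c_\epsilon)^{-b}$ coming out of the iteration is tame on the prescribed parameter window; but this is precisely the smallness already exploited to drive the geometric convergence of the Rayleigh-Airy series, so no new ingredient is needed.
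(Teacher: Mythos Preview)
Your proof is correct and follows exactly the approach the paper intends: the paper itself gives no explicit proof, stating only that the corollary is ``a direct consequence of Proposition \ref{prop.mos} and Remark \ref{rem.prop.mos}'', and you have unpacked precisely this consequence by setting $\Phi_{mOS}[h]=\varphi^{(0)}+\phi^{(1)}$ with $\phi^{(1)}$ as in \eqref{def.phi.1}, reading the $H^1$ and $\partial_Y^2$ bounds off \eqref{est.prop.mos.1}--\eqref{est.prop.mos.3} and the geometric estimate \eqref{proof.prop.mos.2}, and invoking Remark \ref{rem.prop.mos}(ii) for \eqref{est.cor.prop.mos.2}. The parameter checks you spell out (boundedness of $(|\epsilon|/\Im c_\epsilon)^{1/2}$, of $\alpha|\epsilon|/\Im c_\epsilon$, and of $|\epsilon|/(\Im c_\epsilon)^{7/2}$ in the stated $\gamma$-ranges) are the ones already used in the proof of Proposition \ref{prop.mos}, so nothing new is required.
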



Proposition \ref{prop.mos} leads to the solvability of \eqref{eq.os.R} also for the case $h=- f_{2,n} + \frac{1}{i\alpha} \partial_Y f_{1,n}$ as follows.

\begin{prop}\label{prop.mos.h}
Let $\alpha \Im c_\epsilon\leq \delta_2^{-1}$. When $h=- f_{2,n} + \frac{1}{i\alpha} \partial_Y f_{1,n}$ for $f=(f_{1,n},f_{2,n})\in L^2 (\R_+)^2$ the solution $\phi$ in Corollary \ref{cor.prop.mos} satisfies the estimates
\begin{align}
\| \partial_Y \phi \|_{L^2} + \alpha \| \phi \|_{L^2} & \leq \frac{C}{\alpha (\Im c_\epsilon)^\frac52} \| f\|_{L^2}\,,\label{est.prop.mos.h.1}\\
\| (\partial_Y^2 -\alpha^2) \phi \|_{L^2} & \leq \frac{C}{\alpha} \big ( \frac{1}{(\Im c_\epsilon)^\frac52} + \frac{1}{(|\epsilon| \Im c_\epsilon)^\frac12}  \big ) \| f\|_{L^2}\,.\label{est.prop.mos.h.2}
\end{align}
Here $C$ depends only on $U^E$ and $U$. When \eqref{concave.strong} holds the factor $(\Im c_\epsilon)^{-\frac52}$ in \eqref{est.prop.mos.h.1} and \eqref{est.prop.mos.h.2} is replaced by $(\Im c_\epsilon)^{-2}$.
\end{prop}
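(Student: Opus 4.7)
The plan is to split the source $h = -f_{2,n} + \frac{1}{i\alpha}\partial_Y f_{1,n}$ according to its regularity, writing $h = h_1 + h_2$ with $h_1 := -f_{2,n}$ and $h_2 := \frac{1}{i\alpha}\partial_Y f_{1,n}$, and to decompose the solution linearly as $\phi = \phi_1 + \phi_2$ where $\phi_j$ solves \eqref{eq.os.R} with source $h_j$. Since the hypothesis $\alpha\Im c_\epsilon \leq \delta_2^{-1}$ places us precisely in the regime covered by the second estimate of Corollary \ref{cor.prop.mos}, each $\phi_j$ further decomposes as $\Phi_{Ray}[h_j] + \phi_j^{\mathrm{corr}}$ with $\phi_j^{\mathrm{corr}}$ controlled in full $H^2$ by $\|\partial_Y^2 \Phi_{Ray}[h_j]\|_{L^2}$. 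The problem therefore reduces to obtaining $H^2$ bounds on the two Rayleigh pieces $\varphi_j^{(0)} := \Phi_{Ray}[h_j]$.

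For $\varphi_1^{(0)}$ the source $h_1$ lies in $L^2(\R_+)$, so Lemma \ref{lem.Ray.0} applies directly, and the elementary bounds $\|h_1/(V-c_\epsilon)\|_{L^2} \leq (\Im c_\epsilon)^{-1}\|f_{2,n}\|_{L^2}$ and $|\langle h_1/(V-c_\epsilon),\varphi_1^{(0)}\rangle_{L^2}| \leq (\alpha\Im c_\epsilon)^{-1}\|f_{2,n}\|_{L^2}\cdot\alpha\|\varphi_1^{(0)}\|_{L^2}$ produce weights dominated by the target $\alpha^{-1}(\Im c_\epsilon)^{-5/2}$ and $\alpha^{-1}(|\epsilon|\Im c_\epsilon)^{-1/2}$. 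For $\varphi_2^{(0)}$ the source is only distributional, and I invoke Proposition \ref{prop.Ray.1} with $g := \frac{1}{i\alpha}f_{1,n}$; the weighted $L^1$ integral $\|Y^{1/2}\partial_Y U \cdot g/(V-c_\epsilon)^2\|_{L^1}$ is controlled through the integral condition \eqref{integral.condition}, which extracts an additional $(\Im c_\epsilon)^{-3/2}$ factor that combines with Proposition \ref{prop.Ray.1}'s prefactor $(\Im c_\epsilon)^{-1}$ to yield the $(\Im c_\epsilon)^{-5/2}$ of \eqref{est.prop.mos.h.1}. Under \eqref{concave.strong} the integral condition is replaced by Lemma \ref{lem.nodeg'}, sharpening the exponent from $5/2$ to $2$.

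The main obstacle is the $(\partial_Y^2 - \alpha^2)$ bound on $\varphi_2^{(0)}$: Lemma \ref{lem.Ray.0}'s second estimate involves $\|h_2/(V-c_\epsilon)\|_{L^2}$, which is infinite since $\partial_Y f_{1,n} \notin L^2$. I overcome this by keeping $h_2$ in divergence form and deferring the missing $L^2$ control to the Rayleigh--Airy iteration of Proposition \ref{prop.mos}: the distributional residual of $\varphi_2^{(0)}$ is absorbed by the first Airy iterate $\psi^{(1)}$, which solves the Airy equation of Proposition \ref{prop.airy.1} with a source carrying the missing $\partial_Y$; the $\partial_Y$-bound $(|\epsilon|\Im c_\epsilon)^{-1/2}$ of that equation then produces the second additive term $\alpha^{-1}(|\epsilon|\Im c_\epsilon)^{-1/2}$ in \eqref{est.prop.mos.h.2}. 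Recombining $\phi = \phi_1 + \phi_2$ and invoking Corollary \ref{cor.prop.mos} once more completes both estimates \eqref{est.prop.mos.h.1} and \eqref{est.prop.mos.h.2}.
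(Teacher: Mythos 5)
Your decomposition $h = h_1 + h_2$ with $h_2 = \frac{1}{i\alpha}\partial_Y f_{1,n}$ runs into a gap that cannot be repaired within the framework you describe. The Rayleigh--Airy iteration behind Corollary~\ref{cor.prop.mos} is anchored on the quantity $\|\partial_Y^2 \Phi_{Ray}[h]\|_{L^2}$, and the first Airy iterate $\psi^{(1)}$ of \eqref{def.psi.k} takes $\epsilon\,\partial_Y^2\varphi^{(0)}$ as its source -- it requires $\partial_Y^2\varphi^{(0)} \in L^2$ before you can even begin. But for a source in divergence form, the Rayleigh equation forces $\partial_Y^2\Phi_{Ray}[h_2] = (V-c_\epsilon)^{-1}h_2 + \dots$ to live only in $H^{-1}$, not in $L^2$; there is no derivative to ``defer'' to the Airy iterate, because the Airy iterate cannot be launched at all. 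Proposition~\ref{prop.Ray.1} gives you $H^1$ control of $\Phi_{Ray}[h_2]$ (and even a partial $(\partial_Y^2-\alpha^2)$ bound that carries the term $\|h/(V-c_\epsilon)\|_{L^2}$, which you correctly identify as infinite here), but it does not produce the $L^2$ bound on the full second derivative that Corollary~\ref{cor.prop.mos} would need as input. Your proposed recombination ``the distributional residual of $\varphi_2^{(0)}$ is absorbed by the first Airy iterate'' is asserted rather than derived, and does not correspond to any step in Propositions~\ref{prop.airy.1} or~\ref{prop.mos}.

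The paper circumvents this obstacle by \emph{not} starting from $\Phi_{Ray}[h]$ at all. It introduces a different first approximation $\Phi_0 = \Phi_0[f]$ solving the modified equation \eqref{eq.os.R.0}, which retains the viscous term $-\epsilon(\partial_Y^2-\alpha^2)\partial_Y^2\Phi_0$ (so the energy estimate \eqref{proof.prop.mos.h.1}--\eqref{proof.prop.mos.h.4} genuinely controls $\|\partial_Y^2\Phi_0\|_{L^2}$) and writes the advection term in divergence form $\partial_Y\big((V-c_\epsilon)\partial_Y\Phi_0\big)$ (so the derivative in $h$ is transferred onto $\Phi_0$ by an integration by parts, as in \eqref{proof.prop.mos.h.2}, with no loss). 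The remaining commutator source $h_1 = (\partial_Y U)\partial_Y\Phi_0$ is then a genuine $L^2$ function, so $\Phi_{Ray}[h_1]$ has $\partial_Y^2$ in $L^2$, and only at this point is Corollary~\ref{cor.prop.mos} invoked. The subsequent near-boundary/far-from-boundary split of $\Phi_{Ray}[h_1]$ (the paper's $\varphi_1,\varphi_2$) is a split by \emph{location}, not by which component of $f$ the source comes from; it bears no relation to your $h_1/h_2$ split. If you want to make your line of attack work, you must first construct a viscous leading approximation with $\partial_Y^2$ control -- at which point you will have reproduced the paper's $\Phi_0$.
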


\begin{rem}\label{rem.prop.mos.h}{\rm By allowing  a larger negative power on $\Im c_\epsilon$ in \eqref{est.prop.mos.h.1} and \eqref{est.prop.mos.h.2},  Proposition \ref{prop.mos.h} is valid under the slightly weaker condition than \eqref{concave.weak}: $-M_\sigma \pa_Y^2 U\geq (\pa_Y U)^4$, rather than $-M_\sigma \pa_Y^2 U\geq (\pa_Y U)^2$ in \eqref{concave.weak}. Under this condition the factor $(\Im c_\epsilon)^{-\frac52}$ in \eqref{est.prop.mos.h.1} and \eqref{est.prop.mos.h.2} is replaced by $(-\Im c_\epsilon)^{-3}$.
}
\end{rem}

\begin{proofx}{Proposition \ref{prop.mos.h}} The Rayleigh-Airy iteration in the proof of Proposition \ref{prop.mos} requires the bound of the second  derivative for the solution to the Rayleigh equations. Thus, we need to be careful about the choice of the ``first approximation'', for the solution $\Phi_{mOS}[h]$ has to be estimated in terms of $\| f\|_{L^2}$, rather than $\|h\|_{L^2}$. Let $\Phi_0 = \Phi_0[f]$ be the solution to 
\begin{equation}\label{eq.os.R.0}
\left\{
\begin{aligned}
& -\epsilon (\partial_Y^2 -\alpha^2) \partial_Y^2  \Phi_0 + \partial_Y \big ( (V-c_\epsilon) \partial_Y\Phi_0 \big ) - \nu^\frac12 (\partial_y U^E )(\nu^\frac12 \cdot) \partial_Y \Phi_0 \\
& \qquad \qquad   - (V-c_\epsilon) \alpha^2 \Phi_0  - (\partial_Y^2 V) \Phi_0   \, = \, h \,, \qquad Y >0\,,\\
& \Phi_0 (0)  \, = \, \partial_Y \Phi_0 (0) \, = \, 0\,, \qquad Y=0\,.
\end{aligned}\right.
\end{equation}
This elliptic problem is uniquely solvable. Indeed, it suffices to show the a priori estimate.
By taking the inner product with $\Phi_0$ in the first equation of \eqref{eq.os.R.0}, we have
\begin{align*}
& -\epsilon \big ( \| \partial_Y^2 \Phi_0 \|_{L^2}^2 + \alpha^2 \| \partial_Y \Phi_0 \|_{L^2}^2 \big ) 
- \langle (V-c_\epsilon) \partial_Y \Phi_0, \partial_Y \Phi_0\rangle _{L^2} -  \alpha^2 \langle (V-c_\epsilon)\Phi_0, \Phi_0\rangle_{L^2} \\
& \qquad -\nu^\frac12 \langle (\partial_y U^E )(\nu^\frac12 \cdot) \partial_Y \Phi_0 , \Phi_0\rangle _{L^2}  - \langle (\partial_Y^2 V) \Phi_0, \Phi_0 \rangle_{L^2} \, = \, \langle h, \Phi_0 \rangle _{L^2}\,.
\end{align*}
Then the imaginary part of the above identity gives 
\begin{align}\label{proof.prop.mos.h.1}
\begin{split}
& \frac1n \big ( \| \partial_Y^2 \Phi_0 \|_{L^2}^2 + \alpha^2 \| \partial_Y \Phi_0 \|_{L^2} ^2 \big ) +  \Im c_\epsilon \big ( \| \partial_Y \Phi_0 \|_{L^2}^2 + \alpha^2 \| \Phi_0 \|_{L^2} ^2 \big )  \\
& \qquad \, = \,  \nu^\frac12 \Im  \langle (\partial_y U^E )(\nu^\frac12 \cdot) \partial_Y \Phi_0 , \Phi_0\rangle _{L^2}  + \Im \langle h, \Phi_0 \rangle _{L^2}\,.
\end{split}
\end{align}
We see 
\begin{align*}
\nu^\frac12 \Im  \langle (\partial_y U^E )(\nu^\frac12 \cdot) \partial_Y \Phi_0 , \Phi_0\rangle _{L^2}
& \leq \frac{\nu^\frac12 \| U^E\|_{C^1}}{\alpha} \|\partial_Y \Phi_0\|_{L^2} \, \alpha \|\Phi_0\|_{L^2}\\
& \leq \frac{\nu^\frac12 \| U^E \|_{C^1}}{2\alpha \Im c_\epsilon} \, \Im c_\epsilon \big ( \|\partial_Y \Phi_0\|_{L^2}^2 +  \alpha^2 \|\Phi_0\|_{L^2}^2\big )\\
& \leq \frac{\Im c_\epsilon}{2}\big ( \|\partial_Y \Phi_0\|_{L^2}^2 +  \alpha^2 \|\Phi_0\|_{L^2}^2\big )
\end{align*}
in virtue of \eqref{parameter.stability} and the smallness of $\delta$. 
which implies the unique solvability of \eqref{eq.os.R.0}. Note that 
\begin{align}\label{proof.prop.mos.h.2}
\big | \langle h, \Phi_0\rangle _{L^2}\big | \leq \frac{1}{\alpha} \| f \|_{L^2} \big ( \| \partial_Y \Phi_0 \|_{L^2} + \alpha \|\Phi_0 \|_{L^2} \big )\,.
\end{align}
Hence, \eqref{proof.prop.mos.h.1} and \eqref{proof.prop.mos.h.2} yield
\begin{align}\label{proof.prop.mos.h.3}
\| \partial_Y \Phi_0 \|_{L^2} + \alpha \| \Phi_0 \|_{L^2} \leq \frac{C}{\alpha \Im c_\epsilon} \| f \|_{L^2}\,,
\end{align}
where $C$ is a universal constant. 
Then \eqref{proof.prop.mos.h.2} and \eqref{proof.prop.mos.h.3} imply 
$\big | \langle h, \Phi_0\rangle _{L^2}\big | \leq \frac{C}{\alpha^2 \Im c_\epsilon} \| f \|_{L^2}^2$, 
which leads to, from \eqref{proof.prop.mos.h.1},
\begin{align}\label{proof.prop.mos.h.4}
\| \partial_Y^2 \Phi_0 \|_{L^2} \leq C \big ( \frac{n}{\alpha^2 \Im c_\epsilon} \big ) ^\frac12 \| f \|_{L^2}\,,
\end{align}
and therefore,
\begin{align}\label{proof.prop.mos.h.5}
\|( \partial_Y^2 -\alpha^2) \Phi_0 \|_{L^2}\leq C \bigg ( \big ( \frac{n}{\alpha^2 \Im c_\epsilon} \big ) ^\frac12 + \frac{1}{\Im c_\epsilon} \bigg ) \| f \|_{L^2} \leq C \big ( \frac{n}{\alpha^2 \Im c_\epsilon} \big ) ^\frac12 \| f\|_{L^2}
\end{align}
when $\gamma \geq \frac23$ in \eqref{parameter.stability} and $n \leq \delta_0^{-1} \nu^{-\frac34}$, in virtue of $\alpha = n \nu^\frac12$. 

We will now construct the solution $\Phi_{mOS}[h]$ to \eqref{eq.os} of the form $\Phi_{mOS}[h] = \Phi_0[f] + \Phi_1[f]$, where $\Phi_1[f]$ should solve \eqref{eq.os} with $h$ replaced by $h_1 = (\pa_Y U) \partial_Y \Phi_0$. By Corollary \ref{cor.prop.mos} the function $\Phi_1[f]$ is of the form $\Phi_1[f] = \Phi_{Ray} [h_1] + \tilde \Phi_1[f]$ with the estimate
\begin{align}\label{proof.prop.mos.h.6}
\| (\partial_Y^2 -\alpha^2) \tilde \Phi_1 [f] \|_{L^2} + \| \partial_Y \tilde \Phi_1 [f] \|_{L^2} + \alpha \| \tilde \Phi_1 [f] \|_{L^2} \leq C \| \partial_Y^2 \Phi_{Ray}[h_1] \|_{L^2}\,,
\end{align}
where $C$ is a universal constant. Thus it suffices to estimate $\varphi = \Phi_{Ray}[h_1]$.
To this end we decompose $\varphi=\varphi_1+\varphi_2$, where $\varphi_1$ is the solution to \eqref{eq.os} with $h$ replaced by $\chi_{\{0\leq Y\leq \sigma\}} h_1=\chi_{\{0\leq Y\leq \sigma\}} \pa_Y U\,  \partial_Y \Phi_0$ with sufficiently small $\sigma \in (0,1]$, while $\varphi_2$ is the solution to \eqref{eq.os} with $h$ replaced by $\chi_{\{Y\geq \sigma\}}h_1$. The number $\sigma\in (0,1]$ is chosen so that $U(Y)$ is well approximated by the linear function $\pa_Y U|_{Y=0} Y$ for $Y\in [0,\sigma]$. Note that $\pa_Y U|_{Y=0} >0$.
Then $\varphi_1$ satisfies the estimates in Lemma \ref{lem.Ray.0}, and we have  
\begin{align*}
|\langle \frac{\chi_{\{0\leq Y\leq \sigma\}} \pa_Y U\, \partial_Y\Phi_0}{V-c_\epsilon}, \varphi_1 \rangle _{L^2}|
& \leq \|\frac{Y^\frac12 \chi_{\{ 0 \leq Y\leq \sigma\}} \pa_Y U\, \partial_Y \Phi_0}{V-c_\epsilon}\|_{L^1} \| \partial_Y \varphi_1\|_{L^2} \\
& \leq C \| \frac{\chi_{\{0\leq Y\leq \sigma\}}}{V-c_\epsilon} \|_{L^2} \| \pa_Y \Phi_0 \|_{L^2} \| \partial_Y \varphi_1 \|_{L^2}\\
& \leq \frac{C}{(\Im c_\epsilon)^\frac12} \| \pa _Y \Phi_0\|_{L^2} \| \pa_Y \varphi_1 \|_{L^2}\,.
\end{align*}
Here we have used the fact that $V$ is well approximated by the linear function $\pa_Y V|_{Y=0} Y\approx \pa_Y U|_{Y=0} Y$ for $Y\in [0,\sigma]$ and for sufficiently small $\nu$.
Then we have from \eqref{est.lem.Ray.0.2},
\begin{align}\label{proof.prop.mos.h.7}
\| \partial_Y\varphi_1 \|_{L^2} + \alpha \| \varphi_1 \|_{L^2} \leq \frac{C}{(\Im c_\epsilon)^\frac32} \| \partial_Y \Phi_0 \|_{L^2}\leq \frac{C}{\alpha (\Im c_\epsilon)^\frac52} \| f\|_{L^2}\,, 
\end{align}
where $C$ depends only on $U^E$ and $U$, while \eqref{est.lem.Ray.0.3} implies
\begin{align}\label{proof.prop.mos.h.8}
\| (\partial_Y^2 -\alpha^2)\varphi_1\|_{L^2}\leq C\big ( \frac{1}{(\Im c_\epsilon)^\frac32} \| \partial_Y \Phi_0 \|_{L^2} + \frac{1}{\Im c_\epsilon} \| \partial_Y \Phi_0 \|_{L^2} \big ) \leq \frac{C}{\alpha (\Im c_\epsilon)^\frac52} \| f\|_{L^2}\,.
\end{align}
The estimate of $\varphi_2$ follows from Lemma \ref{lem.nodeg'}, and we have 
\begin{align}
& \| (\partial_Y^2 - \alpha^2)\varphi_2 \|_{L^2} + \|\partial_Y \varphi_2 \|_{L^2} + \alpha \| \varphi_2\|_{L^2} \nonumber \\
& \qquad \leq C \big ( \frac{1}{\Im c_\epsilon} \|\frac{\chi_{\{Y\geq \sigma\}} \pa_Y U\, \pa_Y  \Phi_0}{\sqrt{-\pa_Y^2 U}} \|_{L^2}  + \| \frac{\chi_{\{Y\geq \sigma\}} \pa_Y U\, \pa_Y \Phi_0}{V-c_\epsilon}\|_{L^2}\big )   \nonumber \\
& \qquad \leq \frac{C}{\Im c_\epsilon} \| \pa_Y \Phi_0 \|_{L^2} \leq \frac{C}{\alpha (\Im c_\epsilon)^2} \| f\|_{L^2}\,.\label{proof.prop.mos.h.9}
\end{align} 
Here we have used the concave condition in \eqref{concave.weak}.
Collecting \eqref{proof.prop.mos.h.3} - \eqref{proof.prop.mos.h.9}, we obtain \eqref{est.prop.mos.h.1} and \eqref{est.prop.mos.h.2}. The proof is complete.
\end{proofx}

\subsubsection{Construction of slow mode for modified Orr-Sommerfeld equations}\label{subsubsec.slow}

The goal of this subsection is to construct a slow mode for the modified Orr-Sommerfeld equations, 
which is a solution to the boundary value problem
\begin{equation}\label{eq.slow}
\left\{
\begin{aligned}
mOS (\phi) \, & = \, 0 \,, \qquad Y>0\,,\\
\phi \, & = \, 1\,, \qquad Y=0\,.
\end{aligned}\right.
\end{equation}
The slow mode is constructed around the solution to the Rayleigh equations.
To this end we first consider the boundary value problem 
\begin{equation}\label{eq.ray.b}
\left\{
\begin{aligned}
(V -c_\epsilon) (\partial_Y^2-\alpha^2) \varphi - (\partial_Y^2 V ) \varphi \, & = \, 0 \,, \qquad Y>0\,,\\
\varphi \, & = \, 1\,, \qquad Y=0\,.
\end{aligned}\right.
\end{equation}

\begin{prop}\label{prop.Ray.b} Let $0<\nu\leq 1$ and $|c|\leq \delta_1^{-1}$. Assume that \eqref{concave.weak} holds. Let $\delta\in (0,\delta_*]$ in \eqref{parameter.stability}. 
Then there exists a unique solution $\varphi_{Ray}\in H^2 (\R_+)$ to \eqref{eq.ray.b} of the form $\varphi_{Ray} = e^{-\alpha Y} + \tilde \varphi_{Ray}$ satisfying
\begin{align}
\| \partial_Y \tilde \varphi_{Ray} \|_{L^2} + \alpha \| \tilde \varphi_{Ray} \|_{L^2} + \| (\partial_Y^2 -\alpha^2)\tilde \varphi_{Ray}\|_{L^2} \leq \frac{C}{\Im c_\epsilon}\,.\label{est.prop.Ray.b.1}
\end{align}  
Here $C$ depends only on $\|U^E\|_{C^2}$ and $\|U\|$.

\end{prop}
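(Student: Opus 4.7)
The plan is to remove the nonhomogeneous boundary value by setting $\varphi_{Ray} = e^{-\alpha Y} + \tilde\varphi_{Ray}$. Because $(\partial_Y^2 - \alpha^2)e^{-\alpha Y} = 0$, the correction $\tilde\varphi_{Ray}$ must solve the Rayleigh equation \eqref{eq.ray.half} with zero Dirichlet data and source
$$h \, = \, (\partial_Y^2 V)\, e^{-\alpha Y} \, = \, \bigl( \partial_Y^2 U + \nu (\partial_y^2 U^E)(\sqrt\nu Y) \bigr) e^{-\alpha Y}.$$
Existence and uniqueness of $\tilde\varphi_{Ray} \in H^2(\R_+) \cap H^1_0(\R_+)$ then follow from Lemma \ref{lem.Ray.0}, so the proposition reduces to a quantitative control of this specific Rayleigh mode.

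\textbf{Self-referential bound on the energy bracket.} The key step is to establish
$$ A \, := \, \bigl| \langle h/(V - c_\epsilon), \tilde\varphi_{Ray}\rangle_{L^2} \bigr| \leq C/\Im c_\epsilon$$
with a constant uniform in $\alpha$. By Cauchy--Schwarz,
$$A \leq \bigl\|\sqrt{|\partial_Y^2 V|}\,e^{-\alpha Y}\bigr\|_{L^2}\cdot \Bigl\| \tfrac{\sqrt{|\partial_Y^2 V|}}{|V - c_\epsilon|}\tilde\varphi_{Ray}\Bigr\|_{L^2}.$$
The first factor is uniformly bounded: \eqref{bound.U} gives $|\partial_Y^2 U| \lesssim (1+Y)^{-2}$, which is $L^1$, while the contribution of $\nu(\partial_y^2 U^E)(\sqrt\nu Y)$ is $O(\nu/\alpha) = O(\delta_0)$ after a change of variables, thanks to the lower bound $\alpha \geq \sqrt\nu/\delta_0$. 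For the second factor, the concavity \eqref{concave.weak} yields $|\partial_Y^2 V|\leq -\partial_Y^2 V + 2\nu\|U^E\|_{C^2}$, so the imaginary-part Rayleigh identity \eqref{lem.Ray.identity.2} applied to $\tilde\varphi_{Ray}$ gives
$$\Bigl\|\tfrac{\sqrt{|\partial_Y^2 V|}}{|V-c_\epsilon|}\tilde\varphi_{Ray}\Bigr\|_{L^2}^2 \leq \frac{A}{\Im c_\epsilon} + \frac{C\nu\|U^E\|_{C^2}}{(\Im c_\epsilon)^2}\|\tilde\varphi_{Ray}\|_{L^2}^2.$$
Feeding \eqref{est.lem.Ray.0.2} into the last term expresses $\alpha^2\|\tilde\varphi_{Ray}\|_{L^2}^2$ in terms of $A$ itself; the resulting correction is small thanks to $\nu/(\alpha^2 (\Im c_\epsilon)^2) \ll 1$, a consequence of \eqref{parameter.stability} and the smallness of $\delta$. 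The two Cauchy--Schwarz bounds then combine into a self-referential inequality of the form $A^2 \lesssim A/\Im c_\epsilon$, from which $A \leq C/\Im c_\epsilon$ follows.

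\textbf{Conclusion and main obstacle.} Plugging the bracket bound into \eqref{est.lem.Ray.0.2} immediately gives $\|\partial_Y \tilde\varphi_{Ray}\|_{L^2} + \alpha\|\tilde\varphi_{Ray}\|_{L^2} \lesssim 1/\Im c_\epsilon$. For the second-derivative piece one applies \eqref{est.lem.Ray.0.3} together with the elementary bound $\|h/(V-c_\epsilon)\|_{L^2} \lesssim 1/\Im c_\epsilon$, which holds because $|V - c_\epsilon| \geq \Im c_\epsilon$ and $\int |\partial_Y^2 V|^2 e^{-2\alpha Y}\,dY$ is uniformly bounded by the same arguments as in step two. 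The main obstacle is the self-referential estimate on $A$: it is precisely there that the concavity assumption \eqref{concave.weak} enters, allowing $|\partial_Y^2 V|$ to be replaced by $-\partial_Y^2 V$ up to a controllable $\nu$ correction, so that the imaginary-part Rayleigh identity can be invoked. Without this favorable sign a full power of $\Im c_\epsilon$ would be lost.
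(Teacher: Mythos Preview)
Your proof is correct and follows essentially the same route as the paper: reduce to a homogeneous Rayleigh problem with source $h=(\partial_Y^2 V)e^{-\alpha Y}$, then control the key bracket $\langle h/(V-c_\epsilon),\tilde\varphi_{Ray}\rangle$ via Cauchy--Schwarz combined with the imaginary-part identity \eqref{lem.Ray.identity.2} and the a priori bound \eqref{est.lem.Ray.0.2}, yielding a self-referential inequality that closes at $O(1/\Im c_\epsilon)$. The only organizational difference is that the paper splits $h$ into the $U^E$ and $U$ contributions and cites Lemma~\ref{lem.Ray.0} and Lemma~\ref{lem.nodeg'} respectively, whereas you keep $h$ intact and carry out inline the self-referential argument that underlies Lemma~\ref{lem.nodeg'}.
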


\begin{proof} The function $\tilde \varphi_{Ray}$ has the form $\tilde \varphi_{Ray} =\tilde \varphi_{Ray,1}+\tilde \varphi_{Ray,2}$, where $\tilde \varphi_{Ray,1}$ is the solution to \eqref{eq.ray.half} with $h (Y)=h_1 (Y) = \nu (\partial_y^2 U^E) (\nu^\frac12 Y) e^{-\alpha Y}$ and  $\tilde \varphi_{Ray,2}$ is the solution to \eqref{eq.ray.half} with $h (Y)=h_2 (Y) = e^{-\alpha Y} \pa_Y^2 U (Y)$. Then, in virtue of Lemma \ref{lem.Ray.0} we have 
\begin{align*}
& \| \partial_Y \tilde \varphi_{Ray,1}\|_{L^2}^2 + \alpha^2 \| \tilde \varphi_{Ray,1}\|_{L^2}^2  + \| (\partial_Y^2 -\alpha^2) \tilde \varphi_{Ray,1}\|_{L^2}^2 \\
& \leq C\big ( \frac{1}{\Im c_\epsilon}  \big | \langle \frac{h_1}{V-c_\epsilon}, \tilde \varphi_{Ray,1} \rangle _{L^2} \big | + \| \frac{h_1}{V-c_\epsilon} \|_{L^2}^2\big )\,.
\end{align*}
Here $C$ depends only on $\|U^E\|_{C^2}$ and $\|U\|$. We observe that 
\begin{align*}
 \frac{1}{\Im c_\epsilon}  \big | \langle \frac{h_1}{V-c_\epsilon}, \tilde \varphi_{Ray,1} \rangle _{L^2} \big | \leq \frac{\nu\| U^E\|_{C^2}}{(\Im c_\epsilon)^2} \| e^{-\alpha Y} \|_{L^2} \| \tilde \varphi_{Ray,1}\|_{L^2} \leq \frac{C \nu\| U^E\|_{C^2}}{\alpha^\frac12 (\Im c_\epsilon)^2} \| \tilde \varphi_{Ray,1}\|_{L^2} \,,
\end{align*}
and 
\begin{align*}
\| \frac{h_1}{V-c_\epsilon} \|_{L^2}^2 \leq \frac{\nu^2 \|U^E\|_{C^2}^2}{(\Im c_\epsilon)^2} \| e^{-\alpha Y}\|_{L^2}^2 \leq \frac{C\nu^2 \|U^E\|_{C^2}^2}{\alpha (\Im c_\epsilon)^2}\,,
\end{align*}
which imply
\begin{align*}
\| \partial_Y \tilde \varphi_{Ray,1}\|_{L^2}^2 + \alpha^2 \| \tilde \varphi_{Ray,1}\|_{L^2}^2 + \| (\partial_Y^2 -\alpha^2) \tilde \varphi_{Ray,1}\|_{L^2}^2  \leq C\big ( \frac{\nu^2 \| U^E \|_{C^2}^2}{\alpha^3 (\Im c_\epsilon)^4} +\frac{\nu^2 \|U^E\|_{C^2}^2}{\alpha (\Im c_\epsilon)^2} \big )\,.
\end{align*}
Recalling the lower bound $\alpha \Im c_\epsilon \geq \delta_1^{-1} n^{\gamma}\nu^\frac12$ assumed in \eqref{parameter.stability}, we thus obtain 
\begin{align*}
\| \partial_Y \tilde \varphi_{Ray,1}\|_{L^2}^2 + \alpha^2 \| \tilde \varphi_{Ray,1}\|_{L^2}^2 + \| (\partial_Y^2 -\alpha^2) \tilde \varphi_{Ray,1}\|_{L^2}^2  \leq \frac{C}{\Im c_\epsilon}\,.
\end{align*}
Next we see from Lemma \ref{lem.nodeg'},
\begin{align*}
\| \partial_Y \tilde \varphi_{Ray,2}\|_{L^2} +\alpha \| \tilde \varphi_{Ray,2} \|_{L^2} + \| (\partial_Y^2 -\alpha^2)\tilde \varphi_{Ray,2}\|_{L^2} & \leq \frac{C}{\Im c_\epsilon} \| \sqrt{-\pa_Y^2 U} e^{-\alpha Y} \|_{L^2} \\
& \leq \frac{C}{\Im c_\epsilon} \| \sqrt{-\pa_Y^2 U} \|_{L^2} \leq \frac{C}{\Im c_\epsilon}\,,
\end{align*}
where $C$ depends only on $U^E$ and $U$. Note that we have used the spatial decay such as $|\pa_Y^2 U(Y)|\leq \|U\| (1+ Y)^{-2}$. Collecting these estimates, we obtain \eqref{est.prop.Ray.b.1}, since $|c|\leq \delta_1^{-1}$.
The proof is complete.
\end{proof}

\vspace{0.3cm}

The existence and the estimates for the slow mode $\phi_s$ are stated as follows.
Recall that $\delta_2>0$ is the number in Proposition \ref{prop.large.alpha},
while $\delta_*>0$ is the number in Proposition \ref{prop.mos}.

\begin{prop}[Slow mode for modified Orr-Sommerfeld equations]\label{prop.slow}
Let $0<\nu \leq \nu_0$, $|c|\leq \delta_1^{-1}$, and $\alpha \Im c_\epsilon \leq \delta_2^{-1}$. 
Assume that \eqref{concave.weak} holds. 
Let $\gamma\in [\frac57,1]$ and $\delta\in (0,\delta_*]$ in \eqref{parameter.stability}. 
Then there exists a solution $\phi_s\in H^2 (\R_+)$ to \eqref{eq.slow} of the form 
$\phi_s = \varphi_{Ray} + \tilde \phi_s$ 
satisfying the estimate
\begin{align}
\| \pa_Y \tilde \phi_s \|_{L^2}  + \alpha \| \tilde \phi_s \|_{L^2} + \| (\pa_Y^2-\alpha^2) \phi_s \|_{L^2} & \leq \frac{C}{\Im c_\epsilon} \,.
\end{align}
Here  $C$ depends only on $U^E$ and $U$. Moreover, if \eqref{concave.strong} holds in addition then the above statement is valid for $\gamma\in [\frac23,1]$.
\end{prop}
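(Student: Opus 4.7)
The plan is to seek $\phi_s = \varphi_{Ray} + \tilde\phi_s$, where $\varphi_{Ray}$ is the Rayleigh slow mode furnished by Proposition~\ref{prop.Ray.b} (with boundary value $1$) and $\tilde\phi_s$ is an $H^2$ correction with vanishing trace at $Y=0$. Proposition~\ref{prop.Ray.b} gives the decomposition $\varphi_{Ray} = e^{-\alpha Y} + \tilde\varphi_{Ray}$ with $\tilde\varphi_{Ray}|_{Y=0}=0$ and
$$
\|\partial_Y\tilde\varphi_{Ray}\|_{L^2} + \alpha\|\tilde\varphi_{Ray}\|_{L^2} + \|(\partial_Y^2-\alpha^2)\tilde\varphi_{Ray}\|_{L^2} \leq C/\Im c_\epsilon.
$$
Since $\varphi_{Ray}$ solves the Rayleigh equation, the definition \eqref{def.mos.op} reduces to $mOS(\varphi_{Ray}) = -\epsilon(\partial_Y^2-\alpha^2)\partial_Y^2\varphi_{Ray}$. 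The key observation is that $(\partial_Y^2-\alpha^2)(\alpha^2 e^{-\alpha Y})=0$, hence $(\partial_Y^2-\alpha^2)\partial_Y^2\varphi_{Ray} = (\partial_Y^2-\alpha^2)\partial_Y^2\tilde\varphi_{Ray}$, so the source driving $\tilde\phi_s$ sees only the \emph{small} piece $\tilde\varphi_{Ray}$ and not the boundary-layer profile $e^{-\alpha Y}$.

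Thus $\tilde\phi_s$ is required to solve
$$
mOS(\tilde\phi_s) = \epsilon(\partial_Y^2-\alpha^2)\partial_Y^2\tilde\varphi_{Ray}, \qquad \tilde\phi_s|_{Y=0} = 0,
$$
which is exactly problem \eqref{eq.os.R'} with $\varphi^{(0)}$ played by $\tilde\varphi_{Ray}$. I apply the Rayleigh--Airy iteration of Proposition~\ref{prop.mos} verbatim to produce $\tilde\phi_s$ as a convergent series $\sum_k\psi^{(k)} + \sum_k\varphi^{(k)}$; the iteration scheme and its bounds depend only on $\|\partial_Y^2\varphi^{(0)}\|_{L^2}$, not on $\varphi^{(0)}$ being any specific Rayleigh solution. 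Because $\tilde\varphi_{Ray}(0)=0$, integration by parts yields $\|\partial_Y^2\tilde\varphi_{Ray}\|_{L^2}\leq\|(\partial_Y^2-\alpha^2)\tilde\varphi_{Ray}\|_{L^2}\leq C/\Im c_\epsilon$. The supplementary hypothesis $\alpha\Im c_\epsilon \leq \delta_2^{-1}$ then activates Remark~\ref{rem.prop.mos}(ii), which upgrades the iteration estimates to
$$
\|\partial_Y\tilde\phi_s\|_{L^2} + \alpha\|\tilde\phi_s\|_{L^2} + \|(\partial_Y^2-\alpha^2)\tilde\phi_s\|_{L^2} \leq C\|\partial_Y^2\tilde\varphi_{Ray}\|_{L^2} \leq C/\Im c_\epsilon.
$$

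To conclude, $\phi_s := \varphi_{Ray} + \tilde\phi_s$ satisfies $\phi_s(0)=1$ and $mOS(\phi_s)=mOS(\varphi_{Ray}) + mOS(\tilde\phi_s) = 0$ by construction. For the claimed bound on $\|(\partial_Y^2-\alpha^2)\phi_s\|_{L^2}$, the annihilation identity is used once more to write $(\partial_Y^2-\alpha^2)\phi_s = (\partial_Y^2-\alpha^2)\tilde\varphi_{Ray} + (\partial_Y^2-\alpha^2)\tilde\phi_s$, with both summands already controlled by $C/\Im c_\epsilon$. Under the strongly concave assumption \eqref{concave.strong}, precisely the same argument runs for $\gamma\in[\tfrac{2}{3},1]$, since Propositions~\ref{prop.Ray.b} and \ref{prop.mos} both extend to that range with the improved prefactors. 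I anticipate no serious obstacle; the one point requiring care is checking that the iteration of Proposition~\ref{prop.mos} applies with $\varphi^{(0)}=\tilde\varphi_{Ray}$, which is not literally of the form $\Phi_{Ray}[h]$ for some $h\in L^2$, but this is routine as the scheme and its estimates depend only on $\|\partial_Y^2\varphi^{(0)}\|_{L^2}$ together with the vanishing trace at $Y=0$.
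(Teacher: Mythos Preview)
Your proposal is correct and follows essentially the same approach as the paper: decompose $\phi_s=\varphi_{Ray}+\tilde\phi_s$, use the cancellation $(\partial_Y^2-\alpha^2)\partial_Y^2 e^{-\alpha Y}=0$ so that the source for $\tilde\phi_s$ involves only $\tilde\varphi_{Ray}$, then invoke Proposition~\ref{prop.mos} (with Remark~\ref{rem.prop.mos}(ii)) together with the bound $\|\partial_Y^2\tilde\varphi_{Ray}\|_{L^2}\le C/\Im c_\epsilon$ from Proposition~\ref{prop.Ray.b}. Your closing caveat about $\varphi^{(0)}=\tilde\varphi_{Ray}$ not literally being a $\Phi_{Ray}[h]$ is well-observed but, as you say, harmless since the iteration estimates depend only on $\|\partial_Y^2\varphi^{(0)}\|_{L^2}$.
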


\begin{proof} Since the function $\tilde \phi_s$ satisfies the equations \eqref{eq.os.R'} with the source term
\begin{align*}
\epsilon (\partial_Y^2-\alpha^2) \partial_Y^2 \varphi_{Ray} \, = \,
\epsilon (\partial_Y^2-\alpha^2) \partial_Y^2 (e^{-\alpha Y} + \tilde \varphi_{Ray} ) \, = \, \epsilon (\partial_Y^2-\alpha^2 ) \partial_Y^2 \tilde \varphi_{Ray}\,,
\end{align*}
the estimates of $\tilde \phi_s$ follows from Proposition \ref{prop.mos} and Remark \ref{rem.prop.mos} by combining  the estimate 
\begin{align*}
\| \partial_Y^2 \tilde \varphi_{Ray} \|_{L^2} \leq  \frac{C}{\Im c_\epsilon} \,,
\end{align*}
which is proved in Proposition \ref{prop.Ray.b}. The proof is complete.
\end{proof}

\begin{rem}\label{rem.prop.slow}{\rm In virtue of Proposition \ref{prop.large.alpha} and Corollary \ref{cor.prop.large.alpha},  the construction of the slow/fast modes  is needed only in the case $0<\alpha \Im c_\epsilon \leq \delta_2^{-1}$. In this regime of $\alpha$, Propositions \ref{prop.Ray.b} and \ref{prop.slow} lead to the estimate for the slow mode $\phi_s$ such as
\begin{align}
\|\partial_Y ( \phi_s - e^{-\alpha Y} )\|_{L^2} +\alpha \| \phi_s - e^{-\alpha Y}  \|_{L^2} + \| (\pa_Y^2-\alpha^2)  ( \phi_s - e^{-\alpha Y} )\|_{L^2}  \leq \frac{C}{\Im c_\epsilon}\,,\label{est.rem.prop.slow}
\end{align}
by recalling the form $\phi_s = \varphi_{Ray} + \tilde \phi_s=e^{-\alpha Y} + \tilde \varphi_{Ray} + \tilde \phi_s$. Note that the constant $C$ in \eqref{est.rem.prop.slow} depends only on $U^E$ and $U$.
In particular, we have 
\begin{align}
|\partial_Y \phi _s (0)|\leq \frac{C}{\Im c_\epsilon} \qquad {\rm if}~~\alpha \Im c_\epsilon \leq \delta_2^{-1}\,.
\end{align}}
\end{rem}

\subsubsection{Construction of fast mode for modified Orr-Sommerfeld equations}\label{subsubsec.fast}

In this subsection we construct another solution to \eqref{eq.slow} possessing a boundary layer structure,
called the fast mode due to the rapid dependence on the parameter $\epsilon$.
The result is stated as follows.
\begin{prop}[Fast mode for modified Orr-Sommerfeld equations]\label{prop.fast}
Let $0<\nu \leq \nu_0$, $|c|\leq \delta_1^{-1}$, and $\alpha \Im c_\epsilon\leq \delta_2^{-1}$.
Assume that \eqref{concave.weak} holds.  Let $\delta\in (0,\delta_*]$ in \eqref{parameter.stability}.
If  $\gamma\in [\frac57,1]$ in \eqref{parameter.stability}
then there exists a solution $\phi_f\in H^2 (\R_+)$ to \eqref{eq.slow}  satisfying the estimates
\begin{align}
\| \partial_Y^k \phi_f\|_{L^2} \leq C |\frac{c_\eps}{\epsilon}|^{\frac{k}{2}-\frac14}\,, \qquad k=0,1,2\,,\label{est.prop.fast.1}
\end{align}
for all sufficiently small $\nu>0$. Moreover,  
\begin{align}
|\partial_Y \phi_f (0)|\geq \frac{1}{C} |\frac{c_\eps}{\epsilon}|^\frac12\,.\label{est.prop.fast.2}
\end{align}
\noindent 
Here  $C$ depends only on $U^E$ and $U$. 
Moreover, if \eqref{concave.strong} holds in addition then the above statement is valid for $\gamma\in [\frac23,1]$.
\end{prop}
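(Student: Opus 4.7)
The fast mode $\phi_f$ is expected to be a boundary-layer profile on the scale $\ell := |\epsilon/c_\eps|^{1/2}$, which is the natural length arising from the balance $-\epsilon\partial_Y^2 \sim c_\eps$ in the principal part of $mOS$ near the wall. A profile of amplitude $1$ localized in a layer of thickness $\ell$ has $\|\partial_Y^k\phi_f\|_{L^2} \sim \ell^{1/2-k} = |c_\eps/\epsilon|^{k/2-1/4}$ and wall slope $|\partial_Y\phi_f(0)|\sim \ell^{-1}$, which matches \eqref{est.prop.fast.1}--\eqref{est.prop.fast.2}. The plan is to construct $\phi_f$ as a leading Airy/WKB profile $\psi_0$ carrying the boundary condition, plus a correction produced by the Rayleigh--Airy iteration of Section~\ref{subsubsec.iterate}, now seeded by the fast profile instead of by a Rayleigh mode.

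First I would build $\psi_0$ as the decaying solution of the second-order ODE
\[
-\epsilon\,\partial_Y^2\psi_0 + (V-c_\eps)\psi_0 = 0, \qquad \psi_0(0)=1, \quad \psi_0(Y)\to 0 \text{ as } Y\to\infty,
\]
via a Langer-type (equivalently, rescaled Airy-function) ansatz. Since $V(0)=0$ and $\Im c_\eps>0$, $\psi_0$ behaves near the wall like $\exp(-Y\sqrt{-c_\eps/\epsilon})$ with the branch of positive real part, yielding the target bounds $\|\partial_Y^k\psi_0\|_{L^2}\lesssim |c_\eps/\epsilon|^{k/2-1/4}$ for $k=0,1,2$, and the non-degeneracy $|\partial_Y\psi_0(0)|\gtrsim |c_\eps/\epsilon|^{1/2}$. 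Then I would set $\phi_f = \psi_0 + \tilde\phi_f$, so that $\tilde\phi_f(0)=0$ and, using the Airy equation satisfied by $\psi_0$,
\[
mOS(\tilde\phi_f) \; = \; -mOS(\psi_0) \; = \; \epsilon\alpha^2\partial_Y^2\psi_0 \,-\, \alpha^2(V-c_\eps)\psi_0 \,-\, (\partial_Y^2 V)\psi_0 \;=: h_0.
\]
Next, Corollary~\ref{cor.prop.mos} solves $mOS(\tilde\phi_f)=h_0$ with homogeneous Dirichlet condition through the Rayleigh--Airy iteration, and in the regime $\gamma\in[\tfrac57,1]$ (respectively $\gamma\in[\tfrac23,1]$ under \eqref{concave.strong}) with $\alpha\Im c_\eps \leq \delta_2^{-1}$, the smallness of $B_1$ from Proposition~\ref{prop.mos} together with the explicit size of $h_0$ (dominated by $\alpha^2$ and $\partial_Y^2 V$ times $\psi_0$) guarantees that $\tilde\phi_f$ is strictly subdominant to $\psi_0$ in each norm of interest.

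The upper bounds \eqref{est.prop.fast.1} then follow by combining the estimates for $\psi_0$ and $\tilde\phi_f$. For the lower bound \eqref{est.prop.fast.2} at $Y=0$, a trace/interpolation inequality gives $|\partial_Y\tilde\phi_f(0)|\lesssim \|\partial_Y\tilde\phi_f\|_{L^2}^{1/2}\|\partial_Y^2\tilde\phi_f\|_{L^2}^{1/2}$, and the iterative estimates provide both factors with strictly better powers of $|c_\eps/\epsilon|$ than $\ell^{-1}$, so the leading slope $\partial_Y\psi_0(0)\sim \sqrt{-c_\eps/\epsilon}$ survives. The principal obstacle is the rigorous construction of $\psi_0$ with sharp estimates uniformly across the wide parameter range $\delta_0\nu^{3/4}\leq|\epsilon|\leq\delta_0$ and $|c|\leq\delta_1^{-1}$: the naive WKB ansatz $\exp\bigl(-\int_0^Y\sqrt{(V-c_\eps)/\epsilon}\,dY'\bigr)$ fails near any turning point $V(Y)=\Re c_\eps$, where one must patch in a genuine Airy-function profile, and the matching has to be done carefully enough that both the $L^2$ size and the pointwise wall derivative of $\psi_0$ are controlled with the stated exponents. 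A secondary but important point is to show that the Rayleigh--Airy correction $\tilde\phi_f$ cannot cancel the leading wall derivative, which is what forces us to track estimates in $H^2$ rather than only $H^1$ at the last iteration stage.
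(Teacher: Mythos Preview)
Your overall architecture---a fast leading profile carrying the boundary value, corrected by the Rayleigh--Airy iteration of Corollary~\ref{cor.prop.mos}---is the same as the paper's. But the paper does \emph{not} attempt to construct the leading profile as a decaying solution of the full variable--coefficient Airy equation $-\epsilon\partial_Y^2\psi_0+(V-c_\epsilon)\psi_0=0$. Instead it splits into two regimes governed by the distance of the critical point from the wall: for $|c|\le|\epsilon|^{(1-\theta)/3}$ it \emph{linearizes} $V-c_\epsilon$ about the critical point $Y_c$ and takes $\psi_{Ai}(Y)=Ai_\alpha\big((Y-Z_c)/\tilde\epsilon^{1/3}\big)/Ai_\alpha(-Z_c/\tilde\epsilon^{1/3})$ built from an explicit modified Airy function, so that the source for $\tilde\phi_f$ is $-R(Y)\partial_Y^2\psi_{Ai}+(\partial_Y^2V)\psi_{Ai}$ with $R$ the quadratic Taylor remainder; for $|c|\ge|\epsilon|^{(1-\theta)/3}$ it takes the frozen--coefficient exponential $e^{-\omega_\epsilon|\tau_\epsilon|Y}$ with $\tau_\epsilon=(-c_\epsilon/\epsilon)^{1/2}$, adds finitely many boundary--layer correctors $\phi_k$, and then closes a small remainder via $mOS$. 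This two--case mechanism is precisely the paper's substitute for the Langer/turning--point matching you flag as ``the principal obstacle'' and leave unresolved; without it, the sharp $L^2$ and pointwise wall--derivative estimates for $\psi_0$ are not available uniformly over $|c|\le\delta_1^{-1}$.

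Two smaller points. First, your computation of $h_0=-mOS(\psi_0)$ is incorrect: using $-\epsilon\partial_Y^2\psi_0+(V-c_\epsilon)\psi_0=0$ one finds that the $\alpha^2$ contributions cancel and
\[
mOS(\psi_0)=-2(\partial_Y^2V)\psi_0-2(\partial_YV)\partial_Y\psi_0=-2\,\partial_Y\big((\partial_YV)\psi_0\big),
\]
so $h_0=2\,\partial_Y\big((\partial_YV)\psi_0\big)$. This is exactly the structure handled by Proposition~\ref{prop.Ray.WC}, which is good---but note it contains the large term $(\partial_YV)\partial_Y\psi_0$, and your claim that $h_0$ is ``dominated by $\alpha^2$ and $\partial_Y^2V$ times $\psi_0$'' misses it. Second, the paper's Case~(1) leading profile solves a \emph{fourth--order} equation for which $(\partial_Y^2-\alpha^2)\psi_{Ai}$ is an Airy function, not the second--order equation you write; this choice is what makes the remainder source have the favorable factor $R(Y)$ that is small precisely where $\partial_Y^2\psi_{Ai}$ concentrates.
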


The construction of the fast mode becomes a delicate issue when the critical point $Y_c$ is close to the boundary $Y=0$. By taking into account the monotonicity of $U$ such a situation corresponds to the case $|\Re c| \ll 1$. For convenience we shall treat separately  the following cases.
Fix small $\theta \in (0,\frac{1}{10})$.

\vspace{0.3cm}

\noindent 
(1) Case $|c|\leq |\epsilon|^{\frac{1-\theta}{3}}\ll 1$: in this case the critical layer has to be taken into account in the analysis,

\noindent 
(2) Case $|c| \geq |\epsilon|^{\frac{1-\theta}{3}}$: in this case the effect of the critical layer is negligible.

\begin{proofx}{Proposition \ref{prop.fast} for case (1) $|c|\leq |\epsilon|^{\frac{1-\theta}{3}}\ll 1$} 
We recall that $V(Y)=U^E(\sqrt{\nu} Y)-U^E(0) + U(Y)$ and $\pa_Y U>0$ near  $Y = 0$.
Let us introduce the odd extension of $V$ to $\R$: $V(Y)=-V(-Y)$ for $Y<0$. 
Then $V$ belongs to $W^{2,\infty}(\R)$ and, for sufficiently small $\nu$,
we have $\pa_Y V\geq \kappa$ for $Y\in (-1,1)$. 
Here the number $\kappa$ is positive and can be taken uniformly in small $\nu$.
We may assume that there exists a unique $Y_c \in (-1, 1)$ such that $V(Y_c) = \Re c$ holds.
Moreover, $Y_c$ satisfies $Y_c \simeq \Re c$ and $Y_c=0$ if and  only if $\Re c=0$. 
In particular, we can take a constant $C^*\geq 1$ such that 
\begin{align}
| Y_c| \leq C^* |\Re c| \leq C^* |\epsilon|^{\frac{1-\theta}{3}}\,.\label{bound.Y_c}
\end{align}
Then we rewrite the term $V- c_\epsilon$ as
\begin{align*}
V (Y) - c_\epsilon  & \, = \, \pa_Y V|_{Y=Y_c} ( Y-Y_c ) + R (Y) - i \Im c_\epsilon \nonumber \\
& \, =: \, \pa_Y V|_{Y=Y_c} \big ( Y - Z_c \big ) + R (Y) \,.
\end{align*}
Here we have set 
\begin{align} \label{def_Zc}
Z_c \, = \, Y_c + i \frac{ \Im c_\epsilon}{\pa_Y V|_{Y=Y_c}} \,,
\end{align}
and the remainder term $R$ satisfies the estimate 
\begin{align}
|R(Y)|\leq C \| V\|_{C^2(\R_+)} |Y-Y_c|^2\,, \qquad Y\geq 0\,.\label{bound.R}
\end{align}
The key idea is then to build the fast mode $\phi_f$ around a solution $\psi_{Ai}$ to
\begin{equation} \label{eq_psiAi}
-\epsilon \pa^2_Y (\pa^2_Y - \alpha^2) \psi_{Ai} + \pa_Y V|_{Y=Y_c} (Y - Z_c) (\pa^2_Y - \alpha^2) \psi_{Ai} \, = \, 0\,,
\end{equation}
satisfying $\psi_{Ai}(0) =1$, $\psi_{Ai}(Y) \rightarrow 0, \, Y \rightarrow +\infty$. Therefore, we introduce the classical Airy function $Ai = Ai(z)$ defined by the contour integral
$$Ai(z) \, = \, \int_L \exp\big(zt - \frac{t^3}{3}\big) \dd t $$ 
where $\arg(z) \in (-\pi,\pi)$, and $L : \{r(s) e^{i \varphi(s)}, s \in \R\}$ is any contour satisfying 
\begin{itemize}
\item $\lim_{s \rightarrow +\infty} r(s)  = +\infty$, and in the neigborhood of $s=+\infty$, $\frac{2\pi}{3} \le \varphi(s) \le \frac{2\pi}{3} +   \frac{\pi}{6}$. 
\item $\lim_{s \rightarrow - \infty} r(s) = +\infty$, and in the neigborhood of $s=-\infty$, $-\frac{2\pi}{3}-\frac{\pi}{6} \le \varphi(s) \le -\frac{2\pi}{3}$. 
\end{itemize}
It is well-known that $Ai$ satisfies 
$$ \pa^2_z Ai - z Ai \, = \, 0\,, $$
and decays to zero when $|z| \rightarrow +\infty$ with $|\arg(z)| < \frac{\pi}{3}$. See \cite{Die} for details. 
To fix the idea,  we take the contour 
$L$ defined by $L = L_{-} \cup L_{0} \cup L_{+}$
with 
$$L_{-} \, = \, \{ r e^{-2i\pi/3}, r \in (1,+\infty) \}, \quad L_{0} \, = \, \{  e^{i \theta}, \theta \in [-\frac{4\pi}{3},-\frac{2\pi}{3}] \}, \quad L_{+} \, = \, \{e^{2i\pi/3} r, r \in (1,+\infty) \}$$
(oriented from bottom to top). We then set 
\begin{equation} \label{defAialpha}
 Ai_\alpha(z) \, = \, \int_{L} \frac{\exp(zt - \frac{t^3}{3})}{t^2 - (\tilde \eps^{\frac13} \alpha)^2} \dd t\,, \quad \tilde \eps \, = \, \frac{\eps}{\pa_Y V|_{Y=Y_c}}\,. 
 \end{equation}
Note that, since $\alpha \Im c_\epsilon \leq \delta_2^{-1}$, we see 
\begin{align*}
\tilde \eps^{\frac13} \alpha \, = \, e^{-\frac{\pi}{6}i} n^\frac23 \nu^{\frac12} (\pa_Y V|_{Y=Y_c})^{-\frac13} \quad {\rm and } \quad |\tilde \epsilon^\frac13 \alpha |\leq \frac{|\epsilon|^\frac13}{\delta_2 (\pa_Y V|_{Y=Y_c})^\frac13 \Im c_\epsilon} \ll 1
\end{align*}
under the condition $\gamma\in [\frac23,1]$ with sufficiently small $\delta>0$ in \eqref{parameter.stability}, 
and thus, the poles $\pm \tilde \eps^{1/3} \alpha$ are at the right of  $L$. 
By simple differentiation, one has 
$$ (\pa^2_z - (\tilde \eps^{\frac13} \alpha)^2) Ai_\alpha \, = \, Ai \,. $$
Then we set 
\begin{align}\label{def.psi_Ai}
\psi_{Ai} (Y) \, = \, \frac{1}{Ai_\alpha (-Z_c/\tilde \epsilon^\frac13)} Ai_\alpha (\frac{Y-Z_c}{\tilde \epsilon^\frac13} )\,, 
\end{align}
which clearly solves the equation \eqref{eq_psiAi}. 
We collect in the following lemma a few estimates on $\psi_{Ai}$. 
\begin{lem}\label{lem.psi_Ai} Let $\gamma \in [\frac{2}{3}, 1]$ in \eqref{parameter.stability}. 
Then the function $\psi_{Ai}$ given in  \eqref{def.psi_Ai} is well-defined and satisfies the estimates
\begin{align}\label{est.lem.psi_Ai.1} 
|\partial_Y^k \psi_{Ai} (Y)| \leq C |\tilde \epsilon|^{-\frac{k}{3}} \,  | \frac{Z_c}{\tilde \epsilon^\frac13} |^\frac54 \, |\frac{Y-Z_c}{\tilde \epsilon^\frac13}|^{-\frac{5-2k}{4}}\,  | \exp \bigg (-\frac23 \big ( \frac{Y-Z_c}{\tilde \epsilon^\frac13} \big )^\frac32 + \frac23 \big (-\frac{Z_c}{\tilde \epsilon^\frac13}\big )^\frac32 \bigg ) |\,, 
\end{align}
for all $Y\geq 0$ and $k=0,1,2$, and 
\begin{align}
|\partial_Y \psi_{Ai} (0)|\geq \frac{1}{C} \, |\frac{c_\eps}{\epsilon}|^\frac12\,.\label{est.lem.psi_Ai.2} 
\end{align}
In particular, we have 
\begin{align}\label{est.lem.psi_Ai.3} 
\| \partial_Y^k \psi_{Ai} \|_{L^2}\leq C \, |\frac{c_\eps}{\epsilon} |^{\frac{k}{2}-\frac14}\,, \qquad k=0,1,2  \,. 
\end{align}
\end{lem}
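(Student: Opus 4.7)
The plan is to derive \eqref{est.lem.psi_Ai.1}--\eqref{est.lem.psi_Ai.3} from precise asymptotic expansions of the modified Airy function $Ai_\alpha(z)$ and its derivatives. First I would perform a steepest-descent analysis on the contour integral \eqref{defAialpha}. For $z$ with $|\arg z| \le \pi - \eta$ and $|z|$ bounded below by a positive constant, the exponent $zt - t^3/3$ has a saddle at $t = z^{1/2}$ with value $\frac23 z^{3/2}$. The hypothesis $\alpha \Im c_\epsilon \le \delta_2^{-1}$ combined with $\gamma \ge \tfrac23$ forces $|\beta| := |\tilde\epsilon^{1/3}\alpha| \ll 1$ while $|z^{1/2}|$ remains of order at least one, so the poles $\pm\beta$ are far from the saddle and $(t^2-\beta^2)^{-1}|_{t = z^{1/2}} = z^{-1}(1 + O(\beta^2/z))$. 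Deforming $L$ to pass through $z^{1/2}$ along the steepest-descent direction yields, uniformly in the admissible sector,
\begin{equation*}
Ai_\alpha^{(k)}(z) \, = \, C_k\, z^{(2k-5)/4}\, e^{-\frac23 z^{3/2}}\bigl(1 + O(|z|^{-3/2} + |\beta|^2 |z|^{-1})\bigr), \qquad k=0,1,2,
\end{equation*}
where each additional derivative brings down one power of $t \sim z^{1/2}$ at the saddle and the constants $C_k$ are explicit and nonzero.

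Next I would verify that the arguments $\zeta_0 := -Z_c/\tilde\epsilon^{1/3}$ and $\zeta(Y) := (Y-Z_c)/\tilde\epsilon^{1/3}$ lie in the admissible sector. Writing $\tilde\epsilon^{1/3} = e^{-i\pi/6}(n\pa_Y V|_{Y_c})^{-1/3}$ and using that $-Z_c$ has strictly negative imaginary part (from $\Im c_\epsilon > 0$ and $\pa_Y V|_{Y_c} > 0$), a direct computation bounds $\arg \zeta_0$ away from $\pm\pi$; analogously, for $Y \ge 0$ the positive imaginary part of $Z_c$ keeps $\arg\zeta(Y)$ in a fixed subsector of $|\arg\cdot| \le \pi - \eta$. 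For the modulus, the bound $|Y_c| \le C^*|\Re c|$ from \eqref{bound.Y_c} together with $\Im c_\epsilon \ge \delta^{-1} n^{\gamma-1}$ and the lower bound $\pa_Y V|_{Y_c} \ge \kappa/2$ give $|Z_c| \simeq |c_\epsilon|$ (with implicit constants depending only on $U^E$ and $U$), and hence $|\zeta_0| \simeq |c_\epsilon|/|\epsilon|^{1/3}$, which is bounded below under \eqref{parameter.stability} with $\gamma \in [\tfrac23,1]$.

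The three estimates then follow from substituting the asymptotic expansion into the definition of $\psi_{Ai}$. For \eqref{est.lem.psi_Ai.1}, writing $\pa_Y^k\psi_{Ai}(Y) = \tilde\epsilon^{-k/3}\, Ai_\alpha^{(k)}(\zeta(Y))/Ai_\alpha(\zeta_0)$ and applying the expansion to both factors, the algebraic prefactors combine exactly into $|\tilde\epsilon|^{-k/3}|\zeta_0|^{5/4}|\zeta(Y)|^{-(5-2k)/4}$, while the exponentials combine into $|\exp(-\tfrac23\zeta(Y)^{3/2} + \tfrac23\zeta_0^{3/2})|$. For the lower bound \eqref{est.lem.psi_Ai.2}, since $\psi_{Ai}(0)=1$ one has $\pa_Y\psi_{Ai}(0) = \tilde\epsilon^{-1/3} Ai_\alpha'(\zeta_0)/Ai_\alpha(\zeta_0)$, which by the sharp asymptotic equals $\tilde\epsilon^{-1/3}\zeta_0^{1/2}(1+o(1))$; the lower bound $|Z_c|\gtrsim|c_\epsilon|$ then yields \eqref{est.lem.psi_Ai.2}. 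Finally, \eqref{est.lem.psi_Ai.3} is obtained by integrating \eqref{est.lem.psi_Ai.1} over $Y \ge 0$ after the change of variable $Y = \Re Z_c + |\tilde\epsilon|^{1/3} s$. The real part of $\tfrac23\zeta(Y)^{3/2}$ grows like $|s|^{3/2}$ for large $|s|$, so the exponential enforces fast decay and the integration extracts an extra $|\tilde\epsilon|^{1/6}$; the resulting powers of $|c_\epsilon|$ and $|\epsilon|$ assemble into $|c_\epsilon/\epsilon|^{k/2-1/4}$ as claimed.

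The main obstacle is securing the asymptotic expansion of $Ai_\alpha$ with uniform error estimates across the full parameter range, especially in the boundary case $\gamma = \tfrac23$ where $|\zeta_0|$ is only bounded below by a possibly small constant. This will require a careful decomposition of the contour $L$ into a portion near the saddle $z^{1/2}$ and tail contributions at infinity, together with a quantitative verification that the rational weight $(t^2-\beta^2)^{-1}$ acts as a uniformly small perturbation of the classical Airy integrand.
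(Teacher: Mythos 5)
Your overall plan — steepest descent on $Ai_\alpha$ to get pointwise asymptotics, then integrate — matches the paper's strategy in the appendix, but the proposal leaves the two substantive steps unproved.

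First, the asymptotic you write for $Ai_\alpha^{(k)}$ is precisely what the paper proves in Lemma \ref{lem.Ai}, and it is not a routine application of the classical Airy estimate. The sector you need is $|\arg z|\le \frac{5\pi}{6}$ (your ``fixed $\eta$'' must be exactly $\pi/6$, since $\arg\zeta(Y)$ and $\arg\zeta_0$ genuinely range over all of $(-\frac{5\pi}{6},\frac{\pi}{6})$), and on that boundary the choice of contour is not merely ``deform $L$ through the saddle.'' For $\theta=\arg z$ near $\pm\frac{5\pi}{6}$ the lower branch of the steepest-descent curve $\Gamma_\theta$ may exit the upper-left quadrant through the negative real axis, through the imaginary axis, or through a small circle around the origin, depending on $\theta$; the paper splits into these three subcases and in each one patches in an explicit arc $L_0$ plus a ray $L_{\theta,-}$ so as to stay away from the poles $\pm\rho^{-1/2}\eta$ and to verify the saddle-dominance condition $\Re h(z)\le \Re h(u_\theta)-\mu$ on the patched pieces (their \eqref{realh}). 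Your last paragraph correctly identifies this as the main obstacle, but it is not a peripheral detail to be addressed later — it constitutes essentially the whole proof. Asserting the asymptotic without this contour bookkeeping is a gap.

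Second, the passage from \eqref{est.lem.psi_Ai.1} to \eqref{est.lem.psi_Ai.3} is underspecified. A change of variable $Y=\Re Z_c+|\tilde\epsilon|^{1/3}s$ and the heuristic that the exponent grows like $|s|^{3/2}$ does not by itself produce the stated power $|\frac{c_\epsilon}{\epsilon}|^{\frac{k}{2}-\frac14}$: the algebraic prefactor $|\zeta(Y)|^{-\frac{5-2k}{2}}$ can be singular near $Y=Y_c$, and the balance between $|Y_c|$, $\Im c_\epsilon$, and $|\tilde\epsilon|^{1/3}$ matters. The paper handles this by first proving the quantitative \emph{linear-in-$Y$} lower bound $\Re\big(\frac23\zeta(Y)^{3/2}-\frac23\zeta_0^{3/2}\big)\ge \frac{1}{C}|\frac{Z_c}{\tilde\epsilon}|^{1/2}\,Y$ (their \eqref{proof.est.lem.psi.Ai.3.2}), then splitting the $L^2$ integral at $Y=Y_c/2$ and treating the three regimes $Y_c\le 0$, $0<Y_c\le\Im c_\epsilon$, $Y_c\ge \Im c_\epsilon$ separately. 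You would need an argument of at least this granularity; the $|s|^{3/2}$ heuristic alone does not resolve the competition between the algebraic singularity and the exponential on the scale where $|\zeta(Y)|$ is comparable to $|\zeta_0|$.
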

The proof of Lemma \ref{lem.psi_Ai} is postponed to the appendix. 
To achieve the construction of the fast mode, we consider the remainder $\tilde \phi_f =\phi_f-\psi_{Ai}$:  
\begin{equation}\label{eq.fast'}
\left\{
\begin{aligned}
mOS (\tilde \phi_f) \, & = \, h_f, \quad h_f(Y) = - R(Y) \partial_Y^2 \psi_{Ai} + \partial_Y^2 V  \psi_{Ai} \,, \qquad Y>0\,,\\
\phi \, & = \, 0\,, \qquad Y=0\,,
\end{aligned}\right.
\end{equation}
In view of Corollary \ref{cor.prop.mos} and Lemma \ref{lem.Ray.0}, we have 
\begin{equation}
\| \partial_Y \tilde \phi_f \|_{L^2} + \alpha \| \tilde \phi_f   \|_{L^2} + \| (\partial_Y^2-\alpha^2)  \tilde \phi_f  \|_{L^2}  \leq C  \bigg( \, \frac{1}{\Im c_\epsilon} \big | \langle \frac{h_f}{V-c_\epsilon}, \tilde \phi_f \rangle _{L^2} \big |   + \| \frac{h_f}{V-c_\epsilon}\|_{L^2}^2 \bigg )^{1/2}\,. 
\end{equation}
By the Hardy inequality, 
\begin{align*} 
\big | \langle \frac{R \partial_Y^2  \psi_{Ai}}{V-c_\epsilon}, \tilde \phi_f \rangle _{L^2} \big |  
& \le  \frac{C}{\Im c_\epsilon} \| Y R \partial_Y^2 \psi_{Ai} \|_{L^2} \| \partial_Y \tilde \phi_f \|_{L^2}\,, \\
\big | \langle \frac{\partial_Y^2 V \,  \psi_{Ai} }{V-c_\epsilon}, \tilde \phi_f \rangle _{L^2} \big |
&  \le  \frac{C}{\Im c_\epsilon} \big ( \| Y \partial_Y^2 U\,   \psi_{Ai} \|_{L^2} \| \partial_Y \tilde \phi_f \|_{L^2} + \frac{\nu \| U^E \|_{C^2} \|\psi_{Ai} \|_{L^2}}{\alpha} \alpha \| \tilde \phi_f \|_{L^2}\big )\,,
\end{align*}
and 
\begin{align*}
\| \frac{h_f}{V-c_\epsilon} \|_{L^2} \leq \frac{C}{\Im c_\epsilon} \big ( \| R\pa_Y \psi_{Ai} \|_{L^2} + \| \pa_Y^2 V\|_{L^\infty} \| \psi_{Ai}\|_{L^2}\big )\,.
\end{align*}
Thus we end up with 
\begin{align}\label{estim_tildephif}
\begin{split}
& \| \partial_Y \tilde \phi_f \|_{L^2} + \alpha \| \tilde \phi_f   \|_{L^2} + \| (\partial_Y^2-\alpha^2)  \tilde \phi_f  \|_{L^2} \\
&\quad  \leq  
\frac{C}{\Im c_\epsilon} \bigg( \frac{1}{\Im c_\epsilon}  \| Y R \partial_Y^2  \psi_{Ai} \|_{L^2}  + \frac{1}{\Im c_\epsilon} \| Y \psi_{Ai} \|_{L^2} + \| R \partial_Y^2  \psi_{Ai} \|_{L^2}  +  \| \psi_{Ai} \|_{L^2}  \bigg)  
\end{split}
\end{align}
We focus on the control of the terms $\| Y R \partial_Y^2  \psi_{Ai} \|_{L^2}$ and $\| R \partial_Y^2  \psi_{Ai} \|_{L^2}$ at the right-hand side, the other two being similar. We see
\begin{align*}
\| R \partial_Y^2 \psi_{Ai} \|_{L^2} & \leq C \big ( \int_0^\infty \big (Y-Y_c)^4 |\partial_Y^2 \psi_{Ai} (Y) |^2 \dd Y \big )^\frac12 \nonumber \\
& \leq C \big ( \int_0^{2C^* |\epsilon|^\frac{1-\theta}{3}} \big (Y-Y_c)^4 |\partial_Y^2 \psi_{Ai} (Y) |^2 \dd Y \big )^\frac12 \nonumber \\
& \quad + C \big ( \int_{2C^*|\epsilon|^\frac{1-\theta}{3}}^\infty \big (Y-Y_c)^4 |\partial_Y^2 \psi_{Ai} (Y) |^2 \dd Y \big )^\frac12 \nonumber \\
& =: I + II.
\end{align*} 
Since $|Y_c|\leq C |\epsilon|^{\frac{1-\theta}{3}}$ as stated in \eqref{bound.Y_c}, the term $I$ is estimated as 
\begin{align*}
I & \leq C |\epsilon|^{\frac23 (1-\theta)} \| \pa_Y^2 \psi_{Ai} \|_{L^2}\leq C |\epsilon|^{\frac23 (1-\theta)} |\frac{c_\eps}{\epsilon}|^\frac34\,,
\end{align*}
where \eqref{est.lem.psi_Ai.3} is used. 
As for $II$, we use the pointwise estimates
\begin{align}
|\partial_Y^2 \psi_{Ai} (Y) |\leq C |\tilde \epsilon|^{-\frac23} |\frac{Z_c}{\tilde \epsilon^\frac13}|^\frac54 |\frac{Y-Z_c}{\tilde \epsilon^\frac13}|^{-\frac14} e^{-\frac{1}{C} |\frac{Z_c}{\tilde \epsilon}|^\frac12 Y}\,,\label{proof.prop.fast.1}
\end{align}
see \eqref{proof.est.lem.psi.Ai.3.3} in the appendix for the proof. 
Then, since $|\epsilon|\leq C (\Im c_\epsilon)^3$ holds and $|Z_c| \simeq |c_\eps|$, $|\tilde \epsilon| \simeq |\epsilon|$, the term $II$ is small exponentially in $|\epsilon|^{-\frac{\theta}{3}}$. In particular, we have 
\begin{align*}
II\leq C_{\theta,N} |\epsilon|^N\,,
\end{align*}
for large $N\geq 1$. Hence, it follows that 
\begin{equation}
\|R \partial_Y^2 \psi_{Ai} \|_{L^2} \le  C  |\epsilon|^{\frac23 (1-\theta)} |\frac{c_\eps}{\epsilon}|^\frac34\,.\label{proof.prop.fast.2}
\end{equation}
To estimate $\| Y R \partial_Y^2  \psi_{Ai} \|_{L^2}$, we write 
$$ \| Y R \partial_Y^2  \psi_{Ai} \|_{L^2} \, \le \,  \| (Y - Y_c)  R \partial_Y^2  \psi_{Ai} \|_{L^2} + |Y_c| \| R \partial_Y^2  \psi_{Ai} \|_{L^2}, $$
and obtain, by the similar computation as above,
\begin{align*}
\|Y R \partial_Y^2  \psi_{Ai} \|_{L^2} & \le C |\epsilon|^{1-\theta} |\frac{c_\eps}{\epsilon}|^\frac34\,.
\end{align*}
Note that $|c_\epsilon|\leq |c| + |\epsilon| \alpha^2 = |c|+n\nu \leq C |c|$ since $n\nu\leq C \Im c$ for $n\leq \delta_0^{-1}\nu^{-\frac34}$ and $\gamma\in [\frac23,1]$ in \eqref{parameter.stability}.
Then, back to \eqref{estim_tildephif}, still for small enough $\theta$,  we end up with 
\begin{align}
& \| \partial_Y \tilde \phi_f \|_{L^2} + \alpha \| \tilde \phi_f   \|_{L^2} + \| (\partial_Y^2 -\alpha^2) \tilde \phi_f  \|_{L^2} \nonumber \\
& \quad \le  \frac{C}{\Im c_\epsilon} \big (  \frac{1}{\Im c_\epsilon} |\epsilon|^{1-\theta} |\frac{c_\eps}{\epsilon}|^\frac34 + \frac{1}{\Im c_\epsilon} |\epsilon|^{\frac13(1-\theta)} |\frac{\epsilon}{c_\eps}|^\frac14  +  |\epsilon|^{\frac23(1-\theta)} |\frac{c_\eps}{\epsilon}|^\frac34 + |\frac{\epsilon}{c_\eps}|^\frac14 \big ) \nonumber \\
& \quad \leq \frac{C}{\Im c_\epsilon} \bigg ( \frac{|\epsilon|^{\frac12-\frac54 \theta}}{\Im c_\epsilon} + (\frac{|\epsilon|^\frac13}{\Im c_\epsilon})^\frac54 |\epsilon|^{\frac16-\frac{1}{3}\theta}+ |\epsilon|^{\frac16-\frac{11}{12}\theta} + |\epsilon|^{\frac16}\bigg ) \nonumber \\
& \quad \leq \frac{C}{\Im c_\epsilon} |\epsilon|^{\frac16-\frac54\theta} \,,\label{proof.prop.fast.3}
\end{align}
which by the Sobolev embedding implies
\begin{align}
 |\partial_Y \tilde \phi_f(0)| \le \frac{C}{\Im c_\eps} |\epsilon|^{\frac16-\frac54\theta}\,.\label{proof.prop.fast.4}
\end{align}
On the other hand, by \eqref{est.lem.psi_Ai.2} the function $\partial_Y \psi_{Ai}$ has the lower bound at $Y=0$ as follows:
\begin{align}
|\partial_Y \psi_{Ai} (0)| &\geq \frac{1}{C} |\frac{c_\eps}{\epsilon}|^\frac12\,.\label{proof.prop.fast.5}
\end{align}
We recall again the condition $\gamma\in [\frac23,1]$ in \eqref{parameter.stability}, which ensures $|\epsilon|\ll (\Im c_\epsilon)^3$. Then, combining \eqref{proof.prop.fast.4} and \eqref{proof.prop.fast.5},  for small enough $\eps$ (that is large enough $n$), we end up with 
\begin{equation} \label{lowerbound_fastmode}
 |\partial_Y \phi_f(0)| \ge \frac{1}{C} |\frac{c_\eps}{\epsilon}|^\frac12\,.
 \end{equation}
 This lower bound on the derivative of the fast mode at the boundary $Y=0$ will be important when solving the Orr-Sommerfeld equation. From \eqref{est.lem.psi_Ai.3} and \eqref{proof.prop.fast.3} we obtain \eqref{est.prop.fast.1} for $\phi_f = \psi_{Ai} + \tilde \phi_f$. The estimate \eqref{est.prop.fast.2} is just obtained by \eqref{lowerbound_fastmode}. 
The proof of Proposition \ref{prop.fast} for the case (1) is complete.
\end{proofx}

\begin{proofx}{Proposition \ref{prop.fast} for case (2) $|c|\geq |\epsilon|^{\frac{1-\theta}{3}}$}
In this case the construction of the fast mode is more straightforward than the case (1).
Set 
\begin{align}\label{proof.prop.fast.6}
\tau_\epsilon \, = \, \big (\frac{-c_\epsilon}{\epsilon}\big )^\frac12 \, = \, \big ( \frac{\Im c_\epsilon - i \Re c_\epsilon}{|\epsilon|} \big )^\frac12\,, \qquad \omega_\epsilon \, = \, \frac{\tau_\epsilon}{|\tau_\epsilon|}\,.
\end{align}
The root is taken so that $\Re \tau_\epsilon>0$, and since $\Im c_\epsilon>0$ we see
\begin{align}\label{proof.prop.fast.7}
\Re \tau_\epsilon\geq \frac{1}{C} |\frac{c_\epsilon}{\epsilon}|^\frac12 = \frac{|\tau_\epsilon|}{C}
\end{align}
for some universal constant $C>0$. 
Then we look for a solution $\phi_f$ to \eqref{eq.slow} of the form
\begin{align}\label{proof.prop.fast.8}
\phi_f  (Y) \, = \, \sum_{k=0}^N \phi_k (|\tau_\epsilon|Y) + R (Y)\,,
\end{align}
where the leading profile $\phi_0$ is given by 
\begin{align}\label{proof.prop.fast.9}
\phi_0 (z) \, = \, e^{-\omega_\epsilon z}\,,
\end{align}
and $\phi_k$, $k=1,\cdots,N$, are profiles of boundary layer type, and $R$ is a small remainder.
The number $N$ will be taken large enough.
The profile $\phi_k$ is built inductively as the solution to 
\begin{align}\label{proof.prop.fast.10}
\begin{split}
& -\pa_z^4 \phi_k +\omega_\epsilon^2\pa_z^2 \phi_k \\
& \quad \, = \, - (\frac{\alpha}{|\tau_\epsilon|})^2  \pa_z^2\phi_{k-1} - \frac{1}{\epsilon |\tau_\epsilon|^2}  V^{(\epsilon)} \pa_z^2 \phi_{k-1} +\frac{\alpha^2}{\epsilon |\tau_\epsilon|^4}  (V^{(\eps)} -c_\epsilon ) \phi_{k-1} + \frac{1}{\epsilon |\tau_\epsilon|^4}  (\pa_Y^2 V)^{(\eps)} \phi_{k-1}
\end{split}
\end{align}
satisfying $\phi_k=0$ on $z=0$, where $f^{(\eps)} (z) = f(\frac{z}{|\tau_\epsilon|})$ for any $f=f(Y)$.
Set 
$$\pa_z^{-1} f \, = \, -\int_z^\infty f(z') \dd z'\,,\qquad \pa_z^{-2} f \, = \, \int_z^\infty \int_{z'}^\infty f(z'') \dd z'' \dd z'\,.$$
Then the right-hand side of \eqref{proof.prop.fast.10} is written as 
\begin{align}\label{proof.prop.fast.11}
\begin{split}
& \pa_z^{2} \bigg ( - (\frac{\alpha}{|\tau_\epsilon|})^2 \phi_{k-1} - \frac{1}{\epsilon |\tau_\epsilon|^2} V^{(\eps)}  \phi_{k-1}
+ \pa_z^{-1} \big ( \frac{2}{\epsilon |\tau_\epsilon|^3} ({\pa_Y V})^{(\eps)} \phi_{k-1} \big )
\\
& \qquad \qquad \qquad + \pa_z^{-2} \big ( \frac{\alpha^2}{\epsilon |\tau_\epsilon|^4} (V^{(\eps)} -c_\eps) \phi_{k-1} \big ) \bigg ) \, =: \, \pa_z^2 g_k\,.
\end{split}
\end{align}
As a result, it suffices to solve 
\begin{align}\label{proof.prop.fast.12}
-\pa_z^2 \phi_k + \omega_\epsilon^2 \phi_k \, = \, g_k \,, \quad z>0\,, \qquad \phi_k|_{z=0} =0\,.
\end{align}
Thus, $\phi_k$ is expressed as 
\begin{align}\label{proof.prop.fast.13}
\phi_k (z) \, = \, \int_0^z e^{-\omega_\epsilon (z-\xi')} \int_{\xi'}^\infty e^{-\omega_\epsilon (\xi''-\xi')} g_k (\xi'')\dd \xi'' \dd \xi'\,.
\end{align}
By using the estimate
\begin{align*}
|V^{(\eps)}(z)|\leq \frac{z}{|\tau_\epsilon|}\|\pa_Y V \|_{L^\infty_Y}
\end{align*}
each $g_k$ is estimated as, for any $0<\delta<\Re \omega_\epsilon$, 
\begin{align}\label{proof.prop.fast.14}
\sup_{z>0} \, e^{\delta z} |g_k (z) |\leq C D_\epsilon \sup_{z>0} \, (1+z) e^{\delta z} |\phi_{k-1} (z)|\,,
\end{align}
where 
\begin{align}\label{proof.prop.fast.15}
D_\epsilon & \, = \, \frac{\alpha^2}{|\tau_\epsilon|^2} + \frac{1}{|\epsilon| |\tau_\epsilon|^3} \| \pa_Y V \|_{L^\infty_Y} + \frac{\alpha^2}{|\epsilon| |\tau_\epsilon|^5} \| \pa_Y V \|_{L^\infty_Y} + \frac{\alpha^2}{|\tau_\epsilon|^4} |\frac{c_\epsilon}{\eps}| \nonumber \\
& \, = \, 2 \frac{\alpha^2}{|\tau_\epsilon|^2} + \frac{1}{|\epsilon| |\tau_\epsilon|^3} \| \pa_Y V \|_{L^\infty_Y} + \frac{\alpha^2}{|\epsilon| |\tau_\epsilon|^5} \| \pa_Y V \|_{L^\infty_Y}\,.
\end{align}  
Let us recall the conditions  $\alpha \Im c_\epsilon\leq \delta_2^{-1}$ and $|c|\geq |\epsilon|^{\frac{1-\theta}{3}}$. In particular, $|\tau_\epsilon|^{-1} =|\frac{\epsilon}{c_\eps}|^\frac12 \leq |\epsilon|^{\frac13+\frac{\theta}{6}}$ holds.
Then $D_\epsilon$ is estimated as
\begin{align}\label{proof.prop.fast.16}
D_\epsilon\leq C\big ( \frac{|\epsilon|^{\frac23+\frac{\theta}{3}}}{\Im c_\epsilon^2} + |\epsilon|^{\frac{\theta}{2}} \big ) \leq C |\epsilon|^\frac{\theta}{3}\,, 
\end{align}
under the condition $|\epsilon|\leq (\Im c_\epsilon)^3$, which is valid for $\gamma\in [\frac23,1]$ in \eqref{parameter.stability}. From \eqref{proof.prop.fast.13} and \eqref{proof.prop.fast.14} it is straightforward to see
\begin{align*}
|\phi_k (z)| & \leq  \int_0^z e^{-\Re (\omega_\eps) (z-\xi')}\int_{\xi'}^\infty e^{-\Re (\omega_\eps) (\xi''-\xi')} |g_k (\xi'')| \dd \xi'' \dd \xi'\\
& \leq  \int_0^z e^{-\Re (\omega_\eps) (z-\xi')}\int_{\xi'}^\infty e^{-\Re (\omega_\eps) (\xi'' -\xi')-\delta \xi''} \dd \xi'' \dd \xi' \sup_{\xi''>0} |e^{\delta \xi''} g_k (\xi'')|\\
& \leq \frac{1}{\Re \omega_\eps + \delta} \int_0^z e^{-\Re (\omega_\eps) (z-\xi') -\delta \xi'} \dd \xi'\sup_{\xi''>0} |e^{\delta \xi''} g_k (\xi'')|\\
& \leq \frac{C D_\eps e^{-\delta z}}{(\Re \omega_\eps+\delta)(\Re \omega_\eps  - \delta)}   \sup_{z>0} | (1+z) |e^{\delta z} \phi_{k-1} (z)|\,.
\end{align*}
Here we have used the condition $0<\delta<\Re \omega_\eps$.
Similarly, we obtain
\begin{align}\label{proof.prop.fast.17}
\sup_{z>0} \, e^{\delta z} \sum_{j=0}^2 |\pa_z^j \phi_k (z) | & \leq C D_\epsilon \sup_{z>0} \, (1+z) e^{\delta z} |\phi_{k-1} (z)| \nonumber \\
& \leq C_{\delta,k} D_\epsilon^k \,,
\end{align}
for any $0<\delta<\Re \omega_\epsilon$ and $k=1,2,\cdots,N$, where $C_{\delta,k}$ depends only on $\delta$ and $k$.
The remainder $R$ in \eqref{proof.prop.fast.8} is then defined as a solution to 
\begin{align*}
mOS (R) \, = \, h \,, \quad Y>0\,, \qquad R|_{Y=0} \, = \, 0
\end{align*}
with 
\begin{align*}
h \, = \, \big ( -\epsilon \alpha^2 |\tau_\epsilon|^2 \pa_z^2 \phi_N - |\tau_\eps|^2 V^{(\eps)} \pa_z^2\phi_N + (V^{(\eps)} -c_\eps)\alpha^2 \phi_N +(\pa_Y^2 V)^{(\eps)} \phi_N \big ) (|\tau_\eps| Y)\,.
\end{align*}
In virtue of Corollary \ref{cor.prop.mos} we can take $R$ such that 
\begin{align}\label{proof.prop.fast.18}
\begin{split}
& \| \pa_Y R \|_{L^2} + \alpha \| R \|_{L^2} + \| (\pa_Y^2-\alpha^2) R \|_{L^2} \\
& \qquad \qquad \leq C \big ( \| \pa_Y \Phi_{Ray} [h] \|_{L^2} + \alpha \| \Phi_{Ray}[h]\|_{L^2}  + \| (\pa_Y^2 -\alpha^2 )\Phi_{Ray}[h] \|_{L^2} \big )
\end{split}
\end{align}
if $\alpha \Im c_\epsilon \leq \delta_2^{-1}$. Thus it suffices to consider the estimates of $\Phi_{Ray}[h]$.
By using 
$$|\langle \frac{h}{V-c_\epsilon}, \varphi\rangle _{L^2} |\leq C \| \frac{Y h}{V-c_\epsilon} \|_{L^2}\| \pa_Y \varphi \|_{L^2}\,, \qquad \varphi \in H^1_0 (\R_+)\,,$$
Lemma \ref{lem.Ray.0} implies that the right-hand side of \eqref{proof.prop.fast.18} is bounded from above by 
\begin{align}\label{proof.prop.fast.19}
\frac{C}{\Im c_\epsilon} \big ( \| \frac{Y h}{V-c_\epsilon} \|_{L^2} +  \| h\|_{L^2} \big )\,.
\end{align} 
Recalling the definition of $h$, we first observe that 
\begin{align}\label{proof.prop.fast.20}
\|\frac{Y h}{V-c_\epsilon} \|_{L^2} & \leq \frac{C}{\Im c_\epsilon} \| Y h \|_{L^2} \nonumber \\
& \leq \frac{C}{\Im c_\epsilon} \bigg ( |\epsilon| \alpha^2 |\tau_\epsilon|^\frac12 \| z\pa_z^2 \phi_N \|_{L^2_z} + \|V\|_{L^\infty} |\tau_\epsilon|^\frac12 \| z \pa_z^2 \phi_N \|_{L^2_z} \nonumber \\
& \quad +  \alpha^2 \| \pa_Y V\|_{L^\infty} |\tau_\epsilon|^{-\frac52} \| z^2\phi_N \|_{L^2_z}  + \alpha^2 |c_\epsilon| |\tau_\epsilon|^{-\frac32} \| z\phi_N \|_{L^2} + \|\pa_Y^2 V \|_{L^\infty} |\tau_\epsilon|^{-\frac32} \| z\phi_N \|_{L^2_z} \bigg )\nonumber \\
& \leq \frac{C_N}{\Im c_\epsilon} \big ( |\tau_\epsilon|^\frac12 + \alpha^2 |\tau_\epsilon|^{-\frac52} + (\alpha^2 |c_\epsilon| + 1) |\tau_\epsilon|^{-\frac32}  \big ) D_\epsilon^N\,,
\end{align}
since $|\epsilon| \alpha^2=n\nu\leq 1$ by our assumption and \eqref{proof.prop.fast.17}. Similarly, we have 
\begin{align}\label{proof.prop.fast.21}
\| h \|_{L^2} & \leq C_N \big ( |\tau_\epsilon|^{\frac32} + \alpha^2 |\tau_\epsilon|^{-\frac32}  + (\alpha^2 |c_\epsilon| + 1) |\tau_\epsilon|^{-\frac12} \big ) D_\epsilon^N\,.
\end{align}
Since $\Im c_\epsilon\geq |\tau_\epsilon|^{-1}$ and $\alpha^2 \leq |\frac{c}{\eps}|=|\tau_\eps|^2$ under the conditions $\alpha \Im c_\eps \leq \delta_2^{-1}$ and $\gamma\in [\frac23,1]$ in \eqref{parameter.stability} we  
have from \eqref{proof.prop.fast.20} and \eqref{proof.prop.fast.21} applied for \eqref{proof.prop.fast.19},
\begin{align}\label{proof.prop.fast.22}
\| \pa_Y R \|_{L^2} + \alpha \| R \|_{L^2} + \| (\pa_Y^2-\alpha^2) R \|_{L^2} \leq C |\tau_\eps|^\frac52 D_\epsilon^N \leq C |\eps|^{\frac{N\theta}{3}-\frac54}\ll 1\,,
\end{align}
if $N$ is taken so that $N>\frac{15}{4\theta}$.
Collecting \eqref{proof.prop.fast.16}, \eqref{proof.prop.fast.17}, and \eqref{proof.prop.fast.22}, we have constructed the fast mode $\phi_f (Y)$ of the form \eqref{proof.prop.fast.8} and $\phi_f$ satisfies 
\begin{align}\label{proof.prop.fast.23}
\|\pa_Y^k  \big (\phi_f - \phi_0 (|\tau_\epsilon| \cdot ) \big ) \|_{L^2}  & \leq C |\eps|^{\frac{\theta}{3}} |\frac{c_\eps}{\epsilon}|^{\frac{k}{2}-\frac14}\,,
\end{align} 
if $N$ is taken large enough. Comparing the estimate of $\phi_0 (|\tau_\epsilon|Y)$, we obtain \eqref{est.prop.fast.1} and \eqref{est.prop.fast.2}. The proof for the case (2) is complete.
\end{proofx}

\subsubsection{Construction of resolvent}\label{subsubsec.resolvent}

Let us go back to the analysis of the boundary value problem for the modified Orr-Sommerfeld equations \eqref{eq.os'} with the inhomogeneous term $h=- f_{2,n} + \frac{1}{i\alpha} \partial_Y f_{1,n}$.
We look for the solution $\phi$ to \eqref{eq.os'} of the form 
\begin{align}
\phi \, = \, \Phi_{mOS} [h] \, + \, \tilde \phi\,,\label{def.phi.re}
\end{align}
where $\Phi_{mOS}[h]$ is the solution to \eqref{eq.os.R} constructed in Proposition \ref{prop.mos.h}.
Then $\tilde \phi$ should satisfy the homogeneous problem
\begin{equation}\label{eq.os'.h}
\left\{
\begin{aligned}
 -\epsilon  (\partial_Y^2 - \alpha^2 ) \partial_Y^2 \tilde \phi + (V -  c_\epsilon ) (\partial_Y^2-\alpha^2) \tilde \phi - (\partial_Y^2 V ) \tilde \phi    \, = \, 0 \,, \qquad & Y>0\,,\\
 \tilde \phi \, = \, 0 \,, \qquad  \partial_Y \tilde \phi    \, = \, -\partial_Y \Phi_{mOS} [h]  \,, \qquad  & Y=0\,.
\end{aligned}\right.
\end{equation}
In virtue of Propositions \ref{prop.slow} and \ref{prop.fast}, 
the solution $\tilde \phi$ to \eqref{eq.os'.h} is  written as
\begin{align}
\tilde \phi \, = \, A \phi_s + B \phi_f\,,
\end{align}
and the coefficients $A$ and $B$ are determined by the equation
\begin{equation}
\begin{pmatrix}
\phi_s (0) & \phi_f(0)\\
\partial_Y \phi_s (0) & \partial_Y \phi_f (0)
\end{pmatrix}
\begin{pmatrix}
A\\
B
\end{pmatrix}
\, = \, 
\begin{pmatrix}
0 \\
-\partial_Y \Phi_{mOS} [h]  (0)
\end{pmatrix}\,.
\end{equation}
Recall that $\phi_s (0) = \phi_f (0)=1$. Then, $A$ and $B$ are determined under the condition $\partial_Y \phi_f (0)-\partial_Y \phi_s (0) \ne 0$, and we have the formula
\begin{align*}
\begin{pmatrix}
A\\
B
\end{pmatrix}
& \, = \, \frac{1}{\partial_Y \phi_f (0)-\partial_Y \phi_s (0) }
\begin{pmatrix}
 \partial_Y \phi_f (0) & -1\\
-\partial_Y \phi_s (0) & 1
\end{pmatrix}
\begin{pmatrix}
0 \\
-\partial_Y \Phi_{mOS} [h]  (0)
\end{pmatrix}\nonumber \\
& \, = \, \frac{\partial_Y \Phi_{mOS} [h]  (0)}{\partial_Y \phi_f (0)-\partial_Y \phi_s (0) }
\begin{pmatrix}
1 \\
-1
\end{pmatrix}\,.
\end{align*}
Hence, $\tilde \phi$ is given by 
\begin{align}\label{def.tilde.phi.re}
\tilde \phi \, = \, \frac{\partial_Y \Phi_{mOS} [h]  (0)}{\partial_Y \phi_f (0)-\partial_Y \phi_s (0) } \, \big ( \phi_s - \phi_f \big )\,. 
\end{align}
The solvability condition $\partial_Y \phi_f (0)-\partial_Y \phi_s (0) \ne 0$ is ensured by Propositions \ref{prop.slow} and \ref{prop.fast} for $\gamma\geq \frac57$ under \eqref{concave.weak}, while for $\gamma\geq \frac23$ under \eqref{concave.strong}.
Collecting \eqref{def.phi.re} and \eqref{def.tilde.phi.re}, we have the following proposition.
Let us recall that we are interested in the case when $h$ in \eqref{eq.os'} is of the form $h=-f_{2,n} + \frac{1}{i\alpha} \pa_Y f_{1,n}$ for $f=(f_{1,n},f_{2,n})\in L^2 (\R_+)^2$.
 
\begin{prop}\label{prop.resolvent} Assume that \eqref{concave.weak} holds. 

\vspace{0.1cm}

\noindent {\rm (i)} Let $0<\nu\leq \nu_0$, $|c|\leq \delta_1^{-1}$, and $\alpha \Im c_\epsilon \leq \delta_2^{-1}$, where $\delta_2$ is the number in Proposition \ref{prop.large.alpha}. 
Then there exists $\delta_{**} \in (0,\delta_*]$ such that the following statement holds.
If $\gamma \in [\frac57,1]$ and $\delta\in (0,\delta_{**}]$ in \eqref{parameter.stability}, then for any $f=(f_{1,n},f_{2,n})\in L^2 (\R_+)^2$ there exists a weak solution $\phi\in H^2_0 (\R_+)$ to the Orr-Sommerfeld equations \eqref{eq.os'}, and  $\phi$ satisfies the estimates
\begin{align}
\| \partial_Y \phi \|_{L^2} + \alpha \| \phi \|_{L^2} & \leq \frac{C}{\alpha (\Im c_\epsilon)^\frac52}  \| f \|_{L^2} \,, \label{est.prop.resolvent.1}\\
\| (\partial_Y^2-\alpha^2) \phi \|_{L^2} & \leq \frac{C}{\alpha (\Im c_\epsilon)^\frac52} \bigg ( \frac{1}{|\epsilon|^\frac14} + \frac{\Im c_\epsilon}{|\epsilon|^\frac12} \bigg ) \| f\|_{L^2}\,.\label{est.prop.resolvent.2}
\end{align}
Here $C$ depends only on $U^E$ and $U$.
If \eqref{concave.strong} holds in addition, then the above statement is valid for $\gamma\in [\frac23,1]$ with the factors $(\Im c_\epsilon)^{-\frac52}$ and $\frac{\Im c_\epsilon}{|\epsilon|^\frac12}$ replaced by $(\Im c_\epsilon)^{-2}$ and $\frac{(\Im c_\epsilon)^\frac34}{|\epsilon|^\frac12}$, respectively.

\vspace{0.1cm}

\noindent {\rm (ii)} Let $\alpha \Im c_\epsilon \geq \delta_2^{-1}$. Then for any $f=(f_{1,n},f_{2,n})\in L^2 (\R_+)^2$ there exists a unique weak solution $\phi\in H^2_0 (\R_+)$ to the Orr-Sommerfeld equations \eqref{eq.os'}, and  $\phi$ satisfies the estimates
\begin{align}
\| \partial_Y \phi \|_{L^2} + \alpha \| \phi \|_{L^2} & \leq \frac{C}{\alpha \Im c_\epsilon} \| f \|_{L^2} \,, \label{est.prop.resolvent.3}\\
\| (\partial_Y^2-\alpha^2) \phi \|_{L^2} & \leq \frac{C}{\alpha (|\epsilon| \Im c_\epsilon)^\frac12} \| f \|_{L^2}\,.\label{est.prop.resolvent.4}
\end{align}
Here $C$ is a universal constant. 
\end{prop}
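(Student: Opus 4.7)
The plan is to follow the two-step construction already sketched in \eqref{def.phi.re}--\eqref{def.tilde.phi.re}. Part (ii) is essentially immediate: when $\alpha \Im c_\epsilon \geq \delta_2^{-1}$, the a priori estimates \eqref{est.prop.large.alpha.2}--\eqref{est.prop.large.alpha.3} of Proposition \ref{prop.large.alpha} give \eqref{est.prop.resolvent.3}--\eqref{est.prop.resolvent.4} directly for the unique weak solution in $H^2_0(\R_+)$, after inserting the choice $h = -f_{2,n} + \frac{1}{i\alpha}\partial_Y f_{1,n}$ and using $\|h\|_{L^2} \leq C\alpha^{-1}\|f\|_{L^2}$ (keeping in mind that $\pa_Y f_{1,n}$ is paired against test functions).

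For Part (i), the main task is to verify that the candidate $\phi = \Phi_{mOS}[h] + \tilde\phi$, with $\tilde\phi$ given by \eqref{def.tilde.phi.re}, is well defined and satisfies \eqref{est.prop.resolvent.1}--\eqref{est.prop.resolvent.2}. The analysis hinges on three boundary traces: $\partial_Y\Phi_{mOS}[h](0)$, $\partial_Y\phi_s(0)$ and $\partial_Y\phi_f(0)$. First I would bound $|\partial_Y\Phi_{mOS}[h](0)|$ using the one-dimensional trace inequality $|g(0)|^2 \leq 2\|g\|_{L^2}\|\partial_Y g\|_{L^2}$ applied to $g = \partial_Y\Phi_{mOS}[h]$, combined with the $L^2$ bounds on $\partial_Y\Phi_{mOS}[h]$ and $(\partial_Y^2-\alpha^2)\Phi_{mOS}[h]$ provided by Proposition \ref{prop.mos.h}. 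Next, Remark \ref{rem.prop.slow} gives $|\partial_Y\phi_s(0)| \leq C/\Im c_\epsilon$, while Proposition \ref{prop.fast} (estimate \eqref{est.prop.fast.2}) yields the lower bound $|\partial_Y\phi_f(0)| \geq C^{-1}|c_\epsilon/\epsilon|^{1/2}$. Under the standing assumption $\gamma \geq 5/7$ (resp.\ $\gamma \geq 2/3$ in the strongly concave case) and the smallness of $\delta$, one has $|\epsilon| \ll (\Im c_\epsilon)^3$, hence $|c_\epsilon/\epsilon|^{1/2}\,\Im c_\epsilon \gg 1$, which shows that the fast-mode trace strictly dominates the slow-mode trace and the linear system is solvable with
$$|A| = |B| = \frac{|\partial_Y\Phi_{mOS}[h](0)|}{|\partial_Y\phi_f(0)-\partial_Y\phi_s(0)|} \leq C\left|\frac{\epsilon}{c_\epsilon}\right|^{1/2} |\partial_Y\Phi_{mOS}[h](0)|.$$

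With $A, B$ under control, the remaining step is to add the estimates of $\tilde\phi = A(\phi_s - \phi_f)$ to those of $\Phi_{mOS}[h]$. For $\phi_s$ I would use the decomposition $\phi_s = e^{-\alpha Y} + \tilde\varphi_{Ray} + \tilde\phi_s$ from Propositions \ref{prop.Ray.b} and \ref{prop.slow}, so that $\|\partial_Y\phi_s\|_{L^2} + \alpha\|\phi_s\|_{L^2} \leq C\alpha^{1/2} + C/\Im c_\epsilon$ and $\|(\partial_Y^2-\alpha^2)\phi_s\|_{L^2}$ has an analogous bound. For $\phi_f$ I would apply \eqref{est.prop.fast.1}, yielding the critical factor $|c_\epsilon/\epsilon|^{3/4}$ for the second derivative, which after multiplication by $|A| \lesssim |\epsilon/c_\epsilon|^{1/2}|\partial_Y\Phi_{mOS}[h](0)|$ leaves an acceptable $|c_\epsilon/\epsilon|^{1/4}$ factor. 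Comparing with the factor $|\epsilon|^{-1/2}$ already present in \eqref{est.prop.mos.h.2}, and using the hypothesis $|c_\epsilon| \leq C\delta_1^{-1}$, one checks that all contributions are dominated by the right-hand sides of \eqref{est.prop.resolvent.1} and \eqref{est.prop.resolvent.2}. The strongly concave case is identical except for the replacement $(\Im c_\epsilon)^{-5/2} \mapsto (\Im c_\epsilon)^{-2}$ coming from \eqref{est.prop.Ray.WC.3}--\eqref{est.prop.Ray.WC.4}.

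The main obstacle is the bookkeeping of the powers of $\Im c_\epsilon$ and $|\epsilon|$ in the highest-order seminorm \eqref{est.prop.resolvent.2}: in the boundary corrector, the fast-mode contribution scales like $|A|\,|c_\epsilon/\epsilon|^{3/4}$, and one must verify that this is indeed controlled by $\alpha^{-1}(\Im c_\epsilon)^{-5/2}(\Im c_\epsilon/|\epsilon|^{1/2})\|f\|_{L^2}$ in the weakly concave case. This is where the particular thresholds $\gamma \geq 5/7$ and $\gamma \geq 2/3$ are finally used; any weaker lower bound on $\gamma$ would spoil the relation $|\epsilon| \ll (\Im c_\epsilon)^3$ that lies behind both the iterative construction of $\Phi_{mOS}[h]$ in Section \ref{subsubsec.iterate} and the domination of the fast-mode trace required here.
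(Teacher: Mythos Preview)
Your proposal is correct and follows essentially the same approach as the paper's own proof: Part (ii) is a direct application of Proposition \ref{prop.large.alpha}, and Part (i) uses the decomposition $\phi=\Phi_{mOS}[h]+\tilde\phi$ with $\tilde\phi$ as in \eqref{def.tilde.phi.re}, bounding the trace $|\partial_Y\Phi_{mOS}[h](0)|$ by interpolation, using Remark \ref{rem.prop.slow} and Proposition \ref{prop.fast} to show $|\partial_Y\phi_f(0)-\partial_Y\phi_s(0)|\geq C^{-1}|c_\epsilon/\epsilon|^{1/2}$, and then combining the $H^1$ and $H^2$ bounds on $\phi_s$ and $\phi_f$ with those on $\Phi_{mOS}[h]$ from Proposition \ref{prop.mos.h}. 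The paper carries out the final bookkeeping you flag as the ``main obstacle'' explicitly, simplifying via $\alpha\leq C(\Im c_\epsilon)^{-1}$, $|\epsilon|\leq C(\Im c_\epsilon)^3$, and $|c_\epsilon|\leq C|\epsilon|^{-1}$; your outline of how the powers combine is accurate.
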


\begin{rem}\label{rem.prop.resolvent}{\rm (i) By the standard elliptic regularity the solution $\phi$ in Proposition \ref{prop.resolvent} belongs to $H^3(\R_+)$. In fact, the uniqueness of weak solutions is available also for the case (i) of Proposition \ref{prop.resolvent} by applying the method of continuity. Indeed, following the proof of Proposition \ref{prop.resolvent} below, we can also show the existence of the weak solution $\phi\in H^2_0 (\R_+)$ to \eqref{eq.os'} for arbitrary $h\in L^2(\R_+)$, which satisfies the similar $H^2$ estimate as in \eqref{est.prop.resolvent.1} and \eqref{est.prop.resolvent.2}: in fact, under the conditions on the parameters of Proposition \ref{prop.resolvent} (i), one can show 
\begin{align}
\| \pa_Y \phi \|_{L^2} + \alpha \|\phi\|_{L^2}  \leq \frac{C}{\alpha (\Im c_\epsilon)^2} \| h\|_{L^2}\,, \qquad \| (\pa_Y^2-\alpha^2) \phi \|_{L^2} \leq \frac{C}{\alpha(\Im c_\epsilon)^2} \big ( \frac{1}{|\epsilon|^\frac14} + \frac{1}{|c_\eps|^\frac12} \big ) \| h\|_{L^2} \,.
\end{align}
Moreover, $\phi$ belongs to $H^4(\R_+)$ by the elliptic regularity. 
Since the uniqueness is available at least for the case $\alpha \Im c_\epsilon = \delta_2^{-1}$ in virtue of (ii) of Proposition \ref{prop.resolvent}, one can apply the method of continuity  stated in Proposition \ref{prop.continuity} in the appendix for the operator $mOS=-\epsilon (\pa_Y^2-\alpha^2)\pa_Y^2 + (V-c_\epsilon)(\pa_Y^2-\alpha^2) - (\pa_Y^2V)$  with the domain $H^4(\R_+)\cap H^2_0 (\R_+)$, which shows the uniqueness of weak solutions in the case (i). Although we do not give the details here, we use this argument for the operator $\mathbb{L}_{\nu,n}$, rather than $mOS$, in the next subsection, since it is sufficient for our purpose.

\noindent 
(ii) Recalling Remark \ref{rem.prop.mos.h}, we can show Proposition \ref{prop.resolvent} under the condition $-M_\sigma \pa_Y^2U\geq (\pa_Y U)^4$, rather than $-M_\sigma \pa_Y^2 U\geq (\pa_Y U)^2$ in \eqref{concave.weak}, however, the factor $(\Im c_\epsilon)^{-\frac52}$ in \eqref{est.prop.resolvent.1} and \eqref{est.prop.resolvent.2} is replaced by $(\Im c_\epsilon)^{-3}$ under this weaker condition. 
The similar remark is applied for Corollary \ref{cor.prop.resolvent} below.
}
\end{rem}

\begin{proofx}{Proposition \ref{prop.resolvent}} (i) We may assume that $\delta \in (0,\delta_2]$. 
As we have seen in the beginning of this subsection, if  $\partial_Y \phi_f (0)-\partial_Y \phi_s (0) \ne 0$ then the weak solution $\phi$ to \eqref{eq.os'} exists, which is written as \eqref{def.phi.re} with $\tilde \phi$ as in \eqref{def.tilde.phi.re}. Proposition \ref{prop.mos.h} implies that 
\begin{align*}
\| \partial_Y \Phi_{mOS} [h] \|_{L^2} + \alpha \| \Phi_{mOS} [h] \|_{L^2} & \leq \frac{C}{\alpha (\Im c_\epsilon)^\frac52} \| f\|_{L^2}\,,\\
\| (\partial_Y^2 -\alpha^2) \Phi_{mOS}[h] \|_{L^2} & \leq \frac{C}{\alpha} \big ( \frac{1}{(\Im c_\epsilon)^\frac52} + \frac{1}{(|\epsilon| \Im c_\epsilon)^\frac12} \big ) \| f\|_{L^2}\,. 
\end{align*}
The condition $\partial_Y \phi_f (0)-\partial_Y \phi_s (0) \ne 0$ is satisfied if $\gamma\in [\frac23,1]$ and $\delta$ in \eqref{parameter.stability} is sufficiently small, since Proposition \ref{prop.fast} implies $|\partial_Y \phi_f (0)|\geq \frac{1}{C} |\frac{c_\eps}{\epsilon}|^\frac12 \geq \frac{1}{C} |\epsilon|^{-\frac13}$, while Proposition \ref{prop.slow} and Remark \ref{rem.prop.slow} show $|\partial_Y \phi_s (0)|\leq \frac{1}{C} (\Im c_\epsilon)^{-1}$. In particular, we have $|\partial_Y \phi_f (0)-\partial_Y \phi_s (0)|\geq \frac{1}{C} |\frac{c_\eps}{\epsilon}|^\frac12$
if $\gamma\in [\frac23,1]$ and $\delta$ is sufficiently small. Hence, Propositions \ref{prop.slow}, \ref{prop.fast}, and Remark \ref{rem.prop.slow} yield
\begin{align*}
\|\partial_Y \tilde \phi \|_{L^2} + \alpha \| \tilde \phi \|_{L^2} & \leq C |\frac{\epsilon}{c_\eps}|^\frac12 \big ( \frac{1}{\Im c_\epsilon} +  |\frac{c_\eps}{\epsilon}|^\frac14 + \alpha |\frac{\epsilon}{c_\eps}|^{\frac14}  \big )  | \partial_Y \Phi_{mOS}[h](0)|\\
& \leq C  |\frac{\epsilon}{c_\eps}|^\frac12 \big ( \frac{1}{\Im c_\epsilon} +  |\frac{c_\eps}{\epsilon}|^\frac14 \big )  | \partial_Y \Phi_{mOS}[h](0)|\,,
\end{align*}
by the assumption  $\alpha\leq C (\Im c_\epsilon)^{-1}$ and $|\frac{\epsilon}{c}|\leq C$. 
Similarly, we have  
\begin{align*}
\| (\partial_Y^2 -\alpha^2) \tilde \phi\|_{L^2}  & \leq C  |\frac{\epsilon}{c_\eps}|^\frac12 \big ( \frac{1}{\Im c_\epsilon} + |\frac{c_\eps}{\epsilon}|^{\frac34} + \alpha^2 |\frac{\epsilon}{c_\eps}|^{\frac14} \big )  | \partial_Y \Phi_{mOS}[h](0)| \\
& \leq C \big ( |\frac{c_\eps}{\epsilon}|^\frac14 + \frac{1}{|c_\eps|^\frac12} \big )  | \partial_Y \Phi_{mOS}[h](0)|\,,
\end{align*}
where we have again used $\alpha\leq C (\Im c_\epsilon)^{-1}$ and $|\epsilon|\leq C (\Im c_\epsilon)^3$.
The value  $| \partial_Y \Phi_{mOS}[h](0)|$ is estimated by the interpolation as
\begin{align*}
| \partial_Y \Phi_{mOS}[h](0)| \leq \frac{C}{\alpha (\Im c_\epsilon)^\frac52} \big ( 1+ |\epsilon|^{-\frac12} (\Im c_\epsilon)^2 \big )^\frac12 \| f\|_{L^2}\,.
\end{align*}
Collecting these above, since $\phi = \Phi_{mOS}[h]+\tilde \phi_f$, we have arrived at 
\begin{align*}
\| \partial_Y \phi \|_{L^2} + \alpha \| \phi \|_{L^2} & \leq  \frac{C}{\alpha (\Im c_\epsilon)^\frac52} \bigg ( 1 +|\frac{\epsilon}{c_\eps}|^\frac12 \big ( \frac{1}{\Im c_\epsilon} + |\frac{c_\eps}{\epsilon}|^{\frac14} \big) \big ( 1+ |\epsilon|^{-\frac14} \Im c_\epsilon \big ) \bigg ) \| f \|_{L^2} \\
& \leq  \frac{C}{\alpha (\Im c_\epsilon)^\frac52} \| f\|_{L^2}\,,
\end{align*}
since $|\epsilon|\leq C (\Im c_\epsilon)^3$. Similarly, 
\begin{align*}
\| (\partial_Y^2 -\alpha^2) \phi\|_{L^2}  & \leq \frac{C}{\alpha (\Im c_\epsilon)^\frac52} \bigg ( 1 + \frac{(\Im c_\epsilon)^2}{|\epsilon|^\frac12} +  (1+\frac{\Im c_\epsilon}{|\epsilon|^{\frac14}})  \big ( |\frac{c_\eps}{\epsilon}|^\frac14 + \frac{1}{|c_\eps|^\frac12}\big ) \bigg ) \| f\|_{L^2}\\
& \leq \frac{C}{\alpha (\Im c_\epsilon)^\frac52} \bigg ( \frac{1}{|\epsilon|^\frac14} +  \frac{\Im c_\epsilon}{|\epsilon|^{\frac12}}  \bigg ) \| f\|_{L^2}\,.
\end{align*}
Here we have used $|\frac{c_\eps}{\epsilon}|\leq C |\epsilon|^{-1}$ and $|c_\eps|^{-\frac12}\leq |\epsilon|^{-\frac14}$. Thus, \eqref{est.prop.resolvent.1} and \eqref{est.prop.resolvent.2} follow. 

\noindent 
(ii) In the case $\alpha \Im c_\epsilon \geq \delta_2^{-1}$ we can apply Proposition \ref{prop.large.alpha} and obtain \eqref{est.prop.resolvent.3} and \eqref{est.prop.resolvent.4}. The proof is complete.
\end{proofx}

\begin{cor}\label{cor.prop.resolvent} Assume that \eqref{concave.weak} holds.

\vspace{0.1cm}

\noindent {\rm (i)} Let $\delta_0^{-1}\leq |n|\leq \delta_0^{-1}\nu^{-\frac34}$, $0<\nu\leq \nu_0$, $\gamma\in [\frac57,1]$, and $\delta\in (0,\delta_{**}]$.
Then the set 
\begin{align}
O_{\nu,n} \, = \, \big \{ \mu \in \C~ \big |~|\mu|\leq \frac{n\nu^\frac12}{\delta_1} \,, \quad \Re \mu \geq \frac{n^\gamma \nu^\frac12}{\delta} \big \}\label{est.cor.prop.resolvent.1}
\end{align}
is included in the resolvent set of $-\mathbb{L}_{\nu,n}$.
Moreover, if $\mu \in  O_{\nu,n}$ satisfies $\Re \mu = \frac{n^\gamma \nu^\frac12}{\delta}$ and $\Re \mu +n^2\nu^\frac32 \leq \delta_2^{-1}$, then
\begin{align}
\| (\mu + \mathbb{L}_{\nu,n} )^{-1}  f \|_{L^2(\Omega_\nu)} & \leq  \frac{C n^{\frac32 (1-\gamma)}}{\Re \mu}  \| f\|_{L^2(\Omega_\nu)} \,, \quad f\in \mathcal{P}_n^{(\nu)} L^2_\sigma (\Omega_\nu)\,, \label{est.cor.prop.resolvent.2}\\
\| \nabla (\mu + \mathbb{L}_{\nu,n} )^{-1} f \|_{L^2(\Omega_\nu)}  & \leq  \frac{C n^{\frac32(1-\gamma)}}{\Re \mu} \big (n^\frac14 + n^{\frac12-(1-\gamma)}  \big ) \| f\|_{L^2(\Omega_\nu)} \,, \quad f\in \mathcal{P}_n^{(\nu)} L^2_\sigma (\Omega_\nu) \,.\label{est.cor.prop.resolvent.3}
\end{align}
If \eqref{concave.strong} holds in addition, then the above statement is valid for $\gamma\in [\frac23,1]$ with the factors $n^{\frac32(1-\gamma)}$ and $n^{\frac12 - (1-\gamma)}$ replaced by $n^{1-\gamma}$ and $n^{\frac12-\frac34(1-\gamma)}$, respectively.

\vspace{0.1cm}

\noindent {\rm (ii)} If $\Re \mu +n^2\nu^\frac32 \geq \delta_2^{-1}$ and $\Re \mu >0$ then $\mu$ belongs to the resolvent set of $-\mathbb{L}_{\nu,n}$, and the following estimates hold.
\begin{align}
\| (\mu + \mathbb{L}_{\nu,n} )^{-1}  f \|_{L^2(\Omega_\nu)} & \leq  \frac{C}{\Re \mu} \| f\|_{L^2(\Omega_\nu)} \,, \quad f\in \mathcal{P}_n^{(\nu)} L^2_\sigma (\Omega_\nu)\,, \label{est.cor.prop.resolvent.4}\\
\| \nabla (\mu + \mathbb{L}_{\nu,n} )^{-1} f \|_{L^2(\Omega_\nu)}  & \leq  \frac{C}{\nu^\frac14 (\Re \mu)^\frac12}  \| f\|_{L^2(\Omega_\nu)} \,, \quad f\in \mathcal{P}_n^{(\nu)} L^2_\sigma (\Omega_\nu) \,.\label{est.cor.prop.resolvent.5}
\end{align}
\end{cor}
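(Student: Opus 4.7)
The plan is to translate the Orr-Sommerfeld estimates of Proposition \ref{prop.resolvent} back into resolvent estimates for $\mathbb{L}_{\nu,n}$, exactly along the lines of the argument given for Corollary \ref{cor.prop.general'}. For part (ii), the statement is in fact weaker than what Corollary \ref{cor.prop.large.alpha} already provides: since $\Re\mu \leq \Re\mu + n^2\nu^{3/2}$, the bounds \eqref{est.cor.prop.large.alpha.2}--\eqref{est.cor.prop.large.alpha.3} immediately imply \eqref{est.cor.prop.resolvent.4}--\eqref{est.cor.prop.resolvent.5}, so nothing new is required.

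For part (i), I would first fix $\mu \in O_{\nu,n}$ and take $f = e^{in\sqrt{\nu}X}(f_{1,n},f_{2,n}) \in \mathcal{P}_n^{(\nu)}L^2_\sigma(\Omega_\nu)$. Applying Proposition \ref{prop.resolvent}(i) with $h = -f_{2,n} + \frac{1}{i\alpha}\partial_Y f_{1,n}$ produces $\phi \in H^2_0(\R_+)$ (indeed $H^3$ by elliptic regularity) solving \eqref{eq.os'}. The velocity $v = (\partial_Y\phi\, e^{in\sqrt{\nu}X},\, -in\sqrt{\nu}\phi\, e^{in\sqrt{\nu}X})$ then lies in $D(\mathbb{L}_{\nu,n})$ and solves $(\mu + \mathbb{L}_{\nu,n})v = f$, so surjectivity holds. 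Injectivity follows by noting that a nonzero $v$ in the kernel would give a nonzero $H^2_0$ solution to \eqref{eq.os'} with $h=0$; this contradicts uniqueness, which is available already on the region $\Re\mu + n^2\nu^{3/2}\ge \delta_2^{-1}$ via Proposition \ref{prop.large.alpha}, and extends to all of $O_{\nu,n}$ by the method of continuity indicated in Remark \ref{rem.prop.resolvent}(i). Thus $O_{\nu,n} \subset \rho(-\mathbb{L}_{\nu,n})$.

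For the quantitative bounds, recall $\|v\|_{L^2(\Omega_\nu)}^2 = \frac{2\pi}{\sqrt{\nu}}\bigl(\|\partial_Y\phi\|_{L^2}^2 + \alpha^2\|\phi\|_{L^2}^2\bigr)$ and $\|\nabla v\|_{L^2(\Omega_\nu)}^2 \lesssim \frac{2\pi}{\sqrt{\nu}}\bigl(\|(\partial_Y^2-\alpha^2)\phi\|_{L^2}^2 + \alpha^2\|\partial_Y\phi\|_{L^2}^2 + \alpha^4\|\phi\|_{L^2}^2\bigr)$, and the same scaling factor $\sqrt{2\pi/\sqrt{\nu}}$ appears in $\|f\|_{L^2(\Omega_\nu)}$, so it cancels. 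On the slice $\Re\mu = n^\gamma\nu^{1/2}/\delta$ we have $\Im c = n^{\gamma-1}/\delta$, and since $\alpha^2/n = n\nu$, the condition $\gamma \in [\frac57,1] \subset [\frac23,1]$ combined with $n\leq \delta_0^{-1}\nu^{-3/4}$ gives $n\nu \lesssim \Im c$ (this is exactly Remark \ref{rem.c_epsilon}), so $\Im c_\epsilon \sim \Im c \sim n^{\gamma-1}/\delta$. Substituting into \eqref{est.prop.resolvent.1}--\eqref{est.prop.resolvent.2} and using $\alpha = n\sqrt{\nu}$, $|\epsilon|=1/n$, one computes
\[
\frac{1}{\alpha(\Im c_\epsilon)^{5/2}} \sim \frac{n^{5(1-\gamma)/2 - 1}}{\sqrt{\nu}} \sim \frac{n^{3(1-\gamma)/2}}{\Re\mu},
\]
yielding \eqref{est.cor.prop.resolvent.2}. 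For the gradient bound, the extra factor $|\epsilon|^{-1/4} + \Im c_\epsilon|\epsilon|^{-1/2}$ becomes $n^{1/4} + n^{\gamma-1/2} = n^{1/4} + n^{1/2-(1-\gamma)}$, giving \eqref{est.cor.prop.resolvent.3}. Under \eqref{concave.strong}, Proposition \ref{prop.resolvent}(i) replaces $(\Im c_\epsilon)^{-5/2}$ by $(\Im c_\epsilon)^{-2}$ and $\Im c_\epsilon |\epsilon|^{-1/2}$ by $(\Im c_\epsilon)^{3/4}|\epsilon|^{-1/2}$; the same arithmetic then yields the improved powers $n^{1-\gamma}$ and $n^{1/2-3(1-\gamma)/4}$ in the corresponding range $\gamma\in [\frac23,1]$.

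The main technical obstacle is the injectivity/uniqueness step: Proposition \ref{prop.resolvent} is only stated as an existence result, so one must carefully set up the method of continuity between the easy regime $\Re\mu + n^2\nu^{3/2} \ge \delta_2^{-1}$ (where Proposition \ref{prop.large.alpha} gives both existence and uniqueness) and the subtler regime inside $O_{\nu,n}$. Everything else is bookkeeping: translating norms via the stream function formula \eqref{formula.stream} and matching powers of $n$ through the substitution $\alpha = n\sqrt{\nu}$, $\Im c_\epsilon \sim n^{\gamma-1}/\delta$.
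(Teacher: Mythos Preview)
Your approach is essentially the same as the paper's: reduce (ii) to Corollary \ref{cor.prop.large.alpha}, and for (i) lift the Orr--Sommerfeld estimates of Proposition \ref{prop.resolvent} to resolvent bounds via the stream-function formula, closing the injectivity gap by the method of continuity (Proposition \ref{prop.continuity}). The arithmetic you carry out for the powers of $n$ is exactly what the paper does, using $\Im c_\epsilon \sim n^{\gamma-1}$ on the slice $\Re\mu = \delta^{-1}n^\gamma\nu^{1/2}$.

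The one point where you diverge from the paper is the choice of anchoring point for the continuity argument. You propose to anchor in the regime $\Re\mu + n^2\nu^{3/2}\ge \delta_2^{-1}$, where Proposition \ref{prop.large.alpha} gives uniqueness directly. The paper instead anchors at a point $\mu_0$ on the outer sphere $|\mu_0|=\delta_1^{-1}n\nu^{1/2}$ with $\Re\mu_0=\delta^{-1}n^\gamma\nu^{1/2}$, which lies in $O_{\nu,n}$ (whenever $O_{\nu,n}\ne\emptyset$) and is already known to belong to $\rho(-\mathbb{L}_{\nu,n})$ by Corollary \ref{cor.prop.general'} (specifically \eqref{proof.cor.prop.general'.0}). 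The paper's choice is slightly cleaner because the large-$\alpha\Im c_\epsilon$ region need not intersect $O_{\nu,n}$ at all: both $\Re\mu\le n\nu^{1/2}/\delta_1$ and $n^2\nu^{3/2}\le \delta_0^{-2}$ can be small, so your anchor may lie outside $O_{\nu,n}$, forcing you to run the continuity argument on a larger connected set (for which you would need uniform estimates combining both parts of Proposition \ref{prop.resolvent}). This is fixable, but the paper's anchor avoids the issue entirely.
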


\begin{proof} The case (ii) is already proved in Corollary \ref{cor.prop.large.alpha}, so we focus on the case (i):
We may assume that $\frac{n^\gamma \nu^\frac12}{\delta}\leq \frac{n\nu^\frac12}{\delta_1}$, otherwise the set $O_{\nu,n}$ is empty and there is nothing to be proved.
It is clear that the set $O_{\nu,n}$ is arcwise connected in $\mathbb{C}$. By Proposition \ref{prop.resolvent} (i),  for any $\mu\in O_{\nu,n}$ and any $f\in \mathcal{P}_n^{(\nu)}L^2_\sigma (\Omega_\nu)$ there exists a weak solution $\phi\in H_0^2(\R_+)$ to \eqref{eq.os'} (and thus, to \eqref{eq.os}), which is $H^3(\R_+)$ by the elliptic regularity and gives the solution $v$ (as in the proof of Corollary \ref{cor.prop.general'}) to \eqref{eq.resolvent} with a suitable pressure $q$. 
From \eqref{est.prop.resolvent.1} and \eqref{est.prop.resolvent.2} the norms $\| v\|_{L^2(\Omega_\nu)}$ and $\|\nabla v \|_{L^2(\Omega_\nu)}$ are estimated so that the constants in the estimates are uniform in $O_{\nu,n}$. Moreover, if $|\mu_0|=\delta_1^{-1} n\nu^\frac12$ and $\Re \mu_0 = \delta^{-1} n^\gamma\nu^\frac12$ then $\mu_0\in O_{\nu,n}$ but also $\mu_0 \in \rho (-\mathbb{L}_{\nu,n})$ from \eqref{proof.cor.prop.general'.0} in the proof of  Corollary \ref{cor.prop.general'}. 
Then we can apply the method of continuity in Proposition \ref{prop.continuity}, which shows $O_{\nu,n}\subset \rho (-\mathbb{L}_{\nu,n})$.
The estimates \eqref{est.cor.prop.resolvent.1} and \eqref{est.cor.prop.resolvent.2} follow from \eqref{est.prop.resolvent.1} and \eqref{est.prop.resolvent.2}; indeed, the equality $\Re \mu = \delta^{-1}n^\gamma \nu^\frac12$ implies that $\Im c = \delta^{-1} n^{\gamma-1}$, and thus, 
$$\frac{n^{\gamma-1}}{\delta}\leq \Im c_\epsilon \, = \, \frac{n^{\gamma-1}}{\delta} + n\nu \leq C n^{\gamma-1}$$
if $\gamma\in [\frac23,1]$ and $n\leq \delta_0^{-1} \nu^{-\frac34}$. 
The proof is complete.
\end{proof}

\begin{rem}{\rm The resolvent estimate is available for all $\mu\in O_{\nu,n}$ also in (i), 
but  for later use we need the estimate only on the borderline $\Re \mu =\delta^{-1} n^\gamma \nu^\frac12$.
On the borderline $\Re \mu=\delta^{-1} n^\gamma \nu^\frac12$ the conditions (i) and (ii) in Corollary \ref{cor.prop.resolvent} are respectively written as 

\vspace{0.3cm}

\noindent 
(i) $n^{\gamma} \nu^\frac12 + \delta n^2\nu^\frac32\leq \delta \delta_2^{-1}$~~~~~~~ (ii) $n^{\gamma} \nu^\frac12 + \delta n^2\nu^\frac32 \geq \delta \delta_2^{-1}$\,.

\vspace{0.3cm}

\noindent 
When $n\leq \delta_0^{-1}\nu^{-\frac34}$ and $\gamma\in [\frac23,1]$ the condition $n^{\gamma}\nu^\frac12 \gg \delta n^2\nu^\frac32$ is satisfied. Therefore, the case (i) essentially correspond to the regime $n^{\gamma}\nu^\frac12 \leq \mathcal{O} (1)$ when $n\leq \delta_0^{-1}\nu^{-\frac34}$ and $\gamma\in [\frac23,1]$. On the other hand, the case (ii) is always satisfied in the regime $n^{\gamma}\nu^\frac12 \geq \mathcal{O}(1)$.
}
\end{rem}

\subsection{Estimate for semigroup}\label{subsec.semigroup}

The resolvent estimates established in Corollaries \ref{cor.prop.general'} and \ref{cor.prop.resolvent} lead to the estimates for the semigroup $e^{-\tau\mathbb{L}_{\nu,n}}$, and hence, by going back to the original variables, we obtain the estimates for the semigroup $e^{-t \mathbb{A}_{\nu,n}}$ as follows. 

\begin{thm}\label{thm.semigroup}  Assume that \eqref{concave.weak} holds. Then the following estimates hold for all $f\in \mathcal{P}_n L^2_\sigma (\Omega)$ and $t>0$. 

\vspace{0.1cm}

\noindent {\rm (i)} Let $\delta_0^{-1}\leq |n|\leq \delta_0^{-1}\nu^{-\frac34}$, $\gamma \in [\frac57,1]$, and $\delta\in (0,\delta_{**}]$. If in addition $|n|^{\gamma} \nu^\frac12 < 1$, then 
\begin{align}
\| e^{- t \mathbb{A}_{\nu,n}}  f\|_{L^2 (\Omega)} & \leq C |n|^{\frac52(1-\gamma)}  e^{\frac{|n|^\gamma}{\delta}t} \| f\|_{L^2 (\Omega)} \,, \label{est.thm.semigroup.1}\\
\| \nabla e^{- t \mathbb{A}_{\nu,n}}  f\|_{L^2 (\Omega)} & \leq \frac{C}{\nu^\frac12} \bigg ( \frac{1}{t^\frac12} +  |n|^{\frac52(1-\gamma)} \big ( |n|^\frac14 +  |n|^{\frac12 - (1-\gamma)}  \big ) e^{\frac{|n|^\gamma}{\delta}t} \bigg ) \| f\|_{L^2 (\Omega)} \,.\label{est.thm.semigroup.2}
\end{align}
Here $C$ depends only on $U^E$ and $U$.  If \eqref{concave.strong} holds in addition, then \eqref{est.thm.semigroup.1} and \eqref{est.thm.semigroup.2} are valid for $\gamma\in [\frac23,1]$ with the factors $|n|^{\frac52(1-\gamma)}$ and $|n|^{\frac12-(1-\gamma)}$ replaced by $|n|^{2(1-\gamma)}$ and $|n|^{\frac12-\frac34(1-\gamma)}$, respectively.

\vspace{0.1cm}

\noindent {\rm (ii)} Let $|n|^{\gamma} \nu^\frac12 \geq 1$. Then 
\begin{align}
\| e^{- t \mathbb{A}_{\nu,n}}  f\|_{L^2 (\Omega)} & \leq C  |n|^{1-\gamma} e^{\frac{|n|^\gamma}{\delta}t}\| f\|_{L^2 (\Omega)}\,,\label{est.thm.semigroup.3}\\
\| \nabla e^{- t \mathbb{A}_{\nu,n}}  f\|_{L^2 (\Omega)} & \leq \frac{C}{\nu^\frac12} \big ( \frac{1}{t^\frac12} + |n|^{1 -\frac{\gamma}{2}} e^{\frac{|n|^\gamma}{\delta}t} \big ) \| f\|_{L^2 (\Omega)} \,.\label{est.thm.semigroup.4}
\end{align}
Here $C$ depends only on $\|U^E\|_{C^2}$ and $\|U\|$.
\end{thm}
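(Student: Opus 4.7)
The plan is to derive the semigroup bounds from the resolvent estimates of Corollaries \ref{cor.prop.general'} and \ref{cor.prop.resolvent} through the Dunford integral representation. Working in the rescaled variables, I would write
$$e^{-\tau \mathbb{L}_{\nu,n}} \, = \, \frac{1}{2\pi i}\int_{\Gamma_n} e^{\mu \tau}(\mu + \mathbb{L}_{\nu,n})^{-1} \dd \mu,$$
with $\Gamma_n$ a contour in $\rho(-\mathbb{L}_{\nu,n})$ enclosing the spectrum, and then recover the estimates for $e^{-t\mathbb{A}_{\nu,n}}$ through the rescaling $\tau = t/\sqrt{\nu}$. The rescaling preserves $L^2$-operator norms while introducing an extra $\nu^{-1/2}$ factor in gradient bounds, which is the origin of the $\nu^{-1/2}$ prefactors in \eqref{est.thm.semigroup.2} and \eqref{est.thm.semigroup.4}.

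I would choose $\Gamma_n$ as a vertical segment $\{\beta + is : |s|\leq M\}$ at abscissa $\beta := \delta^{-1}|n|^\gamma \nu^{1/2}$, of total length $2M$ with $M := \delta_1^{-1}|n|\nu^{1/2}$, capped at its two endpoints $\beta\pm iM$ by sectorial rays entering $S_{\nu,n}(\theta)$ from Corollary \ref{cor.prop.general'}. A first step is to verify that $\Gamma_n$ is contained in the resolvent set: the vertical segment lies in $O_{\nu,n}$ of Corollary \ref{cor.prop.resolvent}(i) in case (i) of the theorem ($|n|^\gamma\nu^{1/2}<1$), and in the half-plane $\{\Re\mu + n^2\nu^{3/2}\geq \delta_2^{-1}\}$ of Corollary \ref{cor.prop.resolvent}(ii) in case (ii) ($|n|^\gamma\nu^{1/2}\geq 1$ with $\delta$ small); meanwhile the junction points $\beta \pm iM$ lie in $S_{\nu,n}(\theta)$ because $|\beta\pm iM|\geq M = \delta_1^{-1}\alpha$ and $|\Im\mu| = \delta_1^{-1}\alpha \geq \delta_1^{-1}\alpha - |\tan\theta|(\delta^{-1}-\delta_1^{-1})n^\gamma\nu^{1/2}$ for $\delta<\delta_1$.

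On the vertical segment I would apply the $L^2\to L^2$ resolvent bound: $\|R(\mu)\|\leq C|n|^{\frac32(1-\gamma)}/\Re\mu$ in case (i), respectively $C/\Re\mu$ in case (ii); integration over a segment of length $2M$ produces the crucial additional factor $M/\beta \sim |n|^{1-\gamma}$. Combined, these yield $|n|^{\frac52(1-\gamma)}e^{\beta\tau}$ in \eqref{est.thm.semigroup.1} and $|n|^{1-\gamma}e^{\beta\tau}$ in \eqref{est.thm.semigroup.3}, with $\beta\tau = |n|^\gamma t/\delta$ after the rescaling. A parallel calculation for the gradient, using $\|\nabla R(\mu)\|\leq C\nu^{-1/4}(\Re\mu)^{-1/2}(\ldots)$ from the same corollaries, yields the analogous $n$-powers in \eqref{est.thm.semigroup.2} and \eqref{est.thm.semigroup.4}; for instance in case (ii), $M/(\nu^{1/4}\beta^{1/2})\sim \nu^{-1/4}|n|^{1-\gamma/2}$ explains the $|n|^{1-\gamma/2}$ term. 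On the sectorial rays I would use the standard bounds $\|R(\mu)\|\leq C/|\mu|$ and $\|\nabla R(\mu)\|\leq C\nu^{-1/4}/|\mu|^{1/2}$ of Corollary \ref{cor.prop.general'}; a classical sectorial-semigroup calculation then gives a bounded $L^2$ contribution and a $\nu^{-1/4}\tau^{-1/2}$ contribution for the gradient, which becomes the $(\nu t)^{-1/2}$ short-time singularity in \eqref{est.thm.semigroup.2} and \eqref{est.thm.semigroup.4}.

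The main obstacle will be the careful bookkeeping of the contour pieces and the matching of the vertical segment with the sectorial rays under the smallness conditions on $\delta$ imposed by the various corollaries (relative to $\delta_1$, $\delta_2$, $\delta_{**}$). What distinguishes case (i) from case (ii) at the level of this proof is exactly the extra $|n|^{\frac32(1-\gamma)}$ in the resolvent bound on $O_{\nu,n}$, which itself traces back through Proposition \ref{prop.resolvent} to the loss $(\Im c_\epsilon)^{-\frac52}$ in the Rayleigh estimates of Proposition \ref{prop.Ray.WC}; identifying the decomposition $|n|^{\frac52(1-\gamma)} = |n|^{\frac32(1-\gamma)}\cdot |n|^{1-\gamma}$ (resolvent bound times length/abscissa ratio) and checking that each piece of $\Gamma_n$ fits precisely into the range of validity of the appropriate resolvent estimate is the substantive content of the argument.
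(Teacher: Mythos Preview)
Your approach is essentially the paper's: represent $e^{-\tau\mathbb{L}_{\nu,n}}$ via the Dunford integral along a contour that runs along the vertical line $\Re\mu=\delta^{-1}|n|^\gamma\nu^{1/2}$ on a segment of length $\sim\alpha$, then escapes into the sector $S_{\nu,n}(\theta)$ where Corollary~\ref{cor.prop.general'} applies; and your decomposition $|n|^{\frac52(1-\gamma)}=|n|^{\frac32(1-\gamma)}\cdot|n|^{1-\gamma}$ (resolvent bound on $l_0$ times the length/abscissa ratio $\alpha/\beta$) is exactly the mechanism of the paper. The paper's contour differs only cosmetically: it inserts horizontal segments $l_\pm$ at height $|\Im\mu|\sim\alpha$ between the vertical piece $l_0$ and the rays $\Gamma_\pm$, keeping the latter entirely in $\{\Re\mu\le 0\}$; this five-piece decomposition is convenient bookkeeping but not a different idea.

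There is however one genuine gap. Your claim that the sectorial rays yield a \emph{bounded} $L^2$ contribution is not valid for all $\tau>0$ with a fixed ($\tau$-independent) contour. On the rays the bound $\|(\mu+\mathbb{L}_{\nu,n})^{-1}\|\le C/|\mu|$ gives an integral of the type $\int_0^\infty\frac{e^{-cr\tau}}{M+r}\,dr$, which is $O(1)$ only when $M\tau\gtrsim 1$, i.e.\ $\alpha\tau\gtrsim 1$; for short time it behaves like $\log(1/(\alpha\tau))$ and blows up. The paper therefore treats the regime $0<\tau\le\alpha^{-1}$ separately, via the crude energy estimate of Proposition~\ref{prop.general.sg.short}: this gives $\|e^{-\tau\mathbb{L}_{\nu,n}}\|\le e^{C_1(n)\nu^{1/2}\tau}$ with $C_1(n)\sim |n|$, hence a uniformly bounded constant on $\tau\le\alpha^{-1}=(|n|\nu^{1/2})^{-1}$. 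The Dunford integral argument is invoked only for $\tau\ge\alpha^{-1}$, where the ray integral is indeed $O(1)$ (see \eqref{proof.thm.semigroup.5}--\eqref{proof.thm.semigroup.6}). Without this short-time patch (or a $\tau$-dependent contour, which is awkward here because the resolvent bounds require $|\mu|\ge\delta_1^{-1}\alpha$), your argument does not close.
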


\begin{rem}\label{rem.thm.semigroup}{\rm (i) As is mentioned in Remark \ref{rem.parameter}, the best possible value of $\gamma$ in the temporal growth estimates  in (i) of Theorem \ref{thm.semigroup} is $\gamma=\frac23$. Our result achieves this optimal value at least under the strong concave condition \eqref{concave.strong} for the boundary layer profile $U$. In fact, \eqref{concave.strong} can be slightly relaxed as $-M \pa_Y^2 U\geq (\pa_Y U)^4$, but instead, the factor $|n|^{\frac52(1-\gamma)}$ of the derivative loss is replaced by $|n|^{3(1-\gamma)}$ under this condition; see Remark \ref{rem.prop.resolvent} (ii). 

\noindent (ii) The  result for the case (ii) of Theorem \ref{thm.semigroup} is based only on Corollary \ref{cor.prop.general'} and Proposition \ref{prop.large.alpha}. Hence, it holds for $\gamma \in (0,1]$ and for any profile $U$ satisfying \eqref{bound.U} without concave conditions.
}
\end{rem}
\begin{proofx}{Theorem \ref{thm.semigroup}} By the rescaling \eqref{def.scaling} we have 
$$(e^{-t\mathbb{A}_{\nu,n}} f  )(x,y) \, = \, (e^{-\tau \mathbb{L}_{\nu,n}} f^{(\nu)} ) (X,Y)$$
with $\tau=t\nu^{-\frac12}$ and $f^{(\nu)} (X,Y) = f (\nu^\frac12 X, \nu^\frac12 Y)$. Note that we may assume that $n$ is positive without loss of generality.
By the general perturbation theory we have already known that $-\mathbb{L}_{\nu,n}$ generates a $C_0$-analytic semigroup acting on $\mathcal{P}_n^{(\nu)} L^2_\sigma (\Omega_\nu)$, 
and in particular, from Proposition \ref{prop.general.sg.short} we already have the following estimate
\begin{align}\label{proof.thm.semigroup.1}
\| e^{-\tau \mathbb{L}_{\nu,n}} g \|_{L^2(\Omega_\nu)}  \leq e^{(\| \partial_y U^E \|_{L^\infty(\R_+)} + 2 n \| Y \partial_Y U \|_{L^\infty(\R_+)} )\nu^\frac12 \tau}  \| g \|_{L^2 (\Omega_\nu)} \,, \qquad g \in \mathcal{P}_n^{(\nu)} L^2_\sigma (\Omega_\nu)\,,
\end{align} 
which is useful in the short time interval $0< \tau \leq \nu^{-\frac12} n^{-1}=\alpha^{-1}$.
Hence it suffices to consider the case $\tau \geq \nu^{-\frac12} n^{-1}=\alpha^{-1}$.
Let $S_{\nu,n} (\theta)$ be the set defined in Corollary \ref{cor.prop.general'}.
From the proof of Corollary \ref{cor.prop.general'} we have already known that the set 
$\cup_{\mu\in E_{\nu,n}} B_{r_\mu} (\mu)$ with $E_{\nu,n} = \big \{ \mu\in \C~|~ \Re \mu \geq \delta_1^{-1} n^\gamma \nu^{\frac12}\,, ~ |\mu | \geq \delta_1^{-1} n \nu^\frac12 \big \}$ is included in the resolvent set of $-\mathbb{L}_{\nu,n}$, and $S_{\nu,n} (\theta)\subset \cup_{\mu\in E_{\nu,n}} B_{r_\mu} (\mu)$ holds.
On the other hand, in virtue of Corollary \ref{cor.prop.resolvent} the set  $O_{\nu,n} = \big \{ \mu \in \C~ \big |  ~ \Re \mu \geq \delta^{-1} n^\gamma \nu^\frac12\,, ~|\mu|\leq \delta_1^{-1} n\nu^\frac12 \big \}$ is also included in the resolvent set of $-\mathbb{L}_{\nu,n}$.
Hence we conclude that the set 
\begin{align}\label{proof.thm.semigroup.2}
\Sigma_{\nu,\gamma}(\theta) \, = \, S_{\nu,n} (\theta) \, \cup \, \big \{ \mu \in \C~|~\Re \mu \geq \frac{n^\gamma\nu^\frac12}{\delta} \big \} 
\end{align}
is included in the resolvent set of $-\mathbb{L}_{\nu,n}$.
The estimates in Corollary \ref{cor.prop.general'} for $|\Im \mu|\gg 1$ ensures the representation of the semigroup such as 
\begin{align}\label{proof.thm.semigroup.3}
e^{-\tau \mathbb{L}_{\nu,n}} \, = \, \frac{1}{2\pi i} \int_\Gamma e^{\tau \mu} (\mu + \mathbb{L}_{\nu,n} )^{-1} \dd \mu\,,
\end{align}
where the curve $\Gamma$ is oriented counterclockwise and is taken as 
\begin{align*}
\Gamma & \, = \, \Gamma_+ \, + \, \Gamma_- \, + \, l_+ \, + \, l_- \, + \, l_0 
\end{align*}
with
\begin{align}\label{proof.thm.semigroup.4}
\begin{split}
\Gamma_{\pm} & \, = \, \big \{ \mu\in \C ~|~ \pm \Im \mu  =   (\tan \theta )\Re \mu +  \delta_1^{-1} (\alpha + |\tan\theta| n^\gamma\nu^\frac12 )\,, ~\Re \mu \leq 0  \big \}\,,\\
l_{\pm} & \, = \, \big \{ \mu \in \C ~|~ \pm \Im \mu = \delta_1^{-1} (\alpha + |\tan\theta| n^\gamma\nu^\frac12 ) \,, ~ 0\leq \Re \mu \leq \frac{n^\gamma \nu^\frac12}{\delta} \big \}\,,\\
l_0 & \, = \,  \big \{ \mu \in \C ~|~ 0 \leq |\Im \mu | \leq  \delta_1^{-1} (\alpha  + |\tan\theta| n^\gamma\nu^\frac12 )\,, ~ \Re \mu = \frac{n^\gamma\nu^\frac12}{\delta} \big \}\,.
\end{split}
\end{align}
The estimate of the resolvent on $\Gamma_\pm \cup l_\pm$ follows from Corollary \ref{cor.prop.general'}. 
Let $g \in \mathcal{P}_n L^2 (\Omega_\nu)$. Then we have from \eqref{est.cor.prop.general'.2} and $|\Im \mu | = |\Re (\mu) | \, | \tan\theta| + \delta_1^{-1} (\alpha + |\tan\theta| n^\gamma\nu^\frac12 )$ on $\Gamma_\pm$, 
\begin{align*}
& \| \frac{1}{2\pi i} \int_{\Gamma_\pm} e^{\tau \mu} (\mu + \mathbb{L}_{\nu,n})^{-1} g \dd \mu \|_{L^2 (\Omega_\nu)} \\
& \leq C\int_{\Gamma_\pm} e^{\tau \Re (\mu)} |\mu|^{-1} |\dd \mu | \, \| g \|_{L^2 (\Omega_\nu)}\\
& \leq C \int_0^\infty \frac{e^{-\tau s}}{s + |\tan\theta|  s +  \delta_1^{-1} (\alpha + |\tan\theta| n^\gamma\nu^\frac12 )}\dd s \, \| g  \|_{L^2 (\Omega_\nu)}\,.
\end{align*}
Recall that $\alpha=n \nu^\frac12$ and $0<\gamma\leq 1$. 
Then, for any $\kappa\in (0,1]$ there is $C_\kappa>0$ such that 
\begin{align}\label{proof.thm.semigroup.5}
 \int_0^\infty \frac{e^{-\tau s}}{s + |\tan\theta|  s +  \delta_1^{-1} (\alpha + |\tan\theta| n^\gamma\nu^\frac12 )}\dd s \leq \frac{C_\kappa}{(\alpha \tau)^\kappa} \leq C_\kappa \qquad {\rm for}~~\alpha \tau\geq 1\,.
\end{align}
Hence we have, by taking $\kappa=\frac12$ for example,
\begin{align}\label{proof.thm.semigroup.6}
\| \frac{1}{2\pi i} \int_{\Gamma_\pm} e^{\tau \mu} (\mu + \mathbb{L}_{\nu,n})^{-1} g  \dd \mu \|_{L^2 (\Omega_\nu)} \leq C \| g \|_{L^2 (\Omega_\nu)}\,, \qquad \tau\geq \alpha^{-1}\,.
\end{align}
Next we see 
\begin{align}\label{proof.thm.semigroup.7}
& \| \frac{1}{2\pi i} \int_{l_\pm} e^{\tau \mu} (\mu + \mathbb{L}_{\nu,n})^{-1} g \dd \mu \|_{L^2 (\Omega_\nu)}\nonumber \\
& \leq C \int_0^{\frac{n^\gamma \nu^\frac12}{\delta}} \frac{e^{\tau s}}{s +  \delta_1^{-1} (\alpha + |\tan\theta| n^\gamma\nu^\frac12 )} \dd s \, \| g \|_{L^2 (\Omega_\nu)}\nonumber \\
& \leq C n^{\gamma-1} e^{\frac{n^\gamma \nu^\frac12}{\delta}  \tau} \| g \|_{L^2 (\Omega_\nu)} \,.
\end{align}
Finally, on $l_0$ we apply the result of Corollary \ref{cor.prop.resolvent}. 
To this end we consider the following two cases by taking Corollary \ref{cor.prop.resolvent} into account:

\vspace{0.3cm}

\noindent 
{\rm (i')} $n^{\gamma} \nu^\frac12 +\delta n^2 \nu^\frac32\leq \delta \delta_2^{-1}$ ~~~ ~~~~~ {\rm (ii')} $n^{\gamma} \nu^\frac12 +\delta n^2 \nu^\frac32\geq \delta \delta_2^{-1}$ 

\vspace{0.3cm}

\noindent 
In the case (i)' we have from \eqref{est.cor.prop.resolvent.2},
\begin{align}\label{proof.thm.semigroup.8}
\| \frac{1}{2\pi i} \int_{l_0} e^{\tau \mu} (\mu + \mathbb{L}_{\nu,n})^{-1}  g \dd \mu \|_{L^2 (\Omega_\nu)} & \leq C \frac{\delta}{n^\gamma \nu^\frac12}  n^{\frac32(1-\gamma)}  e^{\frac{n^\gamma \nu^\frac12}{\delta}\tau} \int_0^{\frac{C\alpha}{\delta_1}}  \dd s \, \| g \|_{L^2 (\Omega_\nu)} \nonumber \\
& \leq C\delta \delta_1^{-1}   n^{\frac52(1-\gamma)}   e^{\frac{n^\gamma \nu^\frac12}{\delta}\tau}  \| g\|_{L^2 (\Omega_\nu)}\,.
\end{align}
Collecting \eqref{proof.thm.semigroup.1}, \eqref{proof.thm.semigroup.6}, \eqref{proof.thm.semigroup.7}, and \eqref{proof.thm.semigroup.8}, we have arrived at the estimate 
\begin{align}\label{proof.thm.semigroup.9}
\| e^{-\tau \mathbb{L}_{\nu,n}}  g \|_{L^2 (\Omega_\nu)} \leq C n^{\frac52(1-\gamma)}  e^{\frac{n^\gamma \nu^\frac12}{\delta}\tau}  \| g \|_{L^2 (\Omega_\nu)}\,,\qquad \tau >0 \,,
\end{align}
where $C$ depends only on $U^E$ and $U$. In virtue of (i) in Corollary \ref{cor.prop.resolvent}, if \eqref{concave.strong} holds then the above estimate holds for $\gamma\in [\frac23,1]$ with the factor $n^{\frac52(1-\gamma)}$ replaced by $n^{2(1-\gamma)}$.  
The estimate \eqref{proof.thm.semigroup.9} implies \eqref{est.thm.semigroup.1} in the case (i') by returning to the original variables. The estimate \eqref{est.thm.semigroup.3} for the case (ii') is obtained in the same manner by using \eqref{est.cor.prop.resolvent.4} instead of \eqref{est.cor.prop.resolvent.2}. Indeed, in this case \eqref{proof.thm.semigroup.8} is replaced by 
\begin{align}\label{proof.thm.semigroup.10}
\| \frac{1}{2\pi i} \int_{l_0} e^{\tau \mu} (\mu + \mathbb{L}_{\nu,n})^{-1}  g \dd \mu \|_{L^2 (\Omega_\nu)} & \leq C \frac{\delta}{n^\gamma \nu^\frac12} \big ( 1+  \frac{\delta}{n^\gamma \nu^\frac12} \big )  e^{\frac{n^\gamma \nu^\frac12}{\delta}\tau} \int_0^{\frac{C\alpha}{\delta_1}}  \dd s \, \| g \|_{L^2 (\Omega_\nu)} \nonumber \\
& \leq C\delta \delta_1^{-1} n^{1-\gamma}  e^{\frac{n^\gamma \nu^\frac12}{\delta}\tau}  \| g\|_{L^2 (\Omega_\nu)}\,.
\end{align}
Thus we obtain \eqref{est.thm.semigroup.3}.
Note that the dependence on $n$ in the case (ii') is milder than in the case (i'). 
Hence, we conclude that \eqref{est.thm.semigroup.1} holds for $n^{\gamma}\nu^\frac12 \leq 1$, 
while \eqref{est.thm.semigroup.3} follows in the case $n^{\gamma}\nu^\frac12 \geq 1$, as desired.

Next we consider the derivative estimate.
Let us go back to the representation \eqref{proof.thm.semigroup.3} with the curve $\Gamma$ as mentioned in \eqref{proof.thm.semigroup.4}.
For the integral on $\Gamma_\pm$ we have from \eqref{est.cor.prop.general'.3},
\begin{align}
& \| \nabla_{X,Y} \frac{1}{2\pi i} \int_{\Gamma_\pm} e^{\tau \mu} (\mu + \mathbb{L}_{\nu,n})^{-1} g \dd \mu \|_{L^2 (\Omega_\nu)} \nonumber \\
& \leq \frac{C}{\nu^\frac14} \int_{\Gamma_\pm} e^{\tau \Re (\mu)} |\mu|^{-\frac12} |\dd \mu | \, \| g \|_{L^2 (\Omega_\nu)} \nonumber \\
& \leq \frac{C}{\nu^\frac14} \int_0^\infty \frac{e^{-\tau s}}{\big (s + |\tan\theta|  s +  \delta_1^{-1} (\alpha + |\tan\theta| n^\gamma\nu^\frac12 ) \big )^\frac12}\dd s \, \| g  \|_{L^2 (\Omega_\nu)}\nonumber \\
&  \leq  \frac{C}{\nu^\frac14 \tau^\frac12} \| g\|_{L^2 (\Omega_\nu)}\,.\label{proof.thm.semigroup.11}
\end{align}
Similarly, on $l_\pm$ we have 
\begin{align}
& \| \nabla_{X,Y} \frac{1}{2\pi i} \int_{l_\pm} e^{\tau \mu} (\mu + \mathbb{L}_{\nu,n})^{-1} g \dd \mu \|_{L^2 (\Omega_\nu)} \nonumber \\
& \leq \frac{C}{\nu^\frac14} \int_0^{\frac{n^\gamma \nu^\frac12}{\delta}} \frac{e^{\tau s}}{\big (s  +  \delta_1^{-1} (\alpha + |\tan\theta| n^\gamma\nu^\frac12 ) \big )^\frac12}\dd s \, \| g  \|_{L^2 (\Omega_\nu)}\nonumber \\
& \leq \frac{C}{\nu^\frac14 \alpha^\frac12} \int_0^{\frac{n^\gamma \nu^\frac12}{\delta}}  \dd s \, e^{\frac{n^\gamma \nu^\frac12}{\delta}\tau} \| g\|_{L^2 (\Omega_\nu)} \leq C n^{\gamma-\frac12}  e^{\frac{n^\gamma \nu^\frac12}{\delta}\tau} \| g\|_{L^2 (\Omega_\nu)}\,.\label{proof.thm.semigroup.12}
\end{align}
The estimate on $l_0$ is obtained in the same manner as in \eqref{proof.thm.semigroup.8} and \eqref{proof.thm.semigroup.10}. 
Indeed, in the case (i') the estimate \eqref{est.cor.prop.resolvent.3} yields
\begin{align}\label{proof.thm.semigroup.13}
\begin{split}
& \| \nabla_{X,Y} \frac{1}{2\pi i} \int_{l_0} e^{\tau \mu} (\mu + \mathbb{L}_{\nu,n})^{-1}  g \dd \mu \|_{L^2 (\Omega_\nu)} \\
& \qquad \leq C n^{\frac52(1-\gamma)} \big (n^\frac14 + n^{\frac12 - (1-\gamma)}  \big )  e^{\frac{n^\gamma \nu^\frac12}{\delta}\tau}  \| g\|_{L^2 (\Omega_\nu)}\,,
\end{split}
\end{align}
while in the case (ii') the estimate \eqref{est.cor.prop.resolvent.5} implies
\begin{align}\label{proof.thm.semigroup.14}
\| \nabla_{X,Y} \frac{1}{2\pi i} \int_{l_0} e^{\tau \mu} (\mu + \mathbb{L}_{\nu,n})^{-1}  g \dd \mu \|_{L^2 (\Omega_\nu)} & \leq C  n^{1-\frac{\gamma}{2}} e^{\frac{n^\gamma \nu^\frac12}{\delta}\tau}  \| g\|_{L^2 (\Omega_\nu)}\,.
\end{align}
Collecting \eqref{proof.thm.semigroup.11} - \eqref{proof.thm.semigroup.14}, we obtain the estimates for the spatial derivative of $e^{-\tau \mathbb{L}_{\nu,n}} g$, which lead to \eqref{est.thm.semigroup.2} and \eqref{est.thm.semigroup.4} by returning to the original variables.
The proof is complete.
\end{proofx}

\section{Linear evolution operator in middle frequency: the case of time-dependent shear flow}\label{sec.middle.time}

In this section we consider the case when $U^E$ and $U^P$ depend on the time variable.
Our strategy in achieving the estimate of the evolution operator $\mathbb{T}_{\nu,n} (t,s)$ with $\delta_0^{-1}\leq |n| \leq \delta_0^{-1} \nu^{-\frac34}$, is to split the time interval  depending on $n$ and to expand the profile $U^P$ around a fixed time in each short time interval,  in which we can apply the perturbation argument based on the result of Theorem \ref{thm.semigroup} in the previous section.
To verify this idea we need enough regularity of $U^E$ and $U^P$ as well as a concave shape of $U^P(t)$ for each $t$ which should be uniform in time; see \eqref{concave.weak.t} in Section \ref{sec.result}. In fact, due to the factor $|n|^{\frac52(1-\gamma)}$ in \eqref{est.thm.semigroup.1} and the underlying derivative loss structure of \eqref{eq.perturb.intro}, even if we assume the strong concave condition \eqref{concave.strong} on $U^P(t)$ uniformly in time, the present approach does not provide a growth estimate of the order $e^{C|n|^\gamma (t-s)}$ with $\gamma=\frac23$ for $\mathbb{T}_{\nu,n} (t,s)$ in the the parameter regime $|n|^{1+\gamma}\nu \leq 1$. 
Note that, as stated in Remark \ref{rem.thm.semigroup},  the value $\gamma=\frac23$ in the growth estimate is known to be optimal at least for the semigroup $e^{-t\mathbb{A}_{\nu,n}}$, and we have established  this optimal growth bound in Theorem \ref{thm.semigroup} under the condition \eqref{concave.strong} for the time-independent profile $U^P(t)=U^P$.  
The condition \eqref{concave.weak.t} is always satisfied when $U^P$ is the solution to \eqref{eq.heat} with $U^E(0)>0$, if the initial data $U_s\in BC^3 (\R_+)$ satisfies $\| U_s\|<\infty$, $Y\pa_Y^3 U\in L^\infty (\R_+)$, compatibility conditions on $Y=0,\infty$,  and (ii) of \eqref{concave.strong}.

\begin{thm}\label{thm.evolution.middle} Assume that \eqref{concave.weak.t}, stated in Section \ref{sec.result}, holds for some $T>0$.
Then there exist $C, K_0>0$ such that the following estimates hold for all $f\in \mathcal{P}_n L^2_\sigma (\Omega)$ and $0\leq s<t\leq T$. Set 
\begin{align}\label{est.thm.evolution.0}
\theta_{\gamma,n} \, = \, |n|^\gamma \big ( 1+ (1-\gamma)\log (1+|n|)\big )\,, \qquad \gamma \in [\frac23,1]\,.
\end{align}

\noindent 
{\rm (i)} Let $\delta_0^{-1}\leq |n|\leq \delta_0^{-1}\nu^{-\frac34}$. If in addition $\gamma\in [\frac79,1]$ and $|n|^\gamma \nu^\frac12 <1$, then
\begin{align}
\| \mathbb{T}_{\nu,n} (t,s) f \|_{L^2(\Omega)} & \leq C  n^{\frac52(1-\gamma)} e^{K_0 \theta_{\gamma,n} (t-s)}  \| f\|_{L^2 (\Omega)}\,, \label{est.thm.evolution.1}\\
\| \nabla \mathbb{T}_{\nu,n} (t,s) f \|_{L^2(\Omega)} & \leq \frac{C}{\nu^\frac12} \big ( \frac{1}{(t-s)^\frac12} + |n|^{\frac12 + 3 (1-\gamma)} e^{K_0 \theta_{\gamma,n} (t-s)} \big ) \| f\|_{L^2 (\Omega)}\,.\label{est.thm.evolution.2}
\end{align}

\noindent 
{\rm (ii)} If $\gamma\in [\frac23,1]$ and $|n|^\gamma \nu^\frac12 \geq 1$ then  
\begin{align}
\| \mathbb{T}_{\nu,n} (t,s) f \|_{L^2(\Omega)} & \leq C  n^{1-\gamma} e^{K_0 \theta_{\gamma,n} (t-s)}  \| f\|_{L^2 (\Omega)}\,, \label{est.thm.evolution.3}\\
\| \nabla \mathbb{T}_{\nu,n} (t,s) f \|_{L^2(\Omega)} & \leq \frac{C}{\nu^\frac12} \big ( \frac{1}{(t-s)^\frac12} + |n|^{\frac12+\frac32 (1-\gamma)} e^{K_0 \theta_{\gamma,n} (t-s)} \big ) \| f\|_{L^2 (\Omega)}\,.\label{est.thm.evolution.4}
\end{align}
\end{thm}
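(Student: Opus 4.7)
The plan is to partition $[s,t]$ into short subintervals, freeze the shear profile on each one, and combine the frozen-profile bounds from Theorem \ref{thm.semigroup} with a Duhamel expansion to absorb the time dependence, then iterate over the whole interval. Concretely, fix $n$ with $\delta_0^{-1}\leq |n|\leq \delta_0^{-1}\nu^{-3/4}$, set $\Delta t = c |n|^{-\gamma}$ for a small constant $c>0$ (depending only on the quantities in \eqref{concave.weak.t}), and partition $[s,t]$ into $N \leq (t-s)/\Delta t + 1$ pieces $[t_l, t_{l+1}]$. On each piece we will write
\begin{equation*}
\mathbb{T}_{\nu,n}(t_{l+1},t_l) \, = \, e^{-\Delta t\, \mathbb{A}_{\nu,n}(t_l)} \, + \, \int_{t_l}^{t_{l+1}} e^{-(t_{l+1}-\tau)\mathbb{A}_{\nu,n}(t_l)} \big(\mathbb{A}_{\nu,n}(t_l) - \mathbb{A}_{\nu,n}(\tau)\big) \mathbb{T}_{\nu,n}(\tau, t_l)\, d\tau\,,
\end{equation*}
and control the commutator $\mathbb{A}_{\nu,n}(t_l) - \mathbb{A}_{\nu,n}(\tau)$ on $\mathcal{P}_n L^2_\sigma(\Omega)$ by $C(\tau-t_l)|n|$: the $\partial_x$ contribution is immediate from the $L^\infty$ bounds on $\partial_t U^E$, $\partial_t U^P$ in \eqref{concave.weak.t}, while the $u_2 \partial_y U^{BL}(t,y/\sqrt{\nu}){\bf e}_1$ piece, which naively produces an inadmissible $\nu^{-1/2}$ from differentiating the boundary layer, is tamed by the Hardy inequality $\|u_2/y\|_{L^2(\Omega)} \leq 2|n|\|u_1\|_{L^2(\Omega)}$ together with $\|Y\partial_t\partial_Y U^P\|_{L^\infty} < \infty$ from \eqref{concave.weak.t}.

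Setting $M_l = \sup_{\tau\in[t_l,t_{l+1}]} \|\mathbb{T}_{\nu,n}(\tau, t_l)\|_{L^2\to L^2}$ and feeding Theorem \ref{thm.semigroup} into the Duhamel identity yields, in case (i),
\begin{equation*}
M_l \, \leq \, C_1 |n|^{\frac{5}{2}(1-\gamma)} \, + \, C_1^2 |n|^{\frac{7}{2} - \frac{5}{2}\gamma}\, (\Delta t)^2\, M_l\,,
\end{equation*}
together with the analogue in case (ii) obtained by replacing $\frac{5}{2}(1-\gamma)$ by $1-\gamma$ and $\frac{7}{2} - \frac{5}{2}\gamma$ by $2 - \gamma$. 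With $\Delta t = c|n|^{-\gamma}$ the coefficient of $M_l$ becomes $C_1^2 c^2 |n|^{\frac{7}{2}-\frac{9}{2}\gamma}$ in case (i) and $C_1^2 c^2 |n|^{2-3\gamma}$ in case (ii); absorbing it into the left-hand side forces $\gamma \geq \frac{7}{9}$ and $\gamma \geq \frac{2}{3}$ respectively. These thresholds arise precisely from the tension between making $\Delta t$ small enough to kill the Duhamel remainder and keeping $\Delta t$ no smaller than $|n|^{-\gamma}$ so that subsequent composition over the full interval produces only the logarithmic enhancement $(1-\gamma)\log(1+|n|)$ already built into $\theta_{\gamma,n}$.

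With the single-step bound $\|\mathbb{T}_{\nu,n}(t_{l+1},t_l)\|_{L^2\to L^2} \leq 2 C_1 |n|^{\frac{5}{2}(1-\gamma)}$ in hand, composing over $N \sim |n|^\gamma (t-s)/c$ subintervals produces
\begin{equation*}
\|\mathbb{T}_{\nu,n}(t,s)\|_{L^2\to L^2} \, \leq \, \big(2C_1 |n|^{\frac{5}{2}(1-\gamma)}\big)^N \, \leq \, C |n|^{\frac{5}{2}(1-\gamma)} \exp\!\Big(K_0(t-s)|n|^\gamma\big(1+(1-\gamma)\log(1+|n|)\big)\Big)\,,
\end{equation*}
which is \eqref{est.thm.evolution.1}; the same composition with the case (ii) per-step bound gives \eqref{est.thm.evolution.3}. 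The gradient estimates \eqref{est.thm.evolution.2}, \eqref{est.thm.evolution.4} follow from the same scheme by isolating the final subinterval: the heat-type singularity $\nu^{-1/2}(t-s)^{-1/2}$ arises from $\nabla e^{-(t-t_{N-1})\mathbb{A}_{\nu,n}(t_{N-1})}$ acting on $\mathbb{T}_{\nu,n}(t_{N-1},s)$, while combining the long-time gradient bound of Theorem \ref{thm.semigroup} with the $L^2$ growth just established produces the polynomial-in-$|n|$ prefactor. The main obstacle is exactly the balance above: any loosening of the commutator estimate would worsen the $\frac{7}{9}$ threshold, and the Hardy-based control of $u_2\partial_y U^{BL}$ is essentially sharp under the regularity available from \eqref{concave.weak.t}, which is why the time-dependent setting falls short of the $\gamma \geq \frac{5}{7}$ exponent available for steady profiles.
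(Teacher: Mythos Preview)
Your time-stepping scheme for the $L^2$ bound is essentially the paper's proof: same partition with step $\sim |n|^{-\gamma}$, same Duhamel expansion around $\mathbb{A}_{\nu,n}(t_l)$, same Hardy argument for the $u_2\,\partial_y U^{BL}$ piece, same absorption leading to the $\gamma\ge 7/9$ (resp.\ $\gamma\ge 2/3$) threshold, and the same composition producing the $\log$-enhanced exponent $\theta_{\gamma,n}$. One small point you gloss over: your claim that the whole commutator is bounded by $C(\tau-t_l)|n|$ misses the term $u_{n,2}\,\partial_y\big(U^E(s)-U^E(t_l)\big)$, for which \eqref{concave.weak.t} gives no bound on $\partial_t\partial_y U^E$. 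The paper handles it by interpolation between $\|\partial_t U^E\|_{L^\infty}$ and $\|U^E\|_{L^\infty_t BC^2_y}$, yielding a $(s-t_l)^{1/2}$ factor (without $|n|$); this is lower-order and does not affect the absorption, but it is not literally covered by your commutator statement.

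For the gradient estimate your route differs from the paper's. You propose to take $\nabla$ on the final subinterval and compose with the $L^2$ bound on $\mathbb{T}_{\nu,n}(t_{N-1},s)$. This works, but your sentence ``the heat-type singularity $\nu^{-1/2}(t-s)^{-1/2}$ arises from $\nabla e^{-(t-t_{N-1})\mathbb{A}_{\nu,n}(t_{N-1})}$'' is only literally true when $N=1$; for $N\ge 2$ the last step has length $\sim |n|^{-\gamma}$ and produces instead $\nu^{-1/2}|n|^{\gamma/2}$, which after composition with the $L^2$ bound gives $\nu^{-1/2}|n|^{5/2-2\gamma}e^{K_0\theta_{\gamma,n}(t-s)}$---still dominated by the stated $\nu^{-1/2}|n|^{1/2+3(1-\gamma)}e^{K_0\theta_{\gamma,n}(t-s)}$, so the conclusion survives. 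The paper avoids this bookkeeping by a different device: it abandons the frozen operators entirely for the derivative, writes $u_n(t)$ via the \emph{Stokes} semigroup $e^{-\nu t\mathbb{A}_n}$ and a single Duhamel integral over $[s,t]$, and then feeds the already established $L^2$ estimate into that integral. This produces the $\nu^{-1/2}(t-s)^{-1/2}$ term directly and yields the exponent $|n|^{1/2+3(1-\gamma)}$ from $|n|\cdot |n|^{5/2(1-\gamma)}\cdot\theta_{\gamma,n}^{-1/2}$ after integrating $(t-s')^{-1/2}e^{K_0\theta_{\gamma,n}s'}$.
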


\begin{proofx}{Theorem \ref{thm.evolution.middle}} It suffices to show the estimate for $\mathbb{T}_{\nu,n} (T_0,0)$ with $0<T_0\leq T$.  We split the time interval $[0,T_0]$ as $t_0=0<t_1<\cdots<t_{N-1}<t_N=T_0$, where $t_l=\frac{l}{N}T_0$ with $N$ determined later. Set $u_n(t) = \mathbb{T}_{\nu,n} (t,0) f$. 
In the time interval $[t_l,t_{l+1}]$ the operator $\mathbb{A}_{\nu,n} (t)$ defined in \eqref{decompose.A} (recall also \eqref{def.A_t}) is expanded around $\mathbb{A}_{\nu,n} (t_l)$. Then by the Duhamel formula we have for $t\in (t_l,t_{l+1}]$,
\begin{align}\label{proof.thm.evolution.1}
\begin{split}
u_n (t)  = \mathbb{T}_{\nu,n} (t,t_l) u_n (t_l) & = e^{-(t-t_l) \mathbb{A}_{\nu,n} (t_l)} u_n (t_l) \\
&  - \int_{t_l}^t e^{-(t-s) \mathbb{A}_{\nu,n}(t_l)}\mathbb{P} \bigg ( in \big ( U^E (s) - U^E (t_l) \big ) u_n (s) \bigg ) \dd s\\
&   - \int_{t_l}^t e^{-(t-s) \mathbb{A}_{\nu,n}(t_l)}\mathbb{P} \bigg ( u_{n,2} (s) \partial_y \big ( U^E (s) - U^E (t_l) \big ) {\bf e}_1 \bigg ) \dd s \\
&  - \int_{t_l}^t e^{-(t-s) \mathbb{A}_{\nu,n}(t_l)}\mathbb{P} \bigg ( in \big ( U^P (s, \frac{\cdot}{\sqrt{\nu}}) - U^P (t_l, \frac{\cdot}{\sqrt{\nu}}) \big ) u_n (s) \bigg ) \dd s\\
&   - \int_{t_l}^t e^{-(t-s) \mathbb{A}_{\nu,n}(t_l)}\mathbb{P} \bigg ( u_{n,2} (s) \partial_y \big ( U^P (s, \frac{\cdot}{\sqrt{\nu}}) - U^P (t_l, \frac{\cdot}{\sqrt{\nu}}) \big ) {\bf e}_1 \bigg ) \dd s\,.
\end{split}
\end{align}
It is easy to see that
\begin{align*}
\| in \big ( U^E (s) - U^E (t_l) \big ) u_n (s) \|_{L^2(\Omega)} & \leq |n| (s-t_l) \|  \partial_t U^E \|_{L^\infty_{t,y}} \| u_n (s) \|_{L^2 (\Omega)}\,,\\
\| in \big ( U^P (s, \frac{\cdot}{\sqrt{\nu}}) - U^P (t_l, \frac{\cdot}{\sqrt{\nu}}) \big ) u_n (s) \|_{L^2 (\Omega)} &\leq |n| (s-t_l) \| \partial_t U^P \|_{L^\infty_{t,Y}} \| u_n (s) \|_{L^2 (\Omega)}\,. 
\end{align*} 
On the other hand, the interpolation inequality yields
\begin{align*}
\|  u_{n,2} (s) \partial_y \big ( U^E (s) - U^E (t_l) \big ) {\bf e}_1\|_{L^2(\Omega)} \leq C (s-t_l)^\frac12 \big ( \|\partial_t U^E \|_{L^\infty_{t,y}} + \| U^E \|_{L^\infty_t C^2_y}\big ) \| u_{n,2} (s) \|_{L^2(\Omega)}\,.
\end{align*}  
Finally, the same computation as in \eqref{proof.prop.general.low.1} using the Hardy inequality implies that 
\begin{align*}
\| u_{n,2} (s) \partial_y \big ( U^P (s, \frac{\cdot}{\sqrt{\nu}}) - U^P (t_l, \frac{\cdot}{\sqrt{\nu}}) \big ) {\bf e}_1\|_{L^2(\Omega)} \leq 2 |n| (s-t_l)\| Y \partial_t \pa_Y U^P \|_{L^\infty_{t,Y}} \| u_{n,1} (s) \|_{L^2 (\Omega)}\,.
\end{align*}

\paragraph{Case (i) $\delta_0^{-1}\leq |n|\leq \delta_0^{-1}\nu^{-\frac34}$, $\gamma\in [\frac79,1]$, and $|n|^\gamma \nu^\frac12<1$.} 
In this case we apply Theorem \ref{thm.semigroup} (i) for the semigroup $e^{-t\mathbb{A}_{\nu,n} (t_l)}$,
which gives for $\gamma\in [\frac79,1]\subset [\frac57,1]$,
\begin{align*}
\begin{split}
\| u_n (t) \|_{L^2(\Omega)} & \leq C  |n|^{\frac52(1-\gamma)}  e^{\frac{|n|^\gamma}{\delta}(t-t_l)}  \| u_n (t_l) \|_{L^2 (\Omega)}\\
& \quad + C |n|^{1+\frac52 (1-\gamma)}  \int_{t_l}^t    e^{\frac{|n|^\gamma}{\delta}(t-s)}  (s-t_l) \| u_n (s) \|_{L^2 (\Omega)} \dd s\\
& \qquad + C |n|^{\frac52(1-\gamma)} \int_{t_l}^t e^{\frac{|n|^\gamma}{\delta} (t-s)} (s-t_l)^\frac12 \| u_n (s) \|_{L^2 (\Omega)} \dd s\,.
\end{split}
\end{align*}
Since $t_{l+1}-t_l=\frac{T_0}{N}$ this estimate implies 
\begin{align*}
\begin{split}
\sup_{t_l < t\leq t_{l+1} }  \| u_n (t) \|_{L^2 (\Omega)} & \leq  C  |n|^{\frac52(1-\gamma)}  e^{\frac{|n|^\gamma}{\delta}\frac{T_0}{N}}  \| u_n (t_l) \|_{L^2 (\Omega)}\\
&\quad  + C_1  |n|^{1+\frac52 (1-\gamma)} (\frac{T_0}{N})^2 e^{\frac{|n|^\gamma}{\delta} \frac{T_0}{N}}  \sup_{t_l < t\leq t_{l+1} }  \| u_n (t) \|_{L^2 (\Omega)}\\
& \qquad + C_1 |n|^{\frac52(1-\gamma)} (\frac{T_0}{N})^\frac32 e^{\frac{|n|^\gamma}{\delta} \frac{T_0}{N}}  \sup_{t_l < t\leq t_{l+1} }  \| u_n (t) \|_{L^2 (\Omega)} \,.
\end{split}
\end{align*}
Let us take 
\begin{align}
N=\max \{ \tilde \delta^{-1} |n|^\gamma T_0\,, 1\} \,, \qquad \gamma\in [\frac79,1]\,,\label{proof.thm.evolution.2}
\end{align}
where $\tilde \delta\in (0,\delta]$ is a small number  depending only on $C_1$ chosen so that, for $\gamma\in [\frac79,1]$, 
\begin{align*}
C_1 |n|^{1+\frac52 (1-\gamma)} (\frac{T_0}{N})^2 e^{\frac{|n|^\gamma}{\delta} \frac{T_0}{N}} 
+ C_1 |n|^{\frac52(1-\gamma)} (\frac{T_0}{N})^\frac32 e^{\frac{|n|^\gamma}{\delta} \frac{T_0}{N}} 
&\leq C_1  \tilde \delta^2  |n|^{\frac72-\frac92\gamma} e^{\frac{\tilde \delta}{\delta}} + C_1 \tilde \delta^\frac32 |n|^{\frac52-4\gamma}  e^{\frac{\tilde \delta}{\delta}} \\
& \leq\frac12\,.
\end{align*}
Thus we have 
\begin{align*}
\sup_{t_l < t\leq t_{l+1} }  \| u_n (t) \|_{L^2 (\Omega)} \leq C |n|^{\frac52(1-\gamma)}  e^{\frac{|n|^\gamma}{\delta}\frac{T_0}{N}}  \| u_n (t_l) \|_{L^2 (\Omega)}\,,
\end{align*}
and in particular,
\begin{align}
\| u_n (t_{l+1}) \|_{L^2 (\Omega)} \leq C_2 |n|^{\frac52(1-\gamma)}  e^{\frac{|n|^\gamma}{\delta}\frac{T_0}{N}}  \| u_n (t_l) \|_{L^2 (\Omega)}\,.\label{proof.thm.evolution.3}
\end{align}
Here $C_2$ is independent of $n$, $N$, and $T_0$. 
If $N=1$ then we obtain the desired estimate for $T_0\leq \tilde \delta_0 |n|^{-\gamma}$.
If $N\geq 2$ then the iteration leads to the estimate
\begin{align}
\| u_n (T_0)\|_{L^2 (\Omega)} = \| u_n (t_N) \|_{L^2(\Omega)} & \leq \bigg ( C_2  |n|^{\frac52(1-\gamma)}  e^{\frac{|n|^\gamma}{\delta}\frac{T_0}{N}} \bigg )^N \| u_n (t_0) \|_{L^2 (\Omega)}\nonumber \\
& = \big( C_2 |n|^{\frac52(1-\gamma)} \big )^{\frac{|n|^\gamma T_0}{\tilde \delta}} e^{\frac{|n|^\gamma}{\delta} T_0} \| f\|_{L^2 (\Omega)}\nonumber \\
& \leq Ce^{K|n|^{\gamma} \big (1+ (1-\gamma)\log |n| \big ) T_0} \| f\|_{L^2 (\Omega)}\,,\label{proof.thm.evolution.4}
\end{align}
for some $K>0$ independent of $n$, $\gamma$, and $T_0$. Hence, \eqref{est.thm.evolution.1} holds.
Next we show the derivative estimate. 
The above iteration argument does not work well due to the appearance of the singularity in a short time.
To overcome this difficulty, recalling \eqref{def.A_t}, we write $u_n(t)=\mathbb{T}_{\nu,n} (t,0) f$ as, instead of \eqref{proof.thm.evolution.1},
\begin{align}\label{proof.thm.evolution.5}
\begin{split}
u_n (t) & = e^{-\nu t \mathbb{A}_n} f \\
&- \int_0^t e^{-\nu (t-s) \mathbb{A}_n} \mathbb{P} \bigg ( i n \big ( U^E (s) + U^{BL} (s,\frac{\cdot}{\sqrt{\nu}}) \big ) u_n (s) + u_{n,2} \partial_y \big ( U^E (s) + U^{BL} (s, \frac{\cdot}{\sqrt{\nu}}) \big ) {\bf e}_1 \bigg ) \dd s
\end{split}
\end{align}
Here $\mathbb{A}_n$ is the Stokes operator $\mathbb{A}=-\mathbb{P}\Delta$ restricted on the invariant space $\mathcal{P}_n L^2_\sigma (\Omega)$. It is well known that 
\begin{align*}
\| \nabla e^{-\nu t\mathbb{A}} f\|_{L^2 (\Omega)} \leq \frac{C}{\nu^\frac12 t^\frac12} \| f\|_{L^2 (\Omega)}\,, \qquad t>0\,.
\end{align*}
Then we have by using the estimate as in \eqref{proof.prop.general.low.1} and by applying \eqref{est.thm.evolution.1},
\begin{align}
\| \nabla u_n (T_0) \|_{L^2(\Omega)} & \leq \frac{C}{\nu^\frac12 T_0^\frac12} \| f\|_{L^2 (\Omega)} + \frac{C|n|}{\nu^\frac12} \int_0^{T_0} \frac{\| u_n (s) \|_{L^2 (\Omega)}}{(T_0-s)^{\frac12}} \dd s\nonumber \\
& \leq  \frac{C}{\nu^\frac12 T_0^\frac12} \| f\|_{L^2 (\Omega)} + \frac{C|n|^{1+\frac52(1-\gamma)}}{\nu^\frac12} \int_0^{T_0}  \frac{e^{K|n|^\gamma \big (1+ (1-\gamma)\log |n|\big ) s}}{(T_0-s)^{\frac12}}    \| f\|_{L^2 (\Omega)} \dd s\nonumber\\
& \leq \frac{C}{\nu^\frac12 T_0^\frac12} \| f\|_{L^2 (\Omega)}  + \frac{C|n|^{\frac12 + 3 (1-\gamma)} e^{K|n|^\gamma \big (1+ (1-\gamma)\log |n|\big ) T_0}}{\nu^\frac12}  \| f\|_{L^2 (\Omega)}\,.\label{proof.thm.evolution.6}
\end{align}
This implies \eqref{est.thm.evolution.2}.

\paragraph{Case (ii) $|n|^\gamma \nu^\frac12 \geq 1$, $\gamma \in [\frac23,1]$.}
The strategy is the same as in the case (i), and we apply Theorem \ref{thm.semigroup} (ii) instead of (i) in this case, which yields from \eqref{proof.thm.evolution.1},
\begin{align*}
\begin{split}
\| u_n (t) \|_{L^2(\Omega)} & \leq C  |n|^{1-\gamma}  e^{\frac{|n|^\gamma}{\delta}(t-t_l)}  \| u_n (t_l) \|_{L^2 (\Omega)}\\
& \quad + C |n|^{1+ 1-\gamma}  \int_{t_l}^t    e^{\frac{|n|^\gamma}{\delta}(t-s)}  (s-t_l) \| u_n (s) \|_{L^2 (\Omega)} \dd s\\
& \qquad + C |n|^{1-\gamma} \int_{t_l}^t e^{\frac{|n|^\gamma}{\delta} (t-s)} (s-t_l)^\frac12 \| u_n (s) \|_{L^2 (\Omega)} \dd s\,.
\end{split}
\end{align*}
Thus we have 
\begin{align*}
\begin{split}
\sup_{t_l < t\leq t_{l+1} }  \| u_n (t) \|_{L^2 (\Omega)} & \leq  C  |n|^{1-\gamma}  e^{\frac{|n|^\gamma}{\delta}\frac{T_0}{N}}  \| u_n (t_l) \|_{L^2 (\Omega)}\\
&\quad  + C_1  |n|^{1+1-\gamma} (\frac{T_0}{N})^2 e^{\frac{|n|^\gamma}{\delta} \frac{T_0}{N}}  \sup_{t_l < t\leq t_{l+1} }  \| u_n (t) \|_{L^2 (\Omega)}\\
& \qquad + C_1 |n|^{1-\gamma} (\frac{T_0}{N})^\frac32 e^{\frac{|n|^\gamma}{\delta} \frac{T_0}{N}}  \sup_{t_l < t\leq t_{l+1} }  \| u_n (t) \|_{L^2 (\Omega)} \,.
\end{split}
\end{align*}
Setting $N$ as in \eqref{proof.thm.evolution.2}, and we see for $\gamma\in [\frac23,1]$,
\begin{align*}
C_1 |n|^{1+1-\gamma} (\frac{T_0}{N})^2 e^{\frac{|n|^\gamma}{\delta} \frac{T_0}{N}} 
+ C_1 |n|^{1-\gamma} (\frac{T_0}{N})^\frac32 e^{\frac{|n|^\gamma}{\delta} \frac{T_0}{N}} 
&\leq C_1  \tilde \delta^2  |n|^{2-3\gamma} e^{\frac{\tilde \delta}{\delta}} + C_1 \tilde \delta^\frac32 |n|^{1-\frac52\gamma}  e^{\frac{\tilde \delta}{\delta}} \\
& \leq\frac12\,,
\end{align*}
if $\tilde \delta$ is sufficiently small depending only on $C_1$. Hence we obtain 
\begin{align}
\| u_n (t_{l+1}) \|_{L^2 (\Omega)} \leq C_2 |n|^{1-\gamma}  e^{\frac{|n|^\gamma}{\delta}\frac{T_0}{N}}  \| u_n (t_l) \|_{L^2 (\Omega)}\,.\label{proof.thm.evolution.7}
\end{align}
Then \eqref{est.thm.evolution.3} and \eqref{est.thm.evolution.4} follow from \eqref{proof.thm.evolution.7} by the same argument as in the case (i).
The details are omitted here. The proof is complete.
\end{proofx}

\section{Nonlinear stability in Gevrey class}\label{sec.nonlinear}

In this section we consider the full nonlinear problem \eqref{eq.perturb.intro}.
By the Duhamel formula the associated integral equations are given by 
\begin{align}\label{eq.ns.integral}
u(t) = \mathbb{T}_\nu (t,0) a - \int_0^t \mathbb{T}_\nu (t,s) \mathbb{P} \big ( u \cdot \nabla u \big ) \dd s\,, \qquad t>0\,.
\end{align}
For $\gamma\in (0,1]$, $d\geq 0$, and $K>0$ let us introduce the Banach space $X_{d,\gamma,K}$ as 
\begin{align}
X_{d,\gamma, K} = \{ f\in L^2_\sigma (\Omega) ~|~ \| f \|_{X_{d,\gamma,K}} = \sup_{n\in \mathbb{Z}} \, (1+|n|^d) e^{K\theta_{\gamma,n}} \| \mathcal{P}_n f \|_{L^2 (\Omega)} <\infty \}\,,
\end{align}
where $\theta_{\gamma,n}=|n|^\gamma \big ( 1+ (1-\gamma) \log (1+|n|) \big )$ as in \eqref{est.thm.evolution.0}.

\begin{thm}\label{thm.nonlinear} Assume that \eqref{concave.weak.t} holds for some $T>0$. For any $\gamma \in [\frac79,1)$, $d>\frac92-\frac72\gamma$, and $K>0$, there exist $T'\in (0,T]$ and $K'\in (0,K)$ such that the following statement holds for any sufficiently small $\nu>0$.
If $\| a\|_{X_{d,\gamma,K}} \leq \nu^{\frac12+\beta}$ with $\beta=\frac{2(1-\gamma)}{\gamma}$ then the system \eqref{eq.ns.integral} admits a unique solution $u\in C([0,T']; L^2_\sigma (\Omega))\cap L^2 (0,T'; W^{1,2}_0 (\Omega)^2)$ satisfying the estimate
\begin{align}\label{est.thm.nonlinear.1}
\sup_{0<t\leq T'} \big ( \| u (t) \|_{X_{d,\gamma,K'}} + (\nu t)^\frac14 \| u(t) \|_{L^\infty (\Omega)} +  (\nu t)^\frac12 \| \nabla u (t) \|_{L^2 (\Omega)} \big )  \leq C  \| a\|_{X_{d,\gamma,K}} \,.
\end{align}
Here $C>0$ is independent of $\nu$.
\end{thm}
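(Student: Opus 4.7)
\begin{proofx}{Theorem \ref{thm.nonlinear}}
The plan is to solve the integral equation \eqref{eq.ns.integral} by a fixed point argument in a Banach space that carries simultaneously the Gevrey regularity in $x$ and suitable time-weighted control of $L^\infty$ and $\nabla L^2$. Given $0<T'\le T$ and $K''\in (K',K)$ to be chosen, I will work in
$$
Y_{T'} \, = \, \Big\{ u\in C([0,T']; L^2_\sigma(\Omega)) \cap L^2(0,T';W^{1,2}_0(\Omega)^2) ~\Big|~ \|u\|_{Y_{T'}}<\infty \Big\}
$$
with
$$
\|u\|_{Y_{T'}} \, = \, \sup_{0<t\le T'} \Big( \|u(t)\|_{X_{d,\gamma,K'}} + (\nu t)^{1/4}\|u(t)\|_{L^\infty(\Omega)} + (\nu t)^{1/2}\|\nabla u(t)\|_{L^2(\Omega)}\Big),
$$
and prove the contraction estimates
$$
\|\mathbb{T}_\nu(\cdot,0)a\|_{Y_{T'}}\le C_{\text{lin}}\|a\|_{X_{d,\gamma,K}}, \qquad \Big\|\int_0^\cdot \mathbb{T}_\nu(\cdot,s)\mathbb{P}(u\cdot\nabla v)\dd s\Big\|_{Y_{T'}}\le \frac{C_{\text{nl}}}{\nu^{1+2\beta}}\|u\|_{Y_{T'}}\|v\|_{Y_{T'}}.
$$
Combining these, the smallness assumption $\|a\|_{X_{d,\gamma,K}}\le \nu^{1/2+\beta}$ closes the Banach fixed point theorem provided $C_{\text{nl}}C_{\text{lin}}^2 \le c_0$ for a universal $c_0$, yielding \eqref{est.thm.nonlinear.1}.

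For the linear estimate, I decompose in Fourier in $x$ via $\mathcal{P}_n$ and apply the three regimes established earlier: Proposition \ref{prop.general.low} for $|n|\le \delta_0^{-1}$, Theorem \ref{thm.evolution.middle} for $\delta_0^{-1}\le |n|\le \delta_0^{-1}\nu^{-3/4}$, and Proposition \ref{prop.general.high} for $|n|\ge \delta_0^{-1}\nu^{-3/4}$. The key point is in the middle regime: the bound \eqref{est.thm.evolution.1} produces the polynomial loss $|n|^{(5/2)(1-\gamma)}$ and the exponential growth $e^{K_0 \theta_{\gamma,n} t}$. Choosing $T'>0$ so small that $K'+K_0T'\le K-\eta$ for some $\eta>0$ independent of $n$, the factor $e^{-\eta \theta_{\gamma,n}}$ absorbs any polynomial power of $|n|$, and in particular the loss $|n|^{\frac12+3(1-\gamma)}$ appearing in \eqref{est.thm.evolution.2}. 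The short-time singularities $(t-s)^{-1/2}$ and $\nu^{-1/2}$ in the derivative estimates are exactly what one needs in order to recover the weights $(\nu t)^{1/2}\|\nabla u\|_{L^2}$ and $(\nu t)^{1/4}\|u\|_{L^\infty}$ (the latter through the interpolation $\|u\|_{L^\infty}\lesssim \|u\|_{L^2}^{1/2}\|\nabla u\|_{L^2}^{1/2}\log$-type bounds summed mode by mode against $\sum_n(1+|n|^d)^{-1}$, which converges since $d>1$ is implied by $d>\frac92-\frac72\gamma$).

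For the nonlinear estimate, I use the convolution structure $\mathcal{P}_n(u\cdot\nabla v) = \sum_{k+m=n}\mathcal{P}_m u\cdot\nabla \mathcal{P}_k v$, together with the sub-additivity $\theta_{\gamma,n}\le \theta_{\gamma,m}+\theta_{\gamma,k}+C(1-\gamma)\log(1+\min(|m|,|k|))$, which is the standard Gevrey triangle inequality for the weight \eqref{sec2.def.theta}. Estimating $\|\mathcal{P}_m u(s)\cdot\nabla \mathcal{P}_k v(s)\|_{L^2}\le \|\mathcal{P}_m u(s)\|_{L^\infty}\|\nabla\mathcal{P}_k v(s)\|_{L^2}$, splitting into $|m|\le |k|$ and $|k|\le |m|$, and inserting the weights $(\nu s)^{-1/4}$ and $(\nu s)^{-1/2}$, one gets a bilinear bound of the form
$$
(1+|n|^d)e^{K'\theta_{\gamma,n}}\|\mathcal{P}_n(u\cdot\nabla v)(s)\|_{L^2} \, \lesssim \, (\nu s)^{-3/4}\|u\|_{Y_{T'}}\|v\|_{Y_{T'}},
$$
where summability in $m$ (or $k$) is provided by the polynomial decay $(1+|m|^d)^{-1}$ since $d>\frac92-\frac72\gamma>2$. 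Applying again Theorem \ref{thm.evolution.middle} and Propositions \ref{prop.general.low}, \ref{prop.general.high} to control $\mathbb{T}_\nu(t,s)$ on each mode, the integral $\int_0^t(\nu(t-s))^{-1/2}(\nu s)^{-3/4}\dd s$ produces a factor $\nu^{-5/4}t^{-1/4}$, and the matching time weight $(\nu t)^{1/2}$ on $\nabla$ leaves $\nu^{-3/4}t^{1/4}$, which is bounded for $t\le T'$ up to a factor $\nu^{-3/4}$. Altogether the nonlinear bound carries a prefactor $\nu^{-1-2\beta}$ (with $\beta = 2(1-\gamma)/\gamma$ dictated by the loss $|n|^{\frac12+3(1-\gamma)}$ after being absorbed into the Gevrey weight), so the fixed point argument closes with $\|a\|_{X_{d,\gamma,K}}\le \nu^{1/2+\beta}$.

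The main obstacle is the last delicate bookkeeping: one must check that the polynomial loss $|n|^{\frac12+3(1-\gamma)}$ from the derivative of the middle-frequency evolution operator, combined with the factor $|k|$ coming from $\nabla\mathcal{P}_k v$ and the logarithmic correction in $\theta_{\gamma,n}$, is precisely compensated by the Gevrey gap $e^{-\eta\theta_{\gamma,n}}$ with $\eta=K-K'-K_0T'$, at the cost of the polynomial decay provided by the threshold $d>\frac92-\frac72\gamma$. Choosing $T'$ small renormalizes the loss $e^{K_0\theta_{\gamma,n}t}$ into a harmless power of $|n|$, which is what forces both the lower bound on $\gamma$ (namely $\gamma\ge \frac79$, inherited from Theorem \ref{thm.evolution.middle}) and the precise exponent $\beta = 2(1-\gamma)/\gamma$ in the size of the initial data. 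The $L^\infty$ bound is then recovered from the Gevrey-in-$x$, Sobolev-in-$y$ control together with the $\nabla L^2$ bound by standard anisotropic embedding on each Fourier mode, and continuity in time is obtained from the energy identity associated with \eqref{eq.perturb.intro}, using that $u\in L^\infty(0,T';L^2)\cap L^2(0,T';H^1)$ is a standard Leray solution which is unique in two dimensions.
\end{proofx}
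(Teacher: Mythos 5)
Your fixed‐point scheme will not close as stated, because you try to contract in a space with a \emph{fixed} Gevrey radius $K'$. In the Duhamel term $\int_0^t\mathbb{T}_{\nu,n}(t,s)\mathcal{P}_n(u\cdot\nabla u)(s)\,\mathrm{d}s$, the forcing $\mathcal{P}_n(u\cdot\nabla u)(s)$ only carries the weight $e^{-K'\theta_{\gamma,n}}$ (it is built out of $u(s)\in X_{d,\gamma,K'}$), and the evolution operator multiplies by a factor comparable to $e^{K_0\theta_{\gamma,n}(t-s)}$. To place the result back into $X_{d,\gamma,K'}$ you would therefore need $e^{K_0\theta_{\gamma,n}(t-s)}$ to be bounded against a fixed power of $|n|$, uniformly in $n$. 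This is false no matter how small $T'$ is: $e^{K_0\theta_{\gamma,n}T'}$ grows \emph{super}-polynomially in $n$ for any $T'>0$, so the claim that ``choosing $T'$ small renormalizes the loss $e^{K_0\theta_{\gamma,n}t}$ into a harmless power of $|n|$'' is not correct. The gap $K-K'-K_0T'>0$ you invoke does absorb the growth for the linear term $\mathbb{T}_{\nu}(t,0)a$, because there the source lives at radius $K$; but it does nothing for the self-fed nonlinear term, which starts and ends at the same radius $K'$.

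The paper handles exactly this derivative-loss problem by working with a \emph{shrinking} Gevrey radius $K(t)=K-2K_0t$: the contraction norm is $\sup_t\|u(t)\|_{X_{q,\gamma,K(t)}}$ (with $q=d-\frac72(1-\gamma)$), and the crucial ``Cauchy–Kowalevsky''-type estimate is
\begin{align*}
\int_0^t e^{K_0\theta_{\gamma,n}(t-s)}\,e^{-K(s)\theta_{\gamma,n}}\,s^{-\frac12}\,\mathrm{d}s\;\leq\;C\,e^{-K(t)\theta_{\gamma,n}}\min\{\theta_{\gamma,n}^{-\frac12},\,t^{\frac12}\}\,,
\end{align*}
in which the exponential growth from the evolution operator is strictly dominated by the \emph{decay} of the time-dependent weight $K(s)$, and the remaining integration in $s$ even produces a gain $\theta_{\gamma,n}^{-1/2}$ that is used to control part of the polynomial loss. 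This mechanism (and hence the choice $T'<K/(2K_0)$) is indispensable; a fixed-radius Banach fixed point cannot reproduce it. Your accounting of the small-power prefactor is also off as a consequence: you obtain $\nu^{-1-2\beta}$ in the bilinear bound, which would require an initial datum of size $\nu^{1+2\beta}$ rather than the stated $\nu^{1/2+\beta}$; the shrinking-radius estimate in the paper gives the weaker and correct prefactor $\nu^{-1/2-\beta}$ (times a constant going to zero as $T',\nu\to0$), matching the hypothesis. The remaining ingredients you sketch (Fourier decomposition into low/middle/high frequency, sub-additivity of $\theta_{\gamma,n}$, Gagliardo–Nirenberg for the bilinear estimate, embedding for the $L^\infty$ bound) are in line with the paper, but they cannot rescue the fixed-radius argument by themselves.
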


\begin{rem}{\rm (i) Since $a\in L^2_\sigma (\Omega)$ and the problem is a two-dimensional one, the unique existence of global solutions to \eqref{eq.ns.integral} in $C([0,\infty); L^2_\sigma (\Omega))\cap L^2_{loc} (0,\infty; W^{1,2}_0 (\Omega)^2)$ is classical for any $\nu>0$. The nontrivial part of Theorem \ref{thm.nonlinear} is the estimate \eqref{est.thm.nonlinear.1}, which is uniform with respect to sufficiently small $\nu>0$.

\noindent (ii) Theorem \ref{thm.nonlinear} holds also for $\gamma=1$ if $\|a\|_{X_{g,1,K}} \leq \kappa \nu^\frac12$ with sufficiently small (but independent of $\nu$) $\kappa>0$.

\noindent (iii) The requirement $\gamma\in [\frac79,1)$ comes form Theorem \ref{thm.evolution.middle}; the stability estimate for the evolution operator is obtained only for the exponent $\gamma\in [\frac79,1]$. 
In other words, once Theorem \ref{thm.evolution.middle} is obtained for $\gamma\in [\frac23,1]$ (with the same estimates as in \eqref{est.thm.evolution.1} - \eqref{est.thm.evolution.4} ) then Theorem \ref{thm.nonlinear} holds for $\gamma\in [\frac23,1)$ without any change of the statement.
}
\end{rem}

\begin{proofx}{Theorem \ref{thm.nonlinear}} Set 
\begin{align}
q=d-\frac72(1-\gamma) \in (1,d) \,, \qquad K(t) = K - 2 K_0 t\,,\label{proof.thm.nonlinear.1}
\end{align}
where $K_0>0$ is the number in Theorem \ref{thm.evolution.middle}.
We establish the a priori estimate of the solution to \eqref{eq.ns.integral} in the space 
\begin{align}\label{proof.thm.nonlinear.2}
\begin{split}
Y_{\gamma, K, T'} & = \{ f\in C([0, T']; L^2_\sigma (\Omega))~|~\\
& \qquad \|f\|_{Y_{\gamma,K,T'}} = \sup_{0<t\leq T'}  \big ( \| f (t) \|_{X_{q,\gamma, K(t)}} +  \| (\nu t)^\frac12 \nabla f (t) \|_{X_{q,\gamma, K(t)}} \big )  <\infty \}\,.
\end{split}
\end{align}  
For each $n\in \mathbb{Z}$ we have 
\begin{align}\label{proof.thm.nonlinear.3}
\mathcal{P}_n u(t) = \mathbb{T}_{\nu,n} (t,0) \mathcal{P}_n a - \int_0^t \mathbb{T}_{\nu,n} (t,s) \mathbb{P} \mathcal{P}_n \big (  u \cdot \nabla u  \big ) \dd s\,, \qquad t>0\,.
\end{align}
For $1\leq |n|\leq \delta_0^{-1}\nu^{-\frac34}$ the evolution operator $\mathbb{T}_{\nu,n} (t,s)$ is estimated as in \eqref{est.thm.evolution.1} - \eqref{est.thm.evolution.4} in virtue of Proposition \ref{prop.general.low} for $|n|\leq \delta_0^{-1}$ and Theorem \ref{thm.evolution.middle} for $\delta_0^{-1}<|n|\leq \delta_0^{-1}\nu^{-\frac34}$. The estimate for the case $n=0$ follows from Proposition \ref{prop.general.low}  with $m_1=1$.

\paragraph{Case (i) $|n|^\gamma \nu^\frac12 < 1$.}
In this case we have from \eqref{est.thm.evolution.1},
\begin{align}\label{proof.thm.nonlinear.4}
\begin{split}
\| \mathcal{P}_n u (t) \|_{L^2 (\Omega)} & \leq C (1+ |n|^{\frac52(1-\gamma)} ) e^{K_0 \theta_{\gamma,n} t} \| \mathcal{P}_n a\|_{L^2 (\Omega)} \\
&\quad  + C (1+|n|^{\frac52(1-\gamma)} ) \int_0^t e^{K_0 \theta_{\gamma,n} (t-s)}  \| \mathcal{P}_n (u \cdot \nabla u) \|_{L^2 (\Omega)} \dd s \,.
\end{split}
\end{align}
The nonlinear term is estimated as 
\begin{align*}
\| \mathcal{P}_n ( u \cdot \nabla u )  \|_{L^2 (\Omega)} & \leq \| \| \mathcal{P}_n ( u_1 \pa_x u )  \|_{L^2 (\Omega)}   + \| \mathcal{P}_n ( u_2 \pa_y u )  \|_{L^2 (\Omega)} \nonumber  \\
& \leq \| \sum_{j\in \mathbb{Z}} (e^{-i j x} \mathcal{P}_j u_1) \cdot (e^{-i (n-j) x} \pa_x \mathcal{P}_{n-j} u ) \|_{L^2_y (\R_+)} \nonumber \\
& \quad + \| \sum_{j\in \mathbb{Z}} (e^{-i j x} \mathcal{P}_j u_2) \cdot (e^{-i (n-j) x} \pa_y \mathcal{P}_{n-j} u ) \|_{L^2_y (\R_+)}\,.
\end{align*}
From the Gagliardo-Nirenberg inequality we have 
\begin{align*}
& \| \sum_{j\in \mathbb{Z}} (e^{-i j x} \mathcal{P}_j u_1) \cdot (e^{-i (n-j) x} \pa_x \mathcal{P}_{n-j} u ) \|_{L^2_y (\R_+)}\\
& \leq \sum_{j\in \mathbb{Z}} \| e^{-i j x} \mathcal{P}_j u_1 \|_{L^\infty_y (\R_+)} \| e^{-i (n-j) x} \mathcal{P}_{n-j} \pa_x u\|_{L^2_y (\R_+)}\\
& \leq C \sum_{j\in \mathbb{Z}} \| e^{-ijx} \mathcal{P}_j u_1 \|_{L^2(\R_+)}^\frac12 \| \partial_y e^{-ijx} \mathcal{P}_j u_1 \|_{L^2 (\R_+)}^\frac12 |n-j| ^\frac12 \| \mathcal{P}_{n-j} u \|_{L^2 (\Omega)}^\frac12 \| \mathcal{P}_{n-j} \pa_x u \|_{L^2 (\Omega)}^\frac12  \nonumber \\
& \leq  C \sum_{j\in \mathbb{Z}} \| \mathcal{P}_j u _1\|_{L^2(\Omega)}^\frac12 \| \nabla \mathcal{P}_j u_1 \|_{L^2(\Omega)}^\frac12  |n-j|^\frac12 \| \mathcal{P}_{n-j} u \|_{L^2 (\Omega)}^\frac12 \| \nabla \mathcal{P}_{n-j} u \|_{L^2 (\Omega)}^\frac12\,.
\end{align*}
On the other hand, the divergence free condition implies 
\begin{align*}
& \| \sum_{j\in \mathbb{Z}} (e^{-i j x} \mathcal{P}_j u_2) \cdot (e^{-i (n-j) x} \pa_y \mathcal{P}_{n-j} u ) \|_{L^2_y (\R_+)}\\
& \leq \sum_{j\in \mathbb{Z}} \| e^{-i j x} \mathcal{P}_j u_2 \|_{L^\infty_y (\R_+)} \| e^{-i (n-j) x} \mathcal{P}_{n-j} \pa_y u\|_{L^2_y (\R_+)}\\
& \leq C \sum_{j\in \mathbb{Z}} \| e^{-ijx} \mathcal{P}_j u_2 \|_{L^2(\R_+)}^\frac12 \| \partial_y e^{-ijx} \mathcal{P}_j u_2 \|_{L^2 (\R_+)}^\frac12  \| \mathcal{P}_{n-j} \pa_y u \|_{L^2 (\Omega)}  \nonumber \\
& \leq  C \sum_{j\in \mathbb{Z}} \| \mathcal{P}_j u _1\|_{L^2(\Omega)}^\frac12 \|\mathcal{P}_j \pa_x u_1 \|_{L^2(\Omega)}^\frac12  \| \nabla \mathcal{P}_{n-j} u \|_{L^2 (\Omega)}\\
& \leq C \sum_{j\in \mathbb{Z}} |j|^\frac12 \| \mathcal{P}_j u _1\|_{L^2(\Omega)} \| \nabla \mathcal{P}_{n-j} u \|_{L^2 (\Omega)}\,.
\end{align*}
Thus, for $u\in Y_{\gamma,K,T'}$ we have 
\begin{align*}
& \| \mathcal{P}_n ( u \cdot \nabla u )  (s) \|_{L^2 (\Omega)} \\
& \leq \frac{C}{(\nu s)^\frac12} \sum_{j\in \mathbb{Z}} \big ( \frac{1}{(1+|j|^q)(1+|n-j|^{q-\frac12})}  + \frac{1}{(1+|j|^{q-\frac12})(1+|n-j|^q)}  \big ) \\
& \qquad \qquad \qquad \times e^{-K(s) \theta_{\gamma,j}-K(s) \theta_{\gamma,n-j}} \| u\|_{Y_{\gamma,K,T'}}^2  
\end{align*}
Since the function $h(\tau) = \tau^\gamma \big ( 1+ (1-\gamma) \log (1+\tau)\big)$, $\gamma\in (0,1)$, is monotone increasing and concave for $\tau>0$ we have $\theta_{\gamma,j} + \theta_{\gamma,n-j}\geq \theta_{\gamma,n}$. Then we finally obtain  
\begin{align}
 \| \mathcal{P}_n ( u \cdot \nabla u )  (s) \|_{L^2 (\Omega)} & \leq \frac{C}{(\nu s)^\frac12} \frac{e^{-K(s) \theta_{\gamma,n}}}{1+|n|^{q-\frac12}}   \| u\|_{Y_{\gamma,K,T'}}^2\,.\label{proof.thm.nonlinear.5}
\end{align}
Here we have also used the condition $q>1$. Note that \eqref{proof.thm.nonlinear.5} itself is valid for all $n\in \mathbb{Z}$. By the definition of $K(t)$ in \eqref{proof.thm.nonlinear.1}, we have for $|n|\geq 1$,
\begin{align}
\int_0^t  e^{K_0 \theta_{\gamma,n} (t-s)} e^{-K(s)\theta_{\gamma,n}} s^{-\frac12} \dd s 
& \, = \, e^{-(K -K_0 t) \theta_{\gamma,n}} \int_0^t e^{K_0 \theta_{\gamma,n} s}  s^{-\frac12} \dd s \nonumber \\
& \leq C e^{-(K-2K_0t)\theta_{\gamma,n}} \min\{ \frac{1}{(K_0 \theta_{\gamma,n})^\frac12}, t^\frac12 \} \,,\label{proof.thm.nonlinear.6}
\end{align}
Therefore, from \eqref{proof.thm.nonlinear.4}, \eqref{proof.thm.nonlinear.5}, and \eqref{proof.thm.nonlinear.6}, we obtain  
\begin{align*}
\| \mathcal{P}_n u (t) \|_{L^2 (\Omega)} & \leq C \frac{e^{-(K-K_0 t) \theta_{\gamma,n}}}{1+|n|^{d-\frac52(1-\gamma)}} \| a \|_{X_{d,\gamma, K}} \\
& \quad + \frac{C (1+ |n|)^{\frac12 + \frac52 (1-\gamma)} e^{-K(t) \theta_{\gamma,n}}}{\nu^\frac12 (1+|n|^q)} \min\{ \frac{1}{(K_0\theta_{\gamma,n})^\frac12}, t^\frac12\} \| u \|_{Y_{\gamma,K,T'}}^2\,,
\end{align*}
and hence, if $T' < \frac{K}{2K_0}$ then for $\beta=\frac{2(1-\gamma)}{\gamma}$,
\begin{align}\label{proof.thm.nonlinear.8}
& \sup_{0<t\leq T'} \sup_{|n|^\gamma \nu^\frac12 < 1} (1+|n|^q) e^{K(t)\theta_{\gamma,n}} \| \mathcal{P}_n u(t) \|_{L^2 (\Omega)} \nonumber \\
&  \qquad \leq C \bigg (  \| a\|_{X_{d,\gamma,K}} +   \nu^{-\frac12} \| u \|_{Y_{\gamma,K,T'}}^2 \sup_{|n|^\gamma \nu^\frac12 <1} (1+|n|)^{\frac12 + \frac52(1-\gamma)} \min\{ \theta_{\gamma,n}^{-\frac12}, {T'}^\frac12\} \bigg ) \nonumber \\
& \qquad \leq C \bigg (  \| a\|_{X_{d,\gamma,K}} +   \nu^{-\frac12-\beta} \| u \|_{Y_{\gamma,K,T'}}^2 \sup_{|n|^\gamma \nu^\frac12 <1} (1+|n|)^{\frac12 + \frac52(1-\gamma)-2\beta\gamma} \min\{ \theta_{\gamma,n}^{-\frac12}, {T'}^\frac12\} \bigg ) \nonumber \\
& \qquad \leq C \bigg (  \| a\|_{X_{d,\gamma,K}} +   \nu^{-\frac12-\beta} \| u \|_{Y_{\gamma,K,T'}}^2 \sup_{n\in \mathbb{Z}} \, (1+|n|)^{\frac12 - \frac32(1-\gamma)} \min\{ \theta_{\gamma,n}^{-\frac12}, {T'}^\frac12\} \bigg ) \,.
\end{align}
Since $\theta_{\gamma,n}=|n|^\gamma \big (1+ (1-\gamma) \log (1+|n|)\big )$ and $\gamma<1$ we see 
\begin{align}\label{proof.thm.nonlinear.8'}
\lim_{T'\rightarrow 0}  \sup_{n\in \mathbb{Z}} \, (1+|n|)^{\frac12 - \frac32(1-\gamma)} \min\{ \theta_{\gamma,n}^{-\frac12}, {T'}^\frac12\} \, = \, 0 \,.
\end{align}
The derivative estimate is obtained similarly for $|n|^\gamma \nu^\frac12 < 1$. 
Indeed, from Proposition \ref{prop.general.low} for $|n|\leq \delta_0^{-1}$ and Theorem \ref{thm.evolution.middle} for $\delta_0^{-1}<|n| < \nu^{-\frac{1}{2\gamma}}$, we have, instead of \eqref{proof.thm.nonlinear.4},
\begin{align}\label{proof.thm.nonlinear.9}
\begin{split}
\| \nabla \mathcal{P}_n u (t) \|_{L^2 (\Omega)} & \leq \frac{C}{\nu^\frac12} \big ( \frac{1}{t^\frac12} + |n|^{\frac12+3(1-\gamma)}  e^{K_0 \theta_{\gamma,n} t} \big ) \| \mathcal{P}_n a\|_{L^2 (\Omega)} \\
& \quad + \frac{C}{\nu^\frac12} \int_0^t \big ( \frac{1}{(t-s)^\frac12} + |n|^{\frac12+3(1-\gamma)}  e^{K_0 \theta_{\gamma,n} (t-s)} \big )  \| \mathcal{P}_n (u \cdot \nabla u) \|_{L^2 (\Omega)} \dd s\,.
\end{split}
\end{align}
Noe that the inequality $e^{K_0 \theta_{\gamma,n} t} \leq C (K_0 \theta_{\gamma,n} t)^{-\frac12}e^{2 K_0|n|^\gamma t}$ holds. Then the first term of the right-hand side of \eqref{proof.thm.nonlinear.9} is bounded from above by 
\begin{align*}
\frac{C}{(\nu t)^\frac12} \big (1 + |n|^{\frac72(1-\gamma)}  e^{2 K_0 \theta_{\gamma,n} t} \big ) \| \mathcal{P}_n a\|_{L^2 (\Omega)} \,.
\end{align*}
On the other hand, by using \eqref{proof.thm.nonlinear.5} the second term of the right-hand side of \eqref{proof.thm.nonlinear.9} is estimated as 
\begin{align*}
& \frac{C(1+|n|^\frac12)}{\nu (1+|n|^q)} \int_0^t \big ( \frac{1}{(t-s)^\frac12} + |n|^{\frac12 + 3 (1-\gamma)}  e^{K_0 \theta_{\gamma,n} (t-s)} \big ) \frac{e^{- K(s)\theta_{\gamma,n}}}{s^\frac12}  \dd s \, \| u \|_{Y_{\gamma,K,T'}}^2 \\
& \, = :\,  I_{\nu,n} (t) \| u \|_{Y_{\gamma,K,T'}}^2\,.
\end{align*}
Here we have to be careful about the derivative loss in $I_{\nu,n} (t)$ for large $n$.
We observe that, for $l\in (-1,1]$,
\begin{align}\label{proof.thm.nonlinear.24}
\int_0^t (t-s)^{l} e^{K_0 \theta_{\gamma,n} (t-s) -K(s)\theta_{\gamma,n}} s^{-\frac12} \dd s & \leq Ce^{-K(t) \theta_{\gamma,n}} \min\{ \frac{1}{\theta_{\gamma,n}^{1+l} t^\frac12}\,, ~t^{\frac{1+l}{2}} \}\,.
\end{align}
Then the term $I_{\nu,n}(t)$ is estimated as 
\begin{align}\label{proof.thm.nonlinear.11}
\begin{split}
I_{\nu,n} (t) & \leq \frac{C (1+|n|^\frac12) e^{-K(t) \theta_{\gamma,n}}}{\nu (1+|n^q)} \min\{ \frac{1}{(\theta_{\gamma,n}t)^\frac12}\,, ~ t^{\frac14}\} \\
& \quad + \frac{C(1+|n|)^{1+3(1-\gamma)} e^{-K (t) \theta_{\gamma,n}}}{\nu (1+|n|^q)} \min\{\frac{1}{ \theta_{\gamma,n} t^\frac12}\,, ~t^\frac12 \} \,.
\end{split}
\end{align} 
Thus we obtain for $T' < \frac{K}{2K_0}$,
\begin{align*}
& (\nu t )^\frac12\| \nabla \mathcal{P}_n u (t) \|_{L^2 (\Omega)} \\
& \leq \frac{Ce^{-K(t) \theta_{\gamma,n} \gamma} }{(1+|n|)^{d-\frac72(1-\gamma)}}  \| a\|_{X_{d,\gamma,K_0}}   + \frac{C (1+|n|)^\frac12 e^{-K(t)\theta_{\gamma,n}}}{\nu^\frac12 (1+|n|^q)} \min\{ \theta_{\gamma,n}^{-\frac12} \,, ~ t^\frac34 \} \| u\|_{Y_{\gamma,K,T'}}^2 \nonumber \\
& \quad + \frac{C (1+|n|)^{1+3(1-\gamma)} e^{-K (t)\theta_{\gamma,n}}}{\nu^\frac12 (1+|n|^q)} \min \{ \theta_{\gamma,n}^{-1}\,, ~ t \} \, \| u\|_{Y_{\gamma,K,T'}}^2\,.
\end{align*}
Recall that  $q= d-\frac72 (1-\gamma)$ and $\beta = \frac{2(1-\gamma)}{\gamma}$.
Then we have 
\begin{align}\label{proof.thm.nonlinear.13}
& \sup_{0<t\leq T'} \sup_{|n|^\gamma \nu^\frac12 <1} (1+|n|^q) e^{K(t)|n|^\gamma} (\nu t )^\frac12\| \nabla \mathcal{P}_n u (t) \|_{L^2 (\Omega)} \nonumber \\
& \leq  C\bigg (  \| a\|_{X_{d,\gamma,K}}   + \nu^{-\frac12} \| u\|_{Y_{\gamma,K,T'}}^2 \sup_{|n|^\gamma\nu^\frac12<1} (1+|n|)^\frac12 \min\{ \theta_{\gamma,n}^{-\frac12}\,, ~{T'}^\frac34\} \nonumber \\
& \quad + \nu^{-\frac12} \| u\|_{Y_{\gamma,K,T'}}^2  \sup_{|n|^\gamma\nu^\frac12<1} \, (1+|n|)^{1+3 (1-\gamma)}\min \{ \theta_{\gamma,n}^{-1}\,, ~ {T'} \} \big ) \bigg ) \nonumber \\
& \leq  C\bigg (  \| a\|_{X_{d,\gamma,K}}   +  \nu^{-\frac12-\beta} \| u \|_{Y_{\gamma,K,T'}}^2 \sup_{|n|^\gamma\nu^\frac12<1} (1+|n|)^{\frac12-2\beta\gamma} \min\{ \theta_{\gamma,n}^{-\frac12}\,, ~{T'}^\frac34\} \nonumber \\
& \quad + \nu^{-\frac12-\beta} \| u\|_{Y_{\gamma,K,T'}}^2  \sup_{|n|^\gamma\nu^\frac12<1} \, (1+|n|)^{1+3 (1-\gamma)-2\beta \gamma}\min \{ \theta_{\gamma,n}^{-1}\,, ~ {T'} \}  \bigg ) \nonumber \\
\begin{split}
& \leq  C\bigg (  \| a\|_{X_{d,\gamma,K}}   +  \nu^{-\frac12-\beta} \| u \|_{Y_{\gamma,K,T'}}^2 \sup_{n\in \mathbb{Z}} (1+|n|)^{\frac12-4(1-\gamma)} \min\{ \theta_{\gamma,n}^{-\frac12}\,, ~{T'}^\frac34\} \\
& \quad + \nu^{-\frac12-\beta} \| u\|_{Y_{\gamma,K,T'}}^2  \sup_{n\in \mathbb{Z}} \, (1+|n|)^{1 - (1-\gamma)}\min \{ \theta_{\gamma,n}^{-1}\,, ~ {T'} \}  \bigg ) \,.
\end{split}
\end{align}
Note that, since $\theta_{\gamma,n} = |n|^\gamma \big ( 1+ (1-\gamma) \log (1+|n|)\big)$ and $\gamma <1$, we see that 
\begin{align}\label{proof.thm.nonlinear.14}
\begin{split}
\lim_{T'\rightarrow 0}  \sup_{n\in \mathbb{Z}} (1+|n|)^{\frac12-4(1-\gamma)} \min\{ \theta_{\gamma,n}^{-\frac12}\,, ~{T'}^\frac12\} & \, = \, 0\,,\\ 
\lim_{T'\rightarrow 0} \sup_{n\in \mathbb{Z}} \, (1+|n|)^{1 - (1-\gamma)}\min \{ \theta_{\gamma,n}^{-1}\,, ~ {T'} \}  & \,  =\, 0\,.
\end{split}
\end{align}

\paragraph{Case (ii) $|n|^\gamma \nu^\frac12 \geq 1$ and $|n|\leq \delta_0^{-1} \nu^{-\frac34}$.}
The argument is the same as in the case (i), we simply apply the result of (ii) in Theorem \ref{thm.evolution.middle} in this case. From \eqref{proof.thm.nonlinear.3} combined with \eqref{est.thm.evolution.3} and \eqref{proof.thm.nonlinear.5} we have 
\begin{align}\label{proof.thm.nonlinear.15}
\| \mathcal{P}_n u(t) \|_{L^2 (\Omega)} & \leq C |n|^{1-\gamma} e^{K_0 \theta_{\gamma,n} t} \| \mathcal{P}_n a \|_{L^2 (\Omega)} \nonumber \\
& \quad + \frac{C(1+|n|)^\frac12}{\nu^\frac12 (1+|n|^q)}  \int_0^t |n|^{1-\gamma} e^{K_0 \theta_{\gamma,n} (t-s)} e^{- K(s)\theta_{\gamma,n}} s^{-\frac12}\dd s \, \| u \|_{Y_{\gamma,K,T'}}^2 \nonumber \\
\begin{split}
& \leq \frac{C |n|^{1-\gamma} e^{-K(t) \theta_{\gamma,n}}}{(1+|n|^d)} \| \mathcal{P}_n a \|_{X_{d,\gamma,K}} \\
& \quad + \frac{C|n|^{\frac12+1-\gamma}}{\nu^\frac12 (1+|n|^q)}  \int_0^t  e^{K_0 \theta_{\gamma,n} (t-s)} e^{- K(s)\theta_{\gamma,n}} s^{-\frac12}\dd s \, \| u \|_{Y_{\gamma,K,T'}}^2\,.
\end{split}
\end{align}
Using \eqref{proof.thm.nonlinear.6} and \eqref{proof.thm.nonlinear.6}, we obtain from the similar calculation as in \eqref{proof.thm.nonlinear.8},
\begin{align}\label{proof.thm.nonlinear.16}
& \sup_{0<t\leq T'} \sup_{\nu^{-\frac{1}{2\gamma}} \leq |n| \leq \delta_0^{-1}\nu^{-\frac34}} (1+|n|^q) e^{K(t)\theta_{\gamma,n}} \| \mathcal{P}_n u(t) \|_{L^2 (\Omega)} \nonumber \\
&  \qquad \leq C \bigg (  \| a\|_{X_{d,\gamma,K}} +   \nu^{-\frac12} \| u \|_{Y_{\gamma,K,T'}}^2 \sup_{\nu^{-\frac{1}{2\gamma}} \leq |n| \leq \delta_0^{-1}\nu^{-\frac34}} (1+|n|)^{\frac12 + 1-\gamma} \min\{ \theta_{\gamma,n}^{-\frac12}, {T'}^\frac12\} \bigg ) \nonumber \\
&  \qquad \leq C \bigg (  \| a\|_{X_{d,\gamma,K}} +   \nu^{-\frac12} \| u \|_{Y_{\gamma,K,T'}}^2 \sup_{\nu^{-\frac{1}{2\gamma}} \leq |n| \leq \delta_0^{-1}\nu^{-\frac34}} (1+|n|)^{\frac32(1-\gamma)} \bigg ) \nonumber \\
& \qquad \leq C \bigg (  \| a\|_{X_{d,\gamma,K}} +   \nu^{-\frac12-\frac98(1-\gamma)} \| u \|_{Y_{\gamma,K,T'}}^2 \bigg ) \,.
\end{align}
In the first line we have used the fact $d-q-1+\gamma\geq 0$ by the choice of $q$.
As for the derivative estimate, we have from \eqref{est.thm.evolution.4},
\begin{align}\label{proof.thm.nonlinear.17}
\begin{split}
& \| \nabla \mathcal{P}_n u(t) \|_{L^2 (\Omega)} \\
& \leq \frac{C}{\nu^\frac12} \big ( \frac{1}{t^\frac12} +  |n|^{\frac12+\frac32(1-\gamma)} e^{K_0 \theta_{\gamma,n} (t-s)} \big )  \| \mathcal{P}_n a \|_{L^2 (\Omega)} \\
& \quad + \frac{C(1+|n|)^\frac12}{\nu (1+|n|^q)}  \int_0^t \big ( \frac{1}{(t-s)^\frac12} +  |n|^{\frac12 + \frac32 (1-\gamma)}  e^{K_0 \theta_{\gamma,n} (t-s)} \big ) e^{- K(s) \theta_{\gamma,n}} s^{-\frac12}\dd s \, \| u \|_{Y_{\gamma,K,T'}}^2\,.
\end{split}
\end{align}
Therefore, the similar computation as in  \eqref{proof.thm.nonlinear.16} using \eqref{proof.thm.nonlinear.24} yields
\begin{align}\label{proof.thm.nonlinear.18}
& \sup_{0<t\leq T'} \sup_{|n|^\gamma \nu^\frac12 <1} (1+|n|^q) e^{K(t)|n|^\gamma} (\nu t )^\frac12\| \nabla \mathcal{P}_n u (t) \|_{L^2 (\Omega)} \nonumber \\
& \leq  C\bigg (  \| a\|_{X_{d,\gamma,K}}   + \nu^{^\frac12} \| u\|_{Y_{\gamma,K,T'}}^2 \sup_{\nu^{-\frac{1}{2\gamma}}\leq |n| \leq \delta_0^{-1} \nu^{-\frac34}} (1+|n|)^\frac12 \min\{ \theta_{\gamma,n}^{-\frac12}\,, ~{T'}^\frac12\} \nonumber \\
& \quad + \nu^{-\frac12} \| u\|_{Y_{\gamma,K,T'}}^2  \sup_{\nu^{-\frac{1}{2\gamma}}\leq |n| \leq \delta_0^{-1} \nu^{-\frac34}} \, (1+|n|)^{1+\frac32 (1-\gamma)}\min \{ \theta_{\gamma,n}^{-1}\,, ~ {T'} \} \big ) \bigg ) \nonumber \\
\begin{split}
& \leq  C\bigg (  \| a\|_{X_{d,\gamma,K}}   +  \nu^{-\frac12-\frac{15}{8} (1-\gamma)} \| u \|_{Y_{\gamma,K,T'}}^2  \bigg ) \,.
\end{split}
\end{align}
Note that $\frac{15}{8} (1-\gamma)<\beta=\frac{2}{\gamma}(1-\gamma)$ for $\gamma<1$. 

\paragraph{Case (iii) $|n|>\delta_0^{-1}\nu^{-\frac34}$.}
In this case we apply Theorem \ref{prop.general.high}. 
For the estimate of $\| \mathcal{P}_n u(t) \|_{L^2(\Omega)}$
we have from \eqref{proof.thm.nonlinear.3} combined with \eqref{est.prop.general.high.1} and\eqref{proof.thm.nonlinear.5}, 
\begin{align}\label{proof.thm.nonlinear.19}
\| \mathcal{P}_n u(t) \|_{L^2 (\Omega)} & \leq C \| \mathcal{P}_n a \|_{L^2 (\Omega)} + \frac{C(1+|n|)^\frac12}{\nu^\frac12 (1+|n|^q)}  \int_0^t e^{-\frac14\nu |n|^2 (t-s)} e^{- K(s)\theta_{\gamma,n}} s^{-\frac12}\dd s \, \| u \|_{Y_{\gamma,K,T'}}^2\,.
\end{align}
Then we see 
\begin{align}\label{proof.thm.nonlinear.20}
I = (1+|n|)^\frac12 \int_0^t e^{-\frac14\nu |n|^2 (t-s)} e^{- K(s)\theta_{\gamma,n}} s^{-\frac12}\dd s\leq \frac{C e^{-K(t)\theta_{\gamma,n}}}{\nu^{\frac12 (1-\gamma)}}
\end{align}
for $|n|>\delta_0^{-1}\nu^{-\frac34}$. Indeed, when $\delta_0^{-1}\nu^{-\frac34}\leq |n|\leq \nu^{-1}$ we have 
\begin{align*}
I \leq C|n|^\frac12 \int_0^t e^{-K(s) \theta_{\gamma,n}} s^{-\frac12} \dd s \leq C |n|^\frac12 e^{-K(t)\theta_{\gamma,n}} \theta_{\gamma,n}^{-\frac12}\leq C \nu^{-\frac12 (1-\gamma)} e^{-K(t)\theta_{\gamma,n}}\,,
\end{align*}
while when $|n|\geq \nu^{-1}$ we use the factor $e^{-\frac14\nu n^2 (t-s)}$ in the integral of $I$, and then
\begin{align*}
I \leq C \int_0^t \frac{1}{\nu^\frac14 (t-s)^\frac14} e^{-K(s) \theta_{\gamma,n}} s^{-\frac12} \dd s \leq \frac{C e^{-K(t) \theta_{\gamma,n}}}{\nu^{\frac14} \theta_{\gamma,n}^\frac14} \leq C e^{-K(t)\theta_{\gamma,n}} \nu^{-\frac14 (1-\gamma)}\,,
\end{align*} 
which proves \eqref{proof.thm.nonlinear.20}. Then \eqref{proof.thm.nonlinear.19} and \eqref{proof.thm.nonlinear.20} give
\begin{align}\label{proof.thm.nonlinear.21}
\sup_{0<t\leq T'} \sup_{|n|\geq \delta_0^{-1}\nu^{-\frac34}} (1+|n|^q) e^{K(t) \theta_{\gamma,n}} \| \mathcal{P}_n u(t) \|_{L^2(\Omega)} \leq C \bigg ( \| a\|_{X_{d,\gamma,K}} + \nu^{-\frac12-\frac12 (1-\gamma)} \| u\|_{Y_{\gamma, K,T'}}^2\bigg ) \,.
\end{align}
The derivative of $\mathcal{P}_n u$ is estimated by using \eqref{est.prop.general.high.2}, and we obtain
\begin{align}\label{proof.thm.nonlinear.22}
\begin{split}
& \| \nabla \mathcal{P}_n u(t) \|_{L^2 (\Omega)} \\
& \leq \frac{C}{(\nu t)^\frac12} \big ( 1 +  |n|t \big ) e^{-\frac14 \nu n^2 t} \| \mathcal{P}_n a \|_{L^2 (\Omega)} \\
& \quad + \frac{C(1+|n|)^\frac12}{\nu (1+|n|^q)}  \int_0^t \big ( \frac{1}{(t-s)^\frac12} +  |n|(t-s)^\frac12 \big ) e^{-\frac14\nu |n|^2 (t-s)} e^{- K(s) \theta_{\gamma,n}} s^{-\frac12}\dd s \, \| u \|_{Y_{\gamma,K,T'}}^2\,.
\end{split}
\end{align}
The last integral has to be computed carefully as in \eqref{proof.thm.nonlinear.20}.
Our aim is to show 
\begin{align}
II & \, := \, (1+|n|)^\frac12 \int_0^t \big ( \frac{1}{(t-s)^\frac12} +  |n|(t-s)^\frac12 \big ) e^{-\frac14\nu |n|^2 (t-s) - K(s) \theta_{\gamma,n}} s^{-\frac12}\dd s \nonumber \\
& \leq \frac{Ce^{-K(t)\theta_{\gamma,n}}}{\nu^{\frac32(1-\gamma)} t^\frac12} \,,\label{proof.thm.nonlinear.23}
\end{align}
for $|n|\geq \delta_0^{-1}\nu^{-\frac34}$. We first consider the case $\delta_0^{-1}\nu^{-\frac34}\leq |n|\leq \nu^{-1}$. In this case we have from \eqref{proof.thm.nonlinear.24} and  $\theta_{\gamma,n}=|n|^\gamma \big (1+ (1-\gamma)\log (1+|n|) \big )$, 
\begin{align}\label{proof.thm.nonlinear.25}
II \leq \frac{C e^{-K(t)\theta_{\gamma,n}} }{t^\frac12} \big ( |n|^{\frac{1-\gamma}{2}} + |n|^{\frac32 (1-\gamma)} \big ) \leq \frac{Ce^{-K(t) \theta_{\gamma,n}} }{\nu^{\frac32 (1-\gamma)} t^\frac12} \,.
\end{align}
Next we consider the case $|n|\geq \nu^{-1}$. In this case we use the factor $e^{-\frac14\nu n^2 (t-s)}$ in the integral of $II$, and then 
\begin{align*}
II \leq C\int_0^t \big ( \frac{1}{\nu^{\frac14} (t-s)^{\frac34}} + \frac{1}{\nu^\frac34 (t-s)^\frac14} \big )  e^{-K(s)\theta_{\gamma,n}} s^{-\frac12} \dd s,
\end{align*}
which is bounded from above by, again from \eqref{proof.thm.nonlinear.24} and $|n|\geq \nu^{-1}$, 
\begin{align}\label{proof.thm.nonlinear.26}
C \big ( \frac{1}{\nu^\frac14 \theta_{\gamma,n}^\frac14} + \frac{1}{\nu^\frac34 \theta_{\gamma,n}^\frac34} \big ) t^{-\frac12} e^{-K(t) \theta_{\gamma,n}} \leq \frac{C e^{-K(t)\theta_{\gamma,n}}}{\nu^{\frac34 (1-\gamma)} t^\frac12}\,.  
\end{align}
Collecting \eqref{proof.thm.nonlinear.25} and \eqref{proof.thm.nonlinear.26}, we obtain \eqref{proof.thm.nonlinear.23}. Then \eqref{proof.thm.nonlinear.19} and \eqref{proof.thm.nonlinear.23} yield
\begin{align*}
& (1+|n|^q) e^{K(t)|n|^\gamma}(\nu t)^\frac12 \| \nabla \mathcal{P}_n u(t) \|_{L^2 (\Omega)} \nonumber \\
& \leq C \frac{(1+|n|t)}{1+|n|^{d-q}} e^{-2 K_0 \theta_{\gamma,n} t -\frac14\nu n^2 t} \| a\|_{X_{d,\gamma,K}}
+ C\nu^{-\frac12-\frac32(1-\gamma)} \| u \|_{Y_{\gamma,K,T'}}^2\nonumber \\
& \leq C \frac{(1+|n|^{1-\gamma})}{1+|n|^{d-q}}  \| a\|_{X_{d,\gamma,K}}
+ C\nu^{-\frac12-\frac32(1-\gamma)} \| u \|_{Y_{\gamma,K,T'}}^2 \,.\nonumber
\end{align*}
Since $d-q-1+\gamma\geq 0$ by the choice of $q$, we obtain 
\begin{align}
\sup_{|n|\geq \delta_0^{-1}\nu^{-\frac34}}  (1+|n|^q) e^{K(t)|n|^\gamma}(\nu t)^\frac12 \| \nabla \mathcal{P}_n u(t) \|_{L^2 (\Omega)} \leq C \big ( \| a\|_{X_{d,\gamma,K}} + \nu^{-\frac12-\frac32(1-\gamma)} \| u \|_{Y_{\gamma,K,T'}}^2 \big ) \,.\label{proof.thm.nonlinear.27}
\end{align}
Since $\beta=\frac{2(1-\gamma)}{\gamma}$ and $0<\gamma<1$ we have $\nu^{\beta-\frac32(1-\gamma)} \rightarrow 0$ as $\nu\rightarrow 0$.
Collecting \eqref{proof.thm.nonlinear.8}, \eqref{proof.thm.nonlinear.8'}, \eqref{proof.thm.nonlinear.13}, \eqref{proof.thm.nonlinear.14}, \eqref{proof.thm.nonlinear.16}, \eqref{proof.thm.nonlinear.18}, \eqref{proof.thm.nonlinear.21}, and \eqref{proof.thm.nonlinear.27}, we have arrived at, for $\gamma\in [\frac79,1)$ and $T'< \frac{K}{2K_0}$,
\begin{align}\label{proof.thm.nonlinear.28}
\begin{split}
\| u \|_{Y_{\gamma,K,T'}} &\leq C' \bigg ( \| a\|_{X_{d,\gamma,K_0}}  + \kappa \nu^{-\frac12-\beta} \| u\|_{Y_{\gamma,K,T'}}^2 \bigg )\,.
\end{split}
\end{align}
Here $C'>0$ is independent of small $T'$ and $\nu$, while $\kappa=\kappa (T',\nu)>0$ is taken as small enough  when $T'$ and $\nu$ are sufficiently small. Hence,  we can close the estimate if $T'$ and $\nu$ are small enough as long as $\| u \|_{Y_{\gamma,K,T'}}\leq C \nu^{\frac12+\beta}$, which is consistent with the condition $\| a\|_{X_{d,\gamma,K}}\leq \nu^{\frac12+\beta}$. Note that the number $T'$ is taken uniformly with respect to sufficiently small $\nu$. 
In particular, we obtain 
\begin{align}
\| u\|_{Y_{\gamma,K,T'}} \leq 2 C'  \| a\|_{X_{d,\gamma,K}} \,.\label{proof.thm.nonlinear.29}
\end{align}
The existence of solutions satisfying \eqref{proof.thm.nonlinear.29} is proved by the Banach fixed point theorem in the closed ball
$\{f \in Y_{\gamma,K,T'}~|~\| f \|_{Y_{\gamma,K,T'}} \leq 2 C'  \| a\|_{X_{d,\gamma,K}} \}$,  based on exactly the same calculation as above. 
Since the argument is standard we omit the details here. 
Note that if $u\in Y_{\gamma,K,T'}$ and $T'\in (0, \frac{K}{4K_0})$ then $u(t), (\nu t)^\frac12 \nabla u(t) \in X_{q,\gamma,2K'}\subset X_{d,\gamma,K'}$ with $K'=\frac{K}{4}$ for all $t\in (0,T']$, and $\displaystyle \sup_{0 < t\leq T'} \big ( \| u(t)\|_{X_{d,\gamma,K'}} + (\nu t)^\frac12 \| \nabla u(t) \|_{X_{d,\gamma,K'}} \big ) \leq C \| u \|_{Y_{\gamma,K,T'}}$ holds.  Hence, by using the embedding 
$$(\nu t)^\frac14 \| u(t) \|_{L^\infty(\Omega)} + (\nu t)^\frac12 \| \nabla u (t) \|_{L^2 (\Omega)} \leq C \big ( \| u(t) \|_{X_{d,\gamma, K'}} + (\nu t)^\frac12 \|\nabla u(t) \|_{X_{d,\gamma,K'}}\big )\,,$$
the estimate \eqref{est.thm.nonlinear.1} follows from \eqref{proof.thm.nonlinear.29}. The proof is complete.
\end{proofx}

\section*{Acknowledgements}
This work is supported by JSPS Program for Advancing Strategic International Networks
to Accelerate the Circulation of Talented Researchers,
'Development of Concentrated Mathematical Center Linking to Wisdom of the Next Generation',
which is organized by the Mathematical Institute of Tohoku University.
The first author also acknowledges the support of ANR project Dyficolty ANR-13-BS01-0003-01 and the support of program ANR-11-IDEX-005 of Universit\'e Sorbonne Paris Cit\'e.
The second author is grateful to Universit\'e Paris Diderot
and Courant Institute of Mathematical Sciences for their kind hospitality during his stay
from November 2015 to February 2016.

\appendix
\section{Proof of Lemma \ref{lem.psi_Ai}}

To prove Lemma \ref{lem.psi_Ai} we first show 
\begin{lem}\label{lem.Ai} Let $k=0,1,2$. There exists $C,M >0$ such that if $|\alpha \tilde \eps^{\frac13}| \le \frac12$, then   for all $z$ satisfying $|\arg (z)| \le \frac{5\pi}{6}$,  $|z| \ge M$,  
\begin{align}
\frac{1}{C} |z^{-\frac{5-2k}{4}} e^{-\frac23 z^\frac32} | \le     | \pa^k_z  Ai_\alpha (z) | & \leq C |z^{-\frac{5-2k}{4}} e^{-\frac23 z^\frac32} |.
\end{align}
\end{lem}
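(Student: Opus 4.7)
\textbf{Proof plan for Lemma \ref{lem.Ai}.} The plan is to carry out a uniform saddle-point analysis of the contour integral $Ai_\alpha(z) = \int_L \frac{\exp(zt - t^3/3)}{t^2 - (\tilde\eps^{1/3}\alpha)^2}\,\dd t$ for $|\arg z|\le \frac{5\pi}{6}$ and $|z|\ge M$, with $M$ chosen large relative to the pole location $|\tilde\eps^{1/3}\alpha|\le \tfrac12$. The phase $\varphi(t) = zt - t^3/3$ has saddles at $t = \pm\sqrt{z}$, and on the branch $\sqrt{z}=|z|^{1/2}e^{i\arg(z)/2}$ the relevant saddle is $t_0 = -\sqrt{z}$, which lies in the half-plane opposite to the poles. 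Since $|t_0|=|z|^{1/2}\ge M^{1/2}\gg |\tilde\eps^{1/3}\alpha|$, one can deform $L$ to a new contour $L_z = L_z^-\cup L_z^0\cup L_z^+$ whose middle piece $L_z^0$ passes through $t_0$ in the direction of steepest descent (the one on which $\sqrt{z}(t-t_0)^2$ is real negative), while the tails $L_z^\pm$ retain the required asymptotic directions $\arg t\to \pm 2\pi/3$ that guarantee decay of $e^{-t^3/3}$. The deformation is legitimate because the poles $\pm \tilde\eps^{1/3}\alpha$ remain on the original side of $L_z$: indeed, for $|\arg z|\le \frac{5\pi}{6}$ the saddle $t_0$ has argument in $[\tfrac{7\pi}{12},\tfrac{17\pi}{12}]$, so $L_z$ can be chosen entirely outside a ball of radius, say, $\tfrac34$ containing the poles.

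The next step is the rescaling $t = t_0 + z^{-1/4} s$, under which
\[
\varphi(t) - \varphi(t_0) \, = \, s^2 \, - \, \tfrac{1}{3} z^{-3/4} s^3 \,, \qquad \varphi(t_0) \, = \, -\tfrac{2}{3} z^{3/2} \,,
\]
and
\[
t^2 - (\tilde\eps^{1/3}\alpha)^2 \, = \, z \Big( 1 - \tfrac{(\tilde\eps^{1/3}\alpha)^2}{z} - 2 z^{-3/4} s + z^{-3/2} s^2 \Big) \,,
\]
so that on the local part $L_z^0$ the denominator is $z(1+O(z^{-3/4}))$ uniformly for $|s|\le |z|^{\delta}$ with small $\delta>0$. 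This produces the representation
\[
Ai_\alpha(z) \, = \, z^{-5/4}\, e^{-\frac{2}{3} z^{3/2}} \int_{\Gamma_z} e^{s^2 - \frac{1}{3} z^{-3/4} s^3}\, \big( 1 + O(z^{-3/4}(1+|s|)) \big) \dd s \, + \, \text{(tail)} \,,
\]
where $\Gamma_z$ is the rescaled steepest-descent contour. The central Gaussian integral $\int_{\Gamma_z} e^{s^2}\,\dd s$ equals $\pm i \sqrt{\pi}$ up to a unit phase, the cubic correction $-\tfrac13 z^{-3/4}s^3$ is bounded by a power of $z$ on the Gaussian tails and contributes $O(z^{-3/4})$, and the contribution from $L_z^\pm$ is super-exponentially small in $|z|^{3/2}$. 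Choosing $M$ large enough that the error terms are at most $\tfrac12$ in modulus yields both the upper bound and the lower bound claimed for $k=0$.

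For the derivatives $\pa_z^k Ai_\alpha(z) = \int_L \frac{t^k e^{zt-t^3/3}}{t^2 - (\tilde\eps^{1/3}\alpha)^2}\dd t$, the same contour deformation and rescaling apply; the additional polynomial factor is $t^k = (-\sqrt{z})^k(1 + O(z^{-3/4}s))^k$, which contributes $|t_0|^k = |z|^{k/2}$ to the leading term and produces the asserted power $|z|^{-(5-2k)/4}$ for $k=1,2$. The main technical obstacle is to ensure that the implicit constants in the error terms and the choice of $M$ can be taken uniformly in the sector $|\arg z|\le \tfrac{5\pi}{6}$ and in $|\tilde\eps^{1/3}\alpha|\le \tfrac12$; this is controlled by monitoring the angle between the tangent of $\Gamma_z$ at the saddle and the steepest descent direction, which stays bounded away from $\pi/2$ throughout the sector, and by the elementary bound $|z-(\tilde\eps^{1/3}\alpha)^2|\ge |z|/2$ once $|z|\ge M\ge 1$.
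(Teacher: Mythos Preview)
Your approach is the same as the paper's---steepest descent on the contour integral with saddle at $t_0=-\sqrt{z}$---and the local rescaling and Laplace expansion you outline are correct. However, the justification you give for the contour deformation avoiding the poles is too thin and is in fact false as stated. You argue that since $|t_0|=|z|^{1/2}\ge M^{1/2}$ is far from the origin, the deformed contour $L_z$ ``can be chosen entirely outside a ball of radius, say, $\tfrac34$''. But the location of the \emph{saddle} does not control where the global steepest-descent path goes. After the normalisation $u=|z|^{1/2}t$ (which the paper uses, reducing to the phase $h_\theta(u)=e^{i\theta}u-\tfrac{u^3}{3}$ with saddle $u_\theta=-e^{i\theta/2}$ on the unit circle), the steepest-descent curve through $u_\theta$ at $\theta=-\tfrac{2\pi}{3}$ is exactly the ray $e^{2i\pi/3}\R_+$, which passes through the origin. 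Undoing the rescaling, the steepest-descent path in your $t$-plane also passes through the origin---hence right between the poles---for this value of $\arg z$. So for a range of angles near $\arg z=\pm\tfrac{2\pi}{3}$ the contour you describe does not exist without a further modification.

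The paper handles this by keeping the true steepest-descent curve $\Gamma_\theta$ only where it is safe (for $|\theta|\le\tfrac{\pi}{2}$ entirely, and on the upper branch for all $\theta$), and for $\tfrac{\pi}{2}<|\theta|\le\tfrac{5\pi}{6}$ patching in near the origin a detour along an arc of a fixed small circle $\{|u|=R\}$ together with a segment on a ray. The key observation making this work is that for $|\theta|>\tfrac{\pi}{2}$ one has $\Re h_\theta(u_\theta)=-\tfrac{2}{3}\cos\tfrac{3\theta}{2}\ge\tfrac{\sqrt{2}}{3}$, whereas on $|u|=R$ one has $|\Re h_\theta|\le R+\tfrac{R^3}{3}$; choosing $R$ small therefore forces $\Re h_\theta\le \Re h_\theta(u_\theta)-\mu$ on the detour, with $\mu>0$ uniform in $\theta$. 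This recovers the uniform Laplace estimate. You would need exactly this step (or an equivalent one) to close your argument; the remark in your final paragraph about the tangent angle of $\Gamma_z$ at the saddle staying bounded away from $\pi/2$ is not the relevant invariant---it controls the local quadratic expansion, not the global homotopy past the poles.
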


\begin{proof}
We first make the change of variables $u = |z|^{\frac12}t$ in  \eqref{defAialpha},  so that 
$$ Ai_\alpha(z) = \frac{1}{2i\pi} |z|^{-\frac12} \int_{|z|^{-\frac12} L} \frac{\exp\left(|z|^{\frac32} \big(e^{i \arg(z)} u   - \frac{u^3}{3}\big) \right)}{u^2 - |z|^{-1}(\tilde \eps)^{\frac23} \alpha^2} du. $$
By Cauchy's theorem, for $|z| \ge 1$, we can replace the contour $|z|^{-\frac12} L$ by $L$, so that 
$$ Ai_\alpha(z) = \frac{1}{2i\pi} |z|^{-\frac12} \int_{L} \frac{\exp\left(|z|^{\frac32}  \big( e^{i \arg(z)} u   - \frac{u^3}{3} \big)\right)}{u^2 - |z|^{-1}(\tilde \eps)^{\frac23} \alpha^2} du.
 $$
Set 
$$ {\cal A}(\rho, \theta, \eta)  = \frac{1}{2i \pi} \rho^{-1/2} \int_{L} \frac{\exp\left(\rho^{\frac32} \big( e^{i \theta} u   - \frac{u^3}{3}\big)\right)}{u^2 -  \rho^{-1} \eta^2} du.
 $$
where $\rho \ge 1$, $|\eta| \le \frac12$, $\theta \in [-\frac{5\pi}{6}, \frac{5\pi}{6}]$. We shall prove that:
\begin{equation}  \label{asympt_A}
{\cal A}(\rho, \theta, \eta) \sim \frac{1}{2 i \sqrt{\pi}}  (\rho e^{i \theta})^{-\frac54}\exp(-\frac23 (\rho e^{i \theta})^{\frac32}), \quad \rho \rightarrow +\infty
\end{equation}
uniformly in $\eta$ and $\theta$. Case $k=0$ in Lemma \ref{lem.Ai} follows immediately. Cases $k=1,2$ are treated similarly. 

\medskip
To prove \eqref{asympt_A}, we use the method of steepest descent. This method is exposed with much details in \cite{Die}, where an equivalent of the classical Airy function 
$$ A_i(\rho) =  \frac{1}{2i \pi} \rho^{1/2} \int_{L} \exp\left(\rho^{\frac32} (u   - \frac{u^3}{3})\right) du $$ 
is derived as $\rho \rightarrow +\infty$. In our setting, the idea is to replace the curve $L$ by a curve $L_\theta$ that goes  through a critical point of the phase $h_\theta(u) = e^{i \theta} u - \frac{u^3}{3}$. More precisely, one must choose at least locally  the curve of steepest descent for $\Re h_\theta$ through the critical point. By Cauchy-Riemann theorem, the gradients of $\Re h_\theta$ and  $\Im h_\theta$ are orthogonal, so that the imaginary part of the phase  remains constant along that curve. In this way, there is no oscillation of the phase along that curve, which allows application of the Laplace method to the integral, replacing $h_\theta$ and the factor $(u^2 - \rho^{-1} \eta^2)^{-1}$ by their leading part at the critical point. The difficulty lies in the choice of the good curve $L_\theta$, in the range $\theta \in [-\frac{5\pi}{6}, \frac{5\pi}{6}]$.
Before describing $L_\theta$, we collect information on the phase $h_\theta$. It has two critical points: $h'_\theta(u) = 0$ if and only if $u = \pm e^{i \frac{\theta}{2}}$. We denote $u_\theta = - e^{i\frac{\theta}{2}}$. The curves of steepest descent and ascent near $u = u_\theta$ satisfy $\Im h(u)  =\Im h(u_\theta)$. With $u = x+ i y$, this reads
\begin{equation} \label{Imh}
\sin \theta \, x + \cos \theta \, y - y x^2 + \frac{1}{3} y^3 + \frac{2}{3} \sin(\frac{3 \theta}{2}) = 0. 
\end{equation}
Note that this equation is invariant under the transformation $(y, \theta)  \rightarrow (- y, -\theta)$. 
{\em From now on, we therefore focus on the case $\theta \in [-\frac{5\pi}{6},0]$}, all arguments being transposable to $[0, \frac{5\pi}{6}]$. 

\medskip
\noindent
Let us denote by $\Gamma_\theta$ the curve of steepest descent through $u_\theta = x_\theta + i y_\theta$, which is at least locally defined. 
One computes $h_\theta''(u_\theta) = 2 e^{i \frac{\theta}{2}}$, so that for $u$ close to $u_\theta$, 
$$ h_\theta(u) - h_\theta(u_\theta) \sim \frac{h_\theta''(u_\theta)}{2} (u - u_\theta)^2 =  r^2 e^{i (\frac{\theta}{2} + 2 \varphi)}, \quad \text{with } u - u_\theta = r e^{i \varphi}. $$
The curve of steepest descent corresponds to the conditions
$$ \sin\big(\frac{\theta}{2} + 2 \varphi \big) = 0, \quad \cos\big(\frac{\theta}{2} + 2 \varphi\big) < 0. $$
Hence, it is tangent to the line $\phi = - \frac{\theta}{4} \pm \frac{\pi}{2}$. 

\medskip
\noindent
For $\theta = 0$, the computation is explicit and performed in \cite{Die}. The curve of steepest descent is given by  
$$ \Gamma_0 = \{ (x,y), 1 - x^2 + \frac{1}{3} y^2 = 0 \}. $$
This is a branch of hyperbola, see \cite{Die} for details.  For $\theta \in [-\frac{5\pi}{6},0)$, as the tangent line is $\phi = - \frac{\theta}{4} \pm \frac{\pi}{2}$, one has locally near $(x_\theta, y_\theta)$ that: 
\begin{itemize}
\item the upper branch of $\Gamma_\theta$ (corresponding to $y > y_\theta$)  goes in  the upper left direction: the points $(x,y)$ on this upper branch satisfy $y > y_\theta$, $x < x_\theta$. Moreover, from the expression of $\Im h$, 
$$ \pa_x \Im h_\theta(x,y) = \sin \theta + 2 xy, \quad \pa_y  \Im h_\theta(x,y)  = (\cos\theta - x^2)  + y^2. $$
It yields $\pa_x \Im h(x,y) > 0$, $\pa_y \Im h(x,y) >0$ on the upper branch, near $(x_\theta, y_\theta)$.
\item the lower branch of $\Gamma_\theta$ (corresponding to $y < y_\theta$)  goes in  the lower right direction: the points $(x,y)$ on this  lower branch satisfy $y < y_\theta$, $x > x_\theta$. Moreover $\pa_x \Im h(x,y) < 0$, $\pa_y \Im h(x,y) < 0$, near $(x_\theta, y_\theta)$.
\end{itemize}
Note  that $\pa_y \Im h$ is increasing with $y$ over $\{ y > 0\}$, as $\pa^2_y \Im h = 2y$. Also,  $\pa_x \Im h$ does not vanish  on  $\{x < x_\theta, y > y_\theta\}$, nor on $\{x_\theta<  x < 0, 0 < y < y_\theta\}$. From these remarks, it follows that the upper branch keeps pointing in the upper left direction and extends to infinity in $y$. Taking the limit $y \rightarrow +\infty$ in \eqref{Imh}, we see that $\Gamma_\theta$ is asymptotic to $-yx^2 + \frac13 y^3 = 0$, which means that
$$ \lim_{\substack{z \in \Gamma_\theta, \\ \Im z \rightarrow +\infty}} \arg(z) = \frac{2\pi}{3}. $$

\medskip
\noindent
From the same remarks, it follows that the lower branch of $\Gamma_\theta$ must escape the upper left quadrant. It can be either by the imaginary axis or by the real axis (depending on the value of $\theta$). In the special case $\theta = -\frac{2\pi}{3}$, direct verification shows that 
$$\Gamma_{-\frac{2\pi}{3}} \cap \{ x \le 0, y \ge 0\} = \exp(\frac{2i\pi}{3}) \R_+,$$
 so that it escapes by the origin. Nevertheless, for $\theta \in [-\frac\pi2,0[$, $\Gamma_\theta$ can never  cross the imaginary axis, as $\pa_y \Im h\vert_{x=0} = \cos \theta + y^2 > 0$. Moreover, still for 
$\theta \in [-\frac\pi2,0[$, $\pa_x \Im h = \sin \theta - 2 x y < 0$ for all $y < 0$ and $x < 0$. Hence,  the lower branch extends to infinity in $y$, and taking  the limit $y \rightarrow -\infty$ in \eqref{Imh}, we see that $\Gamma_\theta$ is asymptotic to $-yx^2 + \frac13 y^3 = 0$, which means that $$ \lim_{\substack{z \in \Gamma_\theta, \\ \Im z \rightarrow -\infty}} \arg(z) = -\frac{2\pi}{3}. $$

\medskip 
\noindent
We  now describe our choice for the contour $L_\theta$. We need to pay attention to the poles $\pm \rho^{-\frac12} \eta$ in the integrand, due to the denominator $u^2 - \rho^{-1} \eta^2$. We want to choose $L_\theta$ so that it is homotopic to $L$ without crossing the poles.  \begin{itemize}
\item If $\theta \in [-\frac\pi2,0]$,  $\Gamma_\theta$ exits the upper left quadrant through the negative real axis,  away from the origin (which is the exit point for $\theta = -\frac{2\pi}{3}$). Hence, for $\rho$ large enough, uniformly in $\theta \in [-\frac\pi2,0]$ and $|\eta| \le \frac12$, $\Gamma_\theta$ (oriented from bottom to top) is homotopic to $L$ without crossing the poles. We take $L_\theta = \Gamma_\theta$. 
\item If $\theta \in  [-\frac{5\pi}{6}, -\frac\pi2[$, we note that $\Re h(u_\theta) = -\frac23 \cos(\frac{3\theta}{2}) \ge \frac{\sqrt{2}}{3}$. Let $R > 0$ such that $R + \frac{R^3}{3} < \frac{\sqrt{2}}{6}$.  For $\rho$ large enough, the poles $\pm \rho^{-\frac12} \eta$ are in $D(0,\frac{R}{2})$. 
Let us denote by $z_\theta$ the point (either on the real or imaginary axis) at  which $\Gamma_\theta$ exits the upper left quadrant. There are three possibilities. 
\begin{itemize}
\item either $z_\theta$ is real negative and less than $-R$. In this case, we take 
$$ L_\theta = L_{\theta,-} \cup L_{\theta, 0} \cup L_{\theta,+}   $$
where 
$$L_{\theta,-} = \Big\{ r e^{-2i\pi/3}, r \in (R, +\infty) \Big\}, \quad L_0 = \Big\{ R e^{i \theta},  \theta \in [-\pi, -\frac{2\pi}{3}] \Big\}  \cup [z_\theta,-R],$$
and $L_{\theta, +} = \Gamma_\theta \cap \{ x < 0, y > 0 \}$ (oriented from bottom to top). 
\item or $z_\theta$ is on the imaginary axis, above $i R$. In this case, we take 
$$ L_\theta = L_{\theta,-} \cup L_{\theta, 0} \cup L_{\theta,+}   $$
where 
$$L_{\theta,-} = \Big\{ r e^{-2i\pi/3}, r \in (R,+\infty) \Big\}, \quad L_0 = \Big\{ R e^{i \theta},  \theta \in  [-\frac{3\pi}{2}, -\frac{2\pi}{3}] \Big\} \cup [iR, z_\theta],$$ 
and $L_{\theta, +} = \Gamma_\theta \cap \{ x < 0, y > 0 \}$ (oriented from bottom to top). 
\item or $\Gamma_\theta$ crosses the circle $\{ z = |R|\}$ before exiting the upper left quadrant, at some $z = R e^{i \alpha}$, $\alpha \in [-3\pi/2, -\pi]$. We set in this case 
$$ L_\theta = L_{\theta,-} \cup L_{\theta, 0} \cup L_{\theta,+}   $$
where 
$$L_{\theta,-} = \Big\{ r e^{-2i\pi/3}, r \in (R, +\infty) \Big\}, \quad L_0 =  \Big\{ R e^{i \theta},  \theta \in [\alpha, -\frac{2\pi}{3}] \Big\},$$ 
$L_{\theta, +} = \Gamma_\theta \cap \{ x < 0, y > 0, |x+i y| > R \}$ (oriented from bottom to top). 
\end{itemize}
In all three cases, the path $L_\theta$ is homotopic to $L$ away from the poles. Moreover, the important point is that that there exists $\mu > 0$ such that for all $z \in L_{\theta,-} \cup L_{\theta,0}$, 
\begin{equation} \label{realh}
\Re h(z) \le \Re h(u_\theta) - \mu.
\end{equation}
(this condition is the one needed to apply the method of steepest descent, see \cite{Die}). Indeed, from the choice of $R$, this condition is obviously satisfied over $|z| =R$.  If $z_\theta \in (-\infty, -R)$, we find for all $z \in [z_\theta,-R]$, 
$$  \Re h(z) = \cos \theta z - \frac{1}{3} z^3 \le \Re h(z_\theta) \le   \Re h(u_\theta) - \mu  $$
for some $\mu > 0$ (as $z_\theta$ belongs to the line of steepest descent).  
On the other hand, if $z_\theta \in [iR, i\infty]$, we find for all $z \in [iR, z_\theta]$, 
$$ \Re h(z) = - \sin\theta |z| + \frac{1}{3}  |z|^3 \le \Re h(z_\theta) \le   \Re h(u_\theta) - \mu  $$
for some $\mu > 0$.  Finally, for all $r \in [R,+\infty)$, 
$$ \Re h(r e^{-\frac{2i\pi}{3}}) = \cos(\theta - \frac{2\pi}{3}) r - \frac{r^3}{3} $$
which reaches its maximal value at $r = R$ when $\theta \in [-\frac{5\pi}{6}, 0]$. 
\end{itemize}
By Cauchy's theorem, we write 
%
$$ {\cal A}(\rho,\theta,\eta) =   \frac{1}{2i \pi} \rho^{-1/2} \int_{L_\theta} \frac{\exp(\rho^{\frac32} \left( e^{i \theta} u   - \frac{u^3}{3}\right)}{u^2 -  \rho^{-1} \eta^2} du. $$
The steepest method applies to the right-hand side: one can replace the phase $h_\theta(u)$, the factor $\frac{1}{u^2 - \rho^{-1} \eta^2}$, and the contour parametrization  by their leading order terms near $u = u_\theta$. We find, for any $a > 0$ small enough
\begin{align*}
  {\cal A}(\rho,\theta,\eta) & \sim \frac{1}{2i \pi} \rho^{-\frac12}\frac{\exp(-\frac23 (\rho e^{i\theta})^{\frac32})}{u_\theta - \rho^{-1} \eta^2} \int_{[u_\theta - i a e^{-i\frac\theta4}, u_\theta + i a e^{-i\frac\theta4} ]}    \exp(\rho^{\frac32} (u - u_\theta)^2 e^{i \frac\theta2})  du \\
   & \sim  \frac{1}{2 i \sqrt{\pi}}  (\rho e^{i \theta})^{-\frac54}\exp(-\frac23 (\rho e^{i \theta})^{\frac32})
   \end{align*}
as expected. 
 \end{proof}

\noindent 
{\it Proof of \eqref{est.lem.psi_Ai.3}.} Recall that $Z_c = Y_c + i \frac{\Im c_\epsilon}{\pa_Y V|_{Y=Y_c}}$ and $\tilde \epsilon = \frac{\epsilon}{\pa_Y V|_{Y= Y_c}}$, and it is assumed that $|Y_c| + \Im c_\epsilon \leq C|\epsilon|^{\frac{1-\theta}{3}}\ll 1$ for some small $\theta \in (0,\frac{1}{10})$.
Taking into account the pointwise estimate \eqref{est.lem.psi_Ai.1} we set 
\begin{align}\label{proof.est.lem.psi.Ai.3.0}
\begin{split}
\frac23 \big ( \frac{Y-Z_c}{\tilde \epsilon^\frac13} \big )^\frac32 - \frac23 \big (-\frac{Z_c}{\tilde \epsilon^\frac13}\big )^\frac32 & \, = \, \frac23 (n \pa_Y V|_{Y=Y_c} )^\frac12 \big ( f (Y)^\frac32 - f (0) ^\frac32 \big )\,, \\
f (Y) & \, = \,  e^{\frac{\pi}{6}i} \big ( Y- Y_c - i \frac{\Im c_\epsilon}{\pa_Y V|_{Y=Y_c}} \big ) \,.
\end{split}
\end{align} 
We first give the lower bound of $\Re (f(Y)^\frac32 - f(0)^\frac32)$. To this end we observe that the condition $\Im c_\epsilon>0$ implies 
\begin{align*}
-\pi <\arg ( Y-Y_c - i \frac{\Im c_\epsilon}{\pa_Y V|_{Y=Y_c}} ) <0\,, \qquad Y\geq 0\,,
\end{align*}
and therefore, 
\begin{align*}
-\frac{5\pi}{6} < \arg f(Y) < \frac{\pi}{6}\,, \qquad Y\geq 0\,.
\end{align*}
Set $z_1 = f(Y)$ and $z_0=f(0)$. Then the segment $\rho: \rho (t) = t z_1 + (1-t) z_0$, $0\leq t\leq 1$, lies in the half plane $-\frac{5\pi}{6}<\arg \rho (t) <\frac{\pi}{6}$, and hence, we see 
\begin{align*}
f(Y)^\frac32-f(0)^\frac32  \, = \, \int_\rho (z^\frac32)' \dd z  \, = \, \frac32 (z_1-z_0) \int_0^1 \big ( \rho (t) \big )^\frac12 \dd t & \, = \, \frac32 Y \int_0^1 \big \{ e^{\frac{\pi}{3}i} \rho (t)  \big \}^\frac12 \dd t\,.
\end{align*}
Set $\lambda (t) = \big \{ e^{\frac{\pi}{3}i} \rho (t)  \big \}^\frac12$. Since $-\frac{\pi}{2} <\arg \big ( e^{\frac{\pi}{3}i} \rho (t) \big ) <\frac{\pi}{2}$, we have 
\begin{align*}
\Re (\lambda (t) ) \geq \frac{1}{\sqrt{2}} |\rho (t)|^\frac12 \geq \frac18 \big ( |Y_c-t Y|^\frac12 + (\frac{\Im c_\epsilon}{\pa_Y V|_{Y=Y_c}})^\frac12 \big )\,,
\end{align*}
which implies 
\begin{align}\label{proof.est.lem.psi.Ai.3.1}
\Re \big (f(Y)^\frac32-f(0)^\frac32 \big ) \, = \, \frac32 Y \int_0^1 \Re (\lambda (t) ) \dd t \geq \frac{3Y}{16} \big ( \int_0^1 |Y_c - t Y|^\frac12 \dd t +  (\frac{\Im c_\epsilon}{\pa_Y V|_{Y=Y_c}})^\frac12 \big )\,.
\end{align}
It is not difficult to find a constant $C>0$ such that $\int_0^1 |Y_c - t Y|^\frac12 \dd t\geq \frac{1}{C} |Y_c|^\frac12$ for all  $Y\geq 0$.  Hence, there exists $C>0$ such that 
\begin{align}\label{proof.est.lem.psi.Ai.3.2}
\frac23 (n \pa_Y V|_{Y=Y_c})^\frac12 \Re \big (f(Y)^\frac32-f(0)^\frac32 \big )\geq \frac{1}{C} |\frac{Z_c}{\tilde \epsilon}|^\frac12 Y\,, \qquad Y\geq 0\,.
\end{align}
Then \eqref{est.lem.psi_Ai.1} combined with \eqref{proof.est.lem.psi.Ai.3.0} and \eqref{proof.est.lem.psi.Ai.3.2} gives the estimate 
\begin{align}\label{proof.est.lem.psi.Ai.3.3}
|\partial_Y^k \psi_{Ai} (Y)| \leq C |\tilde \epsilon|^{-\frac{k}{3}}  | \frac{Z_c}{\tilde \epsilon^\frac13} |^\frac54 | \frac{Y-Z_c}{\tilde \epsilon^\frac13} |^{-\frac{5-2k}{4}} e^{-\frac{1}{C} |\frac{Z_c}{\tilde \epsilon}|^\frac12 Y}\,, \qquad Y\geq 0\,,\quad k=0,1,2\,.
\end{align}
By using \eqref{proof.est.lem.psi.Ai.3.3}, $\|\partial_Y^k \psi_{Ai} \|_{L^2}$ is estimated in the following three cases:
(i) $Y_c\leq 0$ (ii) $0<Y_c\leq \Im c_\epsilon$ (iii) $Y_c \geq \Im c_\epsilon$.
Here we give the proof only for the cases (ii) and (iii). The case (i) is computed in the same manner.
We see
\begin{align*}
\| \partial_Y^k \psi_{Ai} \|_{L^2}^2 &  \leq   C |\tilde \epsilon|^{-\frac{2k}{3}}  | \frac{Z_c}{\tilde \epsilon^\frac13} |^\frac52 \int_0^{\frac{Y_c}{2}} | \frac{Y-Z_c}{\tilde \epsilon^\frac13} |^{-\frac{5-2k}{2}} e^{-\frac{2}{C} |\frac{Z_c}{\tilde \epsilon}|^\frac12 Y} \dd Y\\
& ~~ +  C |\tilde \epsilon|^{-\frac{2k}{3}}  | \frac{Z_c}{\tilde \epsilon^\frac13} |^\frac52 \int_{\frac{Y_c}{2}}^\infty  | \frac{Y-Z_c}{\tilde \epsilon^\frac13} |^{-\frac{5-2k}{2}} e^{-\frac{2}{C} |\frac{Z_c}{\tilde \epsilon}|^\frac12 Y} \dd Y\\
& =: I + II\,.
\end{align*}
The term $I$ is estimated as 
\begin{align*}
I \leq C |\tilde \epsilon|^{-\frac{2k}{3}}  | \frac{Z_c}{\tilde \epsilon^\frac13} |^\frac52 |\frac{Z_c}{\tilde \epsilon^\frac13} |^{-\frac{5-2k}{2}} \int_0^{\frac{Y_c}{2}}  e^{-\frac{2}{C} |\frac{Z_c}{\tilde \epsilon}|^\frac12 Y} \dd Y
& \leq C |\tilde \epsilon|^{-\frac{2k}{3}}  | \frac{Z_c}{\tilde \epsilon^\frac13} |^k |\tilde \epsilon|^\frac12 |Z_c|^{-\frac12}\\
& \leq C |\tilde \epsilon|^{\frac13-\frac{2k}{3}} |\frac{Z_c}{\tilde \epsilon^\frac13}|^{k-\frac12} \,.
\end{align*}
As for the term $II$, distinguishing the regions where $Y$ is away from $Y_c$ and $Y$ close to $Y_c$, we find
\begin{align*}
II & \leq C |\tilde \epsilon|^{-\frac{2k}{3}} |\frac{Z_c}{\tilde \epsilon^\frac13}|^\frac52
|\tilde \epsilon^\frac13|^{\frac{5-2k}{2}} |\frac{\tilde \epsilon}{Z_c} |^\frac12 e^{-\frac{1}{C} |\frac{Z_c}{\tilde \epsilon}|^\frac12 Y_c} \big ( |Z_c|^{-\frac{5-2k}{2}} + (\Im c_\epsilon)^{-\frac{5-2k}{2}} \big )\\
& \leq  C  |\tilde \epsilon|^{\frac13-\frac{2k}{3}} |\frac{Z_c}{\tilde \epsilon^\frac13}|^2 e^{-\frac{1}{C} |\frac{Z_c}{\tilde \epsilon}|^\frac12 Y_c} |\frac{\tilde \epsilon^\frac13}{\Im c_\epsilon}|^{\frac{5-2k}{2}}\,.
\end{align*}
If $0<Y_c\leq \Im c_\epsilon$ then $|Z_c|\leq C \Im c_\epsilon$, while if $Y_c\geq \Im c_\epsilon$ then 
$|\frac{Z_c}{\tilde \epsilon^\frac13}|^2 e^{-\frac{1}{C} |\frac{Z_c}{\tilde \epsilon}|^\frac12 Y_c} \leq C'$
since $Y_c \simeq|Z_c|$ in this case. Hence, both in the cases (ii) and (iii) we conclude that 
\begin{align*}
II \leq C |\tilde \epsilon|^{\frac13-\frac{2k}{3}} |\frac{Z_c}{\tilde \epsilon^\frac13}|^{k-\frac12}\,.
\end{align*}
Collecting the estimates for $I$ and $II$, we obtain \eqref{est.lem.psi_Ai.3}, for $|Z_c| \simeq |c_\eps|$ and $|\epsilon|\simeq |\tilde \epsilon|$. The proof is complete.

\section{Method of continuity in abstract framework}

\begin{prop}\label{prop.continuity} Let $L: D(L)\subset X \rightarrow X$ be a closed linear operator in a Banach space $X$. Let $\Sigma$ be an arcwise connected set in $\mathbb{C}$. Assume that 

\vspace{0.1cm}

\noindent
{\rm (i)} there exists $\mu_0\in \Sigma$ such that $\mu_0\in \rho (-L)$,

\noindent 
{\rm (ii)} there exists $C_{\Sigma}>0$ such that for any $\mu\in \Sigma$ and for any $f\in X$ there exists $u\in D(L)$ such that $(\mu + L) u =f$ and $\|u\|_X \leq C_{\Sigma} \| f\|_X$.

\vspace{0.1cm}

\noindent 
Then $\Sigma\subset \rho (-L)$.
\end{prop}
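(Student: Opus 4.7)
The plan is to apply the standard method of continuity: set $T := \Sigma \cap \rho(-L)$, show that $T$ is both open and closed in $\Sigma$, and conclude $T = \Sigma$ by arcwise connectedness together with hypothesis (i) which guarantees $T \ne \emptyset$.

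The key preliminary observation is that hypothesis (ii), which a priori only provides a bounded right inverse with norm $\le C_\Sigma$, upgrades to a uniform bound on the full two-sided inverse whenever we already know $\mu \in \rho(-L)$. Indeed, for such $\mu$ the operator $\mu + L : D(L) \to X$ is bijective, so the element $u$ produced by (ii) is necessarily unique and coincides with $(\mu+L)^{-1}f$. Consequently
\begin{align*}
\|(\mu+L)^{-1}\|_{X\to X} \,\le\, C_\Sigma \qquad \text{for every } \mu \in T.
\end{align*}

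Given this uniform bound, a Neumann-series argument handles both openness and closedness simultaneously. Fix $\mu \in T$ and any $\lambda \in \C$ with $|\lambda - \mu| < 1/C_\Sigma$. From the factorization
\begin{align*}
\lambda + L \,=\, (\mu+L)\bigl[I + (\lambda-\mu)(\mu+L)^{-1}\bigr],
\end{align*}
the bracketed factor is invertible on $X$ by Neumann series, hence $\lambda + L : D(L) \to X$ is a bijection whose inverse is bounded on $X$. In particular $\lambda \in \rho(-L)$. Applied to $\lambda \in \Sigma$ this immediately proves that $T$ is open in $\Sigma$, and applied to a limit point $\lambda \in \Sigma$ of $T$ (choosing $\mu \in T$ close enough so that $|\lambda-\mu|<1/C_\Sigma$) it proves closedness of $T$ in $\Sigma$.

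Finally, since $\Sigma$ is arcwise connected, for any $\mu \in \Sigma$ there is a continuous path $\gamma : [0,1] \to \Sigma$ with $\gamma(0) = \mu_0$ and $\gamma(1) = \mu$. The preimage $\gamma^{-1}(T)$ is open, closed, and contains $0$, hence equals $[0,1]$, so $\mu \in T$. The only subtle step is the preliminary upgrade of the right-inverse bound to a genuine resolvent bound on $T$; all subsequent steps are standard. There is no real obstacle, since closedness of $L$ is implicit in the very definition of the resolvent set.
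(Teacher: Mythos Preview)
Your proof is correct and follows essentially the same approach as the paper: both use the uniform bound from (ii) to upgrade to a resolvent bound on $\Sigma\cap\rho(-L)$, then apply a Neumann-series step of fixed size $1/C_\Sigma$ to propagate along a path from $\mu_0$. Your open/closed formulation is a slightly cleaner packaging of the same idea as the paper's explicit ``step along the curve'' argument.
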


\begin{proof} Fix any $\mu\in \Sigma$ and take the continuous curve $\lambda :[0,1] \rightarrow \Sigma$ connecting $\mu_0$ and $\mu$: $\lambda (0)=\mu_0$, $\lambda (1)=\mu$, and $\lambda (t) \in \Sigma$ for all $t\in [0,1]$.  
Since $\mu_0\in \rho (-L)$ and $\rho(-L)$ is open, there exists $\delta>0$ such that $\lambda([0,\delta]) \subset  \rho (-L)$. In virtue of the condition (ii) we see $\|( \lambda (t) + L)^{-1}\|_{X\rightarrow X} \leq C_\Sigma$ for any $t\in [0,\delta]$. Then, by considering the Neumann series, we can take $\delta_\Sigma>0$ depending only on $C_\Sigma$ such that  $\lambda ([\delta, \delta+\delta_\Sigma]) \subset  \rho (-L)$. 
Again from (ii) we have $\| (\lambda(t) + L)^{-1} \|_{X\rightarrow X}\leq C_\Sigma$ for any $t\in [0,\delta+\delta_\Sigma]$. Repeating this argument we finally achieve $\mu\in \rho (-L)$. The proof is complete.
\end{proof}

\section{Estimate for heat equations}

\begin{prop}\label{prop.heat} Assume that $U \in BC^3(\R_+)$ satisfies $\| U \|  + \| Y \pa_Y^3 U \|_{L^\infty (\R_+)} <\infty$. Assume in addition that $U|_{Y=0} =0$ and $\displaystyle \lim_{Y\rightarrow \infty} U (Y)=U^E(0)$. Then there exists a unique solution to \eqref{eq.heat} such that 
\begin{align}\label{est.prop.heat}
\begin{split}
\sup_{0<t<T} \big ( \| U^P (t)\| + \| \partial_t U^P (t)  \|_{L^\infty (\R_+)} + \| Y \partial_t \pa_Y U^P (t) \|_{L^\infty (\R_+)} \big ) & \leq C \| U \|\,.
\end{split}
\end{align}
Moreover, if $U$ satisfies $\pa_Y U|_{Y=0}>0$ and $-M \pa_Y^2 U \geq (\pa_Y U)^2$ for all $Y\geq 0$, then for any $T>0$ and $\sigma\in (0,1]$ there exists $M_\sigma>0$ such that $-M_\sigma \pa_Y^2 U \geq (\pa_Y U)^2$ for all $Y\geq \sigma$ and $t\in [0,T]$.
\end{prop}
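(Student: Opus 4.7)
The plan is to handle the existence/uniqueness together with the weighted pointwise estimates first, and then address the preservation of the concavity condition as a separate (and the most delicate) step.

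For the first part, I would introduce the decomposition $U^P(t,Y) = U(Y) + W(t,Y)$, where $W$ solves the inhomogeneous heat equation $\partial_t W - \partial_Y^2 W = \partial_Y^2 U$ with $W|_{Y=0} = 0$, $W|_{t=0} = 0$, and $\lim_{Y \to \infty} W = 0$ (the latter holds since $U(\infty) = U^E(0)$). Using Duhamel's formula with the Dirichlet heat kernel on $\R_+$,
\begin{equation*}
G(t,Y,Y') \, = \, \frac{1}{\sqrt{4\pi t}} \bigl( e^{-(Y-Y')^2/(4t)} - e^{-(Y+Y')^2/(4t)} \bigr),
\end{equation*}
one has $W(t,Y) = \int_0^t \int_0^\infty G(t-s,Y,Y')\,\partial_{Y'}^2 U(Y')\,dY'\,ds$. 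The weighted estimate $\|U^P(t)\| \le C\|U\|$ then reduces to showing that the map $f \mapsto \int_0^t e^{s\Delta_D} f\,ds$ preserves the weighted norm $\sum_{k=0,1,2} \sup_Y (1+Y)^k|\partial_Y^k \cdot|$, together with $\|\partial_Y^3\cdot\|$ tested against the weight $Y$. This is a routine (but tedious) calculation based on explicit kernel estimates of $\partial_Y^k G$, after commuting $\partial_Y$ with the semigroup and using that odd reflection preserves the regularity structure compatible with the boundary condition $U^P|_{Y=0}=0$.

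For the time-derivative estimates I would use the equation itself: $\partial_t U^P = \partial_Y^2 U^P$, so $\|\partial_t U^P(t)\|_{L^\infty} = \|\partial_Y^2 U^P(t)\|_{L^\infty}$ is controlled by the $L^\infty$-boundedness of the Dirichlet heat semigroup applied to $\partial_Y^2 U \in L^\infty$. Similarly, $Y\,\partial_t\partial_Y U^P = Y\partial_Y^3 U^P$ is controlled using the assumption $\|Y \partial_Y^3 U\|_{L^\infty} < \infty$ combined with the kernel estimate $\|Y\partial_Y^3 G(t,Y,\cdot) * f\|_{L^\infty} \lesssim \|Y \partial_Y^3 f\|_{L^\infty} + t^{-1/2}\|f\|_{L^\infty}$ applied with care near $Y=0$.

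The heart of the proof is the preservation of the concavity inequality. Set $\omega = \partial_Y U^P$ and $\eta = -\partial_Y^2 U^P$. Both satisfy the heat equation: $\eta$ with \emph{Dirichlet} condition $\eta|_{Y=0} = 0$ (derived from $\partial_t U^P|_{Y=0} = 0$) and nonnegative initial data, hence $\eta \ge 0$; $\omega$ with \emph{Neumann} condition $\partial_Y\omega|_{Y=0} = -\eta|_{Y=0} = 0$ and nonnegative initial data, hence $\omega \ge 0$. The quantity $Q = M\eta - \omega^2$ satisfies
\begin{equation*}
\partial_t Q - \partial_Y^2 Q \, = \, 2(\partial_Y \omega)^2 \, \ge \, 0, \qquad Q(0,\cdot) \ge 0,
\end{equation*}
but the difficulty is that $Q(t,0) = -\omega(t,0)^2 \le 0$, which breaks the direct maximum-principle comparison. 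To fix this, I would introduce the auxiliary function $\psi$ solving the heat equation with zero initial data and boundary value $\psi(t,0) = \omega(t,0)^2$ (which is uniformly bounded on $[0,T]$ by the previously proved $L^\infty$-estimate on $\omega$). Then $M\eta + \psi - \omega^2$ is a nonnegative supersolution with nonnegative parabolic boundary data, hence $M\eta + \psi \ge \omega^2$ globally.

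The remaining and main obstacle is to absorb $\psi$ into $\eta$ for $Y \ge \sigma$, i.e., to show $\psi(t,Y) \le C(\sigma,T,U)\,\eta(t,Y)$ in that region so that choosing $M_\sigma = M + C(\sigma,T,U)$ concludes. For this I would use the explicit Poisson representation $\psi(t,Y) = \int_0^t \omega(s,0)^2 \, \partial_{Y'}G(t-s,Y,0)\,(-2)\,ds$, which shows $\psi(t,Y) \le C(T)\|\omega\|_{L^\infty}^2$ uniformly. On the other hand, a strict positivity lower bound for $\eta$ on compact subsets $\{\sigma \le Y \le R\}$ follows from Hopf's lemma applied to $\eta$ (zero Dirichlet boundary, nontrivial positive initial data near $Y = 0$). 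For the tail $Y \ge R$, both $\psi$ and $\omega^2$ are exponentially small (as $Y \to \infty$), and the initial inequality $M\eta_0 \ge \omega_0^2$ transported by the heat semigroup (which has a Gaussian comparison structure at infinity) provides the needed bound $\psi \lesssim \eta$. Combining the two regions yields $M_\sigma \eta \ge \omega^2$ for $Y \ge \sigma$ and $t \in [0,T]$, as desired.
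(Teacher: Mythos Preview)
Your treatment of the weighted pointwise bounds is essentially correct, though more indirect than the paper: the paper simply writes down the explicit representations
\[
U^P=\int_0^\infty G_D\,U,\quad \pa_Y U^P=\int_0^\infty G_N\,\pa_Z U,\quad \pa_Y^2 U^P=\int_0^\infty G_D\,\pa_Z^2 U,\quad \pa_t\pa_Y U^P=2G(t,Y)\pa_Z^2U(0)+\int_0^\infty G_N\,\pa_Z^3U,
\]
and reads off the estimates using $(1+Y)^k\le C(|Y-Z|^k+(1+Z)^k)$. Your Duhamel decomposition works but is unnecessary here.

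For the concavity part you take a genuinely different route. The paper does not look at the PDE for $Q=M\eta-\omega^2$ at all; instead it applies Jensen to $\pa_Y U^P=\int G_N\,\pa_Z U$ (using $\int G_N\,dZ=1$) to get $(\pa_Y U^P)^2\le\int G_N(\pa_Z U)^2$, combines this with $-\pa_Y^2U^P=\int G_D(-\pa_Z^2U)\ge M^{-1}\int G_D(\pa_Z U)^2$, and is then reduced to comparing $\int G_N f$ with $\int G_D f$ for the fixed nonnegative function $f=(\pa_Z U)^2$. After discarding a small contribution from $Z\in[0,r]$ this becomes the elementary pointwise inequality $(1-\frac{4M}{M_\sigma})e^{YZ/2t}>e^{-YZ/2t}$ for $Y\ge\sigma$, $Z\ge r$, $t\le T$. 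This avoids any auxiliary barrier and any lower bound on $\eta$.

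Your maximum--principle strategy is reasonable in spirit, but the final step has a real gap. First, the assertion that $\omega^2$ is exponentially small at infinity is false: the hypothesis $\|U\|<\infty$ only gives $|\pa_Y U|\le C(1+Y)^{-1}$, hence $\omega^2$ decays at most like $(1+Y)^{-2}$. Second, and more seriously, the sentence ``the initial inequality $M\eta_0\ge\omega_0^2$ transported by the heat semigroup \dots\ provides $\psi\lesssim\eta$'' is a non--sequitur: $\omega^2$ does not solve the heat equation, and nothing in that line relates $\psi$ to $\eta$. What you actually need is $\psi(t,Y)\le C(\sigma,T)\,\eta(t,Y)$ for all $Y\ge\sigma$ and all $t\in(0,T]$, including $t\to 0^+$. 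The Hopf lower bound on $\eta$ degenerates as $t\to 0^+$ wherever $\pa_Y^2U$ vanishes, and while $\psi$ also vanishes in that limit, you have not shown that the ratio stays bounded. This can be repaired (e.g.\ by comparing the explicit Poisson representation of $\psi$ with the lower bound $\eta(t,Y)\ge c\int_{\delta_0/2}^{\delta_0}G_D(t,Y,Z)\,dZ$ coming from $-\pa_Y^2U(0)>0$), but it takes a genuine computation that your proposal does not supply. The paper's kernel--comparison argument sidesteps this entirely.
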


\begin{proof} Set 
\begin{align}\label{proof.pro.heat.1}
G_D (t,Y,Z) \, = \, G(t,Y-Z) - G(t,Y+Z)\,, \qquad G_N (t,Y,Z) \, = \, G (t,Y-Z) + G(t,Y+Z)\,,
\end{align}
where $G(t,Y) = (4\pi t)^{-\frac12} e^{-\frac{|Y|^2}{4t}}$ is the one-dimensional Gaussian.
Then the solution $U^P$ to \eqref{eq.heat} and its derivatives are represented by 
\begin{align}\label{proof.pro.heat.2}
\begin{split}
U^P(t,Y )  & \, = \, \int_0^\infty G_D (t,Y,Z) \, U(Z) \dd Z\,,\\
\pa_Y U^P (t,Y) & \, = \, \int_0^\infty G_N (t,Y,Z) \, \pa_Z U (Z) \dd Z\,,\\
\pa_t U^P (t,Y) & \, = \, \pa_Y^2 U^P (t,Y) \, = \, \int_0^\infty G_D (t,Y,Z) \, \pa_Z^2 U(Z) \dd Z\,,\\
\pa_t\pa_Y U^P (t,Y) & \, = \, 2 G(t,Y) \, \pa_Z^2 U|_{Z=0} + \int_0^\infty G_N (t,Y,Z) \, \pa_Z^2 U(Z) \dd Z \,.
\end{split}
\end{align}
From \eqref{proof.pro.heat.2} and the inequality $(1+Y^k) \leq C (|Y-Z|^k + 1+Z^k)$ for $Y, Z\geq 0$ it is straightforward to show \eqref{est.prop.heat} with a constant $C$ depending only on $T>0$. 
Next we assume that $\pa_Y U|_{Y=0}>0$ and $-M\pa_Y^2 U \geq (\pa_Y U)^2$ for all $Y\geq 0$.
Fix $T>0$ and $\sigma \in (0,1]$ and let $Y\geq \sigma$.  
From \eqref{proof.pro.heat.2} and the H${\rm \ddot{o}}$lder inequality we have 
\begin{align*}
(\pa_Y U^P (t,Y) )^2 & \leq \int_0^\infty G_N (t,Y,Z) \dd Z \int_0^\infty G_N (t,Y,Z) (\pa_Z U )^2 \dd Z = \int_0^\infty G_N (t,Y,Z) (\pa_Z U)^2 \dd Z\,,\\
-\pa_Y^2 U^P  (t,Y) &\geq \int_0^\infty M^{-1} G_D (t,Y,Z) (\pa_Z U)^2 \dd Z\,.
\end{align*}
Note that, since $(\pa_Z U)^2 \in BC(\overline{\R_+})$ there is $r\in (0,\frac{\sigma}{2}]$ such that
\begin{align*} 
\int_0^\infty G (t,Y-Z) (\pa_Z U)^2 \dd Z \leq 2 \int_{r}^\infty G (t,Y-Z) (\pa_Z U)^2 \dd Z\,,\qquad Y\geq \sigma\,, \quad  t\in (0,T]\,,
\end{align*}
which implies from $G_N (t,Y,Z) \leq 2G(t,Y-Z)$ for $Y,Z\geq 0$,
\begin{align*}
\int_0^\infty G_N (t,Y,Z) (\pa_Z U)^2 \dd Z \leq 4 \int_r^\infty G(t,Y-Z) (\pa_Z U)^2 \dd Z\,.
\end{align*}
Take large $M_\sigma>0$, which will be determined later.
Then we have 
\begin{align}\label{proof.pro.heat.3}
& -M_\sigma \pa_Y^2 U^P (t,Y) - (\pa_Y U^P (t,Y))^2 \nonumber \\
& \geq \int_r^\infty \bigg ( \frac{M_\sigma}{M} G_D (t,Y,Z) - 4 G (t,Y-Z) \bigg ) (\pa_Z U )^2 \dd Z \nonumber \\
& = \frac{M_\sigma}{M} \int_r^\infty \bigg ( (1-\frac{4 M}{M_\sigma}) G (t, Y-Z) - G(t, Y+Z) \bigg ) (\pa_Z U)^2 \dd Z\,.
\end{align}
Then we observe that, for $Y\geq \sigma$, $Z\geq r$, and $t\in (0,T]$,
\begin{align*}
(1-\frac{4M}{M_\sigma} )G(t,Y-Z)  - G (t,Y+Z) &  =  (4\pi t)^{-\frac12} e^{-\frac{Y^2+Z^2}{4\pi t}} \bigg ( (1-\frac{4M}{M_\sigma}) e^{\frac{Y Z}{2t}} - e^{-\frac{Y Z}{2t}} \bigg ) \nonumber \\
& \geq (4\pi t)^{-\frac12} e^{-\frac{Y^2+Z^2}{4\pi t}} \bigg ( (1-\frac{4M}{M_\sigma}) e^{\frac{\sigma r}{2T}} - e^{-\frac{\sigma r}{2T}} \bigg ) >0\,,
\end{align*}
if $M_\sigma \in (4M,\infty)$ is sufficiently large. Note that $M_\sigma$ depends only on $\sigma$, $r$, and $T$.
This shows that the positivity of the right-hand side of \eqref{proof.pro.heat.3}. The proof is complete.
\end{proof}

\bibliography{biblio_Prandtl_Gevrey}

\begin{thebibliography}{10}

\bibitem{AlWaXuTa}
R.~Alexandre, Y.-G. Wang, C.-J. Xu, and T.~Yang.
\newblock Well-posedness of the {P}randtl equation in {S}obolev spaces.
\newblock {\em J. Amer. Math. Soc.}, 28(3):745--784, 2015.

\bibitem{Die}
J.~Dieudonn{\'e}.
\newblock {\em Calcul infinit\'esimal}.
\newblock Hermann, Paris, 1968.

\bibitem{DrRe}
P.~G. Drazin and W.~H. Reid.
\newblock {\em Hydrodynamic stability}.
\newblock Cambridge Mathematical Library. Cambridge University Press,
  Cambridge, second edition, 2004.
\newblock With a foreword by John Miles.

\bibitem{GeDo}
D.~G{\'e}rard-Varet and E.~Dormy.
\newblock On the ill-posedness of the {P}randtl equation.
\newblock {\em J. Amer. Math. Soc.}, 23(2):591--609, 2010.

\bibitem{GeDo2}
D.~G{\'e}rard-Varet and E.~Dormy.
\newblock Sur l'\'equation de {P}randtl.
\newblock In {\em S\'eminaire: \'{E}quations aux {D}\'eriv\'ees {P}artielles.
  2008--2009}, S\'emin. \'Equ. D\'eriv. Partielles, pages Exp. No. XXI, 10.
  \'Ecole Polytech., Palaiseau, 2010.

\bibitem{GeMa}
D.~Gerard-Varet and N.~Masmoudi.
\newblock Well-posedness for the {P}randtl system without analyticity or
  monotonicity.
\newblock {\em Ann. Sci. \'Ec. Norm. Sup\'er. (4)}, 48(6):1273--1325, 2015.

\bibitem{GeNg}
D.~G{\'e}rard-Varet and T.~Nguyen.
\newblock Remarks on the ill-posedness of the {P}randtl equation.
\newblock {\em Asymptot. Anal.}, 77(1-2):71--88, 2012.

\bibitem{Gre}
E.~Grenier.
\newblock On the nonlinear instability of {E}uler and {P}randtl equations.
\newblock {\em Comm. Pure Appl. Math.}, 53(9):1067--1091, 2000.

\bibitem{GGN2014}
E.~Grenier, Y.~Guo, and T.~NGuyen.
\newblock Spectral instability of characteristic boundary layer flows.
\newblock Preprint 2014.

\bibitem{GrGuNg}
E.~Grenier, Y.~Guo, and T.~T. Nguyen.
\newblock Spectral instability of general symmetric shear flows in a
  two-dimensional channel.
\newblock {\em Adv. Math.}, 292:52--110, 2016.

\bibitem{Kat}
T.~Kato.
\newblock Remarks on zero viscosity limit for nonstationary {N}avier-{S}tokes
  flows with boundary.
\newblock In {\em Seminar on nonlinear partial differential equations
  ({B}erkeley, {C}alif., 1983)}, volume~2 of {\em Math. Sci. Res. Inst. Publ.},
  pages 85--98. Springer, New York, 1984.

\bibitem{Kel}
J.~P. Kelliher.
\newblock On {K}ato's conditions for vanishing viscosity.
\newblock {\em Indiana Univ. Math. J.}, 56(4):1711--1721, 2007.

\bibitem{KuMaViWo}
I.~Kukavica, N.~Masmoudi, V.~Vicol, and T.~K. Wong.
\newblock On the local well-posedness of the {P}randtl and hydrostatic {E}uler
  equations with multiple monotonicity regions.
\newblock {\em SIAM J. Math. Anal.}, 46(6):3865--3890, 2014.

\bibitem{KuVi}
I.~Kukavica and V.~Vicol.
\newblock On the local existence of analytic solutions to the {P}randtl
  boundary layer equations.
\newblock {\em Commun. Math. Sci.}, 11(1):269--292, 2013.

\bibitem{LiWaYa}
C.-J. Liu, Y.-G. Wang, and T.~Yang.
\newblock On the ill-posedness of the {P}randtl equations in three-dimensional
  space.
\newblock {\em Arch. Ration. Mech. Anal.}, 220(1):83--108, 2016.

\bibitem{LoCaSa}
M.~C. Lombardo, R.~E. Caflisch, and M.~Sammartino.
\newblock Asymptotic analysis of the linearized {N}avier-{S}tokes equation on
  an exterior circular domain: explicit solution and the zero viscosity limit.
\newblock {\em Comm. Partial Differential Equations}, 26(1-2):335--354, 2001.

\bibitem{LoMaNuTa}
M.~C. Lopes~Filho, A.~L. Mazzucato, H.~J. Nussenzveig~Lopes, and M.~Taylor.
\newblock Vanishing viscosity limits and boundary layers for circularly
  symmetric 2{D} flows.
\newblock {\em Bull. Braz. Math. Soc. (N.S.)}, 39(4):471--513, 2008.

\bibitem{Mae}
Y.~Maekawa.
\newblock On the inviscid limit problem of the vorticity equations for viscous
  incompressible flows in the half-plane.
\newblock {\em Comm. Pure Appl. Math.}, 67(7):1045--1128, 2014.

\bibitem{MaWo}
N.~Masmoudi and T.~K. Wong.
\newblock Local-in-time existence and uniqueness of solutions to the {P}randtl
  equations by energy methods.
\newblock {\em Comm. Pure Appl. Math.}, 68(10):1683--1741, 2015.

\bibitem{MaTa}
A.~Mazzucato and M.~Taylor.
\newblock Vanishing viscosity plane parallel channel flow and related singular
  perturbation problems.
\newblock {\em Anal. PDE}, 1(1):35--93, 2008.

\bibitem{Ole}
O.~A. Oleinik and V.~N. Samokhin.
\newblock {\em Mathematical models in boundary layer theory}, volume~15 of {\em
  Applied Mathematics and Mathematical Computation}.
\newblock Chapman \& Hall/CRC, Boca Raton, FL, 1999.

\bibitem{SaCa1}
M.~Sammartino and R.~E. Caflisch.
\newblock Zero viscosity limit for analytic solutions, of the {N}avier-{S}tokes
  equation on a half-space. {I}. {E}xistence for {E}uler and {P}randtl
  equations.
\newblock {\em Comm. Math. Phys.}, 192(2):433--461, 1998.

\bibitem{SaCa2}
M.~Sammartino and R.~E. Caflisch.
\newblock Zero viscosity limit for analytic solutions of the {N}avier-{S}tokes
  equation on a half-space. {II}. {C}onstruction of the {N}avier-{S}tokes
  solution.
\newblock {\em Comm. Math. Phys.}, 192(2):463--491, 1998.

\end{thebibliography}
\bibliographystyle{abbrv}


\end{document}